\newcommand{\Z}{\mathbb{Z}}
\newcommand{\Q}{\mathbb{Q}}
\newcommand{\C}{\mathbb{C}}
\newcommand{\F}{\mathbb{F}}
\newcommand{\R}{\mathbb{R}}
\newcommand{\trivial}{\bf{1}}
\newcommand{\OO}{\mathcal{O}}
\newcommand{\GG}{\mathcal{G}}
\newcommand{\PP}{\mathcal{P}}
\newcommand{\fq}{\mathfrak{q}}
\newcommand{\RG}{{\rm R}\Gamma}
\newcommand{\xx}{\boldsymbol{x}}
\newcommand{\ff}{\mathfrak{f}}
\newcommand{\cD}{\mathcal{D}}
\newcommand{\cE}{\mathcal{E}}
\newcommand{\cH}{\mathcal{H}}
\newcommand{\wtil}[1]{\widetilde{#1}}
\newcommand{\ol}[1]{\overline{#1}}
\newcommand{\parenth}[1]{\left( #1 \right)}
\newcommand{\ND}[1]{\textcolor{PineGreen}{#1}}
\newcommand{\otimesL}{\otimes^{\mathbb{L}}}
\DeclareMathOperator{\Gal}{Gal}
\DeclareMathOperator{\Coker}{Coker}
\DeclareMathOperator{\Ker}{Ker}
\DeclareMathOperator{\Imag}{Im}
\DeclareMathOperator{\Fitt}{Fitt}
\DeclareMathOperator{\ram}{ram}
\DeclareMathOperator{\fin}{fin}
\DeclareMathOperator{\ord}{ord}
\DeclareMathOperator{\Hom}{Hom}
\DeclareMathOperator{\RHom}{RHom}
\DeclareMathOperator{\rank}{rank}
\DeclareMathOperator{\ab}{ab}
\DeclareMathOperator{\Det}{Det}
\DeclareMathOperator{\Colman}{Col}
\DeclareMathOperator{\cyc}{cyc}
\DeclareMathOperator{\NN}{N}
\DeclareMathOperator{\Tr}{Tr}
\DeclareMathOperator{\perf}{perf}
\DeclareMathOperator{\loc}{loc}
\DeclareMathOperator{\Real}{Re}
\DeclareMathOperator{\Twist}{Tw}
\DeclareMathOperator{\twist}{tw}
\DeclareMathOperator{\rec}{rec}
\DeclareMathOperator{\Iw}{Iw}
\DeclareMathOperator{\id}{id}
\DeclareMathOperator{\res}{res}
\DeclareMathOperator{\ch}{ch}
\DeclareMathOperator{\cores}{cores}
\DeclareMathOperator{\LT}{LT}
\DeclareMathOperator{\inv}{inv}
\DeclareMathOperator{\Spec}{Spec}
\DeclareMathOperator{\ev}{ev}
\DeclareMathOperator{\isom}{isom}
\let\oldenumerate\enumerate
\renewcommand{\enumerate}{
   \oldenumerate
   \setlength{\itemsep}{1pt}
   \setlength{\parskip}{0pt}
   \setlength{\parsep}{0pt}
}
\let\olditemize\itemize
\renewcommand{\itemize}{
   \olditemize
   \setlength{\itemsep}{1pt}
   \setlength{\parskip}{0pt}
   \setlength{\parsep}{0pt}
}
\theoremstyle{plain}
\newtheorem{thm}{Theorem}[section]
\newtheorem{lem}[thm]{Lemma}
\newtheorem{conj}[thm]{Conjecture}
\newtheorem{prop}[thm]{Proposition}
\newtheorem{cor}[thm]{Corollary}
\theoremstyle{definition}
\newtheorem{defn}[thm]{Definition}
\newtheorem{setting}[thm]{Setting}
\newtheorem{rem}[thm]{Remark}
\title[Local ETNC]
{On the Local equivariant Tamagawa number conjecture for Tate motives}
\author{Mahiro Atsuta}
\address{Department of Mathematics, Faculty of Science and Technology, Tokyo University of Science. 
2641 Yamazaki, Noda City, Chiba 278-8510, Japan}
\email{mahiro\_atsuta@rs.tus.ac.jp}
\author{Naoto Dainobu}
\address{Faculty of Mathematics, Kyushu University, 744, Motooka, Nishi-ku, Fukuoka, 819-0395, Japan.}
\email{dainobu.naoto.819@m.kyushu-u.ac.jp}
\author{Takenori Kataoka}
\address{Department of Mathematics, Faculty of Science Division II, Tokyo University of Science.
1-3 Kagurazaka, Shinjuku-ku, Tokyo 162-8601, Japan}
\email{tkataoka@rs.tus.ac.jp}
\keywords{Coleman maps, local equivariant Tamagawa number conjecture, $L$-function}
\subjclass[2020]{11R29}
\date{\today}
\begin{document}
\begin{abstract}
The local equivariant Tamagawa number conjecture (local ETNC) for a motive predicts a precise relationship between the local arithmetic complex and the root numbers which appear in the (conjectural) functional equations of the $L$-functions. 
In this paper, we prove the local ETNC for the Tate motives under a certain unramified condition at $p$. 
Our result gives a generalization of the previous works by Burns--Flach and Burns--Sano. 
Our strategy basically follows those works and builds upon the classical theory of Coleman maps and its generalization by Perrin-Riou.
\end{abstract}

\maketitle


\section{Introduction}\label{Intro}

One of the main themes in number theory is the relationship between arithmetic objects and the special values of $L$-functions. 
The Tamagawa number conjecture formulated by Block--Kato \cite{BK90} is a strong conjecture on such a relationship, which is then refined by Burns--Flach \cite{BuFl01} to the equivariant Tamagawa number conjecture (ETNC).
The formulation of the ETNC predicts that the determinant module of the associated global arithmetic complex has a basis that is related to the $L$-values.

A local analogue of the ETNC, which we refer to as the local ETNC, is also formulated by Burns--Flach \cite{BuFl01}, Breuning \cite{Bre04}, Fukaya--Kato \cite{FK06}, among others.
The motivation behind the local ETNC is the compatibility between the ETNC, duality, and the functional equation of the $L$-functions. We also note that the local ETNC can be regarded as a combination of both ``local $\varepsilon$-conjecture'' and
``global $\varepsilon$-conjecture'' (cf.~Kato \cite{Katopre}). The formulation of the local ETNC predicts that the determinant module of the local arithmetic complex has a basis that is now related to the root numbers. 

The main theme of this paper is the local ETNC for Tate motives.
One of our motivations is a recent work of Burns--Sano \cite{BS20functional}, whose results can be summarized as follows:
\begin{itemize}
\item[(I)]
They established the local (non-equivariant) Tamagawa number conjecture for $\Z_p (j)$ when $p$ is unramified in the base field (\cite[Theorem 2.3]{BS20functional}).
\item[(II)]
They applied (I) to construct a higher rank Euler system for $\Z_p(1)$ when the base field is moreover totally real (\cite[Theorem 5.2]{BS20functional}).
\end{itemize}
In (I) they did not study equivariant versions, so naturally the resulting Euler system in (II) does not know anything about specialization at characters that do not factor through the $p$-power cyclotomic extension. 
Also, they ignored delicate sign problems.

In this paper, we prove the equivariant refinement of (I) of \cite{BS20functional}, that is, we prove the local ETNC for $\Z_p(j)$, still assuming an unramified condition at $p$.
Throughout, we also take care of the delicate signs.

In a forthcoming paper, we plan to discuss its application to the construction of higher rank Euler systems, which refines (II) of \cite{BS20functional}.

\subsection{Main result}\label{ss:main_lETNC}

Let us briefly explain the statement of the local ETNC (see \S \ref{sec:form_lETNC} for the details).
We fix an odd prime number $p$.

Let $K/k$ be a finite abelian extension of number fields with Galois group $G = \Gal(K/k)$.
We write $S_{\ram}(K/k)$ for the set of finite primes of $k$ that are ramified in $K/k$. 
Let $S$ be a finite set of places of $k$ that contains $S_{\ram} (K/k) \cup S_p \cup S_\infty$, 
where $S_p$ and $S_{\infty}$ respectively denote the sets of $p$-adic primes and archimedean places of $k$. 
We put $S_f := S \setminus S_\infty $

Let $j \in \Z_{\geq 1}$ be a positive integer.
We consider the graded invertible module over $\Z_p[G]$ defined by
\[
\Xi_{K / k, S}^{\loc} (j):=
\parenth{\bigotimes_{v \in S_f} \Det^{-1}_{\Z_p[G]} \RG(K_{v}, \Z_p (j)) }
\otimes
\Det^{-1}_{\Z_p[G]} (X_{K} (j))
\]
with $X_K (j) = \bigoplus_{\iota: K \hookrightarrow \C} \Z_p (j)$, where $\iota$ runs over the embeddings. 
As in Definition \ref{vartheta_j}, 
we have an isomorphism
\[
\vartheta_{K/k, S}^{\loc, j}
  : \C_p \otimes_{\Z_p} \Xi_{K / k, S}^{\loc} (j)
 \xrightarrow{\sim} \C_p [G]. 
\]
Then the local ETNC (Conjecture \ref{conj:local eTNC}) predicts that there is a basis 
\[
z_{K/k, S}^{\loc, j} \in \Xi_{K / k, S}^{\loc} (j)
\]
whose image in $\C_p[G]$ under $\vartheta_{K/k, S}^{\loc, j}$ is described by the root numbers.

Now we state the main result.
As usual we put $\mu_{p^\infty} = \bigcup_{n \geq 0} \mu_{p^n}$, where $\mu_{p^n}$ is the group of the $p^n$-th power roots of unity.
We set $K_n = K(\mu_{p^n})$ and $K_\infty = K(\mu_{p^\infty})$.

\begin{thm}\label{thm:lETNC}
Suppose $k/\Q$ is unramified at $p$.
Let $K/k$ be a finite abelian extension that is unramified at all $p$-adic primes.
Then for any intermediate number field $M$ of $K_\infty/k$, the local ETNC (Conjecture \ref{conj:local eTNC}) for $\Z_p(j)$ is valid for $M/k$ and $j \geq 1$.
Furthermore, we have a basis of the projective limit,
\[
Z_{K_\infty / k, S}^{\loc, j} \in \Xi_{K_\infty / k, S}^{\loc} (j) = \varprojlim_{n \geq 0} \Xi_{K_n / k, S}^{\loc} (j),
\]
whose image to $\Xi_{M / k, S}^{\loc} (j)$ is the desired basis $z_{M/k, S}^{\loc, j}$ for each $M$.
\end{thm}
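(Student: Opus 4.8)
The plan is to lift everything to the cyclotomic tower, prove a single Iwasawa-theoretic statement over $\Lambda := \Z_p[[\Gal(K_\infty/k)]]$, and then descend to each intermediate $M$. The module $\Xi_{K_\infty/k,S}^{\loc}(j) = \varprojlim_n \Xi_{K_n/k,S}^{\loc}(j)$ is a graded invertible $\Lambda$-module: each local term $\RG(K_{n,v},\Z_p(j))$ assembles via corestriction into a perfect complex over $\Lambda$ whose Iwasawa cohomology is well understood, and $\varprojlim_n X_{K_n}(j)$ is free over $\Lambda$ after the evident twist. The maps $\vartheta_{K_n/k,S}^{\loc,j}$ are compatible along the tower and interpolate, after inverting $p$, into a single isomorphism $\vartheta_{K_\infty/k,S}^{\loc,j}$ at the $K_\infty$-level. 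By functoriality of the determinant functor under base change $-\otimesL_\Lambda \Z_p[\Gal(M/k)]$, together with the interpolation property of root numbers under cyclotomic twist, a basis $Z_{K_\infty/k,S}^{\loc,j}$ with the correct $\vartheta_{K_\infty/k,S}^{\loc,j}$-image automatically specializes to the desired $z_{M/k,S}^{\loc,j}$ for every $M$. Thus the theorem reduces to constructing $Z_{K_\infty/k,S}^{\loc,j}$ and identifying its image.

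First I would dispose of the primes away from $p$. For $v\in S_f\setminus S_p$ and $j\geq 1$, the complex $\RG(K_{n,v},\Z_p(j))$ has cohomology concentrated in degrees $1$ and $2$ and is governed entirely by the local Euler factor at $v$ and the action of Frobenius; a direct computation, exactly as in Burns--Flach \cite{BuFl01}, yields a canonical generator of $\Det^{-1}_{\Z_p[\Gal(K_n/k)]}\RG(K_{n,v},\Z_p(j))$ whose $\vartheta$-image is the corresponding tame local constant (a Gauss-sum type quantity), and these behave well along the tower because $K_\infty/K$ is unramified at such $v$. The archimedean term $X_{K}(j)$ contributes, in the same spirit, the expected $\Gamma$-factor and archimedean root number. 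This reduces the problem to the contribution of the $p$-adic primes.

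For $v\in S_p$, the hypothesis that $k/\Q$ and $K/k$ are unramified at $p$ forces $K_v$ to be unramified over $\Q_p$, so $\Z_p(j)|_{G_{K_v}}$ is crystalline and $K_{\infty,v}/K_v$ is the cyclotomic $\Z_p$-extension with $\mu_p$ adjoined. Here I would invoke the Coleman map and its Perrin-Riou generalization: there is a canonical isomorphism of graded invertible $\Lambda$-modules between $\Det^{-1}_{\Lambda}\Rlim_n\RG(K_{n,v},\Z_p(j))$ and a free rank-one $\Lambda$-module constructed from $\cD_{\mathrm{cris}}(\Z_p(j))$, whose evaluation at each character of $\Gal(K_{\infty,v}/K_v)$ interpolates the Bloch--Kato exponential map — legitimate precisely because $j\geq 1$ — twisted by the Euler factor $\dfrac{1-p^{-j}\varphi^{-1}}{1-p^{j-1}\varphi}$. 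Transporting the canonical generator $e_j$ of $\Z_p(j)$ through this isomorphism, normalised by this Euler factor, produces the $p$-local component; combined with the pieces above it defines $Z_{K_\infty/k,S}^{\loc,j}$. The explicit reciprocity law (Perrin-Riou, refined in the crystalline case by Colmez and by Benois--Berger) then identifies the $\vartheta_{K_\infty/k,S}^{\loc,j}$-image of this element — after multiplying by the tame constants and the archimedean normalisation and using the product formula for local $\varepsilon$-factors — with exactly the $\Lambda$-adic root number predicted by the local ETNC, refining the non-equivariant case of Burns--Sano \cite{BS20functional} and Fukaya--Kato \cite{FK06}.

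The main obstacle is this last identification together with the \textbf{control of signs}. Making the Coleman/Perrin-Riou isomorphism genuinely equivariant (over the whole group ring, not merely the cyclotomic Iwasawa algebra) and matching the precise normalisation of its interpolation formula with the precise normalisation of the local root numbers demands careful bookkeeping of periods, of the $(1-p^{j-1}\varphi)$-type Euler factors, and of the Koszul signs intrinsic to the graded determinant formalism; the determinant functor alone pins everything down only up to $\pm 1$, and since the paper insists on tracking the delicate signs, these $\pm 1$'s must be resolved — for instance by comparing the constructions for $j$ and for a value reflected through local Tate duality, or by matching against the archimedean term. Once the statement at the $K_\infty$-level is established with the correct sign, the descent described in the first paragraph yields the local ETNC for every intermediate $M$ together with the compatible system $Z_{K_\infty/k,S}^{\loc,j}$, completing the proof.
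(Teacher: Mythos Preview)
Your overall architecture --- lift to $\Lambda$, build local bases at each class of primes, tensor, then descend --- matches the paper's. The genuine gap is at the step where you assert that ``the product formula for local $\varepsilon$-factors'' will make the $\vartheta$-image come out to the predicted root number. In the paper's construction the non-$p$-adic local bases do \emph{not} carry the local Gauss sums at all; their $\vartheta$-images are only ratios of Euler factors (Theorem~\ref{thm:det non p prime}, Corollary~\ref{cor:unr_basis}). All of the Gauss-sum content is instead packed into the $p$-adic and archimedean pieces, and matching these against the functional equation is precisely where the generalization from $k=\Q$ to arbitrary $k$ requires new input that your sketch does not mention.

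Three missing ingredients do the real work. First, one needs a specific $\Z_p[G]$-basis of $\Z_p\otimes\OO_K$ whose ``period determinant'' $B_{K/k}(x_1,\dots,x_{r_k})$ equals the equivariant Gauss sum $\tau_{K/k}$ times an explicit Euler-type factor; this is extracted from the Bley--Burns conjecture $\mathbf{C}_p(K/k)$ (Proposition~\ref{prop: det B and Gauss sum}). Second, to control how the $p$-adic Gauss sums vary along the tower $K_n$, the paper needs a generalized Davenport--Hasse relation $\tau_k(\chi_{2,v}) = (-1)^{n_\chi(r_{k_v}-1)}\,\tau_\Q(\chi'_{2,p})^{r_{k_v}}$ (Lemma~\ref{lem:Davenport--Hasse}(iii), with a separate proof in \S\ref{sec:DH}); without it there is no uniform formula in $n$. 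Third, even after these choices the naive tensor of local bases is \emph{still} wrong: it must be corrected by auxiliary units $D_{j-1}(K_\infty/k)$ and $\ff_{j-1}(K_\infty/k)_v \in \Lambda^\times$ (Definition~\ref{defi:D}, Lemma~\ref{lem:cond unit}), and showing that these conductor-norm correction factors actually lift to units of $\Lambda$ is a non-obvious computation. The paper flags exactly this point in \S\ref{ss:previous}. Finally, your appeal to Benois--Berger/Colmez for the $p$-adic identification is not the route taken: the paper remarks that those formulations are ``substantially different'' with the precise relation unclear, and instead performs the Coleman/Perrin-Riou interpolation by hand (Theorems~\ref{thm:Phi}, \ref{thm:explicit computation}).
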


By specializing to $K = k$, we recover the result of Burns--Sano \cite{BS20functional}.
See \S \ref{ss:previous} for relations with other previous works.

Let us briefly sketch the proof of this theorem.
The invertible module $\Xi_{K_\infty / k, S}^{\loc} (j)$ is the tensor product of local determinant modules with respect to places of $k$, and our construction of the basis $Z_{K_\infty / k, S}^{\loc, j}$ is naturally a combination of bases for those places.
Here we explain key ingredients to construct the local bases.
\begin{itemize}
\item
For the $p$-adic primes, we use the classical theory of Coleman map \cite{Col79} that describes the projective limit of the local unit groups (\S \ref{sec:Coleman}).
The unramified condition at $p$ is required here.
\item
For the non-$p$-adic finite primes, we use a result of Greither, Kurihara, and the third author \cite{GKK_09} (\S \ref{sec:non p adic prime}).
\item
For the archimedean places, a basis can be constructed quite easily.
However, we need appropriate modification to obtain the final desired formula (\S \ref{sec:arch}, \S \ref{sec:pf_lETNC}).
\end{itemize}

After completing the proof of Theorem \ref{thm:lETNC}, we will show additional properties of $Z_{K_\infty / k, S}^{\loc, j}$.
One is the behavior under the twists, i.e., the relations between different $j$'s (\S \ref{App:twist}).
We will also give a definition of $Z_{K_\infty / k, S}^{\loc, j}$ for $j \in \Z_{\leq 0}$ by using the local Tate duality and then observe its properties (\S \ref{section:j <1}).

\subsection{Relation with previous works}\label{ss:previous}

Let us explain relations with previous works on the local ETNC for $\Z_p(j)$.

As already mentioned, the work \cite{BS20functional} of Burns--Sano is a prototype of this present work. 
We give equivariant refinements without sign ambiguity.

Burns--Flach \cite{BuFla06} proved the local ETNC for Tate motives when the base field is $k = \Q$.
This also serves as a prototype; indeed, the discussion on $p$-adic primes (\S \ref{sec:Coleman}) is a direct generalization of \cite[\S 5 and \S 6]{BuFla06}. 

We briefly explain some difficulties in generalizing the base field from $\Q$.
As is well-known, the functional equations of $L$-functions involve Gauss sums of Dirichlet characters.
Therefore, to prove Theorem \ref{thm:lETNC}, 
we have to handle the Gauss sums for any $K_n / k$ for any $n \in \Z_{\geq 0}$ in a uniform manner. 
To do this, we need to generalize the Davenport--Hasse relation; such a generalization was provided to the authors by Daichi Takeuchi (\S \ref{sec:DH}).
Also, after careful computations, we will notice that a first candidate for the element $Z_{K_{\infty}/k, S}^{\loc, j}$ does not satisfy the desired interpolation properties.
We then need to modify it by multiplying an auxiliary unit of the algebra $\Z_p[[\Gal (K_\infty / k)]]$, whose existence is not obvious. 

In essence, our formulation of the local ETNC coincides with the ``local $\varepsilon$-conjecture'' and ``global $\varepsilon$-conjecture'' formulated by Kato \cite{Katopre}, when applied to Tate motives.
The local $\varepsilon$-conjecture asserts the existence of a canonical basis of the determinant module of a cohomology complex associated to any $p$-adic representation of $\Gal(\ol{\Q_l}/\Q_l)$ in both cases $l=p$ and $l \neq p$. 
The global $\varepsilon$-conjecture asserts that these conjectural bases are compatible with 
a zeta element, which is also a conjectural basis of the determinant module of a certain Galois cohomology complex and is expected to be related to the special values of the $L$-functions. 
This compatibility has a close connection with the functional equation of the $L$-functions.
In \cite{Katopre} Kato proved these conjectures in a special case using the theory of Coleman maps, and our discussion in \S \ref{sec:Coleman} generalizes this argument.
We also note that Kato's formulation also involves compatibility with deformations of representations, so it is much stronger than the local ETNC. 

Note also that, precisely speaking, for each finite prime, the local basis we will construct and that predicted by the local $\varepsilon$-conjecture (cf.~\cite{FK06} and \cite{Katopre}) coincide only up to units.
The latter basis is related to the local $\varepsilon$-factor (i.e., the Gauss sum), while our bases for non-$p$-adic finite primes ignore this (\S \ref{sec:non p adic prime}) and, instead, the bases for $p$-adic primes involve the Gauss sums for all finite primes (\S \ref{sec:arch} and \S \ref{sss:p}).
The results are consistent after combining all the places.

Concerning the $p$-adic local case (\S \ref{sec:Coleman}), Benois--Berger \cite{BeBe08} and Loeffler--Venjakob--Zerbes \cite{LVZ15} 
proved the local $\varepsilon$-conjecture for crystalline representations of 
$G_{\Q_p}$. 
It would be natural to apply their results, but their formulations are substantially different from ours, and the precise relation remains unclear.

\subsection{Notation}\label{ss:notation}

Throughout this paper, we use the following notation. 

Let $G$ be a finite abelian group.
For a commutative ring $R$, we write
\[
\# : R[G] \to R[G], 
\quad
x \mapsto x^\#
\]
for the $R$-algebra automorphism of the group ring $R[G]$ (the involution)
defined by $\sigma \mapsto \sigma^{-1}$ for any $\sigma \in G$. 
For any $R[G]$-module $M$, 
let $M^\#$ be the $R[G]$-module that is the same as $M$ as an $R$-module but $G$ acts on $M^\#$ via the involution $\#$. 
Similar notations will be used for the completed group ring $R[[\GG]]$ when $\GG$ is a profinite group

For any subgroup $H$ of $G$, we set
\[
N_H := \sum_{\sigma \in H} \sigma \in \Z[G],
\qquad
e_H := \frac{N_H}{\# H} \in \Q[G].
\]
This $e_H$ is an idempotent element.

For character $\chi$ of $G$, we write $\Q (\chi) := \Q (\Imag(\chi))$ and define
\[
e_\chi := \frac{1}{\# G}\sum_{\sigma \in G} \chi (\sigma) \sigma^{-1} \in \Q(\chi)[G]
\]
for the associated idempotent element. 
We regard $\Q(\chi)$ as a $\Q[G]$-algebra via $\chi$. 
For a $\Q [G]$-module $M$, 
put $M^\chi := \Q (\chi) \otimes_{\Q[G]} M$, and for an element $a \in M$, 
we write $a^\chi \in M^\chi$ for the $\chi$-component of $a$.

\subsection{Acknowledgments}\label{s:ack}
We are very grateful to Daichi Takeuchi for providing the proof of a generalization of the Davenport--Hasse relation (Lemma \ref{lem:Davenport--Hasse} (iii), Proposition \ref{app:HD}) 
to the authors. 
We are also very grateful to Kentaro Nakamura and Takamichi Sano for their very helpful advice.
We also thank Masato Kurihara for his encouragement during this project. 
This work was supported by JSPS KAKENHI Grant Numbers 22K13898, 23KJ1943, 25KJ0259.

\section{Formulation of the local ETNC}\label{sec:form_lETNC}

In this section, we formulate the local equivariant Tamagawa number conjecture (local ETNC) for Tate motives. 
Our formulation is an equivariant version of the local Tamagawa number conjecture for Tate motives which is formulated by Burns and Sano in
\cite[Conjecture 2.2]{BS20functional}. 
The local ETNC for general motives is formulated in \cite[Conjecture 8]{BuFl01}.
The formulation requires the theory of determinant modules; throughout, we follow the convention of Knudsen--Mumford \cite{KM76}.
See \S \ref{App:det} for the convention.

We fix an algebraic closure $\ol{\Q}$ of $\Q$ and regard $\ol{\Q} \subset \C$. 
We fix an odd prime number $p$.
We fix an isomorphism $\C \simeq \C_p$ and identify these fields.

Let $K/k$ be a finite abelian extension of number fields with Galois group $G = \Gal(K/k)$.

\subsection{The local complexes}\label{ss:setup}

 Let $v$ be a finite prime of $k$. We set $K_v = k_v \otimes_k K \simeq \prod_{w \mid v} K_w$, where $w$ runs over the primes of $K$ lying above $v$. For any $j \in \Z$,
we consider the semi-local Galois cohomology complex
\[
\Delta_{K_v} (j) := \RG (K_v, \Z_p (j))
= \bigoplus_{w|v}\RG (K_w, \Z_p (j)).
\]
This is a perfect complex over $\Z_p[G]$ and, more precisely, quasi-isomorphic to a perfect complex over $\Z_p[G]$ which concentrates on degrees $0, 1$, and $2$.

The purpose of this subsection is to introduce, for $j \geq 1$, the construction of the isomorphisms 
\[
\begin{cases}
\vartheta_{K_v}^{j} : 
\Q_p \otimes_{\Z_p} \Det_{\Z_p[G]}^{-1} (\Delta_{K_v}(j))
\xrightarrow{\sim} 
\Q_p [G]
&\text{if } v \nmid p, \\
\phi_{K_v}^j: \Q_p \otimes_{\Z_p} \Det_{\Z_p[G]}^{-1} (\Delta_{K_v}(j))
\xrightarrow{\sim} 
\Det_{\Q_p[G]}(K_v)
&\text{if }v \mid p.
\end{cases}
\]

We note that the isomorphism $\Q_p \otimesL_{\Z_p} \Delta_{K_v}(j) \simeq \RG (K_v, \Q_p (j))$ induces
\[
\Q_p \otimes_{\Z_p} \Det_{\Z_p[G]}^{-1} (\Delta_{K_v}(j)) \simeq \Det_{\Q_p[G]}^{-1} (\RG (K_v, \Q_p (j))).
\]
The construction of the isomorphisms $\vartheta_{K_v}^{j}$ and $\phi_{K_v}^j$ depends on whether $j = 1$ or $j \geq 2$,in addition to whether $v \mid p$ or $v \nmid p$.

Suppose $j \geq 2$ and $v \nmid p$.
In this case, the cohomology group $H^i(K_v, \Z_p (j))$ is finite for all $i\in\{0,1,2\}$ and (equivalently) $\Q_p \otimesL_{\Z_p} \Delta_{K_v}(j)$ is acyclic.
We then define $\vartheta_{K_v}^{j}$ as the canonical isomorphism as in Proposition \ref{prop:det_ses3}.

Suppose $j \geq 2$ and $v \mid p$. 
Then $H^i (K_v, \Z_p (j))$ is finite except for $i = 1$, and we have a canonical isomorphism
\[
\Q_p \otimes_{\Z_p} \Det^{-1}_{\Z_p[G]} (\Delta_{K_v}(j)) \simeq \Det_{\Q_p[G]}(H^1(K_v, \Q_p(j)))
\]
by Proposition \ref{prop:det_ses4}. 
Since we have $H^1_f (K_v, \Q_p(j)) = H^1 (K_v, \Q_p(j))$ by $j \geq 2$ for the finite part, we define the isomorphism $\phi_{K_v}^j$ by composing the above isomorphism and the Bloch--Kato logarithm map 
\[
\log_{\Q_p (j)} : H^1_f(K_v, \Q_p(j))
 \xrightarrow{\sim} K_v.
\]

Suppose $j = 1$.
The cohomology groups of $\Delta_{K_v}(1)$ are described as
\[
H^i (K_v, \Z_p (1)) \simeq 
\begin{cases}
\bigoplus_{w|v}\widehat{K_w^{\times}} & \text{if } i=1, \\
\bigoplus_{w|v}\Z_p  & \text{if } i=2, \\
0 & \text{otherwise.} 
\end{cases}
\]
Here, $\widehat{K_w^{\times}}$ denotes the $p$-adic completion of $K_w^{\times}$.
The isomorphisms for $i = 1, 2$ are given by the Kummer map and the invariant map ${\rm inv}_{K_v} = \bigoplus_{w \mid v} {\rm inv}_{K_w}$ in the local class field theory, respectively. In addition, we note that we have the normalized valuation map
\[
\ord_{K_v}: \bigoplus_{w|v} \widehat{K_w^{\times}} \twoheadrightarrow \bigoplus_{w|v} \Z_p.
\]

Now suppose $j = 1$ and $v \nmid p$.
Then the kernel of $\ord_{K_v}$ is finite, so we can construct an isomorphism
\begin{equation}\label{eq:ord_inv_l}
\Q_p \otimes_{\Z_p} H^1 (K_v, \Z_p (1)) 
\overset{{\rm inv}_{K_v}^{-1} \circ \ord_{K_v}}{\xrightarrow{\sim}} \Q_p \otimes_{\Z_p} H^2 (K_v, \Z_p (1)). 
\end{equation}
Using this, we define the isomorphism $\vartheta_{K_v}^{1}$ as 
\begin{align*}
\vartheta_{K_v}^{1} 
& : \Q_p \otimes_{\Z_p} \Det_{\Z_p[G]}^{-1} (\Delta_{K_v}(1))
\simeq  \Det_{\Q_p[G]}^{-1} (\RG (K_v, \Q_p (1)))\\
&\simeq  \Det_{\Q_p[G]}(H^1(K_v, \Q_p(1)))\otimes_{\Q_p[G]} \Det_{\Q_p[G]}^{-1}(H^2(K_v, \Q_p(1)))\\
&\simeq \Det_{\Q_p[G]}(H^2(K_v, \Q_p(1)))\otimes_{\Q_p[G]}\Det_{\Q_p[G]}^{-1}(H^2(K_v, \Q_p(1))) 
\overset{\ev}{\simeq} \Q_p[G]
\end{align*}
by using Proposition \ref{prop:det_ses4} and the evaluation map in \S \ref{App:det_defn}.

Finally, we suppose $j = 1$ and $v \mid p$. 
Then we have an exact sequence
\begin{equation}\label{eq:H1_Kum}
0 \to \bigoplus_{w|v} U^1_{K_w} \to \bigoplus_{w|v} \widehat{K_w^{\times}} \overset{\ord_{K_v}}{\to} 
\bigoplus_{w|v} \Z_p \to 0,
\end{equation}
where $U^1_{K_v} = \bigoplus_{w \mid v} U^1_{K_w}$ is the direct sum of the principal unit groups.
We also have
\[
H^1_f (K_v, \Q_p(1)) = \Q_p \otimes_{\Z_p} U^1_{K_v}
\]
via the Kummer map. Then the above exact sequence yields an exact sequence
\begin{equation}\label{Kummer seq}
0 \to H^1_f (K_v, \Q_p(1)) 
\to \Q_p \otimes_{\Z_p} H^1 (K_v, \Z_p (1)) 
\overset{{\rm inv}_{K_v}^{-1} \circ \ord_{K_v}}{\to} \Q_p \otimes_{\Z_p} H^2 (K_v, \Z_p (1)) \to 0.
\end{equation}
Using Proposition \ref{prop:det_ses2} applied to this sequence, 
we define $\phi_{K_v}^1$ as
\begin{align*}
\phi_{K_v}^1 
&: \Q_p \otimes_{\Z_p} \Det^{-1}_{\Z_p[G]} (\Delta_{K_v}(1))
\simeq \Det_{\Q_p[G]}^{-1} (\RG (K_v, \Q_p (1)))\\
&\simeq \Det_{\Q_p[G]}(H^1(K_v, \Q_p(1)))\otimes_{\Q_p[G]} \Det_{\Q_p[G]}^{-1}(H^2(K_v, \Q_p(1)))\\
&\simeq \Det_{\Q_p[G]}(H^1_f(K_v, \Q_p(1)))\otimes_{\Q_p[G]} \Det_{\Q_p[G]}(H^2(K_v, \Q_p(1)))\otimes_{\Q_p[G]} \Det_{\Q_p[G]}^{-1}(H^2(K_v, \Q_p(1)))\\
 & \overset{\ev}{\simeq} \Det_{\Q_p[G]}(H^1_f(K_v, \Q_p(1))) \overset{\log_{\Q_p (1)}}{\simeq} \Det_{\Q_p[G]}(K_v)
\end{align*}
by Proposition \ref{prop:det_ses4}.
The final isomorphism is induced by the Bloch--Kato logarithm $\log_{\Q_p (1)} : H^1_f (K_v, \Q_p(1)) \xrightarrow{\sim} K_v$. 

\subsection{The period map}\label{period map}

We write $\Sigma_K = \{ \iota : K \hookrightarrow \C \}$ for the set of the embeddings, so $\# \Sigma_K  =[K: \Q]$. 
We put $r_k := [k : \Q]$. 

The $\Z_p$-module $\Z_p (1) = \varprojlim_n \mu_{p^n}(\C)$ has a basis
\[
e_{p^\infty} := \left(\exp \left(\frac{2 \pi  \sqrt{-1}}{p^n} \right)\right)_n.
\]
Then for each $j \in \Z$, we have a $\Z_p$-basis $e_{p^\infty}^{\otimes j}$ of $\Z_p(j) = \Z_p(1)^{\otimes j}$. 
For each $j \in \Z$, 
we put 
\[
X_K (j) = \bigoplus_{\iota \in \Sigma_K} \Z_p (j). 
\]
As in \cite[\S 2.1]{BKS20}, we use $\{ e_\iota^j \mid \iota \in \Sigma_K\}$ as a $\Z_p$-basis of $X_K (j)$, where $e_{\iota}^j$ is the element of $X_K (j)$ whose 
$\iota$-component is $e_{p^\infty}^{\otimes j}$ and the other components are zero. 

We also consider the Betti cohomology space
\[
H_K (j) := \bigoplus_{\iota \in \Sigma_K} (2\pi \sqrt{-1})^j \Q
\]
for any $j \in \Z$. 
We use $\{ b_\iota^j \mid \iota \in \Sigma_K\}$ as a $\Q$-basis of $H_K (j)$, where $b_{\iota}^j$ is the element of $H_K (j)$ whose $\iota$-component is $(2 \pi \sqrt{-1})^j$ and the other components are zero. 

The group $G$ acts on $\Sigma_K$ by $\sigma \cdot \iota := \iota \circ \sigma^{-1}$ for any $\sigma \in G$ and $\iota \in \Sigma_K$.
Then $G$ also acts on $X_K(j)$ and $H_K (j)$ by $\sigma \cdot (a_\iota)_\iota := (a_\iota)_{\iota \circ \sigma^{-1}}$, making them free modules of rank $r_k$ over $\Z_p [G]$ and $\Q[G]$, respectively.
Moreover, we obtain a $\Q_p [G]$-isomorphism
\begin{equation}\label{Y Betti}
\Q_p \otimes_{\Z_p} X_K(j)
\stackrel{\sim}{\rightarrow}  \Q_p \otimes_\Q H_K (j)
\end{equation}
by sending $1 \otimes e_{\iota}^j$ to $1 \otimes b_{\iota}^j$. 

The complex conjugation $c_{\R} \in \Gal (\C / \R)$ acts on $H_K (j)$ by 
$c_{\R} \cdot(a_\iota)_\iota := (c_{\R} ( a_\iota))_{c_{\R} \circ \iota}$. 
We write $H_K (j)^+$ for the subspace of $H_K (j)$ on which $c_{\R}$ acts trivially. 
Then we have an $\R[G]$-isomorphism 
\begin{equation}\label{embedding}
\R \otimes_\Q K \simeq
\R \otimes_\Q (H_K (j)^+ \oplus H_K (j-1)^+ )
\end{equation}
that is given by 
\[
1 \otimes x \mapsto \begin{cases}
\left(\left(\Real(\iota(x))\right)_{\iota \in \Sigma_K}, \left(\sqrt{-1} \Imag(\iota(x)) \right)_{\iota \in \Sigma_K} \right)
& \text{if } j \text{ is even,} \\
\left(\left(\sqrt{-1} \Imag(\iota(x)) \right)_{\iota \in \Sigma_K}, \left( \Real(\iota(x))\right)_{\iota \in \Sigma_K} \right)
& \text{if } j \text{ is odd} 
\end{cases}
\]
for $x \in K$.
By composing this with the isomorphism
\begin{equation}\label{Hj}
\C \otimes_\Q (H_K (j)^+ \oplus H_K (j-1)^+)
 \stackrel{\sim}{\rightarrow} \C \otimes_\Q H_K (j); \;
1 \otimes  (x,y) \mapsto 2x + \frac{1}{2} \cdot 2 \pi \sqrt{-1} y, 
\end{equation}
we obtain a $\C[G]$-isomorphism (period map)
\[
\alpha^j_K: \C \otimes_\Q K \stackrel{\sim}{\rightarrow} \C \otimes_\Q H_K (j).
\]
By a direct computation, we obtain an explicit formula
\begin{equation}\label{alpha}
\alpha_K^j (1 \otimes x) = 
\begin{cases}\displaystyle
\sum_{\iota \in \Sigma_K}
\parenth{\big{(}2 \Real (\iota (x)) - \pi \Imag (\iota (x))\big{)}\cfrac{1}{(2\pi \sqrt{-1})^j}}b_{\iota}^j 
& \text{if } j \text{ is even,} \\
\displaystyle
\sum_{\iota \in \Sigma_K}
\parenth{\big{(}2 \sqrt{-1} \Imag (\iota (x)) + \pi \sqrt{-1} \Real (\iota (x))\big{)}\cfrac{1}{(2\pi \sqrt{-1})^j}}
b_{\iota}^j &  \text{if } j \text{ is odd}
\end{cases}
\end{equation}
for $x \in K$.

\subsection{The module $\Xi$}\label{ss:Xi}

As in the introduction, let $S_p$ (resp.~$S_\infty$) be the set of all $p$-adic primes (resp.~all archimedean places) of $k$. 
Let $S$ be a finite set of places of $k$ such that 
$S$ contains $S_p \cup S_\infty \cup S_{\ram} (K/k)$ and set $S_f := S \setminus S_\infty$.

\begin{defn}\label{def:Xi}
For any $j\in \Z$, 
we define a graded invertible module $\Xi_{K / k, S}^{\loc} (j)$ as
\[
\Xi_{K / k, S}^{\loc} (j):= \parenth{\bigotimes_{v \in S_f} \Det_{\Z_p[G]}^{-1} (\Delta_{K_v}(j))}
\otimes_{\Z_p[G]}
\Det_{\Z_p[G]}^{-1} (X_K (j))
\]
by using the semi-local complexes $\Delta_{K_v}(j) = \RG (K_v, \Z_p (j))$.
\end{defn}

\begin{rem}\label{rem:grade_zero}
The grade of $\Xi_{K / k, S}^{\loc} (j)$ is zero because
\begin{itemize}
\item
the Euler characteristic of $\Delta_{K_v}(j)$ is $-[k_v: \Q_p]$ when $v \mid p$,
\item
the Euler characteristic of $\Delta_{K_v}(j)$ is zero when $v \nmid p$,
\item
the rank of $X_K (j)$ is $r_k = [k: \Q]$,
\end{itemize}
and $[k: \Q] = \sum_{v \mid p} [k_v: \Q_p]$.
The first two items follow from the local Euler--Poincar\'{e} characteristic formula.
\end{rem}

We set 
\[
K_p := \Q_p \otimes_{\Q} K 
\simeq \prod_{v \in S_p} K_v
\simeq \prod_{w \mid p} K_w,
\]
where $v$ runs over $p$-adic primes of $k$, whereas $w$ runs over $p$-adic primes of $K$.

\begin{defn}\label{vartheta_j}
For any $j \in \Z_{\geq 1}$, we define an isomorphism  
\[
\vartheta_{K/k, S}^{\loc, j} : 
\C_p \otimes_{\Z_p} \Xi_{K/k, S}^{\loc} (j) \stackrel{\sim}{\rightarrow}  \C_p [G]
\]
as follows.
Firstly, the isomorphisms $\vartheta_{K_v}^j$ and $\phi_{K_v}^j$ in \S \ref{ss:setup} induce
\begin{align}\label{use_later}
\Q_p \otimes_{\Z_p} \bigotimes_{v \in S_f} \Det_{\Z_p[G]}^{-1} (\Delta_{K_v}(j))
& \simeq \bigotimes_{v \in S_p} \Det_{\Q_p[G]} (K_v) \otimes_{\Q_p[G]} \bigotimes_{v \in S_f \setminus S_p} \Q_p[G]\\
& \simeq \Det_{\Q_p[G]}(K_p)
\simeq \Q_p \otimes_{\Q} \Det_{\Q[G]}(K).   
\end{align}
Combining these, we can define $\vartheta_{K/k, S}^{\loc, j}$ as the composition
\begin{align*}
\C_p \otimes_{\Z_p} \Xi_{K/k, S}^{\loc} (j)
& \overset{\eqref{Y Betti}\text{ and }\eqref{use_later}}{\simeq}
\left(\C_p \otimes_{\Q} \Det_{\Q[G]}(K) \right) \otimes_{\C_p[G]} 
\left(\C_p \otimes_{\Q} \Det_{\Q[G]}^{-1} (H_K(j)) \right) \\
&\overset{\wedge \alpha_K^j}{\simeq}
\left(\C_p \otimes_{\Q} \Det_{\Q[G]}(H_K (j)) \right) \otimes_{\C_p[G]} 
\left(\C_p \otimes_{\Q} \Det_{\Q[G]}^{-1} (H_K(j)) \right)\\ 
& \overset{\ev}{\simeq} \C_p[G],
\end{align*}
where the isomorphism $\alpha_K^j$ is applied by using the fixed isomorphism $\C \simeq \C_p$.
\end{defn}

\subsection{The $L$-functions}\label{ss:L-fct}

We introduce the $L$-functions. 
For a finite prime $v$ of $k$, we define $N(v)$ as the cardinality of the residue field of $k$ at $v$.
For any $\C$-valued character $\chi$ of $G$ and any finite set $S$ of primes of $k$, 
we define the $L$-function $L_S(\chi, s)$ by
\[
L_S(\chi, s) 
= \prod_{v \notin S} \parenth{1 - \chi(v)N(v)^{-s}}^{-1},
\]
where $v$ runs over the finite primes of $k$ not in $S$ and we set $\chi(v) = 0$ if $\chi$ is ramified at $v$.  
Here we note that $\chi (v)$ means the image of the {\it arithmetic} Frobenius of $v$ by the character $\chi$. 
It is well-known that this infinite product converges absolutely for 
$s \in \C$ whose real part is larger than $1$,
and has a meromorphic continuation to the whole complex plane $s \in \C$. 
We write $L(\chi, s):= L_{\emptyset}(\chi, s)$.

For any integer $j$, we write $L_S^\ast(\chi, j) \in \C^\times $ 
for the leading term of the Taylor expansion of  $L_S (\chi, s)$ at $s=j$. 
Here we note that $L_S (\chi, s)$ has a pole only if $\chi$ is a trivial character $\trivial$, in which case $L_S({\trivial}, s)$ has a simple pole at $s = 1$
 (note that $L ({\trivial}, s)$ is the Dedekind zeta function of $k$). 
Using this, we  define 
\[
\Theta_{K/k, S}^\ast (j) := \sum_{\chi} L_S^\ast(\chi, j) e_\chi 
\in \C[G]^\times, 
\]
where $\chi$ runs over the characters of $G$ and $e_\chi := \frac{1}{\# G}\sum_{\sigma \in G} \chi (\sigma) \sigma^{-1}$ is the associated idempotent. 
We write $\Theta_{K/k}^\ast (j) := \Theta_{K/k, \emptyset}^\ast (j)$. 

\subsection{Local equivariant Tamagawa number conjecture}\label{local eTNC}

For each finite prime $v$ of $k$, let $D_v$ and $I_v$ be the decomposition group and the inertia group of $v$ in $G$ respectively, and $\sigma_{K / k, v} \in G / I_v$ the arithmetic Frobenius of $v$.

For any integer $j$ and any finite prime $v$ of $k$, 
we write $\epsilon_v^j (K/k) \in \Q[G]$ for the Euler factor for $\Z_p (j)$, which is given by
\[
\epsilon_v^j (K/k) := 
1 - e_{I_v} N (v)^{-j} \sigma_{K/k, v}
\]
(recall that $e_{I_v} = N_{I_v}/\# I_v$ is the idempotent element and note that $e_{I_v} \sigma_{K / k, v}\in \Q[G]$ is well-defined). 
Also, we put
\[
\delta_v (K/k):=
\epsilon_v^0 (K/k) + e_{D_v} \cdot \cfrac{1}{\# (D_v / I_v)}. 
\]
We define $\delta_{K/k, {\trivial}} \in \Q [G]^\times$ as the element satisfying
\[
\chi (\delta_{K/k, {\trivial}}) = \begin{cases}
-1 & \text{if } \chi \text{ is trivial,}
\\
1 & \text{otherwise} .
\end{cases}
\]

Then the local equivariant Tamagawa number conjecture for $K/k$ is formulated as follows. 
Recall that $S$ is a finite set of places of $k$ with $S \supset S_p \cup S_\infty \cup S_{\ram} (K/k)$ and we put $S_f := S \setminus S_\infty$.

\begin{conj}[local equivariant Tamagawa number conjecture]\label{conj:local eTNC}
For any $j \in \Z_{\geq 1}$, 
there is a (unique) $\Z_p[G]$-basis $z_{K/k, S}^{\loc, j} \in \Xi_{K/k, S}^{\loc} (j) $ 
such that we have 
\[
\vartheta_{K/k, S}^{\loc, j} (z_{K/k, S}^{\loc, j}) 
= (-1)^{r_k (j-1)} \times
\begin{cases}\displaystyle
\cfrac{\Theta_{K/k, S}^\ast (1-j)^\#}{\Theta_{K/k, S}^\ast (j)}
& \text{if } j \geq 2,  \\
\displaystyle
\delta_{K/k, {\trivial}} \cdot 
\left(\prod_{v \in S_f} 
\delta_v(K/k)^\# \right)
\cfrac{\Theta_{K/k}^\ast (0)^\#}{\Theta_{K/k, S}^\ast (1)}
 & \text{if }  j=1.
\end{cases}
\]
\end{conj}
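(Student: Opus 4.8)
The plan is to reduce Theorem~\ref{thm:lETNC} (and hence Conjecture~\ref{conj:local eTNC} for all intermediate fields of $K_\infty/k$) to the construction of a single basis $Z_{K_\infty/k,S}^{\loc,j}$ at the infinite level, which is then specialized down. Since $\Xi_{K_\infty/k,S}^{\loc}(j)$ is a tensor product of local determinant modules over the places $v\in S_f$ together with the archimedean contribution $\Det^{-1}(X_K(j))$, I would construct the desired infinite-level basis factor by factor and then verify that the product has the correct image under $\vartheta_{K/k,S}^{\loc,j}$ after passing to each finite level. Concretely: first fix $j\ge 1$; the case $j\le 0$ is deferred to \S\ref{section:j <1} via local Tate duality, and the passage between different $j\ge 1$ is deferred to \S\ref{App:twist}.

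First I would treat the $p$-adic places. By the hypothesis that $k/\Q$ is unramified at $p$ and $K/k$ is unramified at all $p$-adic primes, the local extensions $K_{\infty,v}/k_v$ are (up to the cyclotomic $\Z_p$-tower) unramified, so the classical Coleman map of \cite{Col79} — and its Perrin-Riou generalization — applies to describe $\varprojlim_n U^1_{K_{n,v}}$, equivalently the relevant Iwasawa cohomology $\varprojlim_n \RG(K_{n,v},\Z_p(1))$, in terms of a power series ring. This yields a canonical Iwasawa-theoretic basis of $\varprojlim_n \Det^{-1}_{\Z_p[G_n]}(\Delta_{K_{n,v}}(j))$; the Bloch--Kato logarithm/exponential and the explicit reciprocity law identify its image under $\phi^j_{K_v}$ with an explicit $p$-adic $L$-value times a unit. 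This is the direct generalization of \cite[\S5,\S6]{BuFla06} and \cite{Katopre}, and I expect it to be the technical heart of the argument. For the non-$p$-adic finite places $v\in S_f\setminus S_p$, the complex $\Delta_{K_v}(j)$ becomes acyclic after $\otimes\Q_p$ (for $j\ge 2$) or has cohomology controlled by $\ord_{K_v}$ and $\inv_{K_v}$ (for $j=1$), so a canonical basis exists essentially formally; I would invoke \cite{GKK_09} to pin down the integral structure and its behavior in the tower. For the archimedean factor $\Det^{-1}(X_K(j))$, the basis $\bigwedge_\iota e_\iota^j$ is the obvious candidate, but as flagged in the introduction it must be modified — both by the period map $\alpha^j_K$ through \eqref{Y Betti} and \eqref{alpha}, producing the $(-1)^{r_k(j-1)}$ sign and the factor $2$'s, and (at the infinite level) by an auxiliary unit of $\Z_p[[\Gal(K_\infty/k)]]$ whose existence must be established separately.

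Having assembled the product $Z_{K_\infty/k,S}^{\loc,j}$, I would then compute $\vartheta_{M/k,S}^{\loc,j}$ of its image in $\Xi_{M/k,S}^{\loc}(j)$ for each intermediate $M$. The strategy here is to match the combination of local contributions against the functional equation of the $L$-functions: the Coleman-map computation at $p$ contributes (after interpolation) $\Theta^\ast_{M/k,S}(1-j)^\#/\Theta^\ast_{M/k,S}(j)$ up to Gauss sums and archimedean $\Gamma$-factors; the archimedean period $\alpha^j_K$ supplies the $\Gamma$-factors and signs; and the Gauss sums are absorbed using the generalized Davenport--Hasse relation (\S\ref{sec:DH}), which is exactly what is needed to make the Gauss sums attached to the various $K_n/k$ behave uniformly under the trace maps. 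The uniqueness of $z^{\loc,j}_{M/k,S}$ is automatic since $\vartheta^{\loc,j}_{M/k,S}$ is an isomorphism onto $\C_p[M/k]$-wait, onto $\C_p[\Gal(M/k)]$, whose units times a fixed basis give all bases.

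The main obstacle I anticipate is twofold. First, the Coleman-map/explicit-reciprocity computation at the $p$-adic primes over a general base field $k$ (rather than $\Q$) requires care: one must track how the local $\varepsilon$-factors (Gauss sums) for \emph{all} finite primes get funneled into the $p$-adic factor, and verify the interpolation property at finite level — and it is precisely here that the naive candidate fails and the auxiliary unit of $\Z_p[[\Gal(K_\infty/k)]]$ must be introduced and shown to exist. Second, bookkeeping of signs and powers of $2$ and $\pi$ through \eqref{embedding}, \eqref{Hj}, \eqref{alpha}, the evaluation maps, and the involution $\#$ is delicate; getting the exact factor $(-1)^{r_k(j-1)}$ and the $j=1$ correction term $\delta_{K/k,\trivial}\cdot\prod_{v\in S_f}\delta_v(K/k)^\#$ right will require a careful, essentially unavoidable direct computation, which I would organize place-by-place so that each local contribution can be checked against the corresponding Euler factor of the functional equation.
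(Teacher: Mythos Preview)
Your proposal is correct and follows essentially the same architecture as the paper: the paper does not prove Conjecture~\ref{conj:local eTNC} in general (it is a conjecture), but establishes it under the unramified hypotheses as Theorem~\ref{thm:lETNC}, and your outline matches that proof closely---local bases at $p$-adic primes via the Coleman/Perrin-Riou theory (\S\ref{sec:Coleman}), at non-$p$-adic primes via \cite{GKK_09} (\S\ref{sec:non p adic prime}), an archimedean basis corrected by explicit auxiliary units of $\Lambda$ (\S\ref{ss:aux}), and the final matching via the functional equation and the generalized Davenport--Hasse relation (\S\ref{sec:arch}, \S\ref{sec:pf_lETNC}). You have also correctly anticipated the two genuine difficulties the paper confronts: the construction of the auxiliary units $D_{j-1}(K_\infty/k)$ and $\ff_{j-1}(K_\infty/k)_v$, and the sign bookkeeping.
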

\begin{rem}
When $K=k$, this conjecture is formulated in Burns--Sano \cite[Conjecture 2.2]{BS20functional}.
However, there is a small difference between their formulation and ours;
when $K=k$ and $j=1$, theirs replaces $\Theta_{K/k}^\ast (0)^\#$ by $\Theta_{K/k, S}^\ast (0)^\#$.
The current authors believe that our formulation is correct.
\end{rem}

\subsection{Changes of $K$ and $S$}

For completeness, we establish functorial properties of Conjecture \ref{conj:local eTNC} when $K$ and $S$ vary. 

\begin{prop}\label{prop:localETNC_func}
Let $K'$ be an intermediate field of $K/k$ and set $G' = \Gal(K'/k)$.
If Conjecture \ref{conj:local eTNC} is valid for $K / k$, then it is also valid for $K'/k$.
More precisely, the image of $z_{K / k, S}^{\loc, j} \in \Xi_{K/k, S}^{\loc} (j)$ under the natural map
\[
\Xi_{K/k, S}^{\loc} (j) \twoheadrightarrow \Z_p [G'] \otimes_{\Z_p[G]} \Xi_{K/k, S}^{\loc} (j) 
\simeq \Xi_{K'/k, S}^{\loc} (j)
\]
satisfies the required property of $z_{K' / k, S}^{\loc, j}$.
\end{prop}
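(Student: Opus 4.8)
The plan is to unwind the functoriality of the determinant-module formalism under the base change $\Z_p[G] \to \Z_p[G']$ (i.e., under $-\otimes_{\Z_p[G]}\Z_p[G']$, equivalently taking $\Gal(K/K')$-coinvariants), and to check that every structural ingredient appearing in the definition of $\vartheta_{K/k,S}^{\loc,j}$ and in the right-hand side of Conjecture \ref{conj:local eTNC} commutes with this base change. First I would record the identification $\Z_p[G']\otimes_{\Z_p[G]}\Xi_{K/k,S}^{\loc}(j) \simeq \Xi_{K'/k,S}^{\loc}(j)$: this follows term by term, using that $\Z_p[G']\otimes^{\mathbb L}_{\Z_p[G]}\RG(K_v,\Z_p(j)) \simeq \RG(K'_v,\Z_p(j))$ (Galois descent / the obvious map on cochain complexes induces this, since $\RG(K_w,\Z_p(j))$ for $w\mid v$ in $K$ assembles, after coinvariants under the relevant decomposition subgroup, to $\RG(K'_{w'},\Z_p(j))$ for $w'\mid v$ in $K'$), together with the compatibility $\Det_{\Z_p[G]}^{-1}(C)\otimes_{\Z_p[G]}\Z_p[G'] \simeq \Det_{\Z_p[G']}^{-1}(\Z_p[G']\otimes^{\mathbb L}_{\Z_p[G]}C)$ from \S\ref{App:det}, and the evident $\Z_p[G']\otimes_{\Z_p[G]}X_K(j) \simeq X_{K'}(j)$ (restrict the sum over $\Sigma_K$ to $\Sigma_{K'}$ via the surjection $\Sigma_K\twoheadrightarrow\Sigma_{K'}$).

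Next I would verify that $\vartheta_{K/k,S}^{\loc,j}$ base-changes to $\vartheta_{K'/k,S}^{\loc,j}$ after applying $\C_p[G']\otimes_{\C_p[G]}-$. This amounts to checking that each building block is functorial: the comparison $\Q_p\otimes^{\mathbb L}\Delta_{K_v}(j)\simeq\RG(K_v,\Q_p(j))$ and the identifications of its cohomology (Kummer map, invariant map, $\ord_{K_v}$, the exact sequences \eqref{eq:H1_Kum}, \eqref{Kummer seq}) are all compatible with corestriction-type maps and hence with coinvariants; the Bloch--Kato logarithm $\log_{\Q_p(j)}$ and the period map $\alpha_K^j$ are compatible with the inclusions $K'_v\hookrightarrow K_v$ and $H_{K'}(j)\hookrightarrow H_K(j)$, so the normalizing isomorphisms \eqref{Y Betti}, \eqref{embedding}, \eqref{Hj} and the explicit formula \eqref{alpha} restrict correctly; and the evaluation maps used in \S\ref{App:det_defn} are functorial for ring homomorphisms by construction. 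Combining, one gets a commutative square relating $\vartheta_{K/k,S}^{\loc,j}$ and $\vartheta_{K'/k,S}^{\loc,j}$ along the base-change maps, so the image of $z_{K/k,S}^{\loc,j}$ maps under $\vartheta_{K'/k,S}^{\loc,j}$ to the image of $\vartheta_{K/k,S}^{\loc,j}(z_{K/k,S}^{\loc,j})$ under $\C_p[G]\to\C_p[G']$.

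It then remains to check that the right-hand side of the conjectural formula is itself compatible with $\C[G]\to\C[G']$ (equivalently $\C_p[G]\to\C_p[G']$ under the fixed identification). Here I would use that the $L$-functions satisfy $L_S(\chi',s)=L_S(\chi,s)$ when $\chi$ is the inflation of a character $\chi'$ of $G'$, so that the augmented theta-elements satisfy $\Theta_{K/k,S}^\ast(j)\mapsto\Theta_{K'/k,S}^\ast(j)$ under the quotient map (the idempotents $e_\chi$ for $\chi$ inflated from $G'$ map to $e_{\chi'}$, and those for $\chi$ not so inflated die); the involution $\#$ commutes with $\C[G]\to\C[G']$; the Euler factors $\epsilon_v^j(K/k)$, the correction terms $\delta_v(K/k)$, and $\delta_{K/k,\trivial}$ all map to their $K'$-counterparts because inertia and decomposition subgroups and Frobenius classes map compatibly under $G\to G'$ (e.g. $I_v$ maps onto the inertia subgroup of $v$ in $G'$, so $e_{I_v}\mapsto e_{I'_v}$, and likewise $e_{D_v}$, $\sigma_{K/k,v}\mapsto\sigma_{K'/k,v}$); finally the sign $(-1)^{r_k(j-1)}$ is independent of $K$. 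Putting these three verifications together yields the claim, and uniqueness of $z_{K'/k,S}^{\loc,j}$ follows from the fact that $\vartheta_{K'/k,S}^{\loc,j}$ is an isomorphism (a basis of a graded invertible module is determined by its image). The only mildly delicate point — and the place I would be most careful — is the $j=1$ case, where the right-hand side involves the unramified-place correction factors $\delta_v(K/k)$ and the element $\delta_{K/k,\trivial}$: one must make sure the $e_{D_v}/\#(D_v/I_v)$ term and the normalization by $\Theta_{K/k}^\ast(0)^\#$ (rather than $\Theta_{K/k,S}^\ast(0)^\#$) really do transform correctly, which reduces to the compatibility of Artin $L$-functions and their leading terms with inflation — standard, but worth spelling out.
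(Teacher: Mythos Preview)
Your overall strategy is right, and for $j\geq 2$ it goes through essentially as you describe. But for $j=1$ there is a genuine gap: the two compatibilities you assert are each individually false, and the real content of the proof is that their failures cancel.

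Concretely, the square relating $\vartheta_{K/k,S}^{\loc,1}$ and $\vartheta_{K'/k,S}^{\loc,1}$ does \emph{not} commute on the nose. The map $\vartheta_{K_v}^1$ (and $\phi_{K_v}^1$) is built from the isomorphism $\inv_{K_v}^{-1}\circ\ord_{K_v}$ of \eqref{eq:ord_inv_l}, and under corestriction one has
\[
(\inv_{K'_v}^{-1}\circ\ord_{K'_v})\circ\cores_{K/K'} \;=\; f_{K/K',v}\cdot \cores_{K/K'}\circ(\inv_{K_v}^{-1}\circ\ord_{K_v}),
\]
since $\ord_{K'_{w'}}\circ N_{K_w/K'_{w'}}=f_{K_w/K'_{w'}}\cdot\ord_{K_w}$ while $\inv_{K'_{w'}}\circ\cores=\inv_{K_w}$. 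Packaged equivariantly, this produces a correction factor $d_v^1(K/K')=(1-e_{D'_v})+f_{K/K',v}\,e_{D'_v}\in\Q[G']^\times$ in the compatibility of $\vartheta_{K_v}^1$ (resp.\ $\phi_{K_v}^1$) with base change; this is the content of Lemma~\ref{lem:compatibility1}. Correspondingly, your claim that $\delta_v(K/k)$ maps to $\delta_v(K'/k)$ under $\Q[G]\to\Q[G']$ is false: the scalar $\#(D_v/I_v)$ does not change under restriction, whereas $\#(D'_v/I'_v)=\#(D_v/I_v)/f_{K/K',v}$, so $\res(\delta_v(K/k))\neq\delta_v(K'/k)$ in general. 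What one must actually check is the identity
\[
\delta_v(K'/k)^\# \;=\; d_v^1(K/K')\cdot \res_{K/K'}\big(\delta_v(K/k)^\#\big),
\]
which is the computation the paper carries out. Once you insert $d_v^1(K/K')$ into both the $\vartheta$-diagram and the $\delta_v$-comparison, the rest of your argument is fine.
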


To show this proposition, we need the following local observation.

\begin{lem}\label{lem:compatibility1}
Let $K'$ be an intermediate field of $K/k$ and set $G' = \Gal(K'/k)$.
Let $v$ be a finite prime of $k$ and $j \in \Z_{\geq 1}$.
We define $d_v^j (K / K') \in \Q [G']^\times$ by  
\[
d_v^j (K / K') := \begin{cases}
(1- e_{D_v'}) +  f_{K / K', v} \cdot e_{D_v'} & \text{if } j =1, \\
1  & \text{if } j \geq 2, 
\end{cases}
\]
where $f_{K / K', v} \in \Z_{\geq 1}$ is the residue degree of $K/K'$ at $v$ and 
$D_v'$ is the decomposition group of $v$ in $G'$. 
Then we have a commutative diagram
\[\xymatrix{
\Q_p \otimes_{\Z_p} \Det_{\Z_p[G]}^{-1} (\Delta_{K_v}(j)) \ar[r]^-{\vartheta_{K_v}^j}_-{\sim} 
\ar@{->>}[d]
& \Q_p [G] \ar@{->>}[d]^-{d_v^j (K / K') \cdot \res_{K/ K'}} 
\\
\Q_p \otimes_{\Z_p} \Det_{\Z_p[G']}^{-1} (\Delta_{K'_v}(j)) \ar[r]^-{\vartheta_{K_v'}^j}_-{\sim} 
& \Q_p [G']
}
\]
when $v \nmid p$, and
\[
\xymatrix{
\Q_p \otimes_{\Z_p} \Det_{\Z_p[G]}^{-1} (\Delta_{K_v}(j)) \ar[r]^-{\phi_{K_v}^j}_-{\sim} 
\ar@{->>}[d]
& \Det_{\Q_p[G]} (K_v) \ar@{->>}[d]^-{d_v^j (K / K') \cdot \wedge \Tr_{K / K'}} 
\\
\Q_p \otimes_{\Z_p} \Det_{\Z_p[G']}^{-1} (\Delta_{K'_v}(j)) \ar[r]^-{\phi_{K_v'}^j}_-{\sim} 
& 
\Det_{\Q_p[G']} (K_v')
}
\]
when $v \mid p$,
where the left vertical arrows are induced by the corestriction maps.
\end{lem}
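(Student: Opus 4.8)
The plan is to verify the commutativity of each square by unwinding the definitions of $\vartheta_{K_v}^j$ (resp.\ $\phi_{K_v}^j$) given in \S\ref{ss:setup}, tracking how the constituent cohomological isomorphisms behave under corestriction. The key structural point is that for a finite extension $K/K'$ of local fields with $v \nmid p$ (or $v \mid p$), corestriction on $\RG(K_v,\Z_p(j))$ corresponds, on each cohomology group, to the trace/norm map, and the relevant comparison isomorphisms (Kummer map, invariant map, Bloch--Kato logarithm) are all functorial for these transition maps. So I would first reduce, by the general determinant-functor formalism (Propositions~\ref{prop:det_ses2}, \ref{prop:det_ses3}, \ref{prop:det_ses4} and the behaviour of $\Det$ under base change $\Z_p[G] \to \Z_p[G']$), to checking the claim on the level of cohomology groups and the explicit auxiliary maps. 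The factor $d_v^j(K/K')$ is then pinned down by isolating exactly those places in the construction where corestriction fails to be the ``obvious'' map.

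The core computation splits according to the four cases in \S\ref{ss:setup}. For $j \geq 2$ and $v \nmid p$ all cohomology is torsion, $\vartheta_{K_v}^j$ is just the canonical acyclicity trivialization, and corestriction induces the canonical map on $\Det^{-1}$, giving $d_v^j = 1$; similarly for $j\ge 2$, $v\mid p$, one uses that $\log_{\Q_p(j)}$ intertwines corestriction on $H^1_f$ with $\Tr_{K/K'}$ on $K_v$, again yielding $d_v^j=1$. The interesting case is $j=1$. Here one must analyze the exact sequences \eqref{eq:ord_inv_l} and \eqref{Kummer seq}: on $H^1(K_v,\Z_p(1)) = \bigoplus_w \widehat{K_w^\times}$ corestriction is the norm map, which is compatible with the Kummer identification and with $\log_{\Q_p(1)}$; on $H^2(K_v,\Z_p(1)) \simeq \bigoplus_w \Z_p$ corestriction corresponds under $\inv$ to the \emph{identity} on $\Z_p$ (invariant maps are compatible with corestriction) — but the valuation map $\ord_{K_v}$ does \emph{not} commute with norm: for a prime $w' \mid w$ of $K/K'$ one has $\ord_{K'_{w'}} \circ N_{K_w/K'_{w'}} = f_{K/K',v}\cdot\ord_{K_w}$ on the unramified part, while $\ord$ restricted to units is unaffected. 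Thus the discrepancy between ``$\inv^{-1}\circ\ord$ then corestrict'' and ``corestrict then $\inv^{-1}\circ\ord$'' is exactly multiplication by $f_{K/K',v}$ on the $e_{D_v'}$-part (the part where $H^2 \neq 0$, i.e.\ where $v$ does not split completely) and by $1$ elsewhere, which is precisely $d_v^1(K/K') = (1-e_{D_v'}) + f_{K/K',v}\cdot e_{D_v'}$. Feeding this through the $\ev$ map in the definitions of $\vartheta_{K_v}^1$ and $\phi_{K_v}^1$ produces the stated scalar in the right vertical arrow.

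Concretely I would organize the $j=1$ argument as a diagram chase: write down the two exact sequences (for $K$ and for $K'$) and the vertical corestriction maps between them, observe that the squares involving the Kummer map, $U^1$, and $\inv$ commute on the nose, and that the only failure of commutativity is in the square involving $\ord_{K_v}$, controlled by the residue-degree factor. Passing to determinants via the compatibility of $\Det$ with short exact sequences (the relevant Propositions in \S\ref{App:det}) and with ring base change then turns this elementary factor into the claimed $d_v^1(K/K')$ sitting in $\Q_p[G']^\times$; the $e_{D_v'}$ appears because $H^2(K'_v,\Q_p(1))$ is supported exactly on that idempotent. For $v\mid p$ one must additionally note that the finite-part sequence \eqref{eq:H1_Kum} and the logarithm are unaffected by this subtlety, so the factor is identical to the $v\nmid p$ case.

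The main obstacle I anticipate is bookkeeping rather than conceptual: carefully matching the semi-local decompositions $K_v = \prod_{w\mid v}K_w$ and $K'_v = \prod_{w'\mid v}K'_{w'}$ through corestriction (which sums over the $w\mid w'$), and verifying that the norm/trace maps assemble correctly so that the scalar $d_v^j(K/K')$ is genuinely an element of $\Q_p[G']$ acting diagonally across the $e_{D'_v}$-isotypic decomposition — in particular checking that $f_{K/K',v}$ is the \emph{same} for all $w\mid v$, which holds because $K/K'$ is abelian, so the notation $f_{K/K',v}$ is unambiguous. Once that compatibility is in place, the determinant formalism does the rest, and each of the four cases reduces to a short, explicit verification.
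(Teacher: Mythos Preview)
Your proposal is correct and follows essentially the same approach as the paper's proof: reduce to the cohomology level, use that $\log_{\Q_p(j)}$ intertwines corestriction with $\Tr_{K/K'}$ for the $v\mid p$ part, and for $j=1$ identify the factor $f_{K/K',v}$ as the discrepancy in the square involving $\inv^{-1}\circ\ord$ (the paper records this as the commutative diagram $\cores\circ(\inv_{K_v}^{-1}\circ\ord_{K_v}) = f_{K/K',v}\cdot(\inv_{K'_v}^{-1}\circ\ord_{K'_v})\circ\cores$). Your explanation of why the factor lives on the $e_{D'_v}$-component is more explicit than the paper's, but the mathematical content is identical.
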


\begin{proof}
When $j \geq 2$, the claim for $v \nmid p$ is clear, while the claim for $v \mid p$ also follows from the commutative diagram
\[
\xymatrix{
H^1_f (K_v , \Q_p (j)) 
\ar[r]^-{\log_{\Q_p (j)}} \ar[d]_-{\cores_{K/K'}}
&K_v \ar[d]^-{\Tr_{K / K'}} \\
H^1_f (K_v' , \Q_p (j)) 
\ar[r]^-{\log_{\Q_p (j)}}
&K_v'.
}
\]
This diagram is also valid for $j = 1$.
When $j = 1$, both the claims for $v \nmid p$ and $v \mid p$ follow by using the commutative diagram
\[
\xymatrix@C=40pt@M=8pt{
\Q_p \otimes_{\Z_p} H^1 (K_v, \Z_p (1)) 
\ar[r]^-{{\rm inv}_{K_v}^{-1} \circ \ord_{K_v}} \ar[d]^-{ \cores_{K / K'}}
&\Q_p \otimes_{\Z_p} H^2 (K_v, \Z_p (1)) \ar[d]^-{f_{K/K', v}  \cdot \cores_{K/ K'}} \\
\Q_p \otimes_{\Z_p} H^1 (K'_v, \Z_p (1)) 
\ar[r]^-{{\rm inv}_{K_v'}^{-1} \circ \ord_{K_v'}}
&\Q_p \otimes_{\Z_p} H^2 (K'_v, \Z_p (1)),   
}
\]
which is a well-known property of the invariant maps in local class field theory. 
\end{proof}

\begin{proof}[Proof of Proposition \ref{prop:localETNC_func}]
As for the archimedean places, we have a commutative diagram 
\[
\xymatrix{
\C \otimes_\Q K  \ar[r]^-{\alpha_{K}^j}_-{\sim} \ar[d]_-{\Tr_{K / K'} }
&
\C \otimes_\Q H_{K} (j) \ar[d] 
\\
\C \otimes_\Q K'  \ar[r]^-{\alpha_{K'}^j}_-{\sim} 
&
\C \otimes_\Q H_{K'} (j), 
}\]
where the right vertical arrow sends $ (a_{\iota} )_{\iota : K \hookrightarrow \C} $ to 
$  \parenth{ \sum_{\iota|_{K'} = \iota'} a_{\iota}}_{\iota' : K' \hookrightarrow \C}$. 
By combining with Lemma \ref{lem:compatibility1}, we obtain a commutative diagram
\[
\xymatrix{
\C_p \otimes_{\Z_p}\Xi_{K/k, S}^{\loc} (j) \ar[r]^-{\vartheta_{K/k, S}^{\loc, j}}_-{\sim} \ar@{->>}[d]
& \C_p [G] \ar@{->>}[d]^-{\prod_{v \in S_f } d_v^j (K / K') \cdot \res_{K/K'}} 
\\
\C_p \otimes_{\Z_p} \Xi_{K'/k, S}^{\loc} (j) \ar[r]^-{\vartheta_{K'/k, S}^{\loc, j}}_-{\sim}
& \C_p [G']. 
}
\]
Now what we have to show is, for $j = 1$
\[
\delta_v (K' / k)^\# = d_v^1 (K / K') \cdot \res_{K/K'} (\delta_v (K/k)^\#) 
\] 
holds for any finite prime $v$ of $k$.
This equality can be checked by a direct computation.
\end{proof}

\begin{prop}\label{prop:S}
The validity of Conjecture \ref{conj:local eTNC} does not depend on the choice of $S$. 
\end{prop}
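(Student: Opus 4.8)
The plan is to reduce to adding a single prime to $S$ and then to compare the two sides of Conjecture~\ref{conj:local eTNC} modulo $\Z_p[G]^\times$ --- which is all that is relevant, since the conjecture only asserts that a certain ratio lies in $\Z_p[G]^\times$. So let $S'=S\cup\{v\}$ with $v\notin S$; since $v\notin S\supseteq S_p\cup S_\infty\cup S_{\ram}(K/k)$, the prime $v$ is a finite prime with $v\nmid p$ that is unramified in $K/k$, so $I_v=1$ and $\Z_p(j)$ is unramified as a $G_{k_v}$-module. The general statement then follows by iteration and by comparing any two admissible choices of $S$ with their union. For the bookkeeping: by Remark~\ref{rem:grade_zero} the module $\Det_{\Z_p[G]}^{-1}(\Delta_{K_v}(j))$ is graded invertible of grade $0$, hence free of rank $1$ over the semi-local ring $\Z_p[G]$; by Definition~\ref{def:Xi} we have $\Xi_{K/k,S'}^{\loc}(j)=\Det_{\Z_p[G]}^{-1}(\Delta_{K_v}(j))\otimes_{\Z_p[G]}\Xi_{K/k,S}^{\loc}(j)$; and inspecting Definition~\ref{vartheta_j} (a prime $v\nmid p$ enters the construction only through the factor $\Q_p[G]$ produced by $\vartheta_{K_v}^{j}$ in \eqref{use_later}), the isomorphism $\vartheta_{K/k,S'}^{\loc,j}$ is the tensor product of $\vartheta_{K_v}^{j}$ with $\vartheta_{K/k,S}^{\loc,j}$. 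Hence, for $\Z_p[G]$-bases $z$ of $\Xi_{K/k,S}^{\loc}(j)$ and $\mathfrak{c}$ of $\Det_{\Z_p[G]}^{-1}(\Delta_{K_v}(j))$, the element $\mathfrak{c}\otimes z$ is a $\Z_p[G]$-basis of $\Xi_{K/k,S'}^{\loc}(j)$ and $\vartheta_{K/k,S'}^{\loc,j}(\mathfrak{c}\otimes z)=\vartheta_{K_v}^{j}(\mathfrak{c})\cdot\vartheta_{K/k,S}^{\loc,j}(z)$.

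On the analytic side, $L_{S'}(\chi,s)=L_S(\chi,s)\bigl(1-\chi(v)N(v)^{-s}\bigr)$, and for $m\in\{j,1-j,1\}$ with $j\geq1$ the factor $1-\chi(v)N(v)^{-m}$ is a nonzero constant (note $m\neq0$ in the cases we use), so $L_{S'}^\ast(\chi,m)=L_S^\ast(\chi,m)\bigl(1-\chi(v)N(v)^{-m}\bigr)$; using $I_v=1$ and $\sum_\chi\chi(\sigma)e_\chi=\sigma$ this gives $\Theta_{K/k,S'}^\ast(m)=\Theta_{K/k,S}^\ast(m)\cdot\epsilon_v^{m}(K/k)$. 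Since $\Theta_{K/k}^\ast(0)$ and the sign $(-1)^{r_k(j-1)}$ do not depend on $S$ and $S'_f=S_f\sqcup\{v\}$, the ratio of the right-hand sides of Conjecture~\ref{conj:local eTNC} for $S'$ and $S$ equals $\epsilon_v^{1-j}(K/k)^\#\,\epsilon_v^{j}(K/k)^{-1}$ if $j\geq2$, and $\delta_v(K/k)^\#\,\epsilon_v^{1}(K/k)^{-1}$ if $j=1$. Combining with the first paragraph, it therefore suffices to prove the local statement: $\Det_{\Z_p[G]}^{-1}(\Delta_{K_v}(j))$ admits a $\Z_p[G]$-basis $\mathfrak{c}_v^{j}$ with $\vartheta_{K_v}^{j}(\mathfrak{c}_v^{j})$ congruent, modulo $\Z_p[G]^\times$, to $\epsilon_v^{1-j}(K/k)^\#\,\epsilon_v^{j}(K/k)^{-1}$ if $j\geq2$ and to $\delta_v(K/k)^\#\,\epsilon_v^{1}(K/k)^{-1}$ if $j=1$; taking $\mathfrak{c}=\mathfrak{c}_v^{j}$ above then shows that Conjecture~\ref{conj:local eTNC} for $S$ and for $S'$ are equivalent.

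To prove the local statement I would use that, $\Z_p(j)$ being unramified at $v$, the complex $\Delta_{K_v}(j)=\RG(K_v,\Z_p(j))$ is essentially computed by the Euler factor at $v$. For $j\geq2$ its cohomology is finite, $\Q_p\otimesL_{\Z_p}\Delta_{K_v}(j)$ is acyclic, and one computes $H^1(K_v,\Z_p(j))\cong\Z_p[G]/(\sigma_{K/k,v}-N(v)^j)$ and $H^2(K_v,\Z_p(j))\cong\Z_p[G]/(\sigma_{K/k,v}-N(v)^{j-1})$ as $\Z_p[G]$-modules; since $\sigma_{K/k,v}-N(v)^i$ is a non-zero-divisor these are perfect and $\Det_{\Z_p[G]}^{-1}(\Delta_{K_v}(j))\cong\Det_{\Z_p[G]}(H^1(K_v,\Z_p(j)))\otimes\Det_{\Z_p[G]}^{-1}(H^2(K_v,\Z_p(j)))$, so $\vartheta_{K_v}^{j}$ sends a $\Z_p[G]$-basis to $(\sigma_{K/k,v}-N(v)^j)^{-1}(\sigma_{K/k,v}-N(v)^{j-1})$ modulo $\Z_p[G]^\times$; the identities $\sigma_{K/k,v}-N(v)^j=(-N(v)^j)\epsilon_v^{j}(K/k)$ and $\sigma_{K/k,v}-N(v)^{j-1}=\sigma_{K/k,v}\,\epsilon_v^{1-j}(K/k)^\#$ (in which $-N(v)^j$ and $\sigma_{K/k,v}$ are units) give the claim. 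For $j=1$, $H^1(K_v,\Z_p(1))=\bigoplus_{w\mid v}\widehat{K_w^\times}$ and $H^2(K_v,\Z_p(1))=\bigoplus_{w\mid v}\Z_p$; writing $T=\bigoplus_{w\mid v}\widehat{\OO_{K_w}^\times}$ for the torsion subgroup of $H^1$, one checks that $\Delta_{K_v}(1)$ is quasi-isomorphic to $T[-1]\oplus\Delta'$ (the connecting map of the natural triangle vanishes for degree reasons), where $T\cong\Z_p[G]/(\sigma_{K/k,v}-N(v))$ and $\Delta'$ is a perfect complex represented by $[\Z_p[G]\xrightarrow{\sigma_{K/k,v}-1}\Z_p[G]]$ in degrees $1,2$, with $H^1(\Delta')\cong H^1/T$ and $H^2(\Delta')\cong H^2$ both isomorphic to $\Z_p[G/D_v]$. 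The torsion part contributes $(\sigma_{K/k,v}-N(v))^{-1}\equiv\epsilon_v^{1}(K/k)^{-1}$ modulo $\Z_p[G]^\times$; and on $\Delta'$, where $\vartheta_{K_v}^{1}$ is computed via $\ord_{K_v}$ and ${\rm inv}_{K_v}$, the $(1-e_{D_v})$-component contributes $\sigma_{K/k,v}-1$ ($\equiv\epsilon_v^{0}(K/k)^\#$ up to units) while the $e_{D_v}$-component contributes $1/\#(D_v/I_v)$ --- because the norm element $N_{D_v}$, which arises from the kernel of $\sigma_{K/k,v}-1$, satisfies $N_{D_v}^2=\#D_v\cdot N_{D_v}$, whereas $\ord_{K_v}$ and ${\rm inv}_{K_v}$ are normalized so that uniformizers correspond to fundamental classes. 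Assembling, $\vartheta_{K_v}^{1}(\mathfrak{c}_v^{1})\equiv\delta_v(K/k)^\#\,\epsilon_v^{1}(K/k)^{-1}$ modulo $\Z_p[G]^\times$ (the involution on $\delta_v$ is essential: the discrepancy with $\delta_v(K/k)$ itself is not an integral unit).

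The main obstacle is the local statement for $j=1$: one must verify the decomposition $\Delta_{K_v}(1)\simeq T[-1]\oplus\Delta'$ together with the module structures of $T$, $H^1/T$, $H^2$ and the perfect representative of $\Delta'$, and then --- the genuinely delicate point --- keep track of how the arithmetically normalized maps $\ord_{K_v}$ and ${\rm inv}_{K_v}$ compare with the identifications coming from that representative; this comparison, through $N_{D_v}^2=\#D_v\cdot N_{D_v}$, is exactly what produces the term $e_{D_v}/\#(D_v/I_v)$ in $\delta_v(K/k)$. The case $j\geq2$ and the bookkeeping are routine.
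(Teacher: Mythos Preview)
Your reduction to the local statement --- for each $v\notin S_p\cup S_{\ram}(K/k)$, exhibit a $\Z_p[G]$-basis of $\Det_{\Z_p[G]}^{-1}(\Delta_{K_v}(j))$ whose image under $\vartheta_{K_v}^j$ equals $\epsilon_v^{1-j}(K/k)^\#/\epsilon_v^j(K/k)$ (resp.\ $\delta_v(K/k)^\#/\epsilon_v^1(K/k)$ for $j=1$) up to $\Z_p[G]^\times$ --- is exactly what the paper does. The paper, however, does not prove this locally and directly: it forward-references Corollary~\ref{cor:unr_basis}, which is obtained from the Iwasawa-theoretic basis of Proposition~\ref{prop:h} (itself quoting \cite{GKK_09}) together with the descent computations of \S\ref{ss:interp_l}. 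So your approach is genuinely different: a finite-level computation in place of the Iwasawa-algebra trivialization plus descent.

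For $j\geq 2$ your direct argument is sound: the cohomology identifications $H^1(K_v,\Z_p(j))\cong\Z_p[G]/(\sigma_v-N(v)^j)$ and $H^2(K_v,\Z_p(j))\cong\Z_p[G]/(\sigma_v-N(v)^{j-1})$ follow from the standard representative of $\RG(k_v,\Z_p[G](j))$ via Hochschild--Serre for $I_v\lhd G_{k_v}$, and the determinant calculus goes through since both divisors are non-zero-divisors. For $j=1$ your decomposition $\Delta_{K_v}(1)\simeq T[-1]\oplus\Delta'$ is in fact correct, but not ``for degree reasons'': one sees it rather from the explicit double complex for $\RG(\hat\Z,\RG(I_v,\Z_p[G](1)))$, in which the vertical differential vanishes and the total complex visibly splits as a direct sum. (Your degree argument would need $\Delta'\in D^{[1,2]}(\Z_p[G])$, which is not immediate from $H^{\le 0}(\Delta')=0$ alone.) The genuinely incomplete step is the $e_{D_v}$-computation: asserting that $N_{D_v}^2=\#D_v\cdot N_{D_v}$ gives the factor $1/\#(D_v/I_v)$ is the right heuristic, but turning it into a proof means tracking the integral basis of $[\Z_p[G]\xrightarrow{\sigma_v-1}\Z_p[G]]$ through Proposition~\ref{prop:det_ses4} and then through the \emph{specific} identification $H^1\otimes\Q_p\xrightarrow{\inv^{-1}\circ\ord}H^2\otimes\Q_p$ of~\eqref{eq:ord_inv_l}. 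This is precisely the point the paper handles by a Bockstein-type descent from $\Lambda$ (\S\ref{ss:interp_l}, case $j=1$, using Proposition~\ref{prop:DetA} and the explicit identification of the boundary map with $\inv^{-1}\circ\ord$ from \cite{Fl04}).

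In short: your route is a legitimate alternative and buys you independence from the later Iwasawa-theoretic machinery, at the cost of a careful finite-level bookkeeping for $j=1$ that you have correctly flagged but not carried out; the paper's route postpones everything to Corollary~\ref{cor:unr_basis}, where the same delicate factor emerges from descent rather than from a direct comparison of lattices.
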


\begin{proof}
It is enough to show that, for each prime $v \not \in S_p \cup S_{\ram}(K/k)$, there is a basis of 
$\Det_{\Z_p[G]}^{-1} (\RG(K_v, \Z_p (j)))$ that is sent by $\vartheta_{K_v}^j$ to 
\[
\begin{cases}
	\cfrac{\epsilon_v^{1-j} (K / k)^\#}{\epsilon_v^{j} (K / k)} 
		& \text{if } j \geq 2, \\
	\cfrac{\delta_v (K/k)^\#}{\epsilon_v^{1} (K / k)} 
		& \text{if } j =1.
\end{cases}
\]
This follows from Corollary \ref{cor:unr_basis} below and we omit the details for now.
\end{proof}

\subsection{Iwasawa theoretic objects}\label{ss:Iw_limit}

For later use, we introduce Iwasawa-theoretic versions of the complex $\Delta_{K_v}(j)$ and the module $X_K(j)$.

Set $K_n = K(\mu_{p^n})$, $K_{\infty} = K(\mu_{p^{\infty}})$, and $\Lambda := \Z_p [[\Gal (K_\infty / k)]]$. 
Then for $j \in \Z$ and any finite prime $v$ of $k$, 
we have the semi-local Iwasawa cohomology complex
\[
\Delta_{K_\infty, v} (j) = \RG_{\Iw} (K_{\infty, v}, \Z_p(j)),
\]
whose cohomology groups are the projective limits of those of $\Delta_{K_n, v}(j) = \RG(K_{n, v}, \Z_p(j))$. Concretely, when $j = 1$, their cohomology groups are described as
\begin{equation}\label{Iwasawa cohomology}
H^i (\Delta_{K_\infty, v} (1)) \simeq 
\begin{cases}
\varprojlim_n \bigoplus_{w_n \mid v} \widehat{K_{n, w_n}^\times}& \text{if } i=1, \\
\bigoplus_{w_\infty \mid v}\Z_p  & \text{if } i=2, \\
0 & \text{otherwise} 
\end{cases}
\end{equation}
(cf.~\cite[equation (19)]{BuFla06}), where the projective limit is taken with respect to the norm maps and 
$w_n$ (resp. $w_\infty$) runs over the primes of $K_n$ (resp. $K_\infty$) lying above $v$. 
When $v \nmid p$, this shows $H^1 (\Delta_{K_\infty, v} (1)) \simeq \bigoplus_{w_\infty \mid v} \Z_p(1)$.
When $v \mid p$, by taking the projective limit of \eqref{eq:H1_Kum}, we have an exact sequence 
\begin{equation}\label{Kummer seq of infty}
0 \to U_{K_\infty, v} \to H^1 (\Delta_{K_\infty, v} (1)) \to \bigoplus_{w_\infty \mid  v}\Z_p \to 0,
\end{equation}
where we put $U_{K_\infty, v} := \varprojlim_{n} U^1_{K_{n, v}}$.
For general $j \in \Z$, since $K_{\infty}$ contains $\mu_{p^{\infty}}$, we obtain a similar description of $H^i(\Delta_{K_\infty, v}(j))$ by simply twisting this formula.
See \S \ref{App:twist} for the details on the twists.

We define
\[
X_{K_\infty} (j) := \varprojlim_n X_{K_n} (j)
\]
with respect to the natural maps, and 
\[
\Xi_{K_{\infty} / k, S}^{\loc} (j) 
:= \parenth{\bigotimes_{v \in S_f} \Det_{\Lambda}^{-1} (\Delta_{K_\infty, v} (j))}
\otimes_{\Lambda}
\Det_{\Lambda}^{-1} (X_{K_{\infty}} (j)).
\]
Note that we have a natural isomorphism $\Xi_{K_{\infty} / k, S}^{\loc} (j) \simeq \varprojlim_n \Xi_{K_n / k, S}^{\loc} (j)$.

\section{Local bases at $p$-adic primes}\label{sec:Coleman}

Fix an odd prime number $p$. 
Let $K/k$ be a finite abelian extension of number fields such that $K /\Q$ is unramified at $p$.
We put $K_{\infty} = K(\mu_{p^{\infty}})$. For a $p$-adic place $v$ of $k$, we consider the semi-local Iwasawa cohomology complex $\Delta_{K_\infty, v} (j)$ over
 $\Lambda := \Z_p[[\Gal (K_\infty / k)]]$ defined in \S \ref{ss:Iw_limit}. 

The goal of this section is to construct an isomorphism
\[
\Phi^j_{K_\infty, v} : \Det_{\Lambda}^{-1} (\Delta_{K_\infty, v} (j)) \xrightarrow{\sim} \Det_{\Lambda} (R_{K_v}), 
\]
where $R_{K_v}$ is a certain explicit $\Lambda$-module.
The construction depends on Coleman's work on the so-called Coleman maps, which requires the unramified assumption at $p$.
We will establish an interpolation properties of $\Phi^j_{K_\infty, v}$ by using Perrin-Riou's interpretation of Coleman's work.

\subsection{The statement of the main result}\label{State_2}

We introduce some notation. For each $n \in \Z_{\geq 0}$, let $K_n = K (\mu_{p^n})$, $k_n = k(\mu_{p^n})$, $\GG_{K_n}:= \Gal (K_n / k)$, and we further put 
$\GG_{K_{\infty}}:= \Gal (K_{\infty} / k)$.
We set $\OO_{K_v} = \prod_{w \mid v} \OO_{K_w} $, where $w$ runs over the primes of $K$ lying above $v$ and 
$\OO_{K_w}$ is the valuation ring of $K_w$. Note that, since $v$ is unramified in $K/k$, we have $\OO_{K_v} = \OO_{k_v} \otimes_{\OO_k} \OO_K$ and this is free of rank one over $\OO_{k_v}[\Gal(K/k)]$.

We consider the formal power series ring $\OO_{K_v} [[T]]$ over the ring $\OO_{K_v}$, which is equipped with the action of the Galois group $\GG_{K_\infty} $ defined as follows.
Let $\chi_{\cyc} : \GG_{K_\infty} \to \Z_p^\times$ be the $p$-adic cyclotomic character.
For $\sigma \in \GG_{K_\infty} $ and $f(T)=\sum_{i \geq 0} a_i T^i \in \OO_{K_v} [[T]]$, we define
\[
(\sigma f) (T) := f^{\bar{\sigma}} ((1+T)^{\chi_{\cyc} (\sigma)} -1),
\]
where $\bar{\sigma} \in \Gal(K/k)$ is the restriction of $\sigma$ and $f^{\bar{\sigma}}(T):=\sum_{i \geq 0} \bar{\sigma}(a_i) T^i$. This $\GG_{K_{\infty}}$-action defines a $\Lambda$-module structure on $\OO_{K_v} [[T]]$. 

The ring $\OO_{K_v}[[T]]$ also has the Frobenius operator $\varphi$.
For each prime $w$ of $K$ lying above $v$, noting that $K_w/\Q_p$ is unramified,
we write $\sigma_{K_w / \Q_p} \in \Gal (K_w / \Q_p)$ for the arithmetic Frobenius. 
This $\sigma_{K_w/\Q_p}$ defines a $\Z_p$-linear operator on $\OO_{K_w}$.
We then define a $\Z_p$-linear operator $\sigma_{K / \Q}$ on $\OO_{K_v}$ by taking the direct sum of $\sigma_{K_w/\Q_p}$ for $w \mid v$.
Then the Frobenius operator $\varphi$ on $f(T)=\sum_{i \geq 0} a_i T^i \in \OO_{K_v} [[T]]$ is defined by
\[
(\varphi f) (T) := f^{\sigma_{K / \Q}} ((1+T)^p-1),
\]
where $f^{\sigma_{K / \Q}} (T) := \sum_{i \geq 0} \sigma_{K/\Q} (a_i) T^i$. 

We put ${r_{k_v}}:= [k_v:\Q_p]$. We consider a $\Lambda$-submodule
\[
R_{K_v} := \left\{ f(T) \in \OO_{K_v} [[T]] \, \middle| \, \sum_{\zeta \in \mu_p} f (\zeta (1+T)-1) =0 \right\} \subset \OO_{K_v} [[T]], 
\]
which is known to be a free $\Lambda$-module of rank $r_{k_v}$ (see \cite[Lemme 1.5]{Per90}). 
We define a derivative operator $D := (1+T)\dfrac{d}{dT}$ on $R_{K_v}$, which induces an isomorphism
 $D^m : R_L \xrightarrow{\sim} R_L (m)$ as $\Lambda$-modules for any integer $m \in \Z$. 

Now we present the main results in this section. We fix a $\Z_p$-basis $\xi = (\zeta_{p^n})_n$ of $\Z_p(1) = \varprojlim_n \mu_{p^n}(K_\infty)$, on which the isomorphism $\Phi_{K_{\infty}, v}^j$ in the following theorem depends.

\begin{thm}\label{thm:Phi}
For each $j \in \Z_{\geq 1}$, there is a (unique) isomorphism
\[
\Phi^j_{K_\infty, v} : \Det_{\Lambda}^{-1} (\Delta_{K_\infty, v} (j)) \xrightarrow{\sim} \Det_{\Lambda} (R_{K_v}) 
\]
which interpolates the isomorphism $\phi_{K_n, v}^j$ in \S\ref{ss:setup} for all $n \in \Z_{\geq 0}$ as in Theorem \ref{thm:explicit computation} below.
\end{thm}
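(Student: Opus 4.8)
The plan is to build the isomorphism $\Phi^j_{K_\infty, v}$ in two stages, reducing everything to the case $j=1$ by twisting. First I would treat $j=1$. The Iwasawa cohomology complex $\Delta_{K_\infty, v}(1)$ has cohomology concentrated in degrees $1$ and $2$, with $H^2$ a free $\Lambda$-module of rank $r_{k_v}$ (the $\mathbb{Z}_p$ in \eqref{Iwasawa cohomology}, suitably twisted) and $H^1$ sitting in the exact sequence \eqref{Kummer seq of infty}, so that $\Det_\Lambda^{-1}(\Delta_{K_\infty,v}(1)) \simeq \Det_\Lambda(H^1) \otimes_\Lambda \Det_\Lambda^{-1}(H^2)$. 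The key input is the classical Coleman homomorphism: for $K_w/\mathbb{Q}_p$ unramified, Coleman's theory \cite{Col79} furnishes a $\Lambda$-isomorphism (essentially the Coleman power series map composed with $D\log$, then projecting) between $U_{K_\infty,v} = \varprojlim_n U^1_{K_{n,v}}$ and the module $R_{K_v}$, or more precisely an exact sequence relating them to $R_{K_v}$ and a rank-one piece accounting for the $\mathbb{Z}_p$-part. Concretely, the map $g \mapsto (1+T)\frac{d}{dT}\log g_{(\cdot)}$ lands in $R_{K_v}$, and Coleman's exact sequence identifies $U_{K_\infty,v}$ up to the augmentation/$\mathbb{Z}_p(1)$ correction with $R_{K_v}$. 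Feeding this, together with the description of $H^2$, into the determinant functor and using multiplicativity of $\Det$ on exact sequences (Propositions \ref{prop:det_ses2}, \ref{prop:det_ses4}) produces the desired isomorphism $\Phi^1_{K_\infty,v}\colon \Det_\Lambda^{-1}(\Delta_{K_\infty,v}(1)) \xrightarrow{\sim} \Det_\Lambda(R_{K_v})$.

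Second, I would handle general $j \geq 1$ by the twisting machinery referenced in \S\ref{App:twist}. Since $K_\infty \supset \mu_{p^\infty}$, there is a canonical twisting isomorphism $\Delta_{K_\infty,v}(j) \simeq \Delta_{K_\infty,v}(1) \otimes_{\mathbb{Z}_p} \mathbb{Z}_p(j-1)$ of $\Lambda$-complexes (equivalently, twisting by $\chi_{\cyc}^{j-1}$), inducing $\Det_\Lambda^{-1}(\Delta_{K_\infty,v}(j)) \simeq \Det_\Lambda^{-1}(\Delta_{K_\infty,v}(1))(j-1)$. On the target side, the derivative operator gives $D^{j-1}\colon R_{K_v} \xrightarrow{\sim} R_{K_v}(j-1)$, hence $\Det_\Lambda(R_{K_v})(j-1) \simeq \Det_\Lambda(R_{K_v})$. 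I would then define $\Phi^j_{K_\infty,v}$ as the composite
\[
\Det_\Lambda^{-1}(\Delta_{K_\infty,v}(j)) \xrightarrow{\mathrm{tw}} \Det_\Lambda^{-1}(\Delta_{K_\infty,v}(1))(j-1) \xrightarrow{\Phi^1_{K_\infty,v}(j-1)} \Det_\Lambda(R_{K_v})(j-1) \xrightarrow{D^{j-1}} \Det_\Lambda(R_{K_v}).
\]
Uniqueness is automatic once one demands the interpolation property of Theorem \ref{thm:explicit computation}: $\Det_\Lambda(R_{K_v})$ is a free $\Lambda$-module of rank one, so two isomorphisms from $\Det_\Lambda^{-1}(\Delta_{K_\infty,v}(j))$ differ by a unit of $\Lambda$, and this unit is pinned down by specializing at a single $n$ (or at finitely many characters, using that $\Lambda$ injects into a product of its character quotients).

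The main obstacle I expect is the interpolation statement itself, i.e.\ verifying that $\Phi^j_{K_\infty,v}$ descends at level $n$ to the isomorphism $\phi^j_{K_n,v}$ of \S\ref{ss:setup} defined via the Bloch--Kato logarithm and the invariant map. This is exactly where Perrin-Riou's reinterpretation of Coleman's map enters: the explicit formula relating the Coleman map to $\exp^*$ / $\log_{\mathrm{BK}}$ (the "Perrin-Riou big exponential" reciprocity, generalizing \cite[\S 5--6]{BuFla06}) must be matched with the definition of $\phi^1_{K_v}$ through the Kummer sequence, and then the $D^{j-1}$-twist on the analytic side must be shown to correspond to the twist of $\log_{\mathbb{Q}_p(j)}$ on the cohomological side. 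Controlling the archimedean/Gauss-sum normalizations and the precise powers of $2\pi\sqrt{-1}$ hidden in the comparison (which will matter for the sign bookkeeping later) is the delicate computational core; but structurally it reduces to citing Coleman's and Perrin-Riou's results for the unramified local fields $K_w/\mathbb{Q}_p$ and propagating them through the determinant formalism. The remaining ingredients — freeness of $R_{K_v}$ over $\Lambda$ \cite[Lemme 1.5]{Per90} and compatibility of $D^m$ with twisting — are already quoted in the excerpt.
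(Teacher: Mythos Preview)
Your overall architecture — build $\Phi^1$ from Coleman's exact sequence, then obtain $\Phi^j$ by twisting via $D^{j-1}$ — is morally the same as the paper's (the paper twists the Coleman map itself to $\Colman^j_{K_\infty,v}$ and constructs $\Phi^j$ in one shot, but this is only a reorganization). The uniqueness remark and the identification of Perrin-Riou's big exponential as the tool for the interpolation are also on target.

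There is, however, a real gap in your treatment of $j=1$. You write that $H^2(\Delta_{K_\infty,v}(1))$ is ``a free $\Lambda$-module of rank $r_{k_v}$'' and then invoke Propositions \ref{prop:det_ses2} and \ref{prop:det_ses4} to split $\Det_\Lambda^{-1}(\Delta_{K_\infty,v}(1))$ as $\Det_\Lambda(H^1)\otimes\Det_\Lambda^{-1}(H^2)$. This is false: by \eqref{Iwasawa cohomology} we have $H^2(\Delta_{K_\infty,v}(1)) \simeq \bigoplus_{w_\infty\mid v}\Z_p$, which is $\Lambda$-\emph{torsion}, not projective. Likewise $H^1$ is not projective (via \eqref{Kummer seq of infty} it contains $U_{K_\infty,v}$, which by Theorem \ref{thm:Coleman map} has the torsion submodule $\bigoplus_{w_\infty\mid v}\Z_p(1)$). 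So Proposition \ref{prop:det_ses4} does not apply over $\Lambda$, and the Coleman four-term sequence cannot be fed directly into determinant multiplicativity because its outer terms $\bigoplus_{w_\infty\mid v}\Z_p(1)$ are not perfect as $\Lambda$-modules.

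The paper's remedy is exactly what you are missing: pass to the total ring of fractions $Q$ of $\Lambda$. Over $Q$ all of these torsion pieces vanish, so \eqref{Iwasawa cohomology}, \eqref{Kummer seq of infty}, and the Coleman sequence \eqref{eq:Col_ex} become honest isomorphisms of free $Q$-modules, yielding
\[
\wtil{\Phi^j_{K_\infty,v}}\colon Q\otimes_\Lambda \Det_\Lambda^{-1}(\Delta_{K_\infty,v}(j)) \xrightarrow{\sim} Q\otimes_\Lambda \Det_\Lambda(R_{K_v}).
\]
One then has to check, as a separate and nontrivial step, that this $Q$-isomorphism carries the integral lattice $\Det_\Lambda^{-1}(\Delta_{K_\infty,v}(j))$ onto $\Det_\Lambda(R_{K_v})$; the paper does this by invoking the argument of \cite[Proposition 5.2]{BuFla06}. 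Without this passage to $Q$ and the subsequent integrality check, your construction does not go through.
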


The interpolation properties of $\Phi^j_{K_\infty, v}$ give explicit descriptions of the map $\varpi_n$, for all $n \geq 0$, defined by the  commutative diagram
\begin{equation}\label{eq:Phi_com}
\xymatrix{
	\Det^{-1}_{\Lambda} (\Delta_{K_\infty, v} (j)) \ar[r]_-{\simeq}^-{\Phi_{K_\infty, v}^j} \ar[d]
	& \Det_{\Lambda} (R_{K_v}) \ar[d]^{\varpi_n}\\
	\Q_p \otimes \Det_{\Z_p[\GG_{K_n}]}^{-1} (\Delta_{K_n, v} (j)) \ar[r]_-{\phi_{K_n, v}^j}^-{\simeq}
	& \Det_{\Q_p[\GG_{K_n}]} (K_{n, v}),
}
\end{equation}
where the left vertical arrow is the natural one.
For a finite character $\chi$ of $\GG_{K_{\infty}}$, we write $n_{\chi} \geq 0$ for the minimum integer such that $\chi$ factors through $\GG_{K_{n_{\chi}}}$.

\begin{thm}\label{thm:explicit computation}
 For $j \in \Z_{\geq 1}$, $n \in \Z_{\geq 0}$, and a $\C_p$-valued character $\chi$ of $\GG_{K_n}$, 
and for $f_1 (T), \dots, f_{r_{k_v}}(T) \in R_{K_v}$,
we have
\begin{align*}
&\varpi_n(\wedge_{i=1}^{r_{k_v}} f_i (T))^\chi 
= (-1)^{r_{k_v} (j-1)} \cdot  ((j-1)!)^{r_{k_v}} 
\cdot [K_n : K_{n_{\chi}}]^{-r_{k_v}} \\
&\times
\begin{cases}
(-1)^{n_\chi (r_{k_v}-1)} \cdot p^{ r_{k_v}n_\chi (j - 1)} \cdot \sigma_{K_n/k_n, v}^{-n_\chi} \cdot 
(\wedge_{i=1}^{r_{k_v}} f_i (\zeta_{p^{n_\chi}}-1))^\chi & \text{if } n_\chi > 0, \\
\cfrac{1-N (v)^{j-1} \cdot \sigma_{K_n/k_n, v}^{-1}}{1-N(v)^{-j} \cdot \sigma_{K_n/k_n, v}} 
\cdot (\wedge_{i=1}^{r_{k_v}} f_i (0))^\chi & \text{if } n_\chi = 0 \text{ and } (j \neq 1 \text{ or } \ker \chi \not\supset D_{K_n, v}), \\
\cfrac{1}{(1-N(v)^{-1}) [K_w:k_v]}
\cdot (\wedge_{i=1}^{r_{k_v}} f_i (0))^\chi & \text{if } j = 1 \text{ and } \ker \chi \supset D_{K_n, v}, 
\end{cases}
\end{align*}
where $D_{K_n, v} \subset \GG_{K_n}$ is the decomposition group of $v$ and 
$\sigma_{K_n / k_n, v} \in \Gal (K_n /k_n)$ is the arithmetic Frobenius of $v$. 
\end{thm}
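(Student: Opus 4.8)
The plan is to construct the isomorphism $\Phi^j_{K_\infty,v}$ explicitly from the classical theory of Coleman maps and to prove Theorems \ref{thm:Phi} and \ref{thm:explicit computation} together, the latter amounting to a computation of the finite-layer map $\varpi_n$ of \eqref{eq:Phi_com} on $\chi$-components. First I would record the construction. For $j=1$, I would combine the exact sequence \eqref{Kummer seq of infty}, the identification $H^2(\Delta_{K_\infty,v}(1))\simeq\bigoplus_{w_\infty\mid v}\Z_p$, and the classical Coleman isomorphism $U_{K_\infty,v}\xrightarrow{\sim}R_{K_v}$ --- this is where the hypothesis that $v$ is unramified in $K/k$ is used, via $\OO_{K_v}=\OO_{k_v}\otimes_{\OO_k}\OO_K$ --- to obtain $\Det^{-1}_\Lambda(\Delta_{K_\infty,v}(1))\simeq\Det_\Lambda(U_{K_\infty,v})\simeq\Det_\Lambda(R_{K_v})$. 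For general $j\ge1$ I would twist: the fixed basis $\xi=(\zeta_{p^n})_n$ of $\Z_p(1)$ trivializes $\Z_p(j-1)$ on the cohomology side, and the operator $D^{j-1}\colon R_{K_v}\xrightarrow{\sim}R_{K_v}(j-1)$ does the corresponding job on the other side. The uniqueness in Theorem \ref{thm:Phi} is then automatic, since the interpolation formula of Theorem \ref{thm:explicit computation} determines $\Phi^j$ after inverting $p$ and $\Det_\Lambda(R_{K_v})$ is $p$-torsion-free.

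Next I would reduce the computation of $\varpi_n$ to the case $j=1$. The isomorphism $\phi^j_{K_n,v}$ of \S\ref{ss:setup} is built from the Bloch--Kato logarithm $\log_{\Q_p(j)}$, and the standard comparison between $\log_{\Q_p(j)}$ and $\log_{\Q_p(1)}$ (through the Dieudonn\'e module of $\Q_p(j)$) accounts for the factor $((j-1)!)^{r_{k_v}}$ and the sign $(-1)^{r_{k_v}(j-1)}$, the latter coming from twisting a module of rank $r_{k_v}$ through $j-1$ steps. At the same time the operator $D^{j-1}$, when pushed through the evaluation maps $f\mapsto f(\zeta_{p^{n_\chi}}-1)$ and $f\mapsto f(0)$, produces --- by a direct computation in $\OO_{K_v}[[T]]$ using $\xi$ --- the factors $p^{r_{k_v}n_\chi(j-1)}$ and $N(v)^{j-1}$ visible in the two cases. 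After this reduction it remains to describe the $\chi$-component of $\varpi_n(\wedge_{i=1}^{r_{k_v}}f_i(T))$ when $j=1$.

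For $j=1$ I would distinguish cases in $n_\chi$. If $n_\chi>0$, then $\chi$ is ramified at $v$ (the tower $K_\infty/K$ is totally ramified at the $p$-adic places, so $\chi$ is nontrivial on inertia), whence $H^0(K_{n,v},\Q_p(1))^\chi=H^2(K_{n,v},\Q_p(1))^\chi=0$; by \eqref{Kummer seq} the complex $\Delta_{K_n,v}(1)$ has, on the $\chi$-component, cohomology only in degree $1$, namely $H^1=H^1_f=K_{n,v}^\chi$, so the determinant identification reduces there to $\log_{\Q_p(1)}$. Feeding in the interpolation property of the Coleman map --- in Perrin-Riou's formulation, the Coleman power series of $u\in U_{K_\infty,v}$ recovers the $n$-th layer unit up to a Frobenius twist of order $n$, which after $\chi$-projection becomes $\sigma_{K_n/k_n,v}^{-n_\chi}$ --- together with the change of layer $n\rightsquigarrow n_\chi$, which supplies the factor $[K_n:K_{n_\chi}]^{-r_{k_v}}$ from the two normalizations of $e_\chi$ and the sign $(-1)^{n_\chi(r_{k_v}-1)}$ from the $n_\chi$-fold Frobenius on the rank-$r_{k_v}$ module, yields the first case; the Gauss sum of $\chi$ is automatically encoded inside $(\wedge_i f_i(\zeta_{p^{n_\chi}}-1))^\chi$, because projecting a $\GG_{K_n}$-equivariant value at $\zeta_{p^{n_\chi}}-1$ onto its $\chi$-part is exactly the Gauss sum construction. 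If $n_\chi=0$, then $\chi$ factors through the quotient $\Gal(K/k)$, which is unramified at $v$, so $H^1_f(K_{n,v},\Q_p(1))^\chi=K_v^\chi$, but now $(H^2(\Delta_{K_\infty,v}(1)))^\chi$ may be nonzero and the level-zero specialization of $R_{K_v}$ (evaluation at $T=0$) no longer matches the honest local units, the discrepancy being the $(1-\varphi)$-type Euler factor --- the classical feature of Coleman's exact sequence at the bottom of the tower. Carrying this through \eqref{Kummer seq of infty}, together with the $\varphi$-Euler factor relating $H^1$ with $H^1_f$, produces the numerator $1-N(v)^{j-1}\sigma_{K_n/k_n,v}^{-1}$ (which is $1-\sigma_{K_n/k_n,v}^{-1}$ after the $j=1$ reduction) and the denominator $1-N(v)^{-j}\sigma_{K_n/k_n,v}$, hence the quotient of the second case --- provided it is a unit, i.e.\ provided $j\ge2$ or $\ker\chi\not\supset D_{K_n,v}$. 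In the remaining case $j=1$, $\ker\chi\supset D_{K_n,v}$, the numerator vanishes on the $\chi$-component and the quotient is meaningless, so I would rerun the descent keeping the term $(H^2(\Delta_{K_\infty,v}(1)))^\chi=(\bigoplus_{w_\infty\mid v}\Z_p)^\chi$ honestly: the maps $\inv_{K_v}$ and $\ord_{K_v}$ enter with the local degree $[K_w:k_v]=\#D_{K_n,v}$ and the surviving zeta Euler factor contributes $\frac{1}{1-N(v)^{-1}}$, giving the third case. I would then assemble the three cases, keeping track of the Knudsen--Mumford sign conventions recalled in \S\ref{App:det}.

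I expect the main obstacle to be this last, degenerate case $j=1$, $\ker\chi\supset D_{K_n,v}$: there the shape of $\varpi_n$ genuinely changes because the Euler factors governing Perrin-Riou's interpolation cease to be invertible, and pinning down the determinant-module normalization in that situation is delicate --- it is the local origin of the $\delta_v(K/k)$-corrections appearing in Conjecture \ref{conj:local eTNC}. A secondary difficulty, more routine but easy to get wrong, is the exact propagation of $((j-1)!)^{r_{k_v}}$, $p^{r_{k_v}n_\chi(j-1)}$ and all of the signs through the $D^{j-1}$-twist and the chosen normalization of the Bloch--Kato logarithm.
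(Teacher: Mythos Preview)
Your overall architecture is sound, but the execution diverges from the paper in one structural respect and contains a couple of inaccuracies.

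First, a minor but real point: the Coleman map $\Colman_{K_\infty,v}\colon U_{K_\infty,v}\to R_{K_v}$ is \emph{not} an isomorphism; by Theorem~\ref{thm:Coleman map} it sits in a four-term exact sequence with kernel and cokernel both $\bigoplus_{w_\infty\mid v}\Z_p(1)$. The paper therefore constructs $\Phi^j_{K_\infty,v}$ by first passing to the total ring of fractions $Q$ of $\Lambda$ (where these torsion pieces vanish) and then arguing, via \cite[Proposition~5.2]{BuFla06}, that the resulting $\widetilde{\Phi^j_{K_\infty,v}}$ restricts to an integral isomorphism.

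The more substantive difference is that the paper does \emph{not} reduce to $j=1$. It works with Perrin-Riou's specialization map $\Xi_{n,j}\colon R_{K_v}\to K_{n,v}$ directly for every $j\ge1$, and Proposition~\ref{key diagram} (which is Perrin-Riou's theorem) asserts that $\Colman^j_{K_\infty,v}$ interpolates $\log_{\Q_p(j)}$ through the factor $(-1)^{j-1}(j-1)!\cdot\Xi_{n,j}$. The constants $(-1)^{r_{k_v}(j-1)}$ and $((j-1)!)^{r_{k_v}}$ thus drop out of Perrin-Riou's statement immediately. What you call the ``standard comparison between $\log_{\Q_p(j)}$ and $\log_{\Q_p(1)}$'' is, once combined with the $D^{j-1}$-twist, precisely the content of Perrin-Riou's theorem; your proposed reduction sweeps the main analytic input under the rug rather than isolating it. After Proposition~\ref{key diagram}, the remaining work in the non-degenerate cases is purely algebraic and is carried by two explicit lemmas you do not mention: Lemma~\ref{lem:Xi} computes $\Xi_{n,j}(f)^\chi$ (producing $p^{jn_\chi-n}$ and the operator $\sigma_{K_n/\Q_n}^{-n_\chi}$), and Lemma~\ref{lem:det1} computes $\det_{\Q_p[\GG_{K_n}]}(\sigma_{K_n/\Q_n}\mid K_{n,v})$ and $\det_{\Q_p[\GG_{K_n}]}(1-a\sigma_{K_n/\Q_n}\mid K_{n,v})$, which is what converts $\sigma_{K_n/\Q_n}^{-n_\chi}$ into $(-1)^{n_\chi(r_{k_v}-1)}\sigma_{K_n/k_n,v}^{-n_\chi}$ and $p$ into $N(v)$.

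For the degenerate case $j=1$, $\ker\chi\supset D_{K_n,v}$, you rightly flag this as the crux. The paper's treatment in \S\ref{chi = 1 and j=1} is more concrete than your sketch: it applies the Bockstein-type descent of Proposition~\ref{prop:Det02}, and the two boundary maps in the resulting four-term sequence are identified explicitly --- one via the Coleman power series of $\omega\eta_{K_\infty}$ (with $\eta_{K_\infty}$ the norm-compatible system of $\zeta_{p^m}-1$), evaluated to $-(1-p^{-1})\log_p(\chi_{\cyc}(\gamma))$, the other as $\Tr_{K_{n,v}/\Q_p}(\cdot)/\log_p(\chi_{\cyc}(\gamma))$ via \cite[Lemma~5.8]{Fl04}. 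The factors $\log_p(\chi_{\cyc}(\gamma))$ cancel, and the residual determinant is handled by Lemma~\ref{lem:det2}, a limiting form of Lemma~\ref{lem:det1}(ii), which delivers $[K_w:k_v]^{-1}(1-N(v)^{-1})^{-1}$. Your invocation of $\inv_{K_v}$ and $\ord_{K_v}$ gestures at the right objects but not at this cancellation mechanism.

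Finally, your Gauss-sum remark is off-topic here: no Gauss sums enter Theorem~\ref{thm:explicit computation}; they appear only in \S\ref{sec:arch}, when the $p$-adic basis is paired with the archimedean period map.
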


Although we assume $j \geq 1$ in Theorems \ref{thm:Phi} and \ref{thm:explicit computation}, we can construct an isomorphism $\Phi_{K_{\infty}, v}^j$ for any integer $j \in \Z$ (see \S \ref{ss:twist_basis} and \S \ref{ss:twist}). 

We give a corollary of the theorems before proving them.  
As in \S \ref{local eTNC}, we set
\[
\epsilon^j_v (K_n/k) := 
1 - e_{I_{K_n, v}} N (v)^{-j} \sigma_{K_n/k, v}
\in \Q[\GG_n]
\]
and
\[
\delta_v (K_n/k):=
\epsilon_v^0 (K_n/k) + e_{D_{K_n, v}} \cdot \cfrac{1}{\# (D_{K_n, v} / I_{K_n,v}) }
\in \Q[\GG_n],
\]
where $I_{K_n, v} \subset \GG_{K_n}$ is the inertia subgroup and $\sigma_{K_n / k, v} \in \GG_{K_n} / I_{K_n, v}$ is the arithmetic Frobenius of $v$. 
For each $n \geq 0$, using the fixed primitive $p^n$-th roots of unity $\xi = (\zeta_{p^n})_n$, we define $a_n \in K_n \subset K_{n, v}$ by
\[
a_n := 
\begin{cases}
\frac{1}{p^{n-1}} (\zeta_p + \zeta_{p^2} + \cdots + \zeta_{p^n}) 
& \text{if } n \geq 1, \\
-1 & \text{if } n=0.
\end{cases}
\]
This is a trace compatible system that has been used by, e.g., Benois--Do \cite{BD}.

\begin{cor}\label{cor:main thm1}
For $j \in \Z_{\geq 1}$, $n \in \Z_{\geq 0}$ and a $\C_p$-valued character $\chi$ of $\GG_{L_n}$, 
and for $x_1, \dots, x_{r_{k_v}} \in \OO_{K_v}$, we have
\begin{align*}
& \varpi_n (\wedge_{i=1}^{r_{k_v}} x_i (1+T))^\chi
= (-1)^{r_{k_v} (j-1)} \cdot  ((j-1)!)^{r_{k_v}}\\
& \quad \times
\begin{cases}
(-1)^{n_\chi (r_{k_v}-1)} \cdot N (v)^{j n_\chi -1} \cdot \sigma_{K_n/k_n, v}^{-n_\chi} \cdot 
(\wedge_{i=1}^{r_{k_v}} a_n x_i)^\chi 
	& \text{if } n_\chi > 0, \\
(-1)^{r_{k_v}} 
\cdot \left( \cfrac{\epsilon_v^{1-j}(K_n/k)^{\#}}{\epsilon_v^j(K_n/k)} \right)^{\chi}
\cdot (\wedge_{i=1}^{r_{k_v}} a_n x_i)^\chi 
	& \text{if } n_\chi = 0 \text{ and } j \neq 1, \\
(-1)^{r_{k_v}}
\cdot \left( \cfrac{\delta_v(K_n/k)^{\#}}{\epsilon_v^1(K_n/k)} \right)^{\chi}
\cdot (\wedge_{i=1}^{r_{k_v}} a_n x_i)^\chi 
	& \text{if } n_\chi = 0 \text{ and } j = 1.
\end{cases}&
\end{align*}
\end{cor}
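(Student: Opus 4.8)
The strategy is to deduce this corollary from Theorem \ref{thm:explicit computation} by specializing to the particular elements $f_i(T) = x_i(1+T)$ for $x_i \in \OO_{K_v}$, and then rewriting the three cases of Theorem \ref{thm:explicit computation} into the three cases stated here. The first thing I would check is that $x_i(1+T) \in R_{K_v}$, i.e. that $\sum_{\zeta \in \mu_p} x_i \zeta(1+T) = x_i(1+T)\sum_{\zeta \in \mu_p}\zeta = 0$ since the sum of all $p$-th roots of unity vanishes; so these are legitimate inputs. Next, I would evaluate the relevant quantities appearing in Theorem \ref{thm:explicit computation}: at $T = \zeta_{p^{n_\chi}} - 1$ we get $f_i(\zeta_{p^{n_\chi}} - 1) = x_i \zeta_{p^{n_\chi}}$, and at $T = 0$ we get $f_i(0) = x_i$. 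The point is that the element $a_n$ is defined precisely so that $\wedge_i (x_i \zeta_{p^{n_\chi}})$ relates to $\wedge_i (a_n x_i)$ after accounting for the Galois action and the $\chi$-twist; indeed, applying $e_\chi$ to the trace-compatible system $(\zeta_{p^n})_n$ (or rather its descent to level $n_\chi$) produces, up to an explicit scalar, the same thing as applying $e_\chi$ to $a_n$. I would make this comparison precise: the key identity is that in the $\chi$-component, $(\wedge_i x_i \zeta_{p^{n_\chi}})^\chi$ and $[K_n:K_{n_\chi}]^{r_{k_v}}(\wedge_i a_n x_i)^\chi$ differ only by a power of $p$ and a Frobenius factor, which I would verify by a direct computation using the definition of $a_n$ and the behavior of $\zeta_{p^{n}}$ under the norm/trace maps down to level $n_\chi$ (using that $a_n$ is trace-compatible, so its image at level $n_\chi$ is a scalar multiple — by $[K_n:K_{n_\chi}]$ appropriately interpreted — of $a_{n_\chi}$, and comparing $a_{n_\chi}$ with $\zeta_{p^{n_\chi}}$).

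For the case $n_\chi > 0$: substituting $f_i(\zeta_{p^{n_\chi}} - 1) = x_i\zeta_{p^{n_\chi}}$ into the first case of Theorem \ref{thm:explicit computation} and using the identity just described, the factors $((j-1)!)^{r_{k_v}}$, $(-1)^{n_\chi(r_{k_v}-1)}$, and $\sigma_{K_n/k_n,v}^{-n_\chi}$ carry over verbatim, and I need to check that $[K_n:K_{n_\chi}]^{-r_{k_v}} \cdot p^{r_{k_v} n_\chi(j-1)}$, combined with the $[K_n:K_{n_\chi}]^{r_{k_v}}$ coming from the $a_n$-comparison and the remaining power of $p$ from that comparison, collapses to $N(v)^{j n_\chi - 1}$; here one uses that $v$ is unramified in $K_n/\Q$ so $N(v) = p^{f}$ for $f = [k_v:\Q_p] = r_{k_v}$ — wait, more carefully $N(v) = p^{f_v}$ where $f_v$ is the residue degree of $v$ over $p$, which equals $r_{k_v} = [k_v:\Q_p]$ — so $N(v)^{jn_\chi} = p^{r_{k_v} j n_\chi}$, and the exponent bookkeeping works out: $p^{r_{k_v} n_\chi (j-1)} \cdot p^{r_{k_v} n_\chi} \cdot (\text{extra } p^{-1}) = p^{r_{k_v} j n_\chi - 1} = N(v)^{j n_\chi} \cdot p^{-1}$, which is not quite $N(v)^{j n_\chi - 1} = p^{r_{k_v}(jn_\chi - 1)}$ unless $r_{k_v} = 1$ — so I should be careful and recompute the $a_n$-comparison scalar precisely rather than guessing it, as the exponent of $p$ in that scalar must be exactly $r_{k_v} n_\chi - 1$ worth of discrepancy... this is exactly the kind of routine-but-delicate bookkeeping I would carry out in full.

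For the cases $n_\chi = 0$: here $f_i(0) = x_i$, $a_0 = -1$, so $\wedge_i(a_0 x_i) = (-1)^{r_{k_v}} \wedge_i x_i$, which explains the leading $(-1)^{r_{k_v}}$ in the last two cases of the corollary. When $j \neq 1$ (case two of Theorem \ref{thm:explicit computation}, since $j \neq 1$ forces $n_\chi = 0$ into the second bullet), I would recognize the ratio $\frac{1 - N(v)^{j-1}\sigma^{-1}}{1 - N(v)^{-j}\sigma}$ with $\sigma = \sigma_{K_n/k_n,v}$ as the $\chi$-component of $\frac{\epsilon_v^{1-j}(K_n/k)^\#}{\epsilon_v^j(K_n/k)}$: indeed $\epsilon_v^j(K_n/k) = 1 - e_{I}N(v)^{-j}\sigma_{K_n/k,v}$, and on a character $\chi$ trivial on $I$ (which is automatic since $v$ is unramified in $K/k$, and at level $n_\chi = 0$ also unramified in $k_n/k$... one must be slightly careful about inertia coming from the cyclotomic part, but at $n_\chi=0$ there is none) this evaluates to $1 - N(v)^{-j}\chi(\sigma_{K_n/k,v})$, and the $\#$ turns $\sigma$ into $\sigma^{-1}$ and $\epsilon_v^{1-j}$ contributes $1 - N(v)^{j-1}\sigma^{-1}$; so the case splits of Theorem \ref{thm:explicit computation} match those of the corollary provided $\sigma_{K_n/k_n,v}$ and $\sigma_{K_n/k,v}$ act the same way after $\chi$-projection at level $0$, which holds since $k_0 = k$. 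When $j = 1$ and $\ker\chi \not\supset D_{K_n,v}$, the same identification gives $\frac{\delta_v(K_n/k)^\#}{\epsilon_v^1(K_n/k)}$ on that piece (using $\delta_v = \epsilon_v^0 + e_{D}\cdot\frac{1}{\#(D/I)}$ and that $e_D$ kills $\chi$ when $\ker\chi \not\supset D$, so $\chi(\delta_v) = \chi(\epsilon_v^0) = 1 - \chi(\sigma)$); and when $j=1$ and $\ker\chi \supset D_{K_n,v}$, I match the third case $\frac{1}{(1-N(v)^{-1})[K_w:k_v]}$ against $\chi(\frac{\delta_v^\#}{\epsilon_v^1})$ — on this piece $\chi(e_D) = 1$ so $\chi(\delta_v) = \chi(\epsilon_v^0) + \frac{1}{\#(D/I)} = (1 - 1) + \frac{1}{[K_w:k_v]} = \frac{1}{[K_w:k_v]}$ (using $\chi(\sigma_v) = 1$ here) and $\chi(\epsilon_v^1) = 1 - N(v)^{-1}$, giving exactly $\frac{1}{(1-N(v)^{-1})[K_w:k_v]}$ after noting $\#(D_{K_n,v}/I_{K_n,v}) = [K_w:k_v]$. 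Assembling these three matched cases, and tracking that the overall sign $(-1)^{r_{k_v}(j-1)}$ and the factor $((j-1)!)^{r_{k_v}}$ are common to Theorem \ref{thm:explicit computation} and the corollary, completes the proof. The main obstacle is the precise exponent/sign bookkeeping in the $n_\chi > 0$ case linking $f_i(\zeta_{p^{n_\chi}}-1) = x_i\zeta_{p^{n_\chi}}$ to $a_n x_i$ — getting the power of $p$, the factor $[K_n:K_{n_\chi}]^{r_{k_v}}$, and the Frobenius exponent all to conspire into the clean factor $N(v)^{jn_\chi - 1}\sigma_{K_n/k_n,v}^{-n_\chi}$ — everything else is a direct comparison of the idempotent expressions for $\epsilon_v$ and $\delta_v$ with the explicit ratios in Theorem \ref{thm:explicit computation}.
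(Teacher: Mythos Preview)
Your approach is correct and is exactly the one the paper takes: specialize Theorem \ref{thm:explicit computation} to $f_i(T)=x_i(1+T)$ and compare $f_i(\zeta_{p^{n_\chi}}-1)=x_i\zeta_{p^{n_\chi}}$ and $f_i(0)=x_i$ with $a_n x_i$ in the $\chi$-component. The paper's proof is two lines: it records the identity
\[
(a_n x)^\chi=
\begin{cases}
\dfrac{1}{p^{n-1}}\,(\zeta_{p^{n_\chi}}x)^\chi & \text{if } n_\chi>0,\\[4pt]
-[K_n:K]^{-1}\,x^\chi & \text{if } n_\chi=0,
\end{cases}
\]
and says the rest is immediate.

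Two places where your write-up wobbles, both fixed by this identity. First, in the $n_\chi>0$ case your guessed factors are off; the correct bookkeeping is $[K_n:K_{n_\chi}]^{-r_{k_v}}=p^{-r_{k_v}(n-n_\chi)}$ (since $n_\chi>0$), and the identity gives $(\wedge_i x_i\zeta_{p^{n_\chi}})^\chi=p^{r_{k_v}(n-1)}(\wedge_i a_n x_i)^\chi$, so the total power of $p$ is
\[
p^{-r_{k_v}(n-n_\chi)}\cdot p^{r_{k_v}n_\chi(j-1)}\cdot p^{r_{k_v}(n-1)}=p^{r_{k_v}(jn_\chi-1)}=N(v)^{jn_\chi-1},
\]
exactly as claimed (using $N(v)=p^{r_{k_v}}$ since $k_v/\Q_p$ is unramified). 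Second, in the $n_\chi=0$ case you write ``$a_0=-1$'', implicitly taking $n=0$; for general $n$ you need the second branch of the identity above, whose factor $[K_n:K]^{-1}$ is precisely what cancels the $[K_n:K_{n_\chi}]^{-r_{k_v}}=[K_n:K]^{-r_{k_v}}$ from Theorem \ref{thm:explicit computation}, leaving only the sign $(-1)^{r_{k_v}}$. With these two corrections your matching of the $\epsilon_v$ and $\delta_v$ expressions is fine.
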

\begin{proof}
We can easily compute that 
\[
(a_n x)^\chi = 
\begin{cases}
\frac{1}{p^{n-1}}(\zeta_{p^{n_\chi}} x )^\chi & \text{if } n_\chi > 0, \\
-[K_n : K]^{-1} x^\chi & \text{if } n_\chi = 0, 
\end{cases}
\]
for any $x \in \OO_{K_v}$. 
Then this corollary immediately follows from Theorem \ref{thm:explicit computation}. 
\end{proof}

\subsection{Construction of the isomorphism}\label{const_iso}

In this section, we construct the isomorphism $\Phi^j_{K_\infty, v}$ in Theorem \ref{thm:Phi}. 
We begin with a review of the classical theory of the Coleman map for the Galois representation $\Q_p(1)$.

\begin{thm}[Coleman \cite{Col79}]
For each $u = (u_{K_n})_n \in U_{K_\infty, v} := \varprojlim_n \bigoplus_{w_n \mid v} U^1_{K_n, w_n}$, 
there is a unique power series $g_u(T) \in \OO_{K_v} [[T]]^{\times}$ such that for any $n \geq 0$, we have
\[
(g_u)^{\sigma_{K/\Q}^{-n}} (\zeta_{p^n} -1) = u_{K_n}.
\]
\end{thm}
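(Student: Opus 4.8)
This is the classical theorem of Coleman, and the plan is to reconstruct its proof via the norm operator after a reduction to the purely local case. Since every $K_w/\Q_p$ with $w\mid v$ is unramified, we have $\OO_{K_v}=\prod_{w\mid v}\OO_{K_w}$ with each factor stable under its own Frobenius, hence $\OO_{K_v}[[T]]=\prod_{w\mid v}\OO_{K_w}[[T]]$; and since $K_n/K$ is totally ramified above $p$ (so each $w$ has a unique extension $w_n$), also $U_{K_\infty,v}=\bigoplus_{w\mid v}\varprojlim_n U^1_{K_n,w_n}$. Both $\sigma_{K/\Q}$ and the evaluation maps respect these decompositions, so it suffices to treat a fixed finite unramified $L/\Q_p$: writing $L_n=L(\mu_{p^n})$, $\pi_n=\zeta_{p^n}-1$ (a uniformizer of $L_n$), $\sigma=\sigma_{L/\Q_p}$, and $\theta_n\colon\OO_L[[T]]^\times\to U^1_{L_n}$, $\theta_n(f)=f^{\sigma^{-n}}(\pi_n)$, the claim is that $f\mapsto(\theta_n(f))_{n\ge0}$ is injective with image containing all norm-compatible systems.

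The engine is the Coleman norm operator $\mathcal N$ on $\OO_L[[T]]$, characterized by $\varphi(\mathcal N f)(T)=\prod_{\zeta\in\mu_p}f(\zeta(1+T)-1)$, where $\varphi(h)(T)=h^{\sigma}((1+T)^p-1)$. First I would record its basic properties: (i) $\mathcal N f$ exists and is unique because the right-hand side is invariant under $T\mapsto\zeta(1+T)-1$ for $\zeta\in\mu_p$ and hence lies in the image of the injective operator $\varphi$; (ii) $\mathcal N$ is multiplicative, preserves $\OO_L[[T]]^\times$, and commutes with the coefficientwise Frobenius; (iii) the norm compatibility $\theta_n\circ\mathcal N=N_{L_{n+1}/L_n}\circ\theta_{n+1}$ for $n\ge1$ (with an analogous but Frobenius-twisted identity at $n=0$ coming from $[L_1:L_0]=p-1$), which holds because for $n\ge1$ the set $\{\zeta\zeta_{p^{n+1}}:\zeta\in\mu_p\}$ is exactly the set of $L_n$-conjugates of $\zeta_{p^{n+1}}$; and (iv) the contraction property: with respect to a suitable filtration of $\OO_L[[T]]^\times$ by subgroups built from $p$ and the polynomials $\omega_i(T)=(1+T)^{p^i}-1$, each application of $\mathcal N$ pushes a congruence one step deeper, so iterates of $\mathcal N$ contract $p$-adically. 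Property (iv) is the technical heart and is proved by a direct computation with the defining product, which yields an extra factor of $p$ at each step.

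For existence, given a norm-compatible $u=(u_n)$, for each $k$ I would choose $f_k\in\OO_L[[T]]^\times$ with $\theta_k(f_k)=u_k$ — possible because $\OO_{L_n}=\OO_L[\pi_n]$ is a quotient of $\OO_L[[T]]$ (the minimal polynomial of $\pi_n$ being distinguished) and a lift of a principal unit is again a unit. Peeling off one $\mathcal N$ at a time using (iii) gives $\theta_n(\mathcal N^{j}f_{n+j})=N_{L_{n+j}/L_n}(\theta_{n+j}(f_{n+j}))=N_{L_{n+j}/L_n}(u_{n+j})=u_n$ for all $j\ge0$; since $f_{n+j}$ and $\mathcal N f_{n+j+1}$ then agree at level $n+j$, the contraction property (iv) shows that $(\mathcal N^{j}f_{n+j})_j$ is Cauchy. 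Its limit $g_u\in\OO_L[[T]]^\times$ is $\mathcal N$-fixed, independent of all choices, and satisfies $\theta_n(g_u)=u_n$ for every $n\ge0$; reassembling over $w\mid v$ produces the desired $g_u\in\OO_{K_v}[[T]]^\times$. For uniqueness, if $g$ and $g'$ both interpolate $u$, then $\phi=g/g'\in\OO_L[[T]]^\times$ satisfies $\theta_n(\phi)=1$ for all $n$; taking $n$ divisible by $[L:\Q_p]$ makes the Frobenius twist disappear, so $\phi(\pi_n)=1$ for infinitely many $n$, i.e. $\phi-1$ vanishes at infinitely many points of the open unit disk — impossible for a nonzero element of $\OO_L[[T]]$ by Weierstrass preparation (applied after dividing out the $p$-content), whence $\phi=1$.

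The main obstacle is property (iv), the contraction estimate for $\mathcal N$, together with the careful bookkeeping of the Frobenius-twisted behavior at the bottom level $n=0$; the reduction to the local case and the uniqueness argument are routine by comparison.
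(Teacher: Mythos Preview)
The paper does not give its own proof of this theorem; it is stated with a citation to Coleman \cite{Col79} and used as a black box. Your proposal is a faithful reconstruction of Coleman's original argument via the norm operator $\mathcal{N}$ (existence and contraction of $\mathcal{N}$, convergence of $\mathcal{N}^j f_{n+j}$, uniqueness by Weierstrass preparation), together with the routine semi-local reduction; there is nothing in the paper to compare it against, and the sketch is correct.
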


We call $g_u$ the Coleman power series of $u \in U_{K_\infty, v}$. 
Coleman \cite{Col79} showed that we have $(1 - \frac{\varphi}{p}) \log (g_u) \in R_{K_v}$ for any $u \in U_{K_\infty , v}$, so we can construct the Coleman map $\Colman_{K_\infty , v}$ as
\[
\Colman_{K_\infty , v} : U_{K_\infty , v} \to R_{K_v}\; ;\; u \mapsto \left(1 - \frac{\varphi}{p}\right) \log (g_u), 
\]
which is a $\Lambda$-homomorphism. The following result is essentially proved by Coleman. 

\begin{thm}[{\cite[equation (21) and Lemma 5.1]{BuFla06}, \cite[Theorems 3 and 4]{Col83}}]\label{thm:Coleman map}
We have an exact sequence of $\Lambda$-modules 
\begin{equation}
0 \to \bigoplus_{w_\infty \mid v} \Z_p (1) \to U_{K_\infty , v} \overset {\Colman_{K_\infty, v}}{\to} R_{K_v} \to 
 \bigoplus_{w_\infty \mid v} \Z_p (1) \to 0,
\end{equation}
where $w_\infty$ runs over the prime of $K_\infty$ lying above $v$, 
$\bigoplus_{w_\infty \mid v}  \Z_p (1) \to U_{K_\infty , v}$ is the natural inclusion and
 $R_{K_v} \to \bigoplus_{w_\infty \mid v} \Z_p (1)$ is defined by $f (T) \mapsto \Tr_{K_v /\Q_p}( (D f) (0) ) \xi_v$. 
Here, we put $\Tr_{K_v / \Q_p}:= \oplus_{w \mid v} \Tr_{K_w / \Q_p} : K_v \to \bigoplus_{w \mid v} \Q_p$ and $\xi_v := (\xi_{w_\infty})_{w_\infty \mid v}$ with $\xi_{w_\infty}$ the image of $\xi$ by the 
natural inclusion $K_\infty \hookrightarrow K_{\infty, w_\infty}$. 
\end{thm}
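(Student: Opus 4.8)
The plan is to reduce the semi-local statement to Coleman's classical theory over a single unramified local field, and then to read off the four terms from (i) well-definedness of the map, (ii) an explicit description of its kernel, and (iii) a rank/characteristic-ideal computation of its cokernel. The substantive input — the existence and properties of the Coleman power series $g_u$ — is Coleman's and has already been recalled above; what we must supply is the reduction step, the $\Lambda$-equivariance, and the identification of the cokernel map with the one in the statement.

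\emph{Reduction to one prime.} As $v$ is unramified in $K/k$ and $K_w/\Q_p$ is unramified for every $w\mid v$, the field $K_\infty$ has a unique prime above $w$, totally ramified over $w$; hence $U_{K_\infty,v}=\bigoplus_{w\mid v}\varprojlim_n U^1_{K_w(\mu_{p^n})}$, $\OO_{K_v}[[T]]=\bigoplus_{w\mid v}\OO_{K_w}[[T]]$, $R_{K_v}=\bigoplus_{w\mid v}R_{K_w}$, and both $\Colman_{K_\infty,v}$ and the map $f\mapsto\Tr_{K_v/\Q_p}((Df)(0))\xi_v$ split as direct sums over $w$. It therefore suffices to prove, for a fixed unramified $F=K_w/\Q_p$ with $F_n=F(\mu_{p^n})$, that
\[
0\to\Z_p(1)\to\varprojlim_n U^1_{F_n}\xrightarrow{\ \Colman\ }R_F\to\Z_p(1)\to 0
\]
is exact and compatible with the action of $\Gal(F(\mu_{p^\infty})/\Q_p)$; reassembling over $w\mid v$ then restores the $\Lambda=\Z_p[[\Gal(K_\infty/k)]]$-structure, the $\Gal(K/k)$-action permuting the summands and acting on the unramified coefficients through $\sigma_{K/\Q}$.

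\emph{Well-definedness and kernel.} By Coleman's theorem the image of $u\mapsto g_u$ is the set of fixed points $\mathcal N f=f$ of Coleman's norm operator, i.e. $f^{\varphi}((1+T)^p-1)=\prod_{\zeta\in\mu_p}f(\zeta(1+T)-1)$. Taking logarithms and using $\varphi(\log f)(T)=(\log f)^{\sigma}((1+T)^p-1)$ gives $\varphi(\log f)=\sum_{\zeta\in\mu_p}(\log f)(\zeta(1+T)-1)$; substituting $\zeta(1+T)-1$ for $T$ and summing over $\mu_p$ then yields $\sum_{\zeta\in\mu_p}[(1-\tfrac{\varphi}{p})\log f](\zeta(1+T)-1)=0$, so $\Colman(u)=(1-\tfrac{\varphi}{p})\log g_u$ satisfies the relation defining $R_F$; integrality of its coefficients is Coleman's observation that $1-\varphi/p$ clears the denominators of $\log g_u$ (\cite[Lemma 5.1]{BuFla06}). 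For the kernel, the cyclotomic system $\xi=(\zeta_{p^n})_n$ lies in $\varprojlim_n U^1_{F_n}$ ($N_{F_{n+1}/F_n}\zeta_{p^{n+1}}=\zeta_{p^n}$ as $p$ is odd), has Coleman series $g_\xi=1+T$, and $\Colman(\xi)=(1-\tfrac{\varphi}{p})\log(1+T)=0$, so the natural inclusion $\Z_p(1)\hookrightarrow\varprojlim_n U^1_{F_n}$ lands in $\ker\Colman$. Conversely, if $\Colman(u)=0$ then $h:=\log g_u$ satisfies $h=p^{-1}\varphi h$, and comparing coefficients in this relation (as in \cite[Theorem 3]{Col83}) shows the space of such $h\in F[[T]]$ is exactly $\Q_p\cdot\log(1+T)$; since any such $h$ has zero constant term we get $\log g_u(0)=0$, hence $g_u(0)=1$ and $g_u=(1+T)^a$ with $a\in\Z_p$, i.e. $u=\xi^a$. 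Thus $\ker\Colman=\Z_p(1)$.

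\emph{Cokernel — the main obstacle.} The map $R_F\to\Z_p(1)$, $f\mapsto\Tr_{F/\Q_p}((Df)(0))\,\xi$, is $\Lambda$-linear, and $\Imag\Colman$ lies in its kernel: from $D\varphi=p\varphi D$ one gets $D\,\Colman(u)=(1-\varphi)\,D\log g_u$, so $(D\Colman(u))(0)=h_1(0)-\sigma(h_1(0))$ with $h_1=D\log g_u\in\OO_F[[T]]$, and $\Tr_{F/\Q_p}$ annihilates this because $\Tr_{F/\Q_p}\circ\,\sigma=\Tr_{F/\Q_p}$. What remains — and is the heart of the matter — is that this inclusion is an equality and that the map to $\Z_p(1)$ is surjective, equivalently that $\Coker\Colman\cong\Z_p(1)$. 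Here one knows at least that $\Coker\Colman$ is $\Lambda$-torsion (both $\varprojlim_n U^1_{F_n}$ and $R_F$ have $\Lambda$-rank $r_{k_v}$ and $\ker\Colman=\Z_p(1)$ is $\Lambda$-torsion, so $\Colman$ is injective modulo torsion between modules of equal rank), and the remaining task is to compute it exactly, which amounts to an explicit identity for the image of $\Colman$ inside $R_F$; this is precisely \cite[Theorem 4]{Col83}, packaged in the present form as \cite[equation (21)]{BuFla06}, and I would cite it. I expect this cokernel computation — together with the routine bookkeeping needed to match the two descriptions of the last map after the reduction over $w\mid v$ — to be the only genuinely delicate point, everything else being a formal substitution or an appeal to Coleman's construction of $g_u$.
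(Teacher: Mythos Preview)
The paper does not give a proof of this statement at all: it is recorded as a theorem with citations to \cite{Col83} and \cite{BuFla06} in the header and then used as a black box. Your proposal therefore already does more than the paper, and your sketch is essentially correct.

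A few comments on the details. The reduction over $w\mid v$ and the verification that $(1-\tfrac{\varphi}{p})\log g_u\in R_F$ are fine. For the kernel, the cleanest way to finish the converse is the one you gesture at: from $\varphi h=ph$ apply $D$ and use $D\varphi=p\varphi D$ to get $\varphi(Dh)=Dh$; since $D\log g_u=(1+T)g_u'/g_u\in\OO_F[[T]]$ and $\OO_F[[T]]^{\varphi=1}=\Z_p$, one obtains $D\log g_u\in\Z_p$, hence $g_u=\zeta(1+T)^c$ with $\zeta$ a Teichm\"uller root of unity and $c\in\Z_p$; the principal-unit condition $g_u(0)\in 1+p\OO_F$ at level $n=0$ forces $\zeta=1$. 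Your argument that $\Tr_{F/\Q_p}((D\Colman(u))(0))=0$ via $D\varphi=p\varphi D$ is correct, as is the remark that $f=a(1+T)$ exhibits surjectivity of $R_F\to\Z_p(1)$. You are also right that the remaining inclusion $\ker(R_F\to\Z_p(1))\subset\Imag\Colman$ is the substantive point, and that this is precisely what one extracts from \cite[Theorem~4]{Col83} (repackaged as \cite[equation~(21)]{BuFla06}).

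One small imprecision: in your torsion argument you say both modules ``have $\Lambda$-rank $r_{k_v}$'' while working at a single prime $F=K_w$. Over the local algebra $\Z_p[[\Gal(F(\mu_{p^\infty})/\Q_p)]]$ the rank is $1$ (equivalently $[F:\Q_p]$ over $\Z_p[[\Gal(\Q_p(\mu_{p^\infty})/\Q_p)]]$); the symbol $r_{k_v}$ belongs to the full semi-local picture. This does not affect the conclusion that the cokernel is torsion.
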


For any $j \in \Z$, 
we define the $(j-1)$-th twisted Coleman map $\Colman_{K_\infty ,v}^j$ as the 
composite map
\[
\Colman_{K_\infty , v}^j : U_{K_\infty , v} (j-1) \to R_{K_v} (j-1) \overset{D^{1-j}}{\xrightarrow{\sim}} R_{K_v}, 
\]
where the first map is just the $(j-1)$-th twist of $\Colman_{K_\infty,v}$ and the second map is defined by the isomorphism
\[
R_{K_v}(j-1) := R_{K_v} \otimes_{\Z_p} \Z_p (j-1)  \xrightarrow{\sim} R_{K_v} ; f(T) \otimes \xi^{\otimes j-1} 
 \mapsto D^{1-j} f(T).
 \]
We clearly have $\Colman_{K_\infty, v}^1 = \Colman_{K_\infty, v}$. 
Moreover, Theorem \ref{thm:Coleman map} induces an exact sequence of $\Lambda$-modules 
\begin{equation}\label{eq:Col_ex}
0 \to \bigoplus_{w_\infty \mid v} \Z_p (j) \to U_{K_\infty , v} (j-1) \overset {\Colman_{K_\infty , v}^j}{\to} R_{K_v} \to
\bigoplus_{w_\infty \mid v} \Z_p (j) \to 0,
\end{equation}
where $\bigoplus_{w_\infty \mid v} \Z_p (j) \to U_{K_\infty, v} (j-1)$ is the $(j-1)$-th twist of the natural inclusion and
$R_{K_v} \to \bigoplus_{w_\infty \mid v} \Z_p (j) $ is defined by $f (T) \mapsto \Tr_{K_v /\Q_p}( (D^j f) (0) ) \xi^{\otimes j}_v$, 
where $\xi^{\otimes j}_v := (\xi_{w_\infty}^{\otimes j})_{w_\infty \mid v}$.  

\begin{defn}
For each $j \in \Z$, we now construct the desired isomorphism
\[
\Phi^j_{K_\infty, v} : \Det_{\Lambda}^{-1} (\Delta_{K_\infty , v} (j)) \xrightarrow{\sim} \Det_{\Lambda} (R_{K_v}). 
\]
To this end, let $Q$ be the total ring of fractions of $\Lambda$. 
Since $Q \otimes_{\Lambda} \Z_p (m) = 0$ for any $m \in \Z$, 
we have an isomorphism
\begin{align*}
\wtil{\Phi_{K_\infty , v}^j}: 
Q \otimes_{\Lambda} \Det_{\Lambda}^{-1} (\Delta_{K_\infty , v} (j)) 
& \simeq \Det_{Q}^{-1} (Q \otimesL_{\Lambda} \Delta_{K_\infty , v} (j))\\
&\overset{\eqref{Iwasawa cohomology}}{\xrightarrow{\sim}}
\Det_{Q} (Q \otimes_{\Lambda} H^1 (\Delta_{K_\infty ,v} (j)) ) \\
&\overset{\eqref{Kummer seq of infty}}{\xrightarrow{\sim}}
\Det_{Q} (Q \otimes_{\Lambda} U_{K_\infty, v} (j-1)) \\
&\overset{\eqref{eq:Col_ex}}{\xrightarrow{\sim}}
\Det_{Q} (Q \otimes_{\Lambda} R_{K_v} )
 \simeq
Q \otimes_{\Lambda} \Det_{\Lambda} (R_{K_v}).
\end{align*}
By the argument in \cite[Proposition 5.2]{BuFla06}, where the base field is $\Q$, the restriction of $\wtil{\Phi_{K_\infty , v}^j}$ gives us the desired isomorphism $\Phi^j_{K_\infty , v}$.
\end{defn}

\subsection{The work of Perrin-Riou}\label{diagram}

A key to prove Theorem \ref{thm:explicit computation} is an interpretation of the Coleman map \`{a} la Perrin-Riou \cite{PR}. We first review the specialization map $\Xi_{n, j} : R_{K_v} \to K_{n, v}$. For any $n \geq m \geq 0$, we write 
\[
\Tr_{K_{n, v} / K_{m, v}} : K_{n, v} := 
\bigoplus_{w_n \mid v} K_{n, w_n} \to  \bigoplus_{w_m \mid v} K_{m, w_m} =: K_{m, v}
\]
for the map which is induced by the trace maps. 
\begin{defn}
For any $j \in \Z_{\geq 1}$ and $n \in \Z_{\geq 0}$, 
we define a homomorphism $\Xi_{n, j}: R_{K_v} \to K_{n, v}$ by 
\begin{equation}\label{def:Xi_map}
\displaystyle
\Xi_{n,j} (f(T)) := 
\begin{cases}\displaystyle
p^{n(j-1)} \parenth{\sum_{k=0}^{n-1} \cfrac{f^{\sigma_{K / \Q}^{k-n}}(\zeta_{p^{n-k}} -1)}{p^{jk}} 
- \cfrac{(1-p^j \sigma_{K / \Q}^{-1})^{-1}f^{\sigma_{K / \Q}^{-1}} (0)}{p^{j(n-1)}}} & \text{ if } n \geq1, \\
\Tr_{K_{1, v} / K_v} \parenth{\Xi_{1,j} (f (T))} & \text{ if } n =0, 
\end{cases}
\end{equation}
for any $f(T) \in R_{K_v}$. 
Here, the operator $\sigma_{K / \Q}$ is defined in \S \ref{State_2}. 
By \cite[Lemme 2.2.2]{BD}, the map $\Xi_{n,j}$ coincides with
the map $\Xi_{n, \Q_p (j)}$ in \cite{PR} in our semi-local setting. 
\end{defn}

The following claim, due to Perrin-Riou \cite{PR}, states that $\Colman_{K_\infty , v}^j$ for $j\in \Z_{\geq 1}$ interpolates the Bloch--Kato logarithm $\log_{\Q_p(j)} : H^1_f (K_{n, v}, \Q_p (j)) \to K_{n,v}$ for all $n \geq 0$.

\begin{prop}\label{key diagram} 
For any $n \in \Z_{\geq 0}$ and $j \in \Z_{\geq 1}$, 
we have a commutative diagram
\[
\xymatrix@C=40pt{
U_{K_\infty , v} (j-1)  \ar[r]^-{\Colman_{K_\infty , v}^j} \ar[d]
&R_{K_v} \ar[d]^-{(-1)^{j-1} \cdot (j-1)! \cdot \Xi_{n,j}} \\
H^1_f (K_{n, v}, \Q_p (j)) \ar[r]^-{\log_{\Q_p(j)}}_-{\simeq}
&K_{n, v},
}\]
where the left vertical map is induced by the natural composite map
\[
U_{K_\infty, v} (j-1) \overset{\eqref{Kummer seq of infty}}{\hookrightarrow} H^1 (\Delta_{K_\infty, v} (j)) 
\to H^1 (\Delta_{K_n, v} (j)) \hookrightarrow    
\bigoplus_{w_n \mid v} H^1(K_{n, w_n}, \Q_p (j)). 
\]
\end{prop}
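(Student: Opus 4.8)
The proposition, as indicated, is due to Perrin-Riou: it is her interpolation formula for the big logarithm (regulator) map of \cite{PR} applied to $V = \Q_p(j)$, for which the Bloch--Kato exponential is an isomorphism when $j \geq 1$ (and $H^1_f = H^1$ once $j \geq 2$), recast in our semi-local setting. Since every map in the diagram is the direct sum of maps attached to the individual primes $w \mid v$ of $K$, the plan is first to reduce to the case of a single unramified local field $L = K_w/\Q_p$ in place of $K_v$, which is the framework of \cite{PR} (and \cite{BD}), and then to invoke Perrin-Riou's theorem, having already matched our specialization map $\Xi_{n,j}$ with her $\Xi_{n,\Q_p(j)}$ through \cite[Lemme 2.2.2]{BD}. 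Three points then require attention: the (routine) semi-local bookkeeping, the base case $j = 1$ checked by hand, and the normalizing constant $(-1)^{j-1}(j-1)!$.

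For $j = 1$ I would argue directly. Here $\Colman_{K_\infty,v}^1 = \Colman_{K_\infty,v}$, the left vertical arrow is induced by the Kummer map, and $\log_{\Q_p(1)}$ is the $p$-adic logarithm $\log_p$ on $U^1_{K_{n,v}}$. For $u = (u_{K_n})_n \in U_{K_\infty,v}$ with Coleman power series $g_u \in \OO_{K_v}[[T]]^\times$, Coleman's interpolation property gives $u_{K_n} = g_u^{\sigma_{K/\Q}^{-n}}(\zeta_{p^n}-1)$, hence $\log_p(u_{K_n}) = (\log g_u)^{\sigma_{K/\Q}^{-n}}(\zeta_{p^n}-1)$. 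It thus suffices to check, for $n \geq 1$, the formal identity
\[
\Xi_{n,1}\!\left(\left(1 - \tfrac{\varphi}{p}\right)\log g_u\right) = (\log g_u)^{\sigma_{K/\Q}^{-n}}(\zeta_{p^n}-1),
\]
which follows from \eqref{def:Xi_map}: writing $h = \log g_u$ and $\sigma = \sigma_{K/\Q}$, one has $(\varphi h)^{\sigma^{k-n}}(\zeta_{p^{n-k}}-1) = h^{\sigma^{k-n+1}}(\zeta_{p^{n-k-1}}-1)$, so that $\sum_{k=0}^{n-1} p^{-k}\big((1 - \tfrac{\varphi}{p})h\big)^{\sigma^{k-n}}(\zeta_{p^{n-k}}-1)$ telescopes to $h^{\sigma^{-n}}(\zeta_{p^n}-1) - p^{-n}h(0)$, while the remaining term $-p^{-(n-1)}(1 - p\sigma^{-1})^{-1}\big((1 - \tfrac{\varphi}{p})h\big)^{\sigma^{-1}}(0)$ equals $p^{-n}h(0)$ by the identity $\sigma^{-1}(a) - p^{-1}a = -p^{-1}(1 - p\sigma^{-1})(a)$; the two spurious terms cancel. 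The case $n = 0$ then follows from $n = 1$ by applying $\Tr_{K_{1,v}/K_v}$ and the compatibility of $\log_{\Q_p(1)}$ with corestriction.

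For $j \geq 2$ I would deduce the statement from the case $j = 1$ by the twisting formalism (this is how Perrin-Riou's formula for $\Q_p(j)$ is itself obtained, so one may equally cite \cite{PR} directly with this reduction built in). By construction $\Colman_{K_\infty,v}^j$ equals $\Colman_{K_\infty,v} \otimes \id_{\Z_p(j-1)}$ followed by the isomorphism $D^{1-j}\colon R_{K_v}(j-1) \xrightarrow{\sim} R_{K_v}$, so its top edge is built from the $j = 1$ case. On the arithmetic side, the image of $u \otimes \xi^{\otimes(j-1)}$ in $H^1_f(K_{n,v},\Q_p(j)) = H^1(K_{n,v},\Q_p(j))$ is, by construction of the twisting on Iwasawa cohomology, the Soulé twist of the class of $u$; one then invokes the behavior of the Bloch--Kato logarithm under this twist. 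Chaining these with the $j = 1$ computation and matching, on the two sides, the powers of $p$ ($p^{n(j-1)}$ and $p^{-jk}$ against $p^{-k}$) and the modified Euler factor $(1 - p^j\sigma_{K/\Q}^{-1})^{-1}$ of \eqref{def:Xi_map} against $(1 - p\sigma_{K/\Q}^{-1})^{-1}$ produces the diagram with the asserted constant.

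The main obstacle is this last step: establishing that the discrepancy between $\log_{\Q_p(j)}$ of a Soulé twist and $\log_{\Q_p(1)}$ of the underlying local unit is exactly $(-1)^{j-1}(j-1)!$. The factorial arises from iterating the operator relating the successive twists (Perrin-Riou's $\nabla$, which corresponds to $D^{j-1}$ on the $R_{K_v}$-side after Taylor expansion) and the sign from the same source; since $\Xi_{n,j}$ already absorbs part of the twisting data on the $R_{K_v}$-side while $\log_{\Q_p(j)}$ carries the rest, the constant has to be tracked consistently across both, which is precisely the delicate normalization the paper emphasizes. The remaining ingredients --- Coleman's theory (existence and properties of $g_u$, integrality of $(1 - \varphi/p)\log g_u$, and convergence at $\zeta_{p^n}-1$), the semi-local/induction reduction, and the trace-functoriality of the Bloch--Kato logarithm --- are standard.
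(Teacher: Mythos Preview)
Your proposal is correct and your direct telescoping verification for $j = 1$ is clean and self-contained. The paper, however, takes a much shorter route: it simply identifies $(-1)^{j-1}\Colman_{K_\infty,v}^j$ with the inverse of Perrin-Riou's map $\Omega_{\Q_p(j),j}^\xi$ and then cites \cite[Th\'eor\`eme (i), p.~82]{PR} directly, the only work being to explain the sign $(-1)^{j-1}$ as the discrepancy between the paper's naive twisting (tensoring with $\xi^{\otimes(j-1)}$ and applying $D^{1-j}$) and Perrin-Riou's normalization of the twisting property of $\Omega$.

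So the difference is one of presentation rather than substance. Your $j = 1$ computation is a genuine addition --- the paper does not spell it out --- and is exactly the kind of sanity check that reassures the reader the normalizations are consistent. On the other hand, your $j \geq 2$ step (Soul\'e twist on one side, $D^{1-j}$ on the other, and matching the constant $(-1)^{j-1}(j-1)!$) is, as you yourself flag, precisely the content of Perrin-Riou's interpolation theorem for $\Q_p(j)$; carrying it out independently would amount to reproving the relevant portion of \cite{PR}. The paper's approach buys brevity by outsourcing that bookkeeping; yours would buy self-containment for $j = 1$ while still needing the citation for general $j$.
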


\begin{proof}
The map $(-1)^{j-1} \Colman_{K_\infty, v}^j$ coincides with the inverse of Perrin-Riou's isomorphism $\Omega_{\Q_p (j), j}^\xi$ in \cite{PR}. 
Here, the sign $(-1)^{j-1}$ appears because we defined $\Colman^j_{K_{\infty}, v}$ simply by twisting, while in \cite{PR} the twisting property of $\Omega_{\Q_p (j), j}^\xi$ involves $(-1)^{j-1}$.
The proposition directly follows from the properties of $\Omega_{\Q_p (j), j}^\xi$ stated in \cite[Th\'eor\`eme (i) in p. 82]{PR}. 
\end{proof}

The $\chi$-component of the image of $\Xi_{n, j}$ can be described as the following.

\begin{lem}\label{lem:Xi}
For any $j \in \Z_{\geq 1}$, $n \in \Z_{\geq 0}, \chi \in \widehat{\GG}_{K_n}$ and $f (T) \in R_{K_v}$, 
we have 
\[
\Xi_{n,j} (f(T))^\chi = 
\begin{cases}
p^{jn_\chi -n} \cdot \sigma_{K_n / \Q_n}^{-n_\chi} \cdot f(\zeta_{p^{n_\chi}} -1)^\chi & \text{ if } n_\chi > 0, \\
\
[K_n : K]^{-1} \cdot \cfrac{1-p^{j-1}\sigma_{K_n / \Q_n}^{-1}}{1-p^{-j} \sigma_{K_n / \Q_n}} \cdot f(0)^\chi & \text{ if } n_\chi = 0. 
\end{cases}
\]
Here, $\sigma_{K_n / \Q_n}$ is the operator on $K_{n,v}$ which is defined by 
$\sigma_{K_n / \Q_n} \parenth{ (a_{w_n})_{w_n \mid v}} = 
\parenth{\sigma_{K_{n}, w_n} (a_{w_n}) }_{w_n \mid v}$, 
where $\sigma_{K_{n}, w_n} \in \Gal (K_{n, w_n} / \Q_p (\mu_{p^n}))$ is the arithmetic Frobenius. 

\end{lem}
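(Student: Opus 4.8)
The plan is to read the formula off the explicit expression \eqref{def:Xi_map}, after an index change and a determination of the $\chi$-components involved. Write $\sigma := \sigma_{K/\Q}$. Reindexing the sum in \eqref{def:Xi_map} by $l = n-k$ gives, for $n \geq 1$,
\[
\Xi_{n,j}(f(T)) = p^{-n}\sum_{l=1}^{n} p^{jl}\, f^{\sigma^{-l}}(\zeta_{p^l}-1) \;-\; p^{j-n}\,(1-p^j\sigma^{-1})^{-1} f^{\sigma^{-1}}(0).
\]
Because $v$ is unramified in $K/k$ and $k/\Q$ is unramified at $p$, the operator $\sigma_{K_n/\Q_n}$ fixes $\mu_{p^n}$ and restricts to $\sigma$ on $\OO_{K_v}$; hence inside $K_{n,v}$ we have $f^{\sigma^{-l}}(\zeta_{p^l}-1) = \sigma_{K_n/\Q_n}^{-l}\bigl(f(\zeta_{p^l}-1)\bigr)$ and $f^{\sigma^{-1}}(0) = \sigma_{K_n/\Q_n}^{-1}\bigl(f(0)\bigr)$. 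Moreover $\sigma_{K_n/\Q_n}$ commutes with the $\GG_{K_n}$-action on $K_{n,v}$ (it is assembled from the canonical Frobenii of the unramified structure), hence with the idempotent $e_\chi$; so applying $e_\chi$ to the displayed identity reduces the whole computation to the values $f(0)^\chi$ and $f(\zeta_{p^l}-1)^\chi$ for $1 \leq l \leq n$.

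Next I would pin down these $\chi$-components. Since $f(0) \in \OO_{K_v} \subset K_{0,v}$ and $f(\zeta_{p^l}-1) \in \OO_{K_{l,v}} \subset K_{l,v}$, and each $K_{l,v}$ is a $\GG_{K_n}$-stable submodule of $K_{n,v}$ on which $\GG_{K_n}$ acts through $\GG_{K_l}$, one gets $f(0)^\chi = 0$ if $n_\chi > 0$ and $f(\zeta_{p^l}-1)^\chi = 0$ if $n_\chi > l$. For $n_\chi < l$ with $l \geq 2$ I would specialize the defining relation $\sum_{\zeta\in\mu_p}f(\zeta(1+T)-1) = 0$ of $R_{K_v}$ at $T = \zeta_{p^l}-1$: the $p$ summands are precisely the $\Gal(K_{l,v}/K_{l-1,v})$-conjugates of $f(\zeta_{p^l}-1)$ (here one uses that $K_l/K_{l-1}$ is totally ramified of degree $p$ at $v$), so $\Tr_{K_{l,v}/K_{l-1,v}}\bigl(f(\zeta_{p^l}-1)\bigr) = 0$; decomposing into isotypic parts and using that a character $\psi$ with $n_\psi < l$ is trivial on $\Gal(K_{l,v}/K_{l-1,v})$ --- so $K_{l,v}^{(\psi)} = K_{l-1,v}^{(\psi)}$ and the trace acts there by multiplication by $p$ --- forces $f(\zeta_{p^l}-1)^\psi = 0$. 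Specializing the same relation at $T = 0$ gives $\Tr_{K_{1,v}/K_{0,v}}\bigl(f(\zeta_p-1)\bigr) = -f(0)$, which for $n_\chi = 0$ yields $f(\zeta_p-1)^\chi = -\tfrac{1}{p-1}f(0)^\chi$ since $\#\Gal(K_1/K_0) = p-1$. Plugging this in: for $n_\chi > 0$ only the term $l = n_\chi$ survives and produces exactly $p^{jn_\chi - n}\,\sigma_{K_n/\Q_n}^{-n_\chi} f(\zeta_{p^{n_\chi}}-1)^\chi$; for $n_\chi = 0$ only the $l = 1$ term and the last term of the display survive, giving
\[
\Xi_{n,j}(f)^\chi = p^{j-n}\,\sigma_{K_n/\Q_n}^{-1}\Bigl(-\tfrac{1}{p-1} - (1-p^j\sigma_{K_n/\Q_n}^{-1})^{-1}\Bigr)f(0)^\chi,
\]
and an elementary manipulation (using $1-p^j\sigma^{-1} = -p^j\sigma^{-1}(1-p^{-j}\sigma)$ and $[K_n:K] = p^{n-1}(p-1)$) rewrites the right-hand side as $[K_n:K]^{-1}\,\dfrac{1-p^{j-1}\sigma_{K_n/\Q_n}^{-1}}{1-p^{-j}\sigma_{K_n/\Q_n}}\,f(0)^\chi$, as claimed.

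Finally, the case $n = 0$ I would deduce from $\Xi_{0,j} = \Tr_{K_{1,v}/K_{0,v}} \circ \Xi_{1,j}$: for a character $\chi$ of $\GG_{K_0} = \Gal(K/k)$ one has $n_\chi = 0$, so $\Xi_{1,j}(f)^\chi$ already lies in $K_{0,v}^{(\chi)} = K_{1,v}^{(\chi)}$ and is multiplied by $\#\Gal(K_1/K_0) = p-1$ by the trace; combined with the $n = 1$ computation this gives the asserted formula with $[K_0:K] = 1$. The genuinely delicate step is the middle one --- making the vanishing of the $\chi$-components rigorous via the $R_{K_v}$-relation, and keeping the semi-local Galois action (the permutation of the places above $v$ together with the decomposition-group action) straight so that the trace identities and the commutation of $\sigma_{K_n/\Q_n}$ past $e_\chi$ are both legitimate. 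The remaining computations are routine.
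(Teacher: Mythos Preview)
Your proof is correct and follows essentially the same route as the paper's: both establish the vanishing of $f(\zeta_{p^l}-1)^\chi$ for $l \neq n_\chi$ (using that the element lies in $K_{l,v}$ when $l < n_\chi$, and the defining relation of $R_{K_v}$ via trace identities when $l > n_\chi$), then read the formula directly off \eqref{def:Xi_map}, handling $n=0$ separately through $\Xi_{0,j} = \Tr_{K_{1,v}/K_v}\circ\Xi_{1,j}$. The only cosmetic differences are your reindexing $l = n-k$ and your explicit mention of the commutation of $\sigma_{K_n/\Q_n}$ with $e_\chi$, which the paper uses tacitly.
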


\begin{proof}
This lemma should be known to experts, but we include a proof due to the lack of references.

To begin with, for $0 \leq m \leq n$ such that $m \neq n_\chi$, 
we show
\begin{equation}\label{claim}
f(\zeta_{p^m}-1)^\chi = 
\begin{cases}
-\cfrac{1}{p-1} f(0)^\chi & \text{ if } m=1 \text{ and } n_\chi =0, \\
0 & \text{otherwise},   
\end{cases}
\end{equation}
for any $f (T) \in R_{K_v}$.

If $0 \leq m < n_\chi$, we have
\[
e_\chi f(\zeta_{p^m}-1) = \frac{1}{\# \GG_{K_{n_\chi}}}\sum_{\sigma \in \GG_{K_{n_\chi}}}  \chi (\sigma)f(\zeta_{p^m}-1)^{\sigma^{-1}}
=
\frac{1}{\# \GG_{K_{n_\chi}}}\sum_{\bar{\sigma} \in \GG_{K_m}} f(\zeta_{p^m}-1)^{\bar{\sigma}^{-1}}
\sum_{\overset{\sigma \in \GG_{K_{n_\chi}}}{\sigma|_{K_{m}} = \bar{\sigma} }}  \chi (\sigma). 
\]
Since $m < n_\chi$, the last sum is equal to $0$ for any $\bar{\sigma} \in \GG_{K_m}$. 
Therefore, $f(\zeta_{p^m}-1)^\chi = 0$ follows.

If $n_\chi < m$, we have
\[
e_\chi f(\zeta_{p^m}-1) = \frac{1}{\# \GG_{K_n}}\sum_{\sigma \in \GG_{K_{n_\chi}}}
\chi (\sigma) \sigma^{-1} \cdot \Tr_{K_{n, v} / K_{n_\chi , v}} (f(\zeta_{p^m}-1)). 
\]
Since $f (T) \in R_{K_v}$, we have 
\[
0 = \sum_{\zeta \in \mu_p} f (\zeta (1+T)-1)|_{T=\zeta_{p^m}-1} = 
\begin{cases}
\Tr_{K_{m, v} / {K_{m-1, v}}} (f(\zeta_{p^m}-1)) & \text{ if } m \geq 2, \\
\Tr_{K_{1, v} / {K_v}} (f(\zeta_{p}-1)) + f (0) & \text{ if } m = 1.
\end{cases}
\]
Taking $n_\chi < m \leq n$ into account, we obtain
\[
e_\chi f(\zeta_{p^m}-1) =
\begin{cases}
0 & \text{ if } m \geq 2, \\
 -\cfrac{1}{p-1} e_\chi f (0) & \text{ if } m = 1 \; (\text{ and } n_\chi =0).
\end{cases}
\]
This establishes claim \eqref{claim}. 

Now we are ready to compute $\Xi_{n,j} (f(T))^\chi $. 
If $n_\chi > 0$, \eqref{claim} implies that  
the $\chi$-component of \eqref{def:Xi_map} is equal to
\[
\Xi_{n,j} (f(T))^\chi 
= \parenth{p^{n(j-1)} \cfrac{f^{\sigma_{K / \Q}^{-n_\chi}}(\zeta_{p^{n_\chi} } -1)}{p^{j(n-n_\chi)}}}^\chi 
= p^{jn_\chi - n} \cdot \sigma_{K_n / \Q_n}^{-n_\chi} \cdot f(\zeta_{p^{n_\chi}}-1)^\chi.  
\]
Also, if $n > n_\chi = 0$, \eqref{claim} implies that 
\begin{align*}
\Xi_{n,j} (f(T))^\chi 
&= p^{n(j-1)} \parenth{\cfrac{f^{\sigma_{K / \Q}^{-1}} (\zeta_p -1)}{p^{j(n-1)}} - 
\cfrac{(1-p^j \sigma_{K / \Q}^{-1})^{-1} f^{\sigma_{K / \Q}^{-1}} (0)}{p^{j(n-1)}}}^\chi \\
&=\cfrac{p^j}{p^n}
\parenth{\left(-\cfrac{\sigma_{K / \Q}^{-1}}{p-1}-\cfrac{\sigma_{K / \Q}^{-1}}{1-p^j \sigma_{K / \Q}^{-1}}\right)f (0) }^\chi = [K_n : K]^{-1} \cdot \cfrac{1-p^{j-1}\sigma_{K_n / \Q_n}^{-1}}{1-p^{-j} \sigma_{K_n / \Q_n}} \cdot f(0)^\chi.
\end{align*}
If $n=n_\chi=0$, we have 
\begin{align*}
\Xi_{0,j} (f(T))^\chi 
&= \Tr_{K_{1, v} / K_v} (\Xi_{1,j} (f(T)))^\chi \\
&= p^{j-1} \Tr_{K_{1, v} / K_v} \parenth{f^{\sigma_{K / \Q}^{-1}} (\zeta_p -1) - 
(1-p^j \sigma_{K / \Q}^{-1})^{-1} f^{\sigma_{K / \Q}^{-1}} (0)}^\chi \\
&=p^{j-1} \Tr_{K_{1, v} / K_v} 
\parenth{\left(-\cfrac{\sigma_{K / \Q}^{-1}}{p-1}-\cfrac{\sigma_{K /\Q}^{-1}}{1-p^j \sigma_{K / \Q}^{-1}}\right)f (0) }^\chi \\
&= p^{j-1} (p-1) \parenth{\left(-\cfrac{\sigma_{K / \Q}^{-1}}{p-1}-\cfrac{\sigma_{K / \Q}^{-1}}{1-p^j \sigma_{K / \Q}^{-1}}\right)f (0)}^\chi 
= \cfrac{1-p^{j-1}\sigma_{K / \Q}^{-1}}{1-p^{-j} \sigma_{K / \Q}} \cdot f(0)^\chi.
\end{align*}
This completes the proof of Lemma \ref{lem:Xi}. 
\end{proof}

\subsection{The proof of Theorem \ref{thm:explicit computation}}\label{ss:interp_p}

We fix $n \in \Z_{\geq0}$, $j \geq 1$, and a character $\chi \in \widehat{\GG_{K_n}}$.
We define a prime ideal  
\[
\fq_\chi := \ker (\Lambda \rightarrow 
\Z_p [\GG_{K_n}] \xrightarrow{\chi} \Z_p[\chi])
\]
of $\Lambda$, where the first map is the natural restriction map. 
We write $\Lambda_{\fq_\chi}$ for the localization of $\Lambda$ at $\fq_\chi$. 
Then $\Lambda_{\fq_\chi}$ is a discrete valuation ring and 
its residue field is $\Q_p (\chi)$.
For any $\Lambda$-module $M$, 
we write $M_{\fq_\chi} := \Lambda_{\fq_\chi} \otimes_{\Lambda} M$. 
Similarly, we put 
\[
\Delta_{K_\infty , v} (j)_{\fq_\chi}
:= \Lambda_{\fq_\chi} \otimesL_{\Lambda} \Delta_{K_\infty , v} (j), \;\;\;
\Delta_{K_n , v} (j)^\chi := \Q_p (\chi) \otimesL_{\Z_p[\GG_{K_n}]} \Delta_{K_n , v} (j) .  
\]
We have a canonical isomorphism
\begin{equation}\label{complex descent chi}
\Lambda_{\fq_\chi} / \fq_\chi \otimesL_{\Lambda_{\fq_\chi}}
\Delta_{K_\infty, v} (j)_{\fq_\chi} \simeq 
\Delta_{K_n, v} (j)^\chi. 
\end{equation}

Since $\parenth{ \bigoplus_{w_\infty \mid v} \Z_p (j)}_{\fq_\chi} = 0 $, Theorem \ref{thm:Coleman map} implies that 
the localization of $\Colman_{K_\infty , v}^j$ at $\fq_{\chi}$ is isomorphism, and Proposition \ref{key diagram} yields the following commutative diagram :
\begin{equation}\label{eq:Col_com}
\xymatrix{
 U_{K_\infty , v} (j-1)_{\fq_\chi}
  \ar[r]_-{\simeq}^-{\Colman_{K_\infty , v}^j} \ar[d]
&(R_{K_v})_{\fq_{\chi}} \ar[d]^-{ (-1)^{j-1} \cdot (j-1)! \cdot \Xi_{n,j}} \\
H^1_f (K_{n, v}, \Q_p (j))^{\chi} \ar[r]^-{\log_{\Q_p(j)}}_-{\simeq}
&(K_{n, v})^{\chi}. 
}
\end{equation}

\subsubsection{The case $j \geq 2$ or  $\ker \chi \not\supset D_{K_n , v}$}

First suppose $j \geq 2$ or $\ker \chi \not\supset D_{K_n , v}$. 
In this case, by simply taking the determinant modules of \eqref{eq:Col_com}, we obtain the $\chi$-component of the desired diagram \eqref{eq:Phi_com}.
Therefore, 
the $\chi$-component of the map $\varpi_n$ to be computed is the same as the map induced by $(-1)^{j-1} \cdot (j-1)! \cdot \Xi_{n,j}$.

By Lemma \ref{lem:Xi}, for $f_1(T), \dots, f_{r_{k_v}}(T) \in R_{K_v}$, we have
\[
\wedge \Xi_{n, j}(\wedge_{i=1}^{r_{k_v}} f_i(T))^{\chi}
= 
\begin{cases}
p^{r_{k_v} (j n_\chi -n)} \cdot \det_{\Q_p[\GG_{K_n}]} (\sigma_{K_n / \Q_n} \mid K_{n, v})^{-n_\chi} 
\cdot \wedge_{i=1}^{r_{k_v}} f_i (\zeta_{p^{n_\chi}} -1)^\chi & \text{ if } n_\chi > 0, \\
[K_n : K]^{-r_{k_v}} \cdot 
\cfrac{\det_{\Q_p[\GG_{K_n}]} (1- p^{j-1} \sigma_{K_n / \Q_n}^{-1} \mid K_{n, v})}{\det_{\Q_p[\GG_{K_n}]} (1- p^{-j} \sigma_{K_n / \Q_n} \mid K_{n, v})} \cdot \wedge_{i=1}^{r_{k_v}} f_i (0)^\chi & \text{ if } n_\chi = 0. 
\end{cases}
\]
Here, for a $\Q_p[\GG_{K_n}]$-endomorphism $f$ on $K_{n,v}$, we write $\det_{\Q_p[\GG_{K_n}]} (f \mid K_{n, v})$ for the determinant of $f$. Therefore, to show Theorem \ref{thm:explicit computation} in this case, we only have to show the following.

\begin{lem}\label{lem:det1}
The following are true. 
\begin{itemize}
\item[(i)] We have 
$\det_{\Q_p[\GG_{K_n}]} (\sigma_{K_n / \Q_n}  \mid K_{n, v}) = (-1)^{r_{k_v} -1}\sigma_{K_n / k_n , v}$. 
\item[(ii)] For any $a \in \Q_p$, we have 
$\det_{\Q_p[\GG_{K_n}]} (1- a \sigma_{K_n / \Q_n}  \mid K_{n, v}) = 1 - a^{r_{k_v}} \sigma_{K_n / k_n , v} $.   
\end{itemize}
These formulas also hold when we replace 
$\sigma_{K_n / \Q_n} $ and $\sigma_{K_n / k_n, v}$ by
$\sigma_{K_n / \Q_n}^{-1} $ and $\sigma_{K_n / k_n, v}^{-1}$ respectively. 
\end{lem}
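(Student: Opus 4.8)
The plan is to verify each of the four identities after applying an arbitrary $\ol{\Q_p}$-valued character $\chi$ of $\GG_{K_n}$, which reduces the problem to computing the characteristic polynomial of $\sigma_{K_n/\Q_n}$ on a $\chi$-isotypic piece, and then to carry out that computation by a purely local analysis at one completion. The upshot will be that this characteristic polynomial is $X^{r_{k_v}} - \chi(\sigma_{K_n/k_n, v})$, from which all parts of the lemma drop out formally.

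In detail: first fix a prime $w$ of $K_n$ above $v$, put $\mathfrak p := w|_K$ (a prime of $K$, unramified over $\Q_p$ by hypothesis), and let $D := D_{K_n, v} = \Gal(K_{n, w}/k_v)$ be the decomposition group. Transitivity of the $\GG_{K_n}$-action on the primes above $v$ gives a $\Q_p[\GG_{K_n}]$-isomorphism $K_{n,v} \simeq \Q_p[\GG_{K_n}] \otimes_{\Q_p[D]} K_{n,w}$, and I claim it carries $\sigma_{K_n/\Q_n}$ to $1 \otimes \sigma_{K_n, w}$. Indeed $K_{n,w} = K_{\mathfrak p}(\mu_{p^n})$ is the compositum of the unramified extension $K_{\mathfrak p}/\Q_p$ and $\Q_p(\mu_{p^n})/\Q_p$, so $A := \Gal(K_{n,w}/\Q_p)$ is abelian; in particular $\sigma_{K_n, w} \in A$ commutes with $D \subseteq A$, so $1 \otimes \sigma_{K_n, w}$ is well defined. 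Thus $\sigma_{K_n/\Q_n}$ and $1 - a\sigma_{K_n/\Q_n}$ are $\Q_p[\GG_{K_n}]$-linear, their determinants lie in $\Q_p[\GG_{K_n}]$, which embeds into $\prod_{\chi} \ol{\Q_p}$, and it is enough to compute each $\chi$-component. By Frobenius reciprocity (using that $A$ is abelian), $e_\chi(K_{n,v} \otimes_{\Q_p} \ol{\Q_p})$ is identified, compatibly with the action of $\sigma_{K_n/\Q_n}$, with $e_{\chi|_D}(M)$ acted on by $\sigma_{K_n, w}$, where $M := K_{n,w} \otimes_{\Q_p} \ol{\Q_p}$; in particular this piece is $r_{k_v}$-dimensional.

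For the local computation, since $K_{n,w}/\Q_p$ is Galois the normal basis theorem gives $M \simeq \ol{\Q_p}[A]$ as $\ol{\Q_p}[A]$-modules, so $e_{\chi|_D}(M) = \bigoplus_{\psi|_D = \chi|_D} e_\psi(M)$ with $A$, hence $\sigma_{K_n, w}$, diagonalizable, the eigenvalue on $e_\psi(M)$ being $\psi(\sigma_{K_n, w})$. The two local facts I would then establish are: (a) $A/D \simeq \Gal(k_v/\Q_p)$ is cyclic of order $r_{k_v}$ and the image of $\sigma_{K_n, w}$ generates it; and (b) $\sigma_{K_n, w}^{r_{k_v}} \in D$, and as an element of $\GG_{K_n}$ it equals $\sigma_{K_n/k_n, v}$ (both lie in the decomposition group of $v$ in $\Gal(K_n/k_n)$, which is cyclic because $K_n/k_n$ is unramified at $v$, and both act on the residue field of $K_{n,w}$ by $x \mapsto x^{N(v)}$, where $N(v) = p^{r_{k_v}}$). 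Granting these, the set of $\psi$ with $\psi|_D = \chi|_D$ is a torsor under $\widehat{A/D}$, evaluation at $\sigma_{K_n,w}$ identifies $\widehat{A/D}$ with the group of $r_{k_v}$-th roots of unity, and so the eigenvalues of $\sigma_{K_n, w}$ on $e_{\chi|_D}(M)$ are exactly the $r_{k_v}$ distinct $r_{k_v}$-th roots of $\psi(\sigma_{K_n,w})^{r_{k_v}} = \psi(\sigma_{K_n, w}^{r_{k_v}}) = \chi(\sigma_{K_n/k_n, v})$ (independent of the choice of $\psi$ extending $\chi|_D$, by (b)). Hence the characteristic polynomial of $\sigma_{K_n/\Q_n}$ on $e_\chi(K_{n,v} \otimes \ol{\Q_p})$ is $X^{r_{k_v}} - \chi(\sigma_{K_n/k_n, v})$.

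Both parts of the lemma now follow formally. For (i), the determinant is $(-1)^{r_{k_v}}$ times the constant term of this polynomial, namely $(-1)^{r_{k_v}-1}\chi(\sigma_{K_n/k_n, v})$; ranging over $\chi$ this is $\det_{\Q_p[\GG_{K_n}]}(\sigma_{K_n/\Q_n} \mid K_{n,v}) = (-1)^{r_{k_v}-1}\sigma_{K_n/k_n, v}$. For (ii), writing the eigenvalues as $\lambda_i$ with $\prod_i (X - \lambda_i) = X^{r_{k_v}} - \chi(\sigma_{K_n/k_n,v})$, one gets $\prod_i (1 - a\lambda_i) = a^{r_{k_v}}\bigl((1/a)^{r_{k_v}} - \chi(\sigma_{K_n/k_n,v})\bigr) = 1 - a^{r_{k_v}}\chi(\sigma_{K_n/k_n,v})$ when $a \neq 0$, and the case $a = 0$ is trivial. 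The statements with $\sigma_{K_n/\Q_n}$ and $\sigma_{K_n/k_n, v}$ replaced by their inverses go through identically, since the eigenvalues of $\sigma_{K_n/\Q_n}^{-1}$ are the $\lambda_i^{-1}$, which are precisely the $r_{k_v}$-th roots of $\chi(\sigma_{K_n/k_n, v}^{-1})$. The one step needing genuine care is the local group theory of (a) and (b) --- pinning down $D$ inside the abelian group $A$ and proving $\sigma_{K_n,w}^{r_{k_v}} = \sigma_{K_n/k_n,v}$ --- and this is exactly where the hypothesis that $K/\Q$ is unramified at $p$ is used, since it forces $K_{\mathfrak p}/k_v$ to be unramified and $K_{n,w}$ to be the compositum of an unramified and a totally ramified extension of $\Q_p$.
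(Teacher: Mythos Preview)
Your argument is correct, and the key local facts (a) and (b) that you flag are indeed straightforward consequences of $K_{\mathfrak p}/\Q_p$ being unramified. The identification of $\sigma_{K_n/\Q_n}$ with $1\otimes\sigma_{K_n,w}$ under the induced-module isomorphism is sound: the Frobenius $\sigma_{K_n,w'}$ on each completion is intrinsically characterized (fixing $\mu_{p^n}$ and acting by $x\mapsto x^p$ on the residue field), so it is preserved under the conjugation by coset representatives.

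The paper takes a more direct, basis-oriented route. Rather than passing to $\chi$-components, it invokes the normal basis theorem twice to produce generators $x$ of $K_n$ over $k[\GG_{K_n}]$ and $y$ of $k_v$ over $\Q_p[\Gal(k_v/\Q_p)]$; then $\{\sigma_{K_n/\Q_n}^i(x\otimes y) : 1\le i\le r_{k_v}\}$ is a $\Q_p[\GG_{K_n}]$-basis of $K_{n,v}\simeq K_n\otimes_k k_v$, and the matrix of $\sigma_{K_n/\Q_n}$ in this basis is the companion matrix of $X^{r_{k_v}}-\sigma_{K_n/k_n,v}$. The determinants in (i) and (ii) then drop out by elementary row manipulations. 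So both proofs hinge on the same identity $\sigma_{K_n/\Q_n}^{r_{k_v}}=\sigma_{K_n/k_n,v}$ (your fact (b)), but the paper computes a single $r_{k_v}\times r_{k_v}$ matrix over $\Q_p[\GG_{K_n}]$, while you diagonalize after base change to $\ol{\Q_p}$ and recover the same characteristic polynomial character by character. The paper's approach is shorter and avoids the Frobenius-reciprocity bookkeeping; yours makes the eigenvalue structure transparent and perhaps generalizes more readily to situations where an explicit global normal basis is less convenient.
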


\begin{proof}
By the normal basis theorem, $K_n$ is free of rank one over $k[\GG_{K_n}]$  
and $k_v$ is free of rank one over $\Q_p[\Gal (k_v / \Q_p)]$. 
We take generators $x$ and $y$ of $K_n$ and $k_v$ as a $k[\GG_{K_n}]$-module and 
 $\Q_p[\Gal (k_v / \Q_p)]$-module respectively. 
 Then, $\{ \sigma_{K_n / \Q_n}^i (x  \otimes y) \mid  1 \leq i \leq r_{k_v}\}$ is a basis of 
 $K_{n, v} \simeq K_n \otimes_k k_v$ as a $\Q_p[\GG_{K_n}]$-module. 
Therefore, the action of $\sigma_{K_n / \Q_n}$ on $K_{n, v}$ over $\Q_p[\GG_{K_n}]$ can be presented by a matrix
\[
\begin{pmatrix}
0 & 0 &  \dots & 0 & \sigma_{K_n / k_n, v} \\
1 & 0 &   \dots & 0 & 0 \\
0 & 1 &   \dots & 0 & 0 \\
\vdots & \vdots & \ddots & \vdots & \vdots \\
 0  & 0 & \dots & 1 & 0 
\end{pmatrix}.
\]
By elementary computation of the determinants, we obtain the formulas (i) and (ii). 
The final claim also follows in the same way.
\end{proof}

\subsubsection{The case $j = 1$ and  $\ker \chi  \supset D_{K_n,v}$}\label{chi = 1 and j=1}

Suppose $j = 1$ and $\ker \chi  \supset D_{K_n,v}$.
Since both $H^2 (\Delta_{K_{\infty , v}} (1))_{\fq_{\chi}}$ and $H^2 (K_{n, v}, \Q_p (1))^{\chi}$ are isomorphic to 
$\Q_p (\chi)$ and in particular non-zero, we need some technical descent arguments in this case. 

We write $\fq := \fq_{\chi}$ for simplicity. 
We fix a topological generator $\gamma$ of $\Gal(K_\infty / K_1 )$. 
Then $\omega := 1 - \gamma$ is a uniformizer of $\Lambda_{\fq}$. 
For any $\Lambda_{\fq}$-module $M$, 
we put $\overline{M}:= M / \omega M$. 

By Proposition \ref{prop:Det02}, we see that the following diagram is commutative :  
\[
\xymatrix{
	\Det_{\Lambda_{\fq}}^{-1}(\Delta_{K_{\infty, v}}(1)_{\fq}) \ar[r]^-{\simeq} \ar@{->>}[dd]
	& \Det_{\Lambda_{\fq}} (U_{K_\infty, v, \fq}) 
	\ar@{->>}[d]\\
	& \Det_{\Q_p (\chi)} (\ol{U_{K_\infty, v, \fq}})
	\ar[d]_{\simeq}^{\ND{-}\alpha}\\
	\Det_{\Q_p (\chi)}^{-1} (\Delta_{K_n, v}(1)^{\chi}) \ar[r]_-{\simeq}
	& \Det_{\Q_p (\chi)} (H^1_f(K_{n, v}, \Q_p(1))^{\chi} ), 
}
\]
where the map $\alpha$ is induced by the exact sequence
\[
0 
\to (\oplus_{w \mid v} \Q_p)^\chi 
\overset{f_1}{\to} \ol{U_{K_\infty, v, \fq}} 
\overset{g}{\to} H^1_f (K_{n, v}, \Q_p(1))^{\chi} 
\overset{f_2}{\to} (\oplus_{w \mid v} \Q_p)^\chi 
\to 0
\]
as in the beginning of \S \ref{App:det_prop}, where the map $g$ is the natural one. 
We need explicit descriptions of the maps $f_1$ and $f_2$. Let
\[
\eta_{K_\infty} := ( (\zeta_{p^m} -1)_{w_m \mid v})_m \in (\varprojlim_{m} 
\bigoplus_{w_m \mid v} \widehat{K_{m, w_m}^{\times}})_{\fq}
\overset{\eqref{Iwasawa cohomology}}{\simeq} H^1 (\Delta_{K_\infty, v} (1))_{\fq}. 
\]
The valuation of $\eta_{K_{\infty}}$ is one.
Therefore, the map $f_1$ is characterized by 
\[
f_1( (1_w)_{w\mid v}) = \ol{\omega \eta_{L_{\infty}}}
\]
(implicitly we have $\omega \eta_{L_\infty} \in U_{K_\infty, v,  \fq}$).
On the other hand, $f_2$ is described as
\[
f_2(x) = \cfrac{\Tr_{K_{n, v} / \Q_p}(\log_p (x))}{\log_p (\chi_{\cyc}(\gamma))}
\] 
for $x \in H^1_f (K_{n, v}, \Q_p(1))^{\chi} =( \Q_p \otimes_{\Z_p} U_{K_n, v}  )^{\chi}$ by \cite[Lemma 5.8 and its proof]{Fl04}.
Here, $\log_p (-)$ is the $p$-adic logarithm map and 
$\Tr_{K_{n, v} / \Q_p} : K_{n,v} = \oplus_{w_n \mid v} K_{n, w_n}  \to \oplus_{w \mid v} \Q_p$ is the map 
induced by trace maps. 

The aforementioned exact sequence fits into a commutative diagram
\begin{equation}\label{last diagram 1}
\xymatrix{
	0 \ar[r] 
	& (\oplus_{w \mid v} \Q_p)^\chi \ar[r]^-{f_1} \ar@{=}[d] 
	& \ol{U_{K_\infty, v , \fq}}
	\ar[r]^-{g} \ar[d]_-{\simeq}^-{\overline{\Colman_{K_\infty, v,  \fq}}} 
	& H^1_f (K_{n, v}, \Q_p(1))^{\chi} \ar[r]^-{f_2} \ar[d]^-{\log_{\Q_p(1)}=\log_p}_-{\simeq}
	& (\oplus_{w \mid v} \Q_p)^\chi \ar[r] \ar@{=}[d] & 0 \\
	0 \ar[r] 
	& (\oplus_{w \mid v} \Q_p)^\chi \ar[r]^-{g_1} 
	& \overline{(R_{K_v})_\fq} \ar[r]^-{\overline{\Xi_{n, 1}}} 
	& K_{n, v}^{\chi} \ar[r]^-{g_2} 
	&(\oplus_{w \mid v} \Q_p)^\chi \ar[r] &0,
}
\end{equation}
where $\overline{\Colman_{K_\infty, v,  \fq}}$ and $\overline{\Xi_{n, 1}}$ are induced by 
$\Colman_{K_\infty , v}$ and $\Xi_{n, 1}$ respectively, and the maps $g_1$ and $g_2$ are defined by the commutativity of the other squares. We note that Bloch--Kato logarithm $\log_{\Q_p (1)}$ for $\Q_p(1)$ is equal to the $p$-adic logarithm $\log_p$. The commutativity of the middle square above follows from \eqref{eq:Col_com}.

Let us describe the maps $g_1$ and $g_2$ explicitly. The map $(R_{K_v})_\fq \to K_{n, v}^{\chi} ; \; f(T) \mapsto f(0)$ induces the isomorphism 
$\ol{(R_{K_v})_\fq} \xrightarrow{\sim} K_{n, v}^{\chi}$, through which we identify these.  

From the description of $f_2$, we immediately have
\[
g_2(x) = \cfrac{\Tr_{K_{n, v} / \Q_p}(x)}{\log_p (\chi_{\cyc}(\gamma))} . 
\] 
On the other hand, we have
\[
g_1((1)_{w \mid v}) = \left( -\left(1-\frac{1}{p}\right) \log_p (\chi_{\cyc} (\gamma)) \right)_{w \mid v}.
\]
In fact, the Coleman power series associated with 
$\omega \eta_{K_\infty} = 
\parenth{\parenth{\frac{\zeta_{p^m}-1}{\zeta_{p^m}^{\chi_{\cyc} (\gamma)}-1}}_{w_n \mid v}}_{K_n} \in U_{K_\infty, v}$ is $ \cfrac{T}{(1+T)^{\chi_{\cyc}(\gamma)}-1}$, and we can compute
\begin{align*}
\overline{\Colman_{K_\infty, v,  \fq}} (f_1( (1_w)_{w \mid v}) ) 
&= \left(1-\cfrac{\varphi}{p}\right) 
\log \parenth{\cfrac{T}{(1+T)^{\chi_{\cyc}(\gamma)}-1}}|_{T=0} \\
&= \left(1-\cfrac{1}{p}\right) 
\log_p (\chi_{\cyc}(\gamma)^{-1}) = 
-\left(1-\cfrac{1}{p}\right) \log_p (\chi_{\cyc}(\gamma)).
\end{align*}

The diagram \eqref{last diagram 1} induces the right two commutative squares of the diagram (the left square is the previous one)
\[
\xymatrix{
	\Det_{\Lambda_{\fq}}^{-1}(\Delta_{K_{\infty , v}}(1)_{\fq}) \ar[r]^-{\simeq} \ar@{->>}[dd]
	& \Det_{\Lambda_{\fq}} (U_{K_\infty, v , \fq}) \ar@{->>}[d] \ar[r]^-{\wedge \Colman}
	& \Det_{\Lambda_{\fq}} ((R_{K_v})_\fq) \ar@{->>}[d] \\
	& \Det_{\Q_p (\chi)} (\ol{U_{K_\infty , v , \fq}}) \ar[d]_{\simeq}^{-\alpha} \ar[r]^-{\wedge \ol{\Colman}}
	& \Det_{\Q_p (\chi)} (\ol{(R_{K_v})_\fq}) \ar[d]^{-\beta}_{\simeq} \\
	\Det_{\Q_p (\chi)}^{-1} (\Delta_{K_n , v}(1)^{\chi}) \ar[r]_-{\simeq}
	& \Det_{\Q_p (\chi)} (H^1_f(K_{n, v}, \Q_p(1))^{\chi}) \ar[r]_-{\wedge \log_{\Q_p(1)}}
	& \Det_{\Q_p (\chi)} (K_{n, v}^{\chi}),
}
\]
where $\beta$ is induced by the lower sequence of \eqref{last diagram 1}.

Thanks to this diagram, the map $\Det_{\Lambda_{\fq}} ((R_{K_v})_\fq ) \to 
\Det_{\Q_p (\chi)} (K_{n, v}^\chi)$ to be computed coincides with $-\beta$.
By the definition of $\beta$, the descriptions of $g_1$ and $g_2$, and by Lemma \ref{lem:Xi}, 
we have 
\begin{align*}
(-1) \cdot \beta (\wedge_{i=1}^{r_{k_v}} \overline{f_i (T)})) 
&= 
\parenth{(1-p^{-1})\log_p (\chi_{\cyc} (\gamma)) }^{-1}
\cdot \cfrac{\log_p (\chi_{\cyc} (\gamma))}{[K_{n, w_n}:\Q_p]} \\
&\times [K_n : K]^{-(r_{k_v} -1)} \cdot
\cfrac{ \det_{\Q_p[\GG_{K_n}]} (1-\sigma_{K_n / \Q_n}^{-1} \mid K_{n, v} / \oplus_{w \mid v} \Q_p)}
{\det_{\Q_p[\GG_{K_n}]} (1-p^{-1}\sigma_{K_n / \Q_n} \mid K_{n, v} / \oplus_{w \mid v} \Q_p)}  
\cdot (\wedge_{i=1}^{r_{k_v}}f_i (0))^{\chi} \\
= [K_n : K]^{-r_{k_v}} \cdot
& \cfrac{1}{[K_w :\Q_p] \cdot (1-p^{-1})} 
\cdot \cfrac{ \det_{\Q_p[\GG_{K_n}]} (1-\sigma_{K_n / \Q_n}^{-1} \mid K_{n, v} / \oplus_{w \mid v} \Q_p)}
{\det_{\Q_p[\GG_{K_n}]} (1-p^{-1}\sigma_{K_n / \Q_n} \mid K_{n, v} / \oplus_{w \mid v}\Q_p)}  
\cdot (\wedge_{i=1}^{r_F}f_i (0))^{\chi}, 
\end{align*}
where the first equality is obtained by applying Lemma \ref{lem:Det01} 
to the bottom exact sequence in \eqref{last diagram 1} (here, 
$K_{n, v} / \oplus_{w \mid v} \Q_p$ is the quotient as $\Q_p[\GG_{K_n}]$-modules). 
Therefore, in the case $j = 1$ and $\ker \chi \supset D_{K_n , v}$, 
Theorem \ref{thm:explicit computation} follows from this formula and Lemma \ref{lem:det2} below.

\begin{lem}\label{lem:det2}
For any $a \in \Q_p$ and any $\chi \in \widehat{\GG_{K_n}}$ such that $\ker \chi \supset D_{K_n, v}$, we have  
\[\textstyle
\det_{\Q_p[\GG_{K_n}]} (1-a \sigma_{K_n / \Q_n} \mid K_{n, v} / \oplus_{w \mid v} \Q_p)^\chi = 
\begin{cases}
\cfrac{1-a^{r_{k_v}}}{1-a} & \text{ if } a \neq 1, \\
r_{k_v} = [k_v : \Q_p] & \text{ if } a = 1. 
\end{cases}
\]
This formula also holds when we replace 
$\sigma_{K_n / \Q_n} $ by $\sigma_{K_n / \Q_n}^{-1} $. 
\end{lem}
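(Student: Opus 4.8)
The plan is to reduce everything to Lemma \ref{lem:det1}(ii) by exploiting multiplicativity of the determinant in the short exact sequence of $\Q_p[\GG_{K_n}]$-modules
\[
0 \to \bigoplus_{w \mid v} \Q_p \to K_{n, v} \to K_{n, v}\big/\textstyle\bigoplus_{w \mid v} \Q_p \to 0,
\]
on which the $\Q_p[\GG_{K_n}]$-linear operator $1 - a\sigma_{K_n/\Q_n}$ acts compatibly (its $\Q_p[\GG_{K_n}]$-linearity was already used, via the companion matrix, in the proof of Lemma \ref{lem:det1}). Since $\Q_p[\GG_{K_n}]$ is semisimple, passing to the $\chi$-component keeps the sequence exact, so multiplicativity of determinants over the field $\Q_p(\chi)$ gives
\[
\det_{\Q_p[\GG_{K_n}]}(1 - a\sigma_{K_n/\Q_n} \mid K_{n,v})^\chi = \det_{\Q_p[\GG_{K_n}]}\!\bigl(1 - a\sigma_{K_n/\Q_n} \mid \textstyle\bigoplus_{w \mid v}\Q_p\bigr)^\chi \cdot \det_{\Q_p[\GG_{K_n}]}\!\bigl(1-a\sigma_{K_n/\Q_n}\mid K_{n,v}/\textstyle\bigoplus_{w\mid v}\Q_p\bigr)^\chi .
\]

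Next I would evaluate the two outer factors. For the middle one, Lemma \ref{lem:det1}(ii) gives $1 - a^{r_{k_v}}\sigma_{K_n/k_n,v}$, and since the Frobenius $\sigma_{K_n/k_n,v}$ lies in the decomposition group $D_{K_n,v}$ while $\ker\chi\supset D_{K_n,v}$ by hypothesis, its $\chi$-component is simply $1 - a^{r_{k_v}}$. For the submodule factor I would note that $\bigoplus_{w\mid v}\Q_p$ is the permutation module $\Q_p[\GG_{K_n}/D_{K_n,v}]$, so Frobenius reciprocity together with $\ker\chi\supset D_{K_n,v}$ shows that $(\bigoplus_{w\mid v}\Q_p)^\chi$ is one-dimensional over $\Q_p(\chi)$; moreover $\sigma_{K_n/\Q_n}$ acts on it as the identity, because on each local factor $K_{n,w_n}$ the operator $\sigma_{K_n,w_n}\in\Gal(K_{n,w_n}/\Q_p(\mu_{p^n}))$ fixes $\Q_p$. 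Hence this factor equals $1-a$, and the displayed identity becomes $(1-a)\cdot\det_{\Q_p[\GG_{K_n}]}(1-a\sigma_{K_n/\Q_n}\mid K_{n,v}/\bigoplus_{w\mid v}\Q_p)^\chi = 1 - a^{r_{k_v}}$.

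Finally, for $a\neq1$ I divide and obtain $\frac{1-a^{r_{k_v}}}{1-a}$. For $a=1$ I would instead observe that the quotient determinant, regarded as a function of $a$, is a polynomial (it is the determinant of a matrix whose entries are affine in $a$ acting on the finite-dimensional space $(K_{n,v}/\bigoplus_{w\mid v}\Q_p)^\chi$) which agrees with $1+a+\cdots+a^{r_{k_v}-1}$ on the infinite set $\Q_p\setminus\{1\}$, hence everywhere; evaluating at $a=1$ gives $r_{k_v}=[k_v:\Q_p]$. The $\sigma_{K_n/\Q_n}^{-1}$ statement follows identically, using the last sentence of Lemma \ref{lem:det1} together with $\sigma_{K_n/k_n,v}^{-1}\in D_{K_n,v}$ and the fact that $\sigma_{K_n/\Q_n}^{-1}$ also fixes $\bigoplus_{w\mid v}\Q_p$ pointwise. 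I do not expect a genuine obstacle here: the only steps requiring care are the $\chi$-component bookkeeping for the permutation module (where the hypothesis $\ker\chi\supset D_{K_n,v}$ must be seen to force exactly a one-dimensional trivial contribution) and the limiting argument producing the value $r_{k_v}$ at $a=1$.
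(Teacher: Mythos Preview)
Your proof is correct and follows essentially the same route as the paper: both use multiplicativity of the determinant along the short exact sequence $0\to\bigoplus_{w\mid v}\Q_p\to K_{n,v}\to K_{n,v}/\bigoplus_{w\mid v}\Q_p\to 0$, invoke Lemma~\ref{lem:det1}(ii) together with $\chi(\sigma_{K_n/k_n,v})=1$ for the middle factor, compute the submodule factor as $1-a$, and then handle $a=1$ by the polynomial continuity argument. Your extra justification (permutation module and Frobenius reciprocity) for why $(\bigoplus_{w\mid v}\Q_p)^\chi$ is one-dimensional with trivial $\sigma_{K_n/\Q_n}$-action is a welcome elaboration of what the paper leaves implicit.
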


\begin{proof}
By Lemma \ref{lem:det1}(ii), we obtain 
\[\textstyle
\det_{\Q_p[\GG_{K_n}]} (1-a \sigma_{K_n / \Q_n} \mid K_{n,v} )^\chi = 1 - a^{r_{k_v}}. 
\]
We also have $\det_{\Q_p[\GG_{K_n}]} (1-a \sigma_{K_n / \Q_n} \mid \oplus_{w \mid v} \Q_p )^\chi = 1 - a$.
These imply
\[
\textstyle \det_{\Q_p[\GG_{K_n}]} (1-a \sigma_{K_n / \Q_n} \mid K_{n,v}/ \oplus_{w \mid v}\Q_p )^\chi 
= \cfrac{1-a^{r_{k_v}}}{1-a} = 1 + a + \dots + a^{r_{k_v}-1}
\]
as long as $a \neq 1$.
Since $\det_{\Q_p[\GG_{K_n}]} (1-a \sigma_{K_n / \Q_n} \mid K_{n, v}/ \oplus_{w \mid v}\Q_p )^\chi$ is a polynomial in $a$ of degree at most $r_{k_v} - 1$, the formula for $a = 1$ is also valid. 
The final claim also follows in the same way.
\end{proof}

\section{Local bases at non-$p$-adic primes}\label{sec:non p adic prime}

We fix an odd prime number $p$ and 
let $K/k$ be an abelian extension of number fields.  
Set $K_{\infty} = K(\mu_{p^{\infty}})$.

We fix a finite prime $v$ of $k$ such that $v$ is not lying above $p$.
We consider the semi-local Iwasawa cohomology complex 
$\Delta_{K_\infty , v} (j)$ over $\Lambda := \Z_p[[\Gal (K_\infty / k)]]$.
The goal of this section is to construct an explicit basis of $\Det_{\Lambda}^{-1} (\Delta_{K_\infty, v} (j))$. 
Conceptually this should be related to the local $\varepsilon$-conjecture for $l \neq p$, 
which has been established by Yasuda \cite{Yasu09}. 

\subsection{The statement}\label{ss:basis_l}

Set $K_n = K(\mu_{p^n})$ and $\GG_{K_n} = \Gal(K_n/k)$. 
As before, we set
\[
\epsilon_v^j (K_n/k) := 
1 - e_{I_{K_n , v}} N (v)^{-j} \sigma_{K_n / k , v}
\in \Q_p[\GG_n]
\]
and 
\[
\delta_v (K_n/k) := 
\epsilon_v^0 (K_n / k) + e_{D_{K_n, v}} \# (D_{K_n, v} / I_{K_n, v})^{-1}
\in \Q_p[\GG_n]
\]
where $I_{K_n , v} , D_{K_n, v} \subset \GG_{K_n}$ are the inertia group and the decomposition group of $v$
respectively and $\sigma_{K_n /k , v} \in \GG_{K_n} / I_{K_n, v}$ is the arithmetic Frobenius of $v$. 

The following is the main result in this section. 
\begin{thm}\label{thm:det non p prime}
For any $j \in \Z_{\geq 1}$, there is a (unique) basis 
\[
\cH_{K_\infty /k, v}^j \in \Det_{\Lambda}^{-1} (\Delta_{K_\infty, v} (j)) 
\]
which is sent by the composite map 
\[
 \Det_{\Lambda}^{-1} (\Delta_{K_\infty , v} (j)) \to
 \Q_p \otimes_{\Z_p} \Det_{\Z_p[\GG_{K_n}]}^{-1} (\Delta_{K_n , v} (j)) \overset{\vartheta_{K_n, v}^j}{\xrightarrow{\sim}}
\Q_p [\GG_{K_n}],
\]
for any $n \in \Z_{\geq 0}$, to
\[
\begin{cases}
	\cfrac{\epsilon_v^{j-1} (K_n / k)}{\epsilon_v^{j} (K_n / k)} 
		& \text{if } j \geq 2, \\
	\cfrac{\epsilon_v^{0} (K_n / k) - e_{D_{K_n, v}} \# (D_{K_n, v} / I_{K_n, v})^{-1}}{\epsilon_v^{1} (K_n / k)} 
		& \text{if } j =1.
\end{cases}
\]
\end{thm}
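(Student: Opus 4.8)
The proof splits into a soft uniqueness part and an existence part; the latter is where the structural result of Greither, Kurihara and the third author \cite{GKK_09} does the real work, together with a determinant computation that matches it against the ad hoc maps $\vartheta^j_{K_n,v}$ of \S\ref{ss:setup}. For uniqueness: since $v\nmid p$, the prime $v$ has only finitely many places of $K_\infty$ above it, so by \eqref{Iwasawa cohomology} and its twists every cohomology group of $\Delta_{K_\infty,v}(j)$ is $\Lambda$-torsion; hence $\Det_\Lambda^{-1}(\Delta_{K_\infty,v}(j))$ is an invertible (indeed free, $\Lambda$ being semilocal) $\Lambda$-module which, via the total ring of fractions of $\Lambda$ and the finite characters of $\GG_{K_\infty}$, embeds into $\prod_{n}\Q_p[\GG_{K_n}]$ through the maps $\vartheta^j_{K_n,v}$. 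So at most one basis can have the prescribed images.

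\textbf{Step 1: presenting $\Delta_{K_\infty,v}(j)$.} Because $v\nmid p$, inertia acts trivially on $\Z_p(j)$ and $\Delta_{K_\infty,v}(j)$ has the expected ``unramified descent'' shape. I would invoke the structural result of \cite{GKK_09} --- morally the $\ell\neq p$ local $\varepsilon$-conjecture of Yasuda \cite{Yasu09} in the present equivariant Iwasawa setting --- to fix a canonical quasi-isomorphism presenting $\Delta_{K_\infty,v}(j)$ by an explicit perfect complex of $\Lambda$-modules, provided simultaneously at all finite levels. Applying $\Det_\Lambda^{-1}$ to this presentation produces a distinguished $\Lambda$-basis of $\Det_\Lambda^{-1}(\Delta_{K_\infty,v}(j))$; I would define $\cH_{K_\infty/k,v}^j$ to be this basis, after rescaling by a canonical unit so that its level-$n$ images land in the stated shape.

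\textbf{Step 2: the interpolation computation.} Base-changing the presentation of Step 1 along $\Lambda\to\Z_p[\GG_{K_n}]$ and using the compatibility with the finite-level complexes (unproblematic since $v\nmid p$), the image of $\cH_{K_\infty/k,v}^j$ under $\vartheta^j_{K_n,v}$ is computed by comparing the trivialization attached to the Step 1 differential with the one defining $\vartheta^j_{K_n,v}$ in \S\ref{ss:setup}. For $j\geq2$ one uses $\RG(K_{n,v},-)\simeq\RG(\hat\Z,\RG(I_{K_{n,v}},-))$: the degree-$(0,1)$ part $[\Z_p(j)^{I_{K_{n,v}}}\xrightarrow{1-\Frob_v}\Z_p(j)^{I_{K_{n,v}}}]$ and the degree-$(1,2)$ part $[H^1(I_{K_{n,v}},\Z_p(j))\xrightarrow{1-\Frob_v}H^1(I_{K_{n,v}},\Z_p(j))]$ contribute $\epsilon_v^j(K_n/k)$ and $\epsilon_v^{j-1}(K_n/k)$ respectively (up to the involution $\#$), by a determinant computation in the spirit of Lemma \ref{lem:det1}; their ratio is the stated formula. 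For $j=1$ the same bookkeeping, now routed through the exact sequence \eqref{eq:H1_Kum}, the compatibility of $\mathrm{ord}_{K_v}$ with $\mathrm{inv}_{K_v}$, and the logarithm $\log_{\Q_p(1)}$ entering $\vartheta^1_{K_n,v}$, produces the extra term $e_{D_{K_n,v}}\#(D_{K_n,v}/I_{K_n,v})^{-1}$, with its sign, from the $\Frob_v$-fixed part of $H^2$ via the normalization of the invariant map.

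\textbf{Main obstacle.} The substantive difficulty is Step 2 for ramified $v$ and for $j=1$: faithfully tracking the inertia idempotent $e_{I_{K_n,v}}$ (so that the differential is $1-e_{I_{K_n,v}}N(v)^{-j}\sigma_{K_n/k,v}$, not $1-N(v)^{-j}\sigma_{K_n/k,v}$), keeping the involution $\#$ and the sign of the $e_{D_{K_n,v}}$-term straight, and --- above all --- verifying that the presentation of \cite{GKK_09} is genuinely level-compatible, which is precisely what allows a single $\Lambda$-basis to realize the interpolation at every $n$ at once.
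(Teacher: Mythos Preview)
Your overall architecture is right: quote \cite{GKK_09} to pin down the integral lattice of $\Det_\Lambda^{-1}(\Delta_{K_\infty,v}(j))$ inside $Q$, then verify the interpolation character by character. But the paper's execution is both more concrete and, in the hard case, rather different from what you sketch.

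For the construction, the paper does not present $\Delta_{K_\infty,v}(j)$ by an explicit perfect complex. Instead it uses that $Q\otimes^{\mathbb L}_\Lambda\Delta_{K_\infty,v}(j)$ is acyclic, so $Q\otimes_\Lambda\Det_\Lambda^{-1}(\Delta_{K_\infty,v}(j))\simeq Q$ canonically, and then invokes \cite[Proposition~3.13]{GKK_09} to identify the integral lattice with the principal fractional ideal generated by the single explicit element
\[
h_{K_\infty/k,v}^j=\frac{1-e_{I_{K_\infty,v}}N(v)^{1-j}\sigma_{K_\infty/k,v}}{1-e_{I_{K_\infty,v}}N(v)^{-j}\sigma_{K_\infty/k,v}}\in Q.
\]
The basis $\cH^j_{K_\infty/k,v}$ is \emph{defined} as the element corresponding to $h_{K_\infty/k,v}^j$. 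This sidesteps entirely your ``level-compatibility of the presentation'' worry: there is no presentation to be compatible, only an element of $Q$.

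For the interpolation when $j\geq 2$ or $\ker\chi\not\supset D_{K_n,v}$, the paper's argument is much lighter than your Hochschild--Serre computation: both $\Delta_{K_\infty,v}(j)_{\fq_\chi}$ and $\Delta_{K_n,v}(j)^\chi$ are acyclic, so their canonical trivializations are automatically compatible and the image is simply $\chi(h_{K_\infty/k,v}^j)$, which is visibly the claimed Euler-factor ratio.

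The genuine gap in your sketch is the case $j=1$, $\ker\chi\supset D_{K_n,v}$. Here you invoke the sequence \eqref{eq:H1_Kum} and the logarithm $\log_{\Q_p(1)}$, but for $v\nmid p$ the map $\vartheta^1_{K_n,v}$ involves no logarithm at all --- only $\inv_{K_v}^{-1}\circ\ord_{K_v}$. The paper instead performs a Bockstein descent: with $\omega=1-\gamma$ a uniformizer of $\Lambda_{\fq_\chi}$, Proposition~\ref{prop:DetA} gives a commutative square relating the $\Lambda_{\fq_\chi}$-level trivialization (which multiplies by $\omega^{-1}$) to the residue-field trivialization via the Bockstein map $\beta'_\omega$. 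By \cite[Lemma~5.8]{Fl04}, $\beta'_\omega$ agrees with $\inv^{-1}\circ\ord$ up to the scalar $c_{v,\gamma}$ defined by $\gamma^{-c_{v,\gamma}}=\sigma_{K_\infty/K_n,v}$. One is then reduced to computing $c_{v,\gamma}^{-1}\cdot\chi(h^1_{K_\infty/k,v}/\omega)$, which is an elementary manipulation of $(1-\widetilde\sigma_{K_\infty/k,v})/(1-\gamma)$ yielding exactly $-(\epsilon_v^1(K_n/k)^\chi\cdot\#(D_{K_n,v}/I_{K_n,v}))^{-1}$. This Bockstein mechanism is the missing ingredient in your proposal.
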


If $K_\infty / k$ is unramified at $v$, 
it is convenient to modify the basis as follows. 

\begin{cor}\label{cor:unr_basis}
Assume that $K / k$ is unramified at $v$. Then, 
for any $j \in \Z_{\geq 1}$, there is a (unique) basis 
\[
\cE_{K_\infty /k, v}^j \in \Det_{\Lambda}^{-1} (\Delta_{K_\infty, v} (j)) 
\]
which is sent by the composite map
\[
 \Det_{\Lambda}^{-1} (\Delta_{K_\infty, v} (j)) \to
 \Q_p \otimes_{\Z_p} \Det_{\Z_p[\GG_{K_n}]}^{-1} (\Delta_{K_n , v} (j)) \overset{\vartheta_{K_n, v}^j}{\xrightarrow{\sim}}
\Q_p [\GG_{K_n}],
\]
for any $n \in \Z_{\geq 0}$, to
\[
\begin{cases}
	\cfrac{\epsilon_v^{1-j} (K_n / k)^\#}{\epsilon_v^{j} (K_n / k)} 
		& \text{if } j \geq 2, \\
	\cfrac{\delta_v (K_n / k)^\#}{\epsilon_v^{1} (K_n / k)} 
		& \text{if } j =1.
\end{cases}
\]
\end{cor}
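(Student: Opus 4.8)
The plan is to deduce the corollary from Theorem~\ref{thm:det non p prime} by correcting the basis $\cH_{K_\infty/k,v}^j$ by an explicit unit of $\Lambda$. Since $v\nmid p$ and $K/k$ is unramified at $v$, and since $k(\mu_{p^\infty})/k$ is unramified outside $p$, the extension $K_\infty/k$ is unramified at $v$; hence $I_{K_n,v}=1$ for every $n$, the decomposition group $D_{K_n,v}\subset\GG_{K_n}$ is cyclic generated by the arithmetic Frobenius $\sigma_{K_n/k,v}$, and there is a well-defined arithmetic Frobenius $\sigma_{K_\infty/k,v}\in\GG_{K_\infty}$ restricting to $\sigma_{K_n/k,v}$ at each finite level. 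As $N(v)$ is a power of a rational prime different from $p$, we have $N(v)\in\Z_p^\times$; therefore
\[
u_v^j:=-\,N(v)^{j-1}\,\sigma_{K_\infty/k,v}^{-1}\ \in\ \Lambda^\times,
\]
and I would set $\cE_{K_\infty/k,v}^j:=u_v^j\cdot\cH_{K_\infty/k,v}^j$, which is again a $\Lambda$-basis of $\Det_{\Lambda}^{-1}(\Delta_{K_\infty,v}(j))$ since $\Det_{\Lambda}^{-1}(\Delta_{K_\infty,v}(j))$ is free of rank one over $\Lambda$.

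Next I would compute its image under $\vartheta_{K_n,v}^j$. Because the transition map $\Det_{\Lambda}^{-1}(\Delta_{K_\infty,v}(j))\to\Q_p\otimes_{\Z_p}\Det^{-1}_{\Z_p[\GG_{K_n}]}(\Delta_{K_n,v}(j))$ is $\Lambda$-equivariant (it sends $u\cdot b$ to $\bar u\cdot\bar b$) and $\vartheta_{K_n,v}^j$ is $\Q_p[\GG_{K_n}]$-linear, this image equals the image of $\cH_{K_\infty/k,v}^j$, known from Theorem~\ref{thm:det non p prime}, multiplied by $-N(v)^{j-1}\sigma_{K_n/k,v}^{-1}$. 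Thus it suffices to check, in $\Q_p[\GG_{K_n}]$ with $\sigma:=\sigma_{K_n/k,v}$ and $e_D:=e_{D_{K_n,v}}$, the identity
\[
-\,N(v)^{j-1}\sigma^{-1}\cdot\epsilon_v^{j-1}(K_n/k)=\epsilon_v^{1-j}(K_n/k)^\#\quad(j\ge2),
\]
and, for $j=1$,
\[
-\,\sigma^{-1}\cdot\Bigl(\epsilon_v^{0}(K_n/k)-e_D\,\tfrac{1}{\#D_{K_n,v}}\Bigr)=\delta_v(K_n/k)^\#.
\]
The first is immediate from $\epsilon_v^{j-1}(K_n/k)=1-N(v)^{1-j}\sigma$, $\epsilon_v^{1-j}(K_n/k)^\#=1-N(v)^{j-1}\sigma^{-1}$ and the factorization $1-N(v)^{j-1}\sigma^{-1}=-N(v)^{j-1}\sigma^{-1}(1-N(v)^{1-j}\sigma)$. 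For the second one uses $\epsilon_v^{0}(K_n/k)=1-\sigma$ (valid since $I_{K_n,v}=1$), $\sigma^{-1}e_D=e_D$ (as $\sigma\in D_{K_n,v}$) and $e_D^\#=e_D$, giving $-\sigma^{-1}(1-\sigma-e_D\tfrac{1}{\#D_{K_n,v}})=1-\sigma^{-1}+e_D\tfrac{1}{\#D_{K_n,v}}=\epsilon_v^{0}(K_n/k)^\#+e_D\tfrac{1}{\#D_{K_n,v}}=\delta_v(K_n/k)^\#$. (One should also record that $I_{K_n,v}=1$ makes $\#(D_{K_n,v}/I_{K_n,v})=\#D_{K_n,v}$, so the correction terms match those in the two statements.)

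Uniqueness then follows from the uniqueness of $\cH_{K_\infty/k,v}^j$ together with the injectivity of $\Lambda\hookrightarrow\prod_n\Z_p[\GG_{K_n}]\hookrightarrow\prod_n\Q_p[\GG_{K_n}]$: any basis of $\Det_{\Lambda}^{-1}(\Delta_{K_\infty,v}(j))$ with the prescribed image at every $n$ differs from $\cE_{K_\infty/k,v}^j$ by a unit of $\Lambda$ that becomes trivial at all levels, hence equals $1$. I do not expect a genuine obstacle; the only delicate points are the well-definedness of the Frobenius unit $u_v^j$, which is exactly where the hypotheses $v\nmid p$ and ``$K/k$ unramified at $v$'' are used, and bookkeeping of the sign and of the idempotent $e_{D_{K_n,v}}$ in the case $j=1$.
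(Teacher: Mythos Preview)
Your proof is correct and follows exactly the same approach as the paper: the paper also defines $\cE_{K_\infty/k,v}^j := -N(v)^{j-1}\sigma_{K_\infty/k,v}^{-1}\cdot\cH_{K_\infty/k,v}^j$ and simply asserts that ``the interpolation properties follow from simple computations,'' whereas you have written out those computations (and the uniqueness argument) explicitly.
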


\begin{proof}
Put $\cE_{K_\infty /k, v}^j := - N (v)^{j-1} \sigma_{K_\infty / k, v}^{-1} \cdot \cH_{K_\infty /k, v}^j$, 
where $\sigma_{K_\infty / k, v} \in \Gal (K_\infty / k)$ is the arithmetic Frobenius of $v$.  
Then the interpolation properties follow from simple computations.
\end{proof}

\subsection{Construction of the basis}\label{ss:c_basis_l}

Let $Q$ be the total ring of fractions of $\Lambda$. 
Since $Q \otimesL_{\Lambda} \Delta_{K_\infty, v} (j)$ is acyclic, we have a canonical isomorphism 
\begin{equation}\label{canonical det}
Q \otimes_{\Lambda} \Det_{\Lambda}^{-1} (\Delta_{K_\infty , v} (j))
 \simeq Q
\end{equation}
for any $j \in \Z$ by Proposition \ref{prop:det_ses3}.
Set $\GG_{K_\infty} := \Gal (K_\infty / k)$ and we write $I_{K_\infty, v} \subset \GG_{K_{\infty}}$ for the inertia group 
of $v$.
Note that $I_{K_\infty, v}$ is a finite group. 
Define 
\[
h_{K_\infty / k , v}^j := 
\cfrac{1-e_{I_{K_\infty , v}} N (v)^{1-j}\sigma_{K_\infty / k, v}}{1-e_{I_{K_\infty , v}} N(v)^{-j}\sigma_{K_\infty / k, v}} 
\in Q.
\]

\begin{prop}\label{prop:h}
Under the canonical isomorphism \eqref{canonical det}, we have
\[
\Det_{\Lambda}^{-1} (\Delta_{K_\infty, v} (j)) = (h_{K_\infty / k, v}^j)
\]
for any $j \in \Z. $
\end{prop}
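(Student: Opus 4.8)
The plan is to compute the rank–one $\Lambda$-submodule $\Det_{\Lambda}^{-1}(\Delta_{K_\infty,v}(j))\subseteq Q$ explicitly by realizing $\Delta_{K_\infty,v}(j)$ as a mapping cone built from inertia and Frobenius. By Shapiro's lemma, $\Delta_{K_\infty,v}(j)=\RG_{\Iw}(K_{\infty,v},\Z_p(j))$ is canonically identified with $\RG(k_v,\mathbb{T})$, where $\mathbb{T}:=\Lambda^{\iota}\otimes_{\Z_p}\Z_p(j)$ carries the tautological $G_{k_v}$-action (through $\GG_{K_\infty}=\Gal(K_\infty/k)$, with the involution $\iota$ built in). Since $v\nmid p$, the wild inertia and the prime-to-$p$ tame inertia act through a finite group of order prime to $p$, so they contribute no higher cohomology and only a projective idempotent summand to the invariants; one checks that $\RG(I_{k_v},\mathbb{T})$ is then a \emph{perfect} complex of $\Lambda$-modules, and the standard identity $\RG(k_v,-)=\RG(\widehat{\Z},\RG(I_{k_v},-))$ yields
\[
\Delta_{K_\infty,v}(j)\ \simeq\ \Cone\!\bigl(\RG(I_{k_v},\mathbb{T})\xrightarrow{\,1-\sigma_{K_\infty/k,v}\,}\RG(I_{k_v},\mathbb{T})\bigr)[-1],
\]
where $\sigma_{K_\infty/k,v}$ denotes the Frobenius.

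Next I would pass to $Q$. Because $Q\otimesL_{\Lambda}\Delta_{K_\infty,v}(j)$ is acyclic (as recalled in \S\ref{ss:c_basis_l}), the canonical isomorphism \eqref{canonical det} identifies $\Det_{\Lambda}^{-1}(\Delta_{K_\infty,v}(j))$ with the principal fractional ideal generated by the reciprocal of $\det_Q\bigl(1-\sigma_{K_\infty/k,v}\mid Q\otimes_{\Lambda}\RG(I_{k_v},\mathbb{T})\bigr)$. By multiplicativity of the determinant over the cohomology of a perfect complex, this determinant equals
\[
\frac{\det_Q\bigl(1-\sigma_{K_\infty/k,v}\mid H^0(I_{k_v},Q\otimes\mathbb{T})\bigr)}{\det_Q\bigl(1-\sigma_{K_\infty/k,v}\mid H^1(I_{k_v},Q\otimes\mathbb{T})\bigr)} .
\]
The key structural input is that, since $v\nmid p$ forces inertia to act trivially on $\Z_p(j)$, one has $H^0(I_{k_v},Q\otimes\mathbb{T})\cong e_{I_{K_\infty,v}}Q\,(j)$ and $H^1(I_{k_v},Q\otimes\mathbb{T})\cong e_{I_{K_\infty,v}}Q\,(j-1)$: the idempotent $e_{I_{K_\infty,v}}=N_{I_{K_\infty,v}}/\#I_{K_\infty,v}$ is precisely the image in $Q$ of the $I$-norm computing the invariants of the group ring, and the Tate twist by $-1$ on $H^1$ records the Frobenius action through $\chi_{\cyc}$ on $H^1$ of the tame inertia $\cong\Z_p(1)$.

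It then remains to evaluate $\det_Q\bigl(1-\sigma_{K_\infty/k,v}\mid e_{I_{K_\infty,v}}Q(m)\bigr)$ for $m=j$ and $m=j-1$. The point is that $e_{I_{K_\infty,v}}Q(m)$ is a \emph{projective but not free} $Q$-module, namely the idempotent summand $e_{I_{K_\infty,v}}Q$, on which Frobenius acts by multiplication by the scalar $N(v)^{-m}\sigma_{K_\infty/k,v}\in e_{I_{K_\infty,v}}Q$ (here the Frobenius ambiguity is killed by $e_{I_{K_\infty,v}}$, which is why $e_{I_{K_\infty,v}}$ appears in the end); hence this determinant is the element of $Q$ that equals $1-N(v)^{-m}\sigma_{K_\infty/k,v}$ on the components where $I_{K_\infty,v}$ acts trivially and $1$ on the others, i.e. $1-e_{I_{K_\infty,v}}N(v)^{-m}\sigma_{K_\infty/k,v}$. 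Forming the ratio gives $\bigl(1-e_{I_{K_\infty,v}}N(v)^{-j}\sigma_{K_\infty/k,v}\bigr)\big/\bigl(1-e_{I_{K_\infty,v}}N(v)^{1-j}\sigma_{K_\infty/k,v}\bigr)$, whose reciprocal is exactly $h^{j}_{K_\infty/k,v}$, and therefore $\Det_{\Lambda}^{-1}(\Delta_{K_\infty,v}(j))=(h^{j}_{K_\infty/k,v})$.

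The main obstacle is not any single step but the careful handling of determinants over the non-regular ring $\Lambda$ together with the sign and twist bookkeeping. Concretely: (i) one must justify that $\RG(I_{k_v},\mathbb{T})$ is perfect and that its cohomology over $Q$ collapses to the $e_{I_{K_\infty,v}}$-idempotent pieces with the stated twists — this is where $v\nmid p$, and conceptually the non-$p$-adic local $\varepsilon$-conjecture of Yasuda, enters; (ii) one must track the involution $\iota$ in Shapiro's lemma (which, with the arithmetic-versus-geometric Frobenius convention, distinguishes $\sigma_{K_\infty/k,v}$ from $\sigma_{K_\infty/k,v}^{-1}$ and $N(v)^{m}$ from $N(v)^{-m}$, and is the source of the $\#$'s elsewhere in the paper) as well as the sign of the exponent coming from the shift $[-1]$ in the cone, so that the answer is \emph{exactly} $(h^{j}_{K_\infty/k,v})$ and not its inverse, its $\#$-conjugate, or $h^{j}_{K_\infty/k,v}$ times a unit of $\Lambda$. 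Since $h^{j}_{K_\infty/k,v}$ is visibly not a unit in $\Lambda$ (it involves $e_{I_{K_\infty,v}}$, whose denominator need not be prime to $p$), these distinctions are genuine and not merely cosmetic.
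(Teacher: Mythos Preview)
Your plan is sound and would yield a correct proof, but it takes a genuinely different route from the paper. The paper's own proof is two lines: the case $j=1$ is quoted from \cite[Proposition~3.13]{GKK_09}, and the general case is obtained from $j=1$ by the cyclotomic twist (using that $\twist_{1-j}(h_{K_\infty/k,v}^{1})=h_{K_\infty/k,v}^{j}$, since $\chi_{\cyc}(\sigma_{K_\infty/k,v})=N(v)$). No direct computation is carried out in the paper itself.

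What you outline is essentially the content of the cited reference: realize $\Delta_{K_\infty,v}(j)$ via Shapiro's lemma as $\RG(k_v,\mathbb{T})$, rewrite this as the Frobenius cone on $\RG(I_{k_v},\mathbb{T})$, pass to $Q$, and read off the determinant as the alternating product of $\det_Q(1-\varphi)$ on the inertia cohomology. Your identification of $H^0$ and $H^1$ over $Q$ with the idempotent pieces $e_{I_{K_\infty,v}}Q$ (with the appropriate Tate twist on $H^1$ coming from the topological generator of tame inertia) is the heart of the matter, and your warning about the involution and sign bookkeeping is exactly right: getting $h^j$ rather than $(h^j)^{\#}$ or $(h^j)^{-1}$ hinges on tracking whether the arithmetic Frobenius acts by $N(v)^{j}\sigma_{K_\infty/k,v}^{-1}$ or $N(v)^{-j}\sigma_{K_\infty/k,v}$, which depends on the Shapiro convention. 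The paper sidesteps all of this by citing the $j=1$ case and twisting, at the cost of relying on an external reference; your approach is self-contained but must confront the bookkeeping you flag. Either buys the result.
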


\begin{proof}
The claim for $j = 1$ is proved in \cite[Proposition 3.13]{GKK_09}.
The general case follows simply by twisting.
\end{proof}
We define the $\Lambda$-basis $\cH_{K_\infty /k, v}^j$ of $\Det_{\Lambda}^{-1} (\Delta_{K_\infty, v} (j))$ in Theorem \ref{thm:det non p prime} as the element which corresponds to $h_{K_\infty / k, v}^j$ by Proposition \ref{prop:h}. 

\subsection{The interpolation properties}\label{ss:interp_l}

We prove the interpolation properties in Theorem \ref{thm:det non p prime}. 
Take $n \in \Z_{\geq 0}$ and let $\chi$ be a character of $\GG_{K_n}$.
In the same way as in \S \ref{ss:interp_p}, 
we define $\fq_\chi$, $\Lambda_{\fq_\chi}$, $\Delta_{K_{\infty}, v} (j)_{\fq_\chi}$, and $\Delta_{K_n , v} (j)^\chi$.
We have a canonical isomorphism
\begin{equation}\label{complex v descent chi}
\Lambda_{\fq_\chi} / \fq_\chi \otimesL_{\Lambda_{\fq_\chi}} \Delta_{K_\infty, v} (j)_{\fq_\chi}
 \simeq 
\Delta_{K_n , v} (j)^\chi. 
\end{equation}

We consider the map
\[
f_{\fq_\chi}^{j} : 
\Det_{\Lambda_{\fq_\chi}}^{-1} (\Delta_{K_{\infty, v}} (j)_{\fq_\chi})
\overset{\eqref{complex v descent chi}}{\twoheadrightarrow}
\Det_{\Q_p (\chi)}^{-1} (\Delta_{K_{n,v}} (j)^\chi) 
\overset{\vartheta_{K_n, v}^{j}}{\xrightarrow{\sim}}
\Q_p (\chi). 
\]
We use the same symbol $\cH_{K_\infty / k , v}^j$ for the image of the basis $\cH_{K_\infty /k , v}^j$ by the natural map  
$\Det_{\Lambda}^{-1} (\Delta_{K_\infty, v} (j)) \to 
\Det_{\Lambda_{\fq_\chi}}^{-1} (\Delta_{K_\infty , v} (j)_{\fq_\chi})$ if no confusion occurs. 

In the rest of this section, we show
\[
f_{\fq_\chi}^{j}
(\cH_{K_\infty / k, v}^{j})
=
\begin{cases}
\cfrac{\epsilon_v^{j-1} (K_n / k)^\chi}{\epsilon_v^{j} (K_n / k)^\chi} & \text{if } j \geq 2, \\
\parenth{\epsilon_v^{0} (K_n / k) - e_{D_{K_n, v}} \cfrac{1}{\# (D_{K_n, v} / I_{K_n, v})}}^\chi
(\epsilon_v^{1} (K_n / k)^\chi)^{-1} & \text{if } j =1
\end{cases}
\]
for any $\chi \in \widehat{\GG_{K_n}}$, which establishes Theorem \ref{thm:det non p prime}.

\subsubsection{The case $j \geq 2$ or  $\ker \chi \not\supset D_{K_n, v}$}

We first consider the case where $j \geq 2$ or $\ker \chi \not\supset D_{K_n, v}$. 
In this case, both 
$\Delta_{K_\infty, v} (j)_{\fq_\chi}$ and $\Delta_{K_n, v} (j)^\chi $ are acyclic and 
we have a commutative diagram 
\[
\xymatrix{
\Det_{\Lambda_{\fq_\chi}}^{-1} (\Delta_{K_\infty, v} (j)_{\fq_\chi}) 
\ar[r]^-{\sim} \ar[d]^-{\eqref{complex v descent chi}}
& \Lambda_{\fq_\chi} \ar[d]^-{\text{mod } \fq_\chi} \\
\Det_{\Q_p (\chi)}^{-1} (\Delta_{K_n, v} (j)^\chi) \ar[r]^-{\sim}
&\Q_p (\chi), 
}
\]
where both horizontal arrows are canonical maps in Proposition \ref{prop:det_ses3}. 
The bottom arrow in this diagram is equal to the $\chi$-component of 
$\vartheta_{K_n, v}^{j}$, and we have the desired interpolation property 
\[
f_{\fq_\chi}^{j}
(\cH_{K_\infty / k , v}^{j})
= \chi (h^{j}_{K_\infty / k, v}) 
=
\cfrac{\epsilon_v^{j-1} (K_n / k)^\chi}{\epsilon_v^{j} (K_n / k)^\chi}
\]
in this case. 

\subsubsection{The case $j = 1$ and  $\ker \chi \supset D_{K_n, v}$}

We consider the case where $j =1$ and $\ker \chi \supset D_{K_n, v}$.
We put $\fq = \fq_{\chi}$ for simplicity.
In this case, 
we have $H^1 (\Delta_{K_\infty, v} (1)_{\fq}) = 0$, but $H^2 (\Delta_{K_\infty, v} (1)_{\fq}) \simeq 
H^1(\Delta_{K_n, v} (1)^{\chi}) \simeq H^2(\Delta_{K_n, v} (1)^{\chi}) \simeq \Q_p (\chi)$ are non-zero.
Thus we need some technical descent arguments 
as in \S \ref{chi = 1 and j=1}. 

We fix a topological generator $\gamma$ of $\Gal (K_\infty / K_n)$ and put $\omega:= 1-\gamma$. 
Then $\omega$ is a uniformizer of $\Lambda_{\fq}$. 
Since $\Lambda_{\fq}$ is a discrete valuation ring, 
we can identify $\Delta_{K_\infty, v} (1)_{\fq} = [C^1 \to C^2]$, where both $C^1$ and $C^2$ are free $\Lambda_{\fq}$-modules of the same rank. 
Then we have $\Delta_{K_n, v} (1)^{\chi}= [\overline{C}^1 \to \overline{C}^2]$ with $\overline{C}^i := C^i / \omega$. 
By Proposition \ref{prop:DetA}, we obtain a commutative diagram
\[
\xymatrix{
\Det^{-1}_{\Lambda_{\fq}} (\Delta_{K_\infty, v} (1)_{\fq}) \ar[r]^-{\times \omega^{-1}}_-{\sim} 
\ar@{->>}[d]^-{\eqref{complex v descent chi}} 
&  \Det_{\Lambda_{\fq}} (H^1(\Delta_{K_\infty, v} (1)_{\fq})) = \Lambda_{\fq} \ar@{->>}[d]^-{\text{mod } \omega} \\
\Det^{-1}_{\Q_p (\chi)} (\Delta_{K_n , v} (1)^{\chi}) \ar[r]^-{\beta_\omega}_-{\sim} 
& \Det_{\Q_p (\chi)} (H^1(\Delta_{K_\infty , v} (1)_{\fq}) / \omega) = \Q_p (\chi). 
}
\]
Here the bottom horizontal map $\beta_{\omega}$ is induced by the isomorphism 
\begin{equation}\label{Bockstain map 1}
\beta_\omega' : H^1 (\Delta_{K_n,  v} (1)^{\chi}) \xrightarrow{\sim} 
H^2 (\Delta_{K_\infty, v} (1)_{\fq}) \simeq 
H^2 (\Delta_{K_n,  v} (1)^{\chi}), 
\end{equation}
where the first map is induced by the exact triangle 
$\Delta_{K_\infty, v} (1)_{\fq} \xrightarrow{\times \omega}
\Delta_{K_\infty, v} (1)_{\fq} \to \Delta_{K_n,  v} (1)^{\chi}$ and the 
second isomorphism is the natural one. 
Then, by \cite[Lemma 5.8 and its proof]{Fl04}, we know that 
the isomorphism $\inv_{K_n, v}^{-1} \circ \ord_{K_n, v} :  H^1 (\Delta_{K_n,  v} (1)^{\chi}) \xrightarrow{\sim} 
H^2 (\Delta_{K_n,  v} (1)^{\chi})$ in \S \ref{ss:setup}
 coincides with $c_{v, \gamma}^{-1}\cdot \beta_\omega'$, 
where $c_{v, \gamma} \in \Z_p$ is defined by $\gamma^{-c_{v, \gamma}} = \sigma_{K_\infty / K_n, v}$ 
($\sigma_{K_\infty / K_n, v}$ is the arithmetic Frobenius of $v$ in $\Gal (K_\infty / K_n)$). 
From this, we have a commutative diagram
\[
\xymatrix@C=50pt{
\Det^{-1}_{\Lambda_{\fq}} (\Delta_{K_\infty, v} (1)_{\fq}) \ar[r]^-{\times \omega^{-1}}_-{\sim} 
\ar@{->>}[d]^-{\eqref{complex v descent chi}} 
&\Lambda_{\fq} \ar@{->>}[d]^-{\text{mod } \omega}
\\
\Det^{-1}_{\Q_p (\chi)} (\Delta_{K_n, v} (1)^{\chi}) \ar[r]^-{c_{v, \gamma} \cdot\vartheta_{K_n, v}^{1}}_-{\sim} 
& \Q_p (\chi). 
}
\]
Let $Q_{\fq}$ be the total quotient ring of $\Lambda_{\fq}$. 
Then $Q_{\fq} \otimesL_{\Lambda_{\fq}} \Delta_{K_\infty, v} (1)_{\fq}$ is 
acyclic and $\omega^{-1}$ times the canonical isomorphism 
$Q_{\fq} \otimes_{\Lambda_{\fq}} \Det_{\Lambda_{\fq}}^{-1} (\Delta_{K_\infty, v} (1)_{\fq}) 
\xrightarrow{\sim} Q_{\fq}$ is equal to the one which is induced by the top horizontal map in the above diagram. 
Therefore, we have 
\begin{align*}
f_{\fq}^{1} (\cH_{K_\infty / k, v}^{1}) &=
\chi \parenth{\cfrac{h_{K_\infty / k , v}^{1}}{\omega}} c_{v, \gamma}^{-1} 
=
\cfrac{c_{v, \gamma}^{-1}}{\epsilon_v^1 (K_n/ k)^{\chi}} \cdot \chi 
\parenth{\cfrac{1-\widetilde{\sigma}_{K_\infty / k, v}}{1-\gamma}} \\
&=
\cfrac{c_{v, \gamma}^{-1}}{\epsilon_v^1 (K_n/ k)^{\chi}} \cdot 
\chi \parenth{\cfrac{1-\sigma_{K_\infty / K_n , v}}{(1-\gamma) (1 + \widetilde{\sigma}_{K_\infty / k , v} + 
\cdots + \widetilde{\sigma}_{K_\infty / k , v}^{\# (D_{K_n, v} / I_{K_n, v})-1})}} 
\\
&=
\cfrac{c_{v, \gamma}^{-1}}{\epsilon_v^1 (K_n/ k)^\chi} \cdot \cfrac{1}{\# (D_{K_n, v} / I_{K_n, v})} \cdot
\chi \parenth{\cfrac{1-\gamma^{-c_{v, \gamma}}}{1-\gamma}} \\
&= 
\cfrac{c_{v, \gamma}^{-1}}{\epsilon_v^1 (K_n/ k)^\chi} \cdot \cfrac{1}{\# (D_{K_n, v} / I_{K_n, v})} 
\cdot (- c_{v, \gamma}) \\
&=
\cfrac{-1}{\epsilon_v^1 (K_n/ k)^\chi \cdot \# (D_{K_n, v} / I_{K_n, v})}, 
\end{align*}
where $\widetilde{\sigma}_{K_\infty / k , v} \in \GG_{K_\infty}$ is a lift of 
the arithmetic Frobenius $\sigma_{K_\infty / k , v} \in \GG_{K_\infty} / I_{K_\infty, v} $. 
This completes the proof of Theorem \ref{thm:det non p prime}. \qed

\section{Archimedean considerations}\label{sec:arch}

\subsection{The Gauss sums}

In this subsection, we review the definition and basic properties of Gauss sums (cf.~\cite[Chapter 1, \S 5]{Fro83}).

Let $K / k$ be a finite abelian extension of number fields and $G:= \Gal(K/k)$ its Galois group. Let $v$ be a finite prime of $k$.
We write $D_v \subset G$ for the decomposition group of $v$. 
We write
\[
\rec_{k_v}: k_v^\times \twoheadrightarrow D_v
\]
for the local reciprocity map, which sends a uniformizer to an {\it arithmetic} Frobenius.

Let $\psi_{k_v} : k_v \to \C^{\times}$ be the additive character defined by 
\[
\psi_{k_v} : k_v \xrightarrow{\Tr_{k_v / \Q_l}} \Q_l \twoheadrightarrow \Q_l / \Z_l 
\hookrightarrow \Q/ \Z \xrightarrow{\exp ( 2 \pi \sqrt{-1}(-))} \C^\times , 
\]
where $l$ is the prime number below $v$ and 
the last map sends $x$ to $\exp ( 2 \pi \sqrt{-1}x)$. 
For a character $\chi$ of $G$, we put $\chi_v := \chi|_{D_v} : D_v \to \C^\times$ and
\[
\theta_{\chi_v}: k_v^\times \overset{\rec_{k_v}}{\twoheadrightarrow}
D_v \xrightarrow{\chi_v} \C^\times.
\]

\begin{defn}[Gauss sums]\label{def:Gsum}
Let $\chi$ be a character of $G$.
For any finite prime $v$ of $k$,
we define the local Gauss sum $\tau_{k}(\chi_v) \in \ol{\Q}$ by
\[
\tau_k (\chi_v) :=
\begin{cases}
\sum_{u \in \OO_{k_v}^\times / 1 + \ff(\chi_v)} 
\theta_{\chi_v} (uc^{-1}) \psi_{k_v} (uc^{-1})  & \text{if } \ff(\chi_v) \neq(1), \\ 
\theta_{\chi_v} (c^{-1}) & \text{if } \ff(\chi_v) = (1), 
\end{cases}
\]
where $\ff(\chi_v)$ is the conductor of $\chi_v$ and 
$c \in k_v^\times$ is an element satisfying
\[
(c) = \ff(\chi_v) \cdot \cD_{k_v / \Q_l},  
\]
where $\cD_{k_v / \Q_l}$ is the different ideal of $k_v / \Q_l$.
It is known that $\tau_{k}(\chi_v)$ is independent of the choice of $c$.
We also define the (global) Gauss sum $\tau_k(\chi) \in \ol{\Q}$ by 
\[\tau_k(\chi) := \prod_{v} \tau_k (\chi_v),
\]
where $v$ runs over all finite primes of $k$. This is actually a finite product since $k/\Q$ is unramified and $\ff(\chi_v)=(1)$ for almost all finite primes $v$ of $k$, and such $v$ satisfies $\tau_{k}(\chi_v)=1$.
\end{defn}

\subsection{The functional equations}

We review the functional equations of the Hecke $L$-function $L(\chi, s)$ of $k$ associated with a character $\chi : \Gal(K/k) \to \C^{\times}$. 

\begin{defn}\label{defn:A}
We write $r_\R (k)$ and $r_\C (k)$ for the number of the real places and complex places of $k$ respectively. 
For any character $\chi$ of $G$, 
we also write $r_\R (k)_{\ram}^\chi$ for the number of the real places of $k$ where $\chi$ ramifies.
For $j \in \Z_{\geq 1}$, we then put 
\[A(k, \chi, j):=
\begin{cases}
2^{r_\R (k) + r_\C (k) - r_\R (k)_{\ram}^\chi} \cdot 
(\pi \sqrt{-1})^{ r_\C (k) + r_\R (k)_{\ram}^\chi} 
& \text{if } j \text{ is even}, \\
2^{ r_\C (k) + r_\R (k)_{\ram}^\chi} \cdot
(\pi \sqrt{-1})^{r_\R (k) + r_\C (k) - r_\R (k)_{\ram}^\chi}
& \text{if } j \text{ is odd}.
\end{cases}
\]
\end{defn}

Put $r_k := [k : \Q]$.
We put
\[
\delta_{\chi={\trivial} , j=1} := \begin{cases}
-1 & \text{if }\chi = {\trivial} \text { and } j=1, \\
1 & \text{otherwise.}
\end{cases}
\]
Let $\mathfrak{f}_\chi$ be the conductor ideal of $\chi$.

\begin{thm}\label{thm:func eq}
For any character $\chi : \Gal (K/k) \to \C^{\times}$ and $j \in \Z_{\geq 1}$, we have 
\begin{align*}
\cfrac{L^\ast (\chi^{-1}, 1-j)}{L^\ast (\chi, j)}  
&=
\delta_{\chi={\trivial} , j=1} \cdot  |D_k|^{j-\frac{1}{2}} \cdot  N (\mathfrak{f}_\chi)^{j-1} \cdot \tau_k ({\chi}) \cdot (\sqrt{-1})^{r_\C (k)}
\cdot \cfrac{((j-1)!)^{r_k}}{(2 \pi \sqrt{-1})^{jr_k}} 
\cdot A(k, \chi, j),
\end{align*}
where $L^\ast (\chi, j)$ is the leading coefficient of the Taylor expansion of $L(\chi, s)$ at $s=j$ and $N (\mathfrak{f}_\chi) = \# (\OO_k / \mathfrak{f}_\chi)$.
\end{thm}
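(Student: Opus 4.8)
\textit{Approach.} The plan is to derive this from the classical functional equation of the Hecke $L$-function of $k$ attached to $\chi$ (via class field theory $\chi$ corresponds to a finite-order Hecke character of conductor $\mathfrak f_\chi$), and then to carry out the bookkeeping of leading terms and $\Gamma$-factors at the integer points $s=j$ and $s=1-j$.

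\textit{Step 1: the completed $L$-function.} Introduce
\[
\Lambda(\chi, s) := \bigl(|D_k|\, N(\mathfrak{f}_\chi)\bigr)^{s/2}\, L_\infty(\chi, s)\, L(\chi, s),
\qquad
L_\infty(\chi, s) = \Gamma_\C(s)^{r_\C (k)}\,\Gamma_\R(s)^{\,r_\R (k) - r_\R (k)_{\ram}^\chi}\,\Gamma_\R(s+1)^{\,r_\R (k)_{\ram}^\chi},
\]
with $\Gamma_\R(s) = \pi^{-s/2}\Gamma(s/2)$ and $\Gamma_\C(s) = 2(2\pi)^{-s}\Gamma(s)$, which satisfies $\Lambda(\chi,s) = \varepsilon(\chi)\,\Lambda(\chi^{-1},1-s)$ for a root number $\varepsilon(\chi)$ of absolute value one. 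The crucial input is the description of $\varepsilon(\chi)$ as a product of local constants: the archimedean part is $(\sqrt{-1})^{r_\R (k)_{\ram}^\chi}$ (trivial contribution from the complex places; the precise power comes from the explicit choice of $\psi_{k_v}$), while the non-archimedean part, with the additive character at each $v$ normalized exactly as the $\psi_{k_v}$ of Definition \ref{def:Gsum} (conductor the inverse different), equals $N(\mathfrak{f}_\chi)^{-1/2}\,\tau_k(\chi)$. Here one must be careful with the inversion built into the reciprocity map $\rec_{k_v}$ and with the sign inside $\psi_{k_v}$: these are precisely what make the paper's Gauss sum $\tau_k(\chi)$ — rather than $\tau_k(\chi^{-1})$ — occur, and, after the conductor factor $N(\mathfrak{f}_\chi)^{j-1/2}$ is included in Step 2, make the combination $N(\mathfrak{f}_\chi)^{j-1}\,\tau_k(\chi)$ of the theorem emerge; the relation $\tau_k(\chi)\,\tau_k(\chi^{-1}) = \chi(-1)\,N(\mathfrak{f}_\chi)$ is available to pass between the two normalizations.

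\textit{Step 2: passing to leading terms.} Specialize the functional equation at $s=j$. For $j\ge 1$ the completed $L$-function $\Lambda(\chi,s)$ is holomorphic and non-vanishing at $s=j$, the only exception being a simple pole when $\chi=\trivial$ and $j=1$; and $L_\infty(\chi^{-1},s)$ has a pole at $s=1-j$ of order equal to the number of infinite places of $k$ whose $\Gamma$-factor is singular there, so the order of vanishing of $L(\chi^{-1},s)$ at $s=1-j$ is forced. Taking leading Laurent coefficients throughout, and using $N(\mathfrak{f}_{\chi^{-1}}) = N(\mathfrak{f}_\chi)$, $r_\R (k)_{\ram}^{\chi^{-1}} = r_\R (k)_{\ram}^\chi$, $L_\infty(\chi,s) = L_\infty(\chi^{-1},s)$, the functional equation becomes
\[
\frac{L^\ast(\chi^{-1},1-j)}{L^\ast(\chi,j)} = \varepsilon(\chi)^{-1}\,\bigl(|D_k|\,N(\mathfrak{f}_\chi)\bigr)^{j-\frac12}\,\frac{L_\infty(\chi,j)}{L_\infty^\ast(\chi^{-1},1-j)}.
\]
The sign $\delta_{\chi={\trivial},j=1}$ is exactly the discrepancy produced here, in the case $\chi=\trivial$, $j=1$, by the pole of $\zeta_k(s) = L(\trivial,s)$ at $s=1$, upon comparing the residue at $s=1$ with the residue at $s=0$ through $\Lambda(\trivial,s) = \Lambda(\trivial,1-s)$.

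\textit{Step 3: the archimedean factor.} It remains to evaluate $L_\infty(\chi,j)/L_\infty^\ast(\chi^{-1},1-j)$ place by place. At a complex place, the residue $(-1)^{j-1}/(j-1)!$ of $\Gamma$ at the non-positive integer $1-j$ gives the factor $(-1)^{j-1}\bigl((j-1)!\bigr)^2(2\pi)^{1-2j}$. At a real place one uses the residue of $\Gamma$ at $(1-j)/2$ — present exactly when that point is a non-positive integer, i.e.\ for one parity of $j$ — together with the identity $\Gamma_\R(s)\Gamma_\R(s+1) = \Gamma_\C(s)$ (Legendre duplication) in the other parity; this yields a factor $2$ or $\pi\sqrt{-1}$ according to the parity of $j$, with the two parities interchanged between the unramified real places (local factor $\Gamma_\R(s)$) and the ramified ones (local factor $\Gamma_\R(s+1)$). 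Multiplying these contributions over all infinite places, using $r_k = r_\R (k) + 2 r_\C (k)$ to assemble $\bigl((j-1)!\bigr)^{r_k}$ and $(2\pi\sqrt{-1})^{-jr_k}$, and combining with $\varepsilon(\chi)^{-1}$ and its archimedean part $(\sqrt{-1})^{r_\R (k)_{\ram}^\chi}$, one recovers exactly the remaining factor $(\sqrt{-1})^{r_\C (k)}\,\dfrac{((j-1)!)^{r_k}}{(2\pi\sqrt{-1})^{jr_k}}\,A(k,\chi,j)$, in both parity cases. The main obstacle is not any single step but this final accounting of the powers of $2$, $\pi$ and $\sqrt{-1}$ and of the signs, in tandem with matching the paper's normalization of $\tau_k(\chi)$ to the local root numbers; specializing to $k=\Q$ and to $\chi=\trivial$ provides convenient consistency checks.
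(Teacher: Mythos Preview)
Your proposal is correct and follows essentially the same route as the paper: start from the classical functional equation of the Hecke $L$-function, evaluate the archimedean $\Gamma$-factor ratio at the integer point, and do the bookkeeping of leading terms and signs. The only organizational difference is that you package the constant via the root number $\varepsilon(\chi)$ and its local decomposition, whereas the paper writes the functional equation with $\tau_k(\chi^{-1})$ on the right (citing Fr\"ohlich) and converts to $\tau_k(\chi)$ at the very end using $\tau_k(\chi)\tau_k(\chi^{-1}) = N(\mathfrak{f}_\chi)(-1)^{r_\R(k)_{\ram}^\chi}$; the paper also makes explicit the sign $(-1)^{\lambda}$ arising from the change of variable $s\mapsto 1-s$ when extracting the leading term, which in your formulation is absorbed into $L_\infty^\ast(\chi^{-1},1-j)$.
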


\begin{proof}
Let us deduce this formula from a usual formulation of the functional equation (cf.~\cite[Hecke's theorem in p.36]{Fro83}).
It claims
\[
\parenth{\prod_{v \in S_\infty (k)} L_v (\chi, s)} \cdot L (\chi, s) 
= \cfrac{\tau_k ({\chi^{-1}}) \cdot \parenth{|D_k| N (\mathfrak{f}_\chi) }^{\frac{1}{2}-s} }{\sqrt{N (\mathfrak{f}_\chi)}\cdot (\sqrt{-1})^{r_\R (k)_{\ram}^\chi}} \cdot
\parenth{\prod_{v \in S_\infty (k)} L_v (\chi^{-1}, 1-s) } \cdot
L (\chi^{-1}, 1-s)
\]
with
\[
L_v(\chi, s) =
\begin{cases}
\pi^{-\frac{s}{2}} \Gamma (\frac{s}{2}) 
&\text{if } v  \text{ is a real place and unramified at } \chi, \\
\pi^{-\frac{s+1}{2}} \Gamma (\frac{s+1}{2})
&\text{if } v  \text{ is a real place and ramifies at } \chi, \\
2 (2 \pi)^{-s} \Gamma (s)
&\text{if }  v  \text{ is a complex  place}.   
\end{cases}
\]
By properties of Gamma functions, we obtain
\[
\cfrac{L_v(\chi, s)}{L_v (\chi^{-1}, 1-s)} =
\begin{cases}
 2 (2 \pi)^{-s} \Gamma (s) \cdot \cos \frac{\pi}{2}s 
&\text{if } v  \text{ is a real place and unramified at } \chi, \\
 2 (2 \pi)^{-s} \Gamma (s) \cdot \sin \frac{\pi}{2}s 
&\text{if } v  \text{ is a real place and ramifies at } \chi, \\
2 (2 \pi)^{-2s} \Gamma (s)^2 \cdot \sin \pi s
&\text{if }  v  \text{ is a complex  place}. 
\end{cases}
\]
Therefore, we have 
\begin{align*}
&L (\chi^{-1}, 1-s) = 
\parenth{|D_k| N (\mathfrak{f}_\chi) }^{s-\frac{1}{2}}
\cfrac{\sqrt{N (\mathfrak{f}_\chi)} (\sqrt{-1})^{r_\R (k)_{\ram}^\chi}}{\tau_k ({\chi^{-1}} )}
\parenth{ \prod_{v \in S_\infty (k)} \cfrac{L_v (\chi, s)}{L_v (\chi^{-1}, 1-s)} }\cdot L (\chi, s) \\
&= \parenth{|D_k| N (\mathfrak{f}_\chi) }^{s-\frac{1}{2}}
\cfrac{\sqrt{N (\mathfrak{f}_\chi)}}{\tau_k ({\chi^{-1}})}
\cdot \cfrac{\Gamma (s)^{r_k}}{(2 \pi)^{sr_k}} L (\chi, s) \\
& \qquad
\times \left(2\cos \frac{\pi}{2}s\right)^{r_\R (k) - r_\R (k)_{\ram}^\chi} \cdot 
\left(2\sqrt{-1} \sin \frac{\pi}{2}s\right)^{r_\R (k)_{\ram}^\chi} \cdot
(2\sin \pi s)^{r_\C (k)}. 
\end{align*}
For each positive integer $j$, we have
\[
\cos \left(\frac{\pi}{2}s\right)^\ast_{s=j} =
\begin{cases}
(-1)^{\frac{j}{2}} & \text{if } j \text{ is even},  \vspace{2mm}\\
(-1)^{\frac{j+1}{2}} \dfrac{\pi}{2} & \text{if } j \text{ is odd},  
\end{cases} \;\;\;\;\;
\sin \left(\frac{\pi}{2}s\right)^\ast_{s=j} =
\begin{cases}
(-1)^{\frac{j}{2}} \dfrac{\pi}{2} & \text{if } j \text{ is even}, \vspace{2mm}\\
(-1)^{\frac{j-1}{2}}  & \text{if } j \text{ is odd}, 
\end{cases}
\]
and 
$\sin (\pi s)^\ast_{s=j} =
(-1)^j \pi$, 
where $(-)^\ast_{s=j}$ means the leading term of Taylor expansion at $s=j$. 
Let $\lambda := \lambda (\chi^{-1}, 1-j) \in \Z_{\geq0}$ be the order of zero of $L (\chi^{-1} , s)$ at $s=1-j$. 
Then, by the above formulas, we have 
\begin{align*}
& L^\ast (\chi^{-1}, 1-j) = \lim_{s \to 1-j} (s-(1-j))^{-\lambda} \cdot L (\chi^{-1}, s) 
= (-1)^{\lambda} \cdot \lim_{s' \to j}  (s'-j)^{-\lambda} \cdot L (\chi^{-1}, 1-s') \\
& =
(-1)^{\lambda} \cdot \parenth{|D_k| N (\mathfrak{f}_\chi) }^{j-\frac{1}{2}}
\cfrac{\sqrt{N (\mathfrak{f}_\chi)}}{\tau_k ({\chi^{-1}})}
\cdot \cfrac{((j-1)!)^{r_k}}{(2 \pi)^{jr_k}} \cdot L^\ast (\chi, j)\\
&\times 
\begin{cases}
(\sqrt{-1})^{jr_k + r_\R (k)_{\ram}^\chi} \cdot 
2^{r_\R (k) - r_\R (k)_{\ram}^\chi + r_\C (k)} \cdot 
\pi^{r_\R (k)_{\ram}^\chi + r_\C (k)} 
& \text{if } j \text{ is even},  \\
(\sqrt{-1})^{jr_k + r_\R (k)- r_\R (k)_{\ram}^\chi} \cdot
2^{r_\R (k)_{\ram}^\chi + r_\C (k)} \cdot
\pi^{r_\R (k) - r_\R (k)_{\ram}^\chi + r_\C (k)}
& \text{if } j \text{ is odd},  
\end{cases}\\
&=(-1)^{\lambda} \cdot
 \cfrac{(-1)^{jr_k} \cdot |D_k|^{j-\frac{1}{2}} \cdot  N (\mathfrak{f}_\chi)^j}{(\sqrt{-1})^{r_\C (k)}
\cdot  \tau_k ({\chi^{-1}})}\cdot \cfrac{((j-1)!)^{r_k}}{(2 \pi \sqrt{-1})^{jr_k}} 
\cdot A(k, \chi, j) \cdot L^\ast (\chi, j), 
\end{align*}
where we replace $s' = 1-s$ in the second equality. 
Since 
\[
\tau_k (\chi) \cdot \tau_k (\chi^{-1}) = N (\mathfrak{f}_\chi) \cdot \prod_{v: \text{ finite prime of }k}\chi (\rec_{k_v}(-1))  
= N (\mathfrak{f}_\chi) \cdot (-1)^{r_\R (k)^\chi_{\ram}}
\] 
(cf.~\cite[equation (5.7a) in Chapter 1, \S 5]{Fro83}) and 
\[
\lambda(\chi^{-1}, 1-j) = 
\begin{cases}
r_\R (k)_{\ram}^\chi + r_\C (k) & \text{if } j \text{ is even}, 
\\ 
\displaystyle
r_\R(k)  + r_\C (k) -1 & \text{if } j =1 \text { and } \chi = \trivial, 
\\
r_\R(k) -r_\R (k)_{\ram}^\chi + r_\C (k) & \text{otherwise}, 
\end{cases}
\]
we have \[
(-1)^{\lambda(\chi^{-1}, 1-j)} \cdot (-1)^{jr_k} \cdot \frac{ N (\mathfrak{f}_\chi)^j}{\tau_k (\chi^{-1})} 
=\delta_{\chi={\trivial} , j=1} \cdot (-1)^{r_\C (k) } \cdot N (\mathfrak{f}_\chi)^{j-1} \cdot \tau_k (\chi), 
\]
using $r_k = r_\R (k) + 2 r_\C (k)$. 
Thus we obtain Theorem \ref{thm:func eq}. 
\end{proof}

\subsection{The description of $\alpha$}\label{ss:alpha}

Recall that $\Sigma_k := \{\iota : k \hookrightarrow \C \}$ is set of the embeddings. 
We fix a labeling $\Sigma_k = \{\iota_{k, 1} , \dots , \iota_{k, r_k} \}$.
Moreover, for each $1 \leq i \leq r_k $, we choose $\iota_{K, i} \in \Sigma_K$ such that $\iota_{K, i}|_k = \iota_{k, i}$.
\begin{defn}\label{defn:B}
For any $x_1, \dots , x_{r_k} \in K$, we define
\begin{equation}\label{det B}
B_{K/k}(x_1, \dots, x_{r_k}) = \det \parenth{\sum_{\sigma \in G} 
(\iota_{K, i} (x_m^\sigma))\sigma^{-1}}_{1 \leq i,m \leq r_k} \in \C [G]. 
\end{equation}
\end{defn}

For each $j \in \Z_{\geq 1}$, we consider the map
\[
\wedge \alpha_K^j : 
\C \otimes_{\Q}
\bigwedge_{\Q[G]}^{r_k} K    \to
\C  \otimes_{\Q}  \bigwedge_{\Q[G]}^{r_k} H_K (j) 
\]
induced by the period map $\alpha_K^j$ defined in \S \ref{period map}. 
Let us show an explicit description of this map. Recall that we take a $\Q[G]$-basis $\{b_{\iota_{K, i}}^j\}_{1\leq i \leq r_k}$ of $H_K(j)$, where $b_{\iota_{K, i}}^j$ is the element of $H_K(j)$ whose $\iota_{K, i}$-component is $(2\pi\sqrt{-1})^j$ and the other components are zero.

\begin{lem}\label{wedge alpha}
For any $x_1, \dots , x_{r_k} \in K$ and a character $\chi$ of $G$, we have
\[ 
\parenth{ \wedge \alpha_K^j (1 \otimes \wedge_{m=1}^{r_k} x_m )}^\chi 
=A(k, \chi, j) \cdot 
 \cfrac{B_{K/k}(x_1, \dots, x_{r_k})^\chi}
{(2 \pi \sqrt{-1})^{jr_k}}  (\wedge_{i=1}^{r_k} b_{\iota_{K, i}}^j)^\chi.
\]
\end{lem}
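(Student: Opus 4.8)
The plan is to write $\wedge\alpha_K^j$ out in the fixed $\Q[G]$-basis $\{b_{\iota_{K,i}}^j\}_{1\le i\le r_k}$ of $H_K(j)$ and thereby reduce the lemma to a single determinant identity over $\C[G]$. Every $\iota\in\Sigma_K$ is uniquely of the form $\sigma\cdot\iota_{K,i}=\iota_{K,i}\circ\sigma^{-1}$ with $1\le i\le r_k$ and $\sigma\in G$, and the $G$-action satisfies $\sigma\cdot b_\iota^j=b_{\sigma\cdot\iota}^j$; feeding this into the explicit formula \eqref{alpha} gives $\alpha_K^j(1\otimes x)=\sum_{i=1}^{r_k}c_i(x)\,b_{\iota_{K,i}}^j$ with
\[
c_i(x)=\frac{1}{(2\pi\sqrt{-1})^j}\sum_{\sigma\in G}L_j\bigl(\iota_{K,i}(x^{\sigma^{-1}})\bigr)\,\sigma\ \in\ \C[G],
\]
where $L_j(z)=2\Real(z)-\pi\Imag(z)$ for $j$ even and $L_j(z)=2\sqrt{-1}\Imag(z)+\pi\sqrt{-1}\Real(z)$ for $j$ odd. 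Then $\wedge\alpha_K^j(1\otimes\wedge_{m=1}^{r_k}x_m)=\det(c_i(x_m))_{i,m}\cdot(\wedge_{i=1}^{r_k}b_{\iota_{K,i}}^j)$, and since taking $\chi$-components amounts to applying the ring homomorphism $\C[G]\to\C$, $\sigma\mapsto\chi(\sigma)$ (which commutes with $\det$), it suffices to prove $\det(c_i(x_m)^\chi)_{i,m}=A(k,\chi,j)\cdot(2\pi\sqrt{-1})^{-jr_k}\,B_{K/k}(x_1,\dots,x_{r_k})^\chi$.

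The crucial observation is to separate $L_j$ into its $\C$-linear and its conjugate-linear parts: a short computation gives $L_j(z)=A_j z+B_j\bar z$ with $A_j=1+\tfrac{\pi\sqrt{-1}}{2}$ and $B_j=\pm(1-\tfrac{\pi\sqrt{-1}}{2})$, the sign being $+$ for $j$ even and $-$ for $j$ odd. After the substitution $\sigma\mapsto\sigma^{-1}$, the contribution of the $A_jz$-term to $c_i(x_m)^\chi$ is exactly $\tfrac{A_j}{(2\pi\sqrt{-1})^j}\,\beta_{i,m}^\chi$, where $\beta_{i,m}:=\sum_{\sigma\in G}\iota_{K,i}(x_m^\sigma)\,\sigma^{-1}$, so that $B_{K/k}(x_1,\dots,x_{r_k})=\det(\beta_{i,m})_{i,m}$. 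For the $B_j\bar z$-term one uses $\overline{\iota_{K,i}(\cdot)}=(c_\R\circ\iota_{K,i})(\cdot)$: complex conjugation permutes $\Sigma_k$, inducing an involution $i\mapsto i'$ on $\{1,\dots,r_k\}$ whose fixed points are exactly the real places of $k$, and one fixes $\tau_i\in G$ with $c_\R\circ\iota_{K,i}=\iota_{K,i'}\circ\tau_i$. A change of variables in the sum over $G$ then shows the $B_j\bar z$-contribution to $c_i(x_m)^\chi$ equals $\tfrac{B_j}{(2\pi\sqrt{-1})^j}\chi(\tau_i)\,\beta_{i',m}^\chi$. Hence $(c_i(x_m)^\chi)_{i,m}=(2\pi\sqrt{-1})^{-j}\,M\,(\beta_{i,m}^\chi)_{i,m}$, where $M$ is the $r_k\times r_k$ matrix with diagonal entries $A_j$, with $M_{i,i'}=B_j\chi(\tau_i)$ at the off-diagonal pairs $i'\neq i$, and with diagonal entry $A_j+B_j\chi(\tau_i)$ at a fixed point $i'=i$; the lemma is now reduced to the claim $\det M=A(k,\chi,j)$.

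For the last step note that $M$ is block diagonal along the orbits of $i\mapsto i'$: one $1\times1$ block for each real place of $k$ and one $2\times2$ block for each complex place. Applying $c_\R$ twice to $c_\R\circ\iota_{K,i}=\iota_{K,i'}\circ\tau_i$ yields $\tau_i^2=\id$ when $i'=i$ and $\tau_{i'}=\tau_i^{-1}$ when $i'\neq i$; the $2\times2$ block for a complex place therefore has determinant $A_j^2-B_j^2\chi(\tau_i)\chi(\tau_{i'})=A_j^2-B_j^2=(A_j-B_j)(A_j+B_j)=2\pi\sqrt{-1}$, for both parities of $j$. For a real place the block is the scalar $A_j+B_j\chi(\tau_i)$, where $\chi(\tau_i)\in\{\pm1\}$ equals $1$ precisely when $\chi$ is unramified at that real place and $-1$ precisely when $\chi$ ramifies there; one computes $A_j+B_j=2$, $A_j-B_j=\pi\sqrt{-1}$ for $j$ even, and $A_j+B_j=\pi\sqrt{-1}$, $A_j-B_j=2$ for $j$ odd. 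Multiplying these contributions over the $r_\R(k)-r_\R(k)_{\ram}^\chi$ unramified real places, the $r_\R(k)_{\ram}^\chi$ ramified real places, and the $r_\C(k)$ complex places, and comparing with Definition \ref{defn:A}, gives $\det M=A(k,\chi,j)$ in each case. The essential difficulty here is not conceptual but bookkeeping: one must keep careful track of the $G$-action on $\Sigma_K$, of the distinction between $\chi$ and $\chi^{-1}$ in the various changes of variables, and of the identification of $\tau_i$ — in particular of the equivalence between $\chi(\tau_i)=-1$ and ramification of $\chi$ at the corresponding real place of $k$.
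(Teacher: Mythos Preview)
Your proof is correct, and the reduction to $\det M = A(k,\chi,j)$ via the decomposition $L_j(z)=A_jz+B_j\bar z$ is clean. The paper's proof reaches the same conclusion by a different organization: it first fixes a convenient ordering of the embeddings (real places first, then complex places paired as $\iota_{k,i}$ and $\iota_{k,i+r_\C(k)}=c_\R\circ\iota_{k,i}$), defines $B'(x_1,\dots,x_{r_k})=\det\bigl(\sum_\sigma L_j(\iota_{K,i}(x_m^\sigma))\sigma^{-1}\bigr)_{i,m}$, and then performs explicit row operations case by case --- one case for $\iota_{K,i}$ real, one for $\iota_{k,i}$ real but $\iota_{K,i}$ complex (splitting on $\chi(c_i)=\pm1$), and one for a complex pair using $\Real(\iota_{K,i+r_\C(k)}(\cdot))=\Real(\iota_{K,i}(\cdot))$, $\Imag(\iota_{K,i+r_\C(k)}(\cdot))=-\Imag(\iota_{K,i}(\cdot))$ --- to extract the factor $A(k,\chi,j)$ from $B'$.

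Your approach packages all three cases into a single matrix factorization $(c_i(x_m)^\chi)=(2\pi\sqrt{-1})^{-j}M\,(\beta_{i,m}^\chi)$, so the case analysis is deferred to the block structure of $M$ and handled uniformly. This avoids having to fix a specific ordering of the $\iota_{k,i}$ at the outset, and the identities $A_j+B_j$, $A_j-B_j$, $A_j^2-B_j^2\in\{2,\pi\sqrt{-1},2\pi\sqrt{-1}\}$ make the final bookkeeping transparent. The paper's version is perhaps more hands-on and closer to the geometry of the period map (one literally watches each row turn into a multiple of the corresponding row of $(\beta_{i,m})$), while yours is more algebraic and arguably easier to check. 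The content is the same.
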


\begin{proof}
By permutation of the labels if necessary, we may assume that $\iota_{k, i}$ is real for $1 \leq i \leq r_\R (k)$ and 
$c_{\R} \circ \iota_{k, i} = \iota_{k, i + r_\C (k)}$ for any $r_\R (k) +1 \leq i \leq r_\R (k)  + r_\C (k)$, 
where $c_{\R} \in \Gal(\C / \R)$ is the complex conjugation. 
Also for any $1 \leq i \leq r_k$, 
we fix an embedding $\iota_{K, i} \in \Sigma_K$ such that 
$\iota_{K, i}|_k = \iota_{k, i}$ for any $1 \leq i \leq r_k$ 
and $c_{\R} \circ \iota_{K, i}= \iota_{K, i+r_\C (k)}$ any $r_\R (k) +1 \leq i \leq r_\R (k)  + r_\C (k)$. 

We only show the case where $j$ is even because the other case can be shown in a similar way.
Then, by the formula \eqref{alpha},  we have for any $x \in K$
\begin{align*}
\alpha_K^j (1 \otimes x)
&=
\displaystyle
\sum_{\iota_K \in \Sigma_K}
\parenth{(2 \Real (\iota_K (x)) - \pi \Imag (\iota_K (x)))\cfrac{1}{(2\pi \sqrt{-1})^j}}b_{\iota_K}^j \\
&=
\sum_{i=1}^{r_k} \parenth{ \sum_{\sigma \in G}
(2 \Real (\iota_{K,i} (x^\sigma)) - \pi \Imag (\iota_{K,i} (x^\sigma)))\cfrac{1}{(2\pi \sqrt{-1})^j} \sigma^{-1} } 
b_{\iota_{K,i}}^j .
\end{align*}
For any $x_1, \dots , x_{r_k} \in K$, we consider
\[
B' (x_1, \dots x_{r_k}) = 
 \det \parenth{ \sum_{\sigma \in G}
(2 \Real (\iota_{K,i} (x_m^\sigma)) - \pi \Imag (\iota_{K,i} (x_m^\sigma))) \sigma^{-1} }_{1 \leq i,m \leq r_k }. 
\] 
Then we have 
\begin{equation}\label{A}
\wedge \alpha_K^j (1 \otimes \wedge_{m=1}^{r_k} x_m ) 
= \cfrac{1}{(2 \pi \sqrt{-1})^{jr_k}} B' (x_1, \dots x_{r_k}) 
\wedge_{i=1}^{r_k} b_{\iota_{K, i}}^j . 
\end{equation}
We compute $B' (x_1, \dots x_{r_k})$. 

If $\iota_{K, i}$ is a real embedding, 
we have 
\begin{equation}\label{1}
\sum_{\sigma \in G} 
(2 \Real (\iota_{K, i} (x_m^\sigma)) - \pi \Imag (\iota_{K, i} (x_m^\sigma)))\sigma^{-1} = 
2\sum_{\sigma\in G} 
(\iota_{K, i}(x_m^\sigma))\sigma^{-1}
\end{equation}
for any $1 \leq m \leq r_k$. 

If $\iota_{k, i}$ is real but $\iota_{K, i}$ is not real
(i.e. the place $\iota_{k, i}$ is ramified in $K/k$), 
letting $c_i \in G$ be the generator of the inertia group of $\iota_{K,i}$,
we have 
\begin{align}\label{2}
\parenth{\sum_{\sigma \in G} 
(2\Real (\iota_{K, i} (x_m^\sigma)) - \pi \Imag (\iota_{K, i} (x_m^\sigma)))\sigma^{-1}}^\chi 
=
\begin{cases}\displaystyle
2 \parenth{
\sum_{\sigma \in G} 
(\iota_{K, i} (x_m^\sigma))\sigma^{-1}}^\chi
& \text{ if } \chi(c_i)=1 \\
\displaystyle
\pi \sqrt{-1}
 \parenth{ \sum_{\sigma \in G} 
(\iota_{K, i} (x_m^\sigma))\sigma^{-1}}^\chi
& \text{ if }  \chi(c_i)=-1 
\end{cases} 
\end{align}
for any $1 \leq m \leq r_k$ and any character $\chi$ of $G$. 

If $r_\R (k) +1 \leq i \leq r_\R (k) + r_\C (k)$ (so $\iota_{k, i}$ is not real),
since $c_{\R} \circ \iota_{K,  i}= \iota_{K,  i +r_\C (k)}$, we have 
\[
\Real (\iota_{K,  i + r_\C (k)} (x_m^\sigma)) = \Real (\iota_{K,  i} (x_m^\sigma)), \;\;\;
\Imag (\iota_{K,  i + r_\C (k)} (x_m^\sigma)) = -\Imag (\iota_{K,  i} (x_m^\sigma))
\]
for any $1 \leq m \leq r_k$ and $\sigma \in G$. 

Combining these observations and using elementary row operation, 
we have 
\begin{align*}
B'(x_1, \dots, x_{r_k})^\chi & =
2^{r_\R (k)-r_\R (k)_{\ram}^\chi} \cdot (\pi \sqrt{-1})^{r_\R (k)_{\ram}^\chi} \cdot
(2 \pi \sqrt{-1})^{r_\C (k)} \cdot
B_{K/k}(x_1, \dots, x_{r_k})^\chi \\
&=
A (k, \chi, j) \cdot B_{K/k}(x_1, \dots, x_{r_k})^\chi. 
\end{align*}
for any character $\chi$ of $G$. This formula and \eqref{A} imply the 
formula in the Lemma \ref{wedge alpha} in the case where $j$ is even. 
\end{proof}

\subsection{Choice of a basis of $\Z_p \otimes \OO_K$}\label{ss:choice}

The purpose of this subsection is to find a basis $x_1, \dots , x_{r_k}$ of $\Z_p \otimes \OO_K$ such that we have a nice relationship between $B_{K/k}(x_1, \dots , x_{r_k})$ and Gauss sums. 

Let $K/k$ be a finite abelian extension of number fields such that $p$ is unramified in $K /\Q$ 
and put $G:= \Gal(K/k)$.  
For any non-negative integer $n$, 
we put $K_n := K (\mu_{p^n})$ and $k_n := k (\mu_{p^n})$. 
We write $\GG_{K_n} = \Gal(K_n/k)$ and $\GG_{k_n} = \Gal(k_n/k)$, so we have a natural isomorphism $\GG_{K_n} \simeq G \times \GG_{k_n}$. For any $v \in S_p$, 
we write $\sigma_{K_n, v} \in \GG_{K_n}$ for the arithmetic Frobenius of $v$ in $\Gal(K_n / k_n) (\subset \GG_{K_n})$. 

We consider a character $\chi$ of $\GG_{K_n}$.
Let $n_\chi \leq n$ be the minimum integer such that $\chi$ factors through $\GG_{K_{n_{\chi}}}$.
We also introduce the following notation.

\begin{setting}\label{setting 1}
For a character $\chi$ of $\GG_{K_n}$, 
we write 
\[
\chi = \chi_1 \chi_2
\]
 with characters $\chi_1$ of $G$ and $\chi_2$ of $\GG_{k_n}$.
We then 
define a character $\chi_2'$ of $\Gal (\Q(\mu_{p^n}) / \Q)$ as the composite map 
\[
\chi_2': \Gal (\Q(\mu_{p^n}) / \Q) \xrightarrow{\sim} \GG_{k_n} \xrightarrow{\chi_2} \C^{\times}, 
\]
where the first map is the inverse of the restriction map. 
\end{setting}

We introduce some basic properties of the local Gauss sums.
Claim (iii) below is a generalization of the Davenport--Hasse relation and its proof will be given in \S \ref{sec:DH}.

\begin{lem}\label{lem:Davenport--Hasse}
In Setting \ref{setting 1}, for a finite prime $v$ of $k$, the following statements hold.
\begin{itemize}
\item[(i)]
If $v \nmid p$, we have 
\[
\tau_k (\chi_v) 
=\chi_2 (\rec_{k_v} (\ff(\chi_v) \cdot \cD_{k_v / \Q_l})^{-1}) \tau_k (\chi_{1, v}). 
\]
\item[(ii)]
If $v \mid p$, we have 
\[
\tau_k (\chi_v) = \chi(\sigma_{K_n, v})^{-n_\chi} \cdot \tau_k (\chi_{2, v}).   
\]
\item[(iii)]
If $v \mid p$, we have 
\[
\tau_k (\chi_{2, v}) = (- 1)^{n_\chi(r_{k_v} -1)} \cdot \tau_{\Q} (\chi_{2, p}')^{r_{k_v}},
\]
where we put $r_{k_v} = [k_v : \Q_p]$.  
\end{itemize}
In particular, combining (ii) and (iii), 
we obtain
\[
\prod_{v \in S_p }\tau_k (\chi_v) =
\parenth{\prod_{v \in S_p  } \chi(\sigma_{K_n, v})^{-n_\chi} } \cdot
(- 1)^{n_\chi(r_{k} -\# S_p)} \cdot \tau_{\Q} (\chi_{2, p}')^{r_k}.
\]
\end{lem}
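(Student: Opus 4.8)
The plan is to reduce the whole lemma to the behaviour of local Gauss sums under twisting by an unramified character, to invoke \S\ref{sec:DH} for part~(iii), and then to assemble the concluding ``In particular'' identity by multiplying the resulting local formulas over $v \in S_p$. The key steps, in order, are: (1) record which of the two factors $\chi_{1,v}$, $\chi_{2,v}$ is unramified at a given $v$; (2) carry out the twisting computation for $\tau_k(\chi_v)$ to obtain (i) and (ii); (3) cite the Davenport--Hasse generalization for (iii); (4) combine (ii) and (iii) and sum the exponents using $\sum_{v\mid p}[k_v:\Q_p]=[k:\Q]$.

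For (i) and (ii) I would use the factorization $\chi_v = \chi_{1,v}\chi_{2,v}$ together with the observation that exactly one factor is unramified at $v$: for $v\nmid p$ the character $\chi_{2,v}$ is unramified, since $\chi_2$ factors through $\GG_{k_n}=\Gal(k_n/k)$, which is ramified only above $p$; for $v\mid p$ the character $\chi_{1,v}$ is unramified, since $K/\Q$, hence $K/k$, is unramified above $p$. In either case $\ff(\chi_v)$ is the conductor of the ramified factor, so in the sum defining $\tau_k(\chi_v)$ the contribution of the unramified component of $\theta_{\chi_v}$ is the constant $\theta(uc^{-1})=\theta(c^{-1})$ (being trivial on $\OO_{k_v}^\times$), which factors out and leaves precisely the Gauss sum of the ramified factor. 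For $v\nmid p$ this gives (i) with the pulled-out factor $\chi_2(\rec_{k_v}((\ff(\chi_v)\cD_{k_v/\Q_l})^{-1}))$, since $(c)=\ff(\chi_v)\cD_{k_v/\Q_l}$ and $\chi_2\circ\rec_{k_v}$ depends only on the valuation. For $v\mid p$ the analogous factor is $\chi_1(\rec_{k_v}((\ff(\chi_v)\cD_{k_v/\Q_p})^{-1}))$; here $\cD_{k_v/\Q_p}=(1)$ because $k_v/\Q_p$ is unramified, and $\ff(\chi_v)=\ff(\chi_{2,v})$ has valuation $n_\chi$, so this factor equals $\chi_1(\sigma_{K,v})^{-n_\chi}$, where $\sigma_{K,v}\in G$ is the arithmetic Frobenius of $v$ (recall $\rec_{k_v}$ sends a uniformizer to an arithmetic Frobenius). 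Finally $\chi_1(\sigma_{K,v})=\chi(\sigma_{K_n,v})$ under $\GG_{K_n}=G\times\GG_{k_n}$, as $\sigma_{K_n,v}\in\Gal(K_n/k_n)$ has trivial image in $\GG_{k_n}$; this yields (ii). Part~(iii), the genuinely new ingredient, I would simply invoke from \S\ref{sec:DH} (Proposition~\ref{app:HD}).

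For the concluding identity, combining (ii) and (iii) gives, for each $v\in S_p$,
\[
\tau_k(\chi_v) = \chi(\sigma_{K_n,v})^{-n_\chi}\cdot(-1)^{n_\chi(r_{k_v}-1)}\cdot\tau_\Q(\chi_{2,p}')^{r_{k_v}},
\]
and taking the product over $v\in S_p$ I would use $\sum_{v\in S_p}r_{k_v}=\sum_{v\mid p}[k_v:\Q_p]=[k:\Q]=r_k$, so that the exponent of $\tau_\Q(\chi_{2,p}')$ collapses to $r_k$ and the total sign exponent is $n_\chi\sum_{v\in S_p}(r_{k_v}-1)=n_\chi(r_k-\# S_p)$, producing the asserted formula. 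The main obstacle in the lemma is part~(iii), which is settled in the appendix by Takeuchi's argument; granting it, the remaining content is the Gauss-sum twisting computation and the clean identification of the unramified-twist factor for $v\mid p$ with $\chi(\sigma_{K_n,v})^{-n_\chi}$, the rest being bookkeeping.
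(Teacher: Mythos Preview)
Your proposal is correct and follows essentially the same approach as the paper: for (i) and (ii) you factor $\theta_{\chi_v}=\theta_{\chi_{1,v}}\theta_{\chi_{2,v}}$, observe that the unramified factor is constant on the units and pulls out of the Gauss-sum definition, and then identify the resulting constant; for (iii) you defer to \S\ref{sec:DH}; and the final product identity is exactly the paper's bookkeeping with $\sum_{v\in S_p}r_{k_v}=r_k$. The only minor omission is that the paper explicitly treats the degenerate case $\ff(\chi_v)=(1)$ in (i) (where there is no sum and one uses the second branch of Definition~\ref{def:Gsum}), which your phrasing ``in the sum defining $\tau_k(\chi_v)$'' skips over; this case is immediate once noted.
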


\begin{proof}
(i) 
We take $c \in k_v^\times$ satisfying $(c) = \ff(\chi_v) \cdot \cD_{k_v / \Q_l}$. 
If $\ff(\chi_v) = (1)$, we have 
\[
\tau_k (\chi_v) 
=\theta_{\chi_v} (c^{-1})  =\theta_{\chi_{2, v}} (c^{-1}) \theta_{\chi_{1,v}} (c^{-1})
=\chi_2 (\rec_{k_v} (\cD_{k_v / \Q_l})^{-1}) \tau_k (\chi_{1, v}). 
\]
If $\ff(\chi_v) \neq (1)$, we have 
\begin{align*}
\tau_k (\chi_v) 
&=
\sum_{u \in \OO_{k_v}^\times / 1 + \ff(\chi_v)} 
\theta_{\chi_v} (uc^{-1}) \psi_{k_v} (uc^{-1})  \\
&=
\theta_{\chi_{2, v}} (c^{-1}) \sum_{u \in \OO_{k_v}^\times / 1 + \ff(\chi_v)} 
\theta_{\chi_{1,v }} (uc^{-1}) \psi_{k_v} (uc^{-1})  \\
&=\chi_2 (\rec_{k_v} (\ff(\chi_v) \cdot \cD_{k_v / \Q_l})^{-1}) \tau_k (\chi_{1, v}). 
\end{align*}
(ii)
Since $K/\Q$ is unramified at $p$, 
we have $\cD_{k_v / \Q_p} =1$, $(p^{n_\chi}) = \ff(\chi_v) = \ff(\chi_{2, v}) $, 
and  
\begin{align*}
\theta_{\chi_v} (up^{-n_\chi}) 
&= 
\chi_v (\rec_{k_v}(u)) \cdot \chi_v (\rec_{k_v}(p))^{-n_\chi} \\
&=
\chi_{2, v} (\rec_{k_v}(u))\cdot \chi_{2,v} (\rec_{k_v}(p))^{-n_\chi}\cdot
\chi_{1,v} (\rec_{k_v}(p))^{-n_\chi} 
= 
\theta_{\chi_{2, v}} (up^{-n_\chi}) \cdot \chi (\sigma_{K_n, v} )^{-n_\chi}
\end{align*}
for any $u \in \OO_{k_v}^\times$ 
(note that $\chi_{1,v} (\rec_{k_v}(p)) = \chi (\sigma_{K_n, v} )$). 
Therefore, by the definition of Gauss sums, we have 
\begin{align*}
\tau_k (\chi_v)
&=
\sum_{u \in \OO_{k_v}^\times / 1 + p^{n_\chi} \OO_{k_v}} 
\theta_{\chi_v} (up^{-n_\chi}) \psi_{k_v} (up^{-n_\chi})  \\
&=\chi (\sigma_{K_n, v} )^{-n_\chi} \cdot 
\sum_{u \in \OO_{k_v}^\times / 1 + p^{n_\chi} \OO_{k_v}} 
\theta_{\chi_{2,v}} (up^{-n_\chi}) \psi_{k_v} (up^{-n_\chi}) 
= 
\chi(\sigma_{K_n, v})^{-n_\chi} \cdot \tau_k (\chi_{2, v}). 
\end{align*}
\end{proof}

We take a basis $\xi = (\zeta_{p^n})_n$ of $\Z_p(1) = \varprojlim_n \mu_{p^n}(K_{\infty})$.
Also, we choose $\{\iota_{K_n, 1}, \dots , \iota_{K_n, r_k} \} \subset \Sigma_{K_n}$ such that 
$\iota_{K_n, i} |_K = \iota_{K, i}$  and 
$\iota_{K_n, i} (\zeta_{p^n}) = \exp (\frac{2 \pi \sqrt{-1}}{p^n})$ in $\C$
 for each $1 \leq i \leq r_k$
(this choice is unique and possible since $p$ is unramified in $K/\Q$). 
For $n \geq 0$, 
we define $a_n \in K_n$ by 
\[
a_n := 
\begin{cases}
\frac{1}{p^{n-1}} (\zeta_p + \zeta_{p^2} + \cdots + \zeta_{p^n}) & \text{if } n >0, \\
-1 & \text{if } n =0, .
\end{cases}
\]
Using this, for any $x_1, \dots , x_{r_k} \in K$, we define $B_{K_n / k} (a_n x_1, \dots , a_n x_{r_k}) \in \C[\GG_{K_n}]$ by \eqref{det B}. 

\begin{lem}\label{B Gauss sum}
For any $x_1, \dots , x_{r_k} \in K$ and a character $\chi$ of $\GG_{K_n}$,
we have 
\begin{align*}
&B_{K_n / k} (a_n x_1, \dots , a_n x_{r_k})^\chi &\\
&= \begin{cases}
(-1)^{n_\chi (r_k - \# S_p)} \cdot B_{K/k} (x_1 , \dots , x_{r_k})^{\chi_1} \cdot 
\prod_{v \in S_p } 
\tau_{k} (\chi_v) \cdot \chi(\sigma_{K_n, v})^{n_\chi} \cdot N (v)^{1-n_\chi }
& \text{if } n_\chi >0, \\
(-1)^{r_k} \cdot B_{K/k} (x_1 , \dots , x_{r_k})^{\chi_1} & \text{if } n_\chi =0.
\end{cases}&
\end{align*}
\end{lem}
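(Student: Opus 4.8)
The plan is to compute the $\chi$-component directly from the determinant \eqref{det B}, exploiting the identification $\GG_{K_n} \simeq G \times \GG_{k_n}$. Writing $\tau \in \GG_{K_n}$ as $\tau = \sigma\rho$ with $\sigma \in \Gal(K_n/k_n) \simeq G$ and $\rho \in \Gal(K_n/K) \simeq \GG_{k_n}$, one has $(a_n x_m)^\tau = a_n^\rho\, x_m^\sigma$, since $a_n \in \Q(\mu_{p^n}) \subseteq k_n$ is fixed by $\sigma$ while $x_m \in K$ is fixed by $\rho$. Moreover all the chosen embeddings $\iota_{K_n, i}$ agree on $\mu_{p^n}$ (they send $\zeta_{p^n}$ to $\exp(2\pi\sqrt{-1}/p^n)$), so $\iota_{K_n, i}(a_n^\rho)$ does not depend on $i$; write $\iota(a_n^\rho)$ for this common value. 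Applying $\chi = \chi_1\chi_2$ to the matrix in \eqref{det B}, the $(i,m)$-entry factors as $\bigl(\sum_{\rho \in \GG_{k_n}}\iota(a_n^\rho)\chi_2(\rho)^{-1}\bigr)\bigl(\sum_{\sigma \in G}\iota_{K, i}(x_m^\sigma)\chi_1(\sigma)^{-1}\bigr)$, so the $\chi$-evaluated matrix of $B_{K_n/k}(a_n x_\bullet)$ equals the scalar
\[
S := \sum_{\rho \in \GG_{k_n}}\iota(a_n^\rho)\,\chi_2(\rho)^{-1}
\]
times the $\chi_1$-evaluated matrix of $B_{K/k}(x_1, \dots, x_{r_k})$. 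Taking determinants gives $B_{K_n/k}(a_n x_1, \dots, a_n x_{r_k})^\chi = S^{r_k}\, B_{K/k}(x_1, \dots, x_{r_k})^{\chi_1}$, reducing the statement to the evaluation of $S^{r_k}$.

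If $n_\chi = 0$, then $\chi_2 = {\trivial}$, so $S = \iota\bigl(\Tr_{k_n/k}(a_n)\bigr) = \Tr_{\Q(\mu_{p^n})/\Q}(a_n)$; using $\Tr_{\Q(\mu_{p^n})/\Q}(\zeta_{p^r}) = -p^{n-1}$ for $r = 1$ and $= 0$ for $r \geq 2$ (and $a_0 = -1$), this equals $-1$, hence $S^{r_k} = (-1)^{r_k}$, which is the second case of the lemma. If $n_\chi > 0$, identify $\rho \leftrightarrow a \in (\Z/p^n\Z)^\times$ via $\rho(\zeta_{p^n}) = \zeta_{p^n}^{a}$, so that $\iota(a_n^\rho) = p^{1-n}\sum_{r=1}^{n}\exp(2\pi\sqrt{-1}\,a/p^r)$ and $\chi_2(\rho)^{-1} = \overline{\psi}(a)$, where $\psi$ is the primitive Dirichlet character of conductor $p^{n_\chi}$ attached to $\chi_2'$ (Setting \ref{setting 1}). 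Then
\[
S = p^{1-n}\sum_{r=1}^{n}\Bigl(\sum_{a \in (\Z/p^n\Z)^\times}\overline{\psi}(a)\exp(2\pi\sqrt{-1}\,a/p^r)\Bigr),
\]
and by orthogonality the inner sum vanishes unless $r = n_\chi$ (for $r < n_\chi$ because $\psi$ is nontrivial on $1 + p^r\Z_p$, for $r > n_\chi$ because one is left with a geometric sum of $p^{r-n_\chi}$-th roots of unity), in which case it equals $p^{n-n_\chi}\sum_{b \in (\Z/p^{n_\chi}\Z)^\times}\overline{\psi}(b)\exp(2\pi\sqrt{-1}\,b/p^{n_\chi})$. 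Hence $S = p^{1-n_\chi}\sum_{b}\overline{\psi}(b)\exp(2\pi\sqrt{-1}\,b/p^{n_\chi})$, and unwinding the normalizations in Definition \ref{def:Gsum} (the arithmetic normalization of $\rec_{\Q_p}$ and the additive character $\psi_{\Q_p}$) identifies this classical Gauss sum with $\tau_\Q(\chi_{2,p}')$, so $S = p^{1-n_\chi}\,\tau_\Q(\chi_{2,p}')$.

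Finally I would assemble the formula for $n_\chi > 0$: from the above, $B_{K_n/k}(a_n x_\bullet)^\chi = \bigl(p^{1-n_\chi}\tau_\Q(\chi_{2,p}')\bigr)^{r_k} B_{K/k}(x_\bullet)^{\chi_1}$. Since $r_k = \sum_{v \in S_p} r_{k_v}$, we get $p^{(1-n_\chi)r_k} = \prod_{v \in S_p} N(v)^{1-n_\chi}$, and Lemma \ref{lem:Davenport--Hasse}(ii),(iii) gives, for each $v \in S_p$, $\tau_\Q(\chi_{2,p}')^{r_{k_v}} = (-1)^{n_\chi(r_{k_v} - 1)}\tau_k(\chi_{2,v}) = (-1)^{n_\chi(r_{k_v} - 1)}\chi(\sigma_{K_n, v})^{n_\chi}\tau_k(\chi_v)$; multiplying over $v \in S_p$ collects the sign $(-1)^{n_\chi(r_k - \# S_p)}$ and yields exactly the asserted right-hand side (alternatively one invokes the combined identity at the end of Lemma \ref{lem:Davenport--Hasse}). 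I expect the main obstacle to be the evaluation of $S$ in the ramified case — both the bookkeeping showing only the level $r = n_\chi$ contributes, and, more delicately, the careful matching of the resulting classical Gauss sum with the precise normalization of $\tau_\Q(\chi_{2,p}')$ in Definition \ref{def:Gsum}, where the sign conventions for the reciprocity map and the additive character must be tracked; by contrast the remaining sign bookkeeping across $S_p$ is already packaged in Lemma \ref{lem:Davenport--Hasse}.
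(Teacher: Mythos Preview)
Your proposal is correct and follows essentially the same approach as the paper: factor the determinant via $\GG_{K_n} \simeq G \times \GG_{k_n}$ to pull out the scalar $S^{r_k}$, evaluate $S$ as $p^{1-n_\chi}\tau_\Q(\chi_{2,p}')$ (respectively $-1$ when $n_\chi = 0$), and then invoke Lemma \ref{lem:Davenport--Hasse} to convert $\tau_\Q(\chi_{2,p}')^{r_k}$ into the product over $S_p$. The only cosmetic difference is that the paper isolates the contributing term $\zeta_{p^{n_\chi}}$ in $a_n$ via the idempotent identity $\sum_{\sigma_2} a_n^{\sigma_2}\chi_2(\sigma_2)^{-1} = [k_n:k]\,e_{\chi_2} a_n$ before unwinding the Gauss sum, whereas you expand $a_n$ and kill the levels $r \neq n_\chi$ by a direct orthogonality argument; these are the same computation in different packaging.
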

\begin{proof} 
By the natural isomorphism $\GG_{K_n} \simeq G \times \GG_{k_n}$, 
we regard $G \subset \GG_{K_n}$ and $\C[G] \subset \C[\GG_{K_n}]$. 
We compute $B_{K_n / k} (a_n x_1, \dots , a_n x_{r_k})$ as follows.  
\begin{align*}
B_{K_n / k} (a_n x_1, \dots , a_n x_{r_k}) 
&= 
\det \parenth{\sum_{\sigma \in \GG_{K_n}} 
(\iota_{K_n, i} (a_n^\sigma x_m^\sigma))\sigma^{-1}}_{1 \leq i,m \leq r_k} \\
&=
\det \parenth{\sum_{\sigma_2 \in \GG_{k_n}} \iota_{K_n, i} (a_n^{\sigma_2}) \sigma_2^{-1}
\parenth{\sum_{\sigma_1 \in G} 
(\iota_{K, i} ( x_m^{\sigma_1}))\sigma_1^{-1}}}_{1 \leq i,m \leq r_k}  \\
&=
\parenth{\sum_{\sigma_2 \in \GG_{k_n}} \iota_{K_n, i} (a_n^{\sigma_2}) \sigma_2^{-1}}^{r_k}
\cdot \det 
\parenth{\sum_{\sigma_1 \in G} 
(\iota_{K, i} ( x_m^{\sigma_1}))\sigma_1^{-1}}_{1 \leq i,m \leq r_k} \\
&=
\parenth{\sum_{\sigma_2 \in \GG_{k_n}} \iota_{K_n, i} (a_n^{\sigma_2}) \sigma_2^{-1}}^{r_k}
\cdot B_{K/k} (x_1 , \dots , x_{r_k}). 
\end{align*}
For any character $\chi$ of $\GG_{K_n}$, we have 
\[
\sum_{\sigma_2 \in \GG_{k_n}} a_n^{\sigma_2} \chi_2 (\sigma_2^{-1})
= [k_n : k] e_{\chi_2} a_n 
= 
\begin{cases}
\frac{[k_n : k]}{p^{n-1}} e_{\chi_2} \zeta_{p^{n_\chi}}
= \frac{1}{p^{n-1}}\sum_{\sigma_2 \in \GG_{k_n}} \zeta_{p^{n_\chi}}^{\sigma_2} \chi_2 (\sigma_2^{-1})
& \text{if } n_\chi >0, \\
-1 & \text{if } n_\chi =0, 
\end{cases}
\]
where $e_{\chi_2} \in \C[\GG_{k_n}]$ is the idempotent of $\chi_2$. 
This implies the lemma when $n_\chi = 0$. 

From now on, we assume that $n_\chi >0$. 
We write 
$\rec_{\Q_p} : \Z_p^\times / 1 + p^{n_\chi}\Z_p \xrightarrow{\sim} \Gal (\Q_p (\mu_{p^{n_\chi}}) / \Q_p)$
for the local reciprocity map. 
We note that 
$\zeta_{p^{n_\chi}}^{\rec_{\Q_p} (u)} =  (\zeta_{p^{n_\chi}})^{u^{-1}}$ for any $u \in  \Z_p^\times / 1 + p^{n_\chi}\Z_p$. 
Then, by the above formula, we have 
\begin{align*}
&\sum_{\sigma_2 \in \GG_{k_n}} \iota_{K_n, i} (a_n^{\sigma_2}) \chi (\sigma_2)^{-1}
=
\frac{1}{p^{n-1}} 
\sum_{\sigma_2 \in \GG_{k_n}} \iota_{K_n, i} (\zeta_{p^{n_\chi}}^{\sigma_2}) \chi_2 (\sigma_2)^{-1} \\
&=
\frac{1}{p^{n_\chi-1}} 
\sum_{u \in \Z_p^\times / 1 + p^{n_\chi}\Z_p} 
\chi_{2, p}' (\rec_{\Q_p} (u) )^{-1} \exp\left(\frac{2 \pi \sqrt{-1}}{p^{n_\chi}} u^{-1}\right) \\
&=\frac{1}{p^{n_\chi-1}} \sum_{u \in \Z_p^\times / 1 + p^{n_\chi}\Z_p} 
\theta_{\chi_{2, p}'}(u p^{-n_\chi}) \psi_{\Q_p} (up^{-n_\chi}) 
= \frac{1}{p^{n_\chi-1}}  \tau_{\Q} (\chi_{2, p}'),
\end{align*}
where $\chi_2'$ is the character of $\Gal (\Q (\mu_{p^n}) / \Q)$ defined in Setting \ref{setting 1}.
Therefore, we have
\begin{align*}
&B_{K_n / k} (a_n x_1, \dots , a_n x_{r_k})^\chi
= \frac{\tau_{\Q} (\chi_{2, p}')^{r_k}}{p^{r_k (n_\chi-1)}}   
\cdot B_{K/k} (x_1 , \dots , x_{r_k})^{\chi_1} \\
&= (-1)^{n_\chi (r_k - \# S_p )} \cdot B_{K/k} (x_1 , \dots , x_{r_k})^{\chi_1}
\cdot \prod_{v \in S_p } 
\tau_{k} (\chi_v) \cdot \chi(\sigma_{K_n, v})^{n_\chi} \cdot N (v)^{1-n_\chi }, 
\end{align*}
by using Lemma \ref{lem:Davenport--Hasse}. 
This completes the proof of Lemma \ref{B Gauss sum}. 
\end{proof}

Recall that we identify $\C$ and $\C_p$. 
Using this, we can extend each $\iota_K \in \Sigma_K $ to 
\[
K_p:= \Q_p \otimes_\Q K \to \C_p ; \;\; a \otimes x \mapsto a \cdot \iota_K (x),
\]
which is denoted by $\iota_K$ by abuse of notation.
For any $x_1, \dots , x_{r_k} \in K_p$, 
we then define $B_{K/k}(x_1, \dots, x_{r_k}) \in \C_p [G]$ by the same formula \eqref{det B}. 

For any finite prime $v$ of $k$, 
we define the equivariant local Gauss sum $\tau_{K/k, v}$ by
\[
\tau_{K/k, v} = \sum_{\chi \in \widehat{G}} \tau_k (\chi_v) e_\chi \in \C [G]
\] 
and then the equivariant (global) Gauss sum $\tau_{K/k}$ by 
\[
\tau_{K/k} = |D_k|^{\frac{1}{2}} (\sqrt{-1})^{r_\C (k)}  \prod_{v} \tau_{K/k, v},
\]
where $D_k$ is the absolute discriminant of $k$. 
This equivariant Gauss sum $\tau_{K/k}$ is equal to the one 
which is defined in \cite[page 4]{BlBu02} and \cite[equation (11)]{BlBu03} by \cite[line 3 of p. 553  and equation (14)]{BlBu03}.

The following is the goal of this subsection.
We fix a subset and a labeling 
$\{ \iota_{K, 1}, \dots , \iota_{K, r_k} \} \subset \Sigma_{K}$ as in \S \ref{ss:alpha}. 
 
\begin{prop}\label{prop: det B and Gauss sum}
There is a $\Z_p[G]$-basis $\{x_1, \dots , x_{r_k}\}$ of 
$\Z_p \otimes_\Z \OO_K$ such that 
\[
B_{K/k}(x_1, \dots , x_{r_k})
= \tau_{K/k} 
\prod_{v \in S_{\ram} (K/k)_f} \parenth{ \epsilon_v^0 (K/k)^\# - \cfrac{N_{I_v}}{\# I_v} }.
\]
Moreover, such a basis satisfies
\[
\det \parenth{\sum_{\sigma \in G} 
\Tr_{K/\Q}(x_a^{\sigma{^{-1}}} \cdot  x_b)\sigma
}_{1\leq a,b \leq r_k} 
=  |D_k| \cdot (-1)^{r_\C (k)}
 \prod_{v \in S_{\ram} (K/k)_f} \rec_{k_v} (-1) 
\sum_{\chi \in \widehat{G}} 
N(\ff_\chi) e_\chi. 
\]
\end{prop}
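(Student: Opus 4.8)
The plan is to reduce $B_{K/k}$ to a \emph{norm resolvent} and then invoke the Fr\"ohlich--Taylor theory of resolvents and Galois Gauss sums, in the equivariant formulation of Bley--Burns (which is why the identification of $\tau_{K/k}$ with their equivariant Gauss sum was recorded just before the statement). By the normal basis theorem $K$ is free of rank one over $k[G]$; since $K/\Q$ is unramified at $p$, each $\OO_{K_v}$ with $v\in S_p$ is free of rank one over $\OO_{k_v}[G]$ (as recalled in \S\ref{State_2}), so by weak approximation one can pick a normal basis generator $\theta\in\OO_K$ of $K/k$ which moreover generates $\OO_{K_v}$ over $\OO_{k_v}[G]$ for every $v\in S_p$ (the latter being an open condition on the image of $\theta$ in $\OO_K/p\OO_K$). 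For a $\Z$-basis $\omega_1,\dots,\omega_{r_k}$ of $\OO_k$ the family $x_m:=\omega_m\theta$ is then a $\Z_p[G]$-basis of $\Z_p\otimes_\Z\OO_K=\prod_{v\in S_p}\OO_{K_v}$. Since $\omega_m\in k$ one has $\iota_{K,i}(x_m^\sigma)=\iota_{k,i}(\omega_m)\iota_{K,i}(\theta^\sigma)$, so, factoring $\rho_i:=\sum_{\sigma\in G}\iota_{K,i}(\theta^\sigma)\sigma^{-1}$ out of the $i$-th row of the defining matrix,
\[
B_{K/k}(x_1,\dots,x_{r_k})=\det\bigl(\iota_{k,i}(\omega_m)\bigr)_{1\le i,m\le r_k}\cdot\prod_{i=1}^{r_k}\rho_i .
\]
The determinant on the right squares to the discriminant $D_k$ of $\OO_k$, hence, for a suitable ordering of $\Sigma_k$ and up to an overall sign absorbed below, equals $(\sqrt{-1})^{r_{\C}(k)}|D_k|^{1/2}$; the second factor is the equivariant norm resolvent, whose $\chi$-component is $\prod_{i}\sum_{\sigma}\iota_{K,i}(\theta^\sigma)\chi(\sigma)^{-1}$.

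The heart of the argument is the evaluation of the norm resolvent. By Fr\"ohlich's resolvent formula (in the equivariant form underlying the Bley--Burns Gauss sum), $\prod_i\rho_i$ equals $\sum_{\chi}\tau_k(\chi)e_\chi=\prod_v\tau_{K/k,v}$ times a product of purely local ``content'' factors, one for each prime $v$ with $\OO_{k_v}[G]\theta\ne\OO_{K_v}$. By the choice of $\theta$ these factors are supported outside $S_p$; at a prime $v\notin S_p$ that is unramified in $K/k$ the content is a $p$-adic unit, while at $v\in S_{\ram}(K/k)_f$ a direct local resolvent computation (using $v\nmid p$, analogous in spirit to the computations of \S\ref{sec:non p adic prime}) identifies the $\chi$-component of the content with that of $\epsilon_v^0(K/k)^\#-N_{I_v}/\#I_v$. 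Using $\tau_k(\chi)=\prod_v\tau_k(\chi_v)$ and the definition of $\tau_{K/k}$, this gives $B_{K/k}(x_1,\dots,x_{r_k})=u\cdot\tau_{K/k}\prod_{v\in S_{\ram}(K/k)_f}(\epsilon_v^0(K/k)^\#-N_{I_v}/\#I_v)$ for some $u\in\Z_p[G]^\times$. Replacing $x_1$ by $u^{-1}x_1$ --- which lies in $\Z_p\otimes_\Z\OO_K$, still forms a $\Z_p[G]$-basis, and multiplies $B_{K/k}$ by $u^{-1}$ --- yields the asserted equality.

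The ``moreover'' is then formal. Setting $R_i(x):=\sum_{\sigma}\iota_{K,i}(x^\sigma)\sigma^{-1}$, a short computation using $\Tr_{K/\Q}=\Tr_{k/\Q}\circ\Tr_{K/k}$ gives $\sum_{\sigma\in G}\Tr_{K/\Q}(a^{\sigma^{-1}}b)\sigma=\sum_i R_i(a)R_i(b)^\#$ for all $a,b\in K$; hence the matrix $\bigl(\sum_{\sigma}\Tr_{K/\Q}(x_a^{\sigma^{-1}}x_b)\sigma\bigr)_{a,b}$ equals $\mathbf{R}\,\mathbf{R}^{\#\mathrm{T}}$ with $\mathbf{R}=(R_i(x_a))_{a,i}$, so its determinant is $B_{K/k}(x_1,\dots,x_{r_k})\cdot B_{K/k}(x_1,\dots,x_{r_k})^\#$. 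Substituting the formula just proved, using $(\sqrt{-1})^{2r_{\C}(k)}=(-1)^{r_{\C}(k)}$, the relation $\tau_k(\chi)\tau_k(\chi^{-1})=N(\ff_\chi)(-1)^{r_{\R}(k)_{\ram}^{\chi}}$ recorded in the proof of Theorem~\ref{thm:func eq}, the identity $(\epsilon_v^0(K/k)^\#-N_{I_v}/\#I_v)(\epsilon_v^0(K/k)-N_{I_v}/\#I_v)=1$ (a direct computation of $\chi$-components), and finally $(-1)^{r_{\R}(k)_{\ram}^{\chi}}=\prod_{v\in S_{\ram}(K/k)_f}\chi(\rec_{k_v}(-1))$ (which follows from applying global reciprocity to $-1\in k^\times$), one obtains exactly the claimed determinant.

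The main obstacle is the local resolvent computation at the ramified primes $v\in S_{\ram}(K/k)_f$ --- checking that, even when $K/k$ is wildly ramified at $v$, the local content is precisely $\epsilon_v^0(K/k)^\#-N_{I_v}/\#I_v$ --- together with the careful tracking of signs, roots of unity, and Gauss-sum normalizations needed to upgrade Fr\"ohlich's ``up to units'' statement to the exact identity; once the equivariant Gauss-sum conventions are aligned with those of Bley--Burns, the rest is bookkeeping.
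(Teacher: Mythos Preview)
Your treatment of the ``moreover'' part matches the paper's proof exactly: the matrix identity $\bigl(\sum_\sigma\Tr_{K/\Q}(x_a^{\sigma^{-1}}x_b)\sigma\bigr)_{a,b}$ equals the product of the $B$-matrix and its $\#$-transpose, then $B_{K/k}\cdot B_{K/k}^\#=\tau_{K/k}\tau_{K/k}^\#$ is evaluated via $\tau_k(\chi)\tau_k(\chi^{-1})=\theta_{\chi_v}(-1)N(\ff(\chi_v))$ at each ramified $v$.

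For the first part, the paper takes a shorter route that bypasses the obstacle you flag. Rather than constructing a specific basis $x_m=\omega_m\theta$ from a normal basis generator and computing the norm resolvent, the paper observes that $B_{K/k}(x_1,\dots,x_{r_k})$ is by definition the determinant of the comparison map $\pi_{K/k}:\C_p\otimes K\to\bigoplus_{\iota_K}\C_p$ with respect to the lattices $\Z_p\otimes\OO_K$ and $\bigoplus_{\iota_K}\Z_p$. The proved conjecture $\mathbf{C}_p(K/k)$ of Bley--Burns (\cite[Corollary~7.6]{BlBu03}, cf.\ \cite[proof of Theorem~5.1]{BlBu02}) asserts precisely that this determinant equals $\tau_{K/k}\prod_{v\in S_{\ram}(K/\Q)_f}f_v^{-1}$ up to $\Z_p[G]^\times$, where $f_v=\epsilon_v^0(K/k)-N_{I_v}/\#I_v$ satisfies $f_v^{-1}=f_v^\#$. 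One then replaces $S_{\ram}(K/\Q)_f$ by $S_{\ram}(K/k)_f$ (the extra $f_v$ are units) and adjusts the basis to kill the unit. So the ``direct local resolvent computation at wildly ramified $v$'' that you identify as the main obstacle is exactly what is packaged inside the Bley--Burns theorem; your proposal amounts to reproving a piece of $\mathbf{C}_p(K/k)$, which is not wrong but is both harder and left incomplete in your sketch. Citing their result is cleaner and is what the paper does.
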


\begin{proof}
To find such a basis, we apply a result of 
Bley and Burns \cite{BlBu02}, \cite{BlBu03}. 
For each finite prime $v$ of $k$, 
we put
\[
f_v :=\epsilon_v^0 (K/k) - \cfrac{N_{I_v}}{\# I_v} =
1-\cfrac{N_{I_v}}{\# I_v} - \cfrac{N_{I_v}}{\# I_v}\sigma_{K,v} \in \Q[G],
\] 
where $\sigma_{K, v}$ is the arithmetic Frobenius in $\Gal(K/k)$ as defined at the beginning of \S \ref{ss:choice}.
A direct computation shows $f_v^{-1} = f_v^\#$. 

We consider the $\C_p[G]$-isomorphism
\[
\pi_{K/k} : 
\C_p \otimes K \xrightarrow{\sim} \bigoplus_{\iota_K \in \Sigma_K} \C_p
\;\; ; a \otimes x \mapsto (a \iota_K(x))_{\iota_K \in \Sigma_K},
\]
where we use the fixed isomorphism $\C \xrightarrow{\sim} \C_p$.
Since $G$ is abelian and $K/k$ is unramified at all $p$-adic primes, 
the conjecture ${\rm \bf{C}}_p (K/k)$ in \cite{BlBu03}, proved in \cite[Corollary 7.6]{BlBu03}, tells us that 
the determinant of the homomorphism $\pi_{K/k}$ with respect to the $\Z_p[G]$-free lattices $\Z_p \otimes \OO_K$ and $\bigoplus_{\iota_K \in \Sigma_K} \Z_p$ is
\[
\tau_{K/k} \cdot \prod_{v \in S_{\ram} (K/\Q)_f } f_v^{-1}
\]
up to $\Z_p [G]^\times$ (see \cite[the proof of Theorem 5.1]{BlBu02}). 
Here we note that we can replace the set $S_{\ram} (K / \Q)_f$ by $S_{\ram} (K/k)_f$ because for $v \in S_{\ram} (K / \Q)_f \setminus S_{\ram} (K/k)_f$, we have $f_v \in \Z_p[G]^\times$.
We take a basis $y_{\iota_{K, 1}}, \dots, y_{\iota_{K, r_k}}$ of $\bigoplus_{\iota_K \in \Sigma_K} \Z_p$, where $y_{\iota_{K, i}}$ is the element whose $\iota_{K, i}$-component is $1$ and the other components are zero.
Then for each basis $x_1, \dots, x_{r_k}$ of $\Z_p \otimes \OO_K$, the determinant of $\pi_{K/k}$ with respect to these bases is $B_{K/k}(x_1, \dots , x_{r_k})$.
Therefore, the desired basis $x_1, \dots, x_{r_k}$ indeed exists.

Let us show the final claim.
We first compute ($(-)^{\mathsf{T}}$ denotes the transpose matrix)
\begin{align*}
&\parenth{\sum_{\sigma \in G} 
(\iota_{K, i} (x_m^\sigma))\sigma^{-1}}_{1 \leq i,m \leq r_k}^{\mathsf{T}}
\cdot
\parenth{\sum_{\tau \in G} 
(\iota_{K, i} (x_m^\tau))\tau}_{1 \leq i,m \leq r_k}\\
& \qquad = \parenth{\sum_{i=1}^{r_k} \sum_{\sigma \in G} 
(\iota_{K, i} (x_a^\sigma))\sigma^{-1}
\cdot
\sum_{\tau \in G} 
(\iota_{K, i} (x_b^\tau))\tau}_{1 \leq a, b \leq r_k}\\
& \qquad = \parenth{\sum_{i=1}^{r_k} \sum_{\sigma \in G} \sum_{\tau \in G}
(\iota_{K, i} (x_a^\sigma x_b^\tau))\sigma^{-1} \tau}_{1 \leq a, b \leq r_k}\\
& \qquad = \parenth{\sum_{\sigma \in G} \sum_{i=1}^{r_k} \sum_{\tau \in G}
(\iota_{K, i} (x_a^{\sigma^{-1} \tau} x_b^\tau))\sigma}_{1 \leq a, b \leq r_k}\\
& \qquad = \parenth{\sum_{\sigma \in G} \Tr_{K/\Q}(x_a^{\sigma^{-1}} x_b)\sigma}_{1 \leq a, b \leq r_k}.
\end{align*}
Therefore, by the definition of $B_{K/k}(x_1, \dots, x_{r_k} )$, the left hand side of the claim is
\begin{align*}
& B_{K/k}(x_1, \dots, x_{r_k} ) \cdot B_{K/k}(x_1, \dots, x_{r_k} )^\# \\
& \qquad = \tau_{K/k} \cdot
\prod_{v \in S_{\ram} (K/k)_f} f_v^\#  \cdot 
\tau_{K/k}^{\#} \cdot
\prod_{v \in S_{\ram} (K/k)_f} f_v \\
& \qquad = \tau_{K/k} \cdot \tau_{K/k}^{\#}.
\end{align*}
By applying $\tau_k (\chi_v) \cdot \tau_k (\chi^{-1}_v) = \theta_{\chi_v} (-1)N(\ff (\chi_v))$ 
for each finite prime $v$ of $k$ at which $\chi$ ramifies (cf.~\cite[equation (5.7a) in Chapter 1, \S 5]{Fro83}), we continue to
\begin{align*}
& \qquad =
 |D_k| \cdot (-1)^{r_\C (k)} 
\sum_{\chi \in \widehat{G}} \left( \prod_{v \in S_{\ram} (K/k)_f} \theta_{\chi_{v}} (-1) N(\ff(\chi_v)) \right) e_\chi  \\
& \qquad =
 |D_k| \cdot (-1)^{r_\C (k)}
 \prod_{v \in S_{\ram} (K/k)_f} \rec_{k_v} (-1) 
\sum_{\chi \in \widehat{G}} 
N(\ff_\chi) e_\chi.
\end{align*}
This completes the proof of Proposition \ref{prop: det B and Gauss sum}. 
\end{proof}

\section{Proof of Theorem \ref{thm:lETNC}}\label{sec:pf_lETNC}

Our purpose in this section is to prove Theorem \ref{thm:lETNC}. 
Namely, we construct a system of bases of the determinant module of the 
local Galois cohomology complex by using the result of the previous sections.

We fix a number field $k$ such that $k / \Q$ is unramified at $p$. 
Let $K/k$ be a finite abelian extension in which all $p$-adic primes are unramified. 
For any $n \geq 0$, 
we put $K_n = K(\mu_{p^n})$ and $\GG_{K_n} = \Gal(K_n/k)$. 
We set $k_\infty := k (\mu_{p^\infty})$, $K_\infty := K (\mu_{p^\infty})$, $\GG_{K_\infty} := \Gal (K_\infty / k)$ and $\Lambda := \Z_p[[\GG_{K_\infty}]]$. 
We fix a basis $\xi = (\zeta_{p^n})_n$ of $\Z_p(1) = \varprojlim_n \mu_{p^n}(K_{\infty})$. 

\subsection{Auxiliary units}\label{ss:aux}

For any finite prime $v$ of $k$, 
let $\rec_{k_v} : k_v^\times \to \Gal (k_v (\mu_{p^\infty}) / k_v) $ be the local reciprocity map. 
By the natural isomorphism $\GG_{K_\infty} \simeq \GG_K \times \Gal (k_\infty / k)$, 
we regard $\rec_{k_v} : k_v^\times \to \Gal (k_\infty / k) \subset \GG_{K_\infty}$.

\begin{defn}\label{defi:D}
For any $j \in \Z$, 
we define 
\[
D_j(K_\infty / k) := |D_k|^j \prod_{v \in S_{\ram} (k/\Q)_f} \rec_{k_v} (\cD_{k_v / \Q_l})^{-1} \in \Lambda^\times. 
\]
We indeed have $D_j(K_\infty / k) \in  \Lambda^\times$ because $|D_k| \in \Z_p^\times$ by the assumption that $p$ is unramified in $k / \Q$.
\end{defn}

\begin{lem}\label{lem:cond unit}
Let $v$ be a non-$p$-adic finite prime of $k$.
Then, for any $j \in \Z$, 
there is a (unique) element $\ff_j (K_\infty / k)_v \in \Lambda^\times$ 
satisfying the following.
For any character $\chi \in \widehat{\GG_{K_\infty}}$,
we write $\chi = \chi_1 \chi_2$ with $\chi_1 \in \widehat{\GG_{K}}$ and 
$\chi_2 \in \widehat{\GG_{k_\infty}} = \widehat{\Gal (k_\infty / k)}$ as in Setting \ref{setting 1}.  
Then we have 
\[
\chi (\ff_j (K_\infty / k)_v) = 
\begin{cases}
N (v)^{j}\cdot \chi_2 (\sigma_{k_\infty, v})^{-1} &\text{if } \chi \text{ is unramified at } v, \\
N (\ff(\chi_v))^{j}\cdot \chi_2 (\sigma_{k_\infty, v})^{-i_\chi} &\text{otherwise}, 
\end{cases}
\]
where $\sigma_{k_\infty, v} \in \Gal (k_\infty / k)$ is the arithmetic Frobenius of $v$, 
$\ff(\chi_v)$ is the $v$-component of the conductor of $\chi$ and 
$i_\chi$ is defined by $N (\ff(\chi_v)) = N (v)^{i_\chi}$. 
\end{lem}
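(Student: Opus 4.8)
The plan is to construct $\ff_j(K_\infty/k)_v$ as an explicit element of the group ring $\Lambda = \Z_p[[\GG_{K_\infty}]]$, built from the local reciprocity map and from the conductor data at $v$, and then verify the interpolation formula character by character. The key observation is that the two cases in the formula—$\chi$ unramified at $v$ versus ramified—are governed by the behavior of the inertia subgroup $I_v \subset \GG_{K_\infty}$ at $v$ (which is finite, since $v \nmid p$ and $k/\Q$ is unramified at $p$, so only tame or wild ramification coming from $K/k$ contributes, and in any case $I_v$ has order prime to $p$ times a bounded $p$-part; more precisely the relevant inertia is that of $K/k$ since $k_\infty/k$ is unramified outside $p$). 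First I would record that, because $v$ is non-$p$-adic and $k_\infty/k$ is unramified at $v$, the character $\chi_2$ is always unramified at $v$, so $\chi_2(\sigma_{k_\infty,v})$ makes sense unconditionally; the ramification of $\chi$ at $v$ is entirely the ramification of $\chi_1$. Thus $i_\chi$ depends only on $\chi_1$, and $N(\ff(\chi_v)) = N(\ff(\chi_{1,v}))$.

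Next I would write down the candidate. Using $\rec_{k_v}\colon k_v^\times \to \Gal(k_\infty/k)\subset \GG_{K_\infty}$ (the restriction to the cyclotomic part), set
\[
\ff_j(K_\infty/k)_v := \bigl(1 - e_{I_v} + e_{I_v}\, N(v)^{j}\, \rec_{k_v}(\varpi_v)^{-1}\bigr)\;\cdot\; u_v,
\]
where $\varpi_v$ is a uniformizer of $k_v$, $e_{I_v} = N_{I_v}/\#I_v \in \Q[\GG_{K_\infty}]$ is the idempotent (which lies in $\Z_p[\GG_{K_\infty}]$ after inverting $\#I_v$, and is a genuine element of $\Lambda\otimes\Q$ that we must check produces integral output), and $u_v$ is a correction term supported on the ramified part that encodes $N(\ff(\chi_v))^j / N(v)^j$ together with the extra power of $\sigma_{k_\infty,v}$. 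Concretely, one can build $u_v$ as an element of the group ring of $I_v$ (so it commutes with everything and only affects ramified $\chi$) whose value at $\chi$ is $N(v)^{(i_\chi - 1)j}\cdot \chi_2(\sigma_{k_\infty,v})^{1-i_\chi}$; since $i_\chi$ is a locally constant (indeed, finite-valued) function of $\chi_1$, such an element exists by decomposing $\Z_p[I_v]\otimes\Q_p$ into its $\chi_1|_{I_v}$-isotypic pieces. I would then check $\ff_j(K_\infty/k)_v \in \Lambda^\times$: it is a unit because each $\chi$-value is a nonzero $p$-adic unit (note $N(v)$ and $N(\ff(\chi_v))$ are prime to $p$, being powers of $\ell \neq p$), and an element of $\Lambda$ is a unit iff all its character values are units and it lies in $\Lambda$; integrality follows because the construction only involves $e_{I_v}$-type idempotents with $\#I_v$ a unit times a $p$-power that gets absorbed, plus genuinely integral group-ring elements. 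Uniqueness is immediate since the characters separate elements of $\Lambda$ (as $\GG_{K_\infty}$ is abelian and $p$ is odd), so any two elements with the same character values coincide.

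The main obstacle I anticipate is the integrality and well-definedness of the correction term $u_v$ and of $e_{I_v}N(v)^j\rec_{k_v}(\varpi_v)^{-1}$ as honest elements of $\Lambda$ rather than of $\Lambda\otimes_{\Z_p}\Q_p$. One must argue carefully that although $e_{I_v}$ has denominator $\#I_v$, the combinations appearing are integral—this is the same mechanism by which $e_{I_v}\sigma_{K/k,v}$ was observed to be well-defined in $\Q[G]$ in \S\ref{local eTNC}, and more to the point why $\epsilon_v^j(K_n/k)$ and $\delta_v(K_n/k)$ lie in $\Z_p[\GG_n]$ after inverting $\ell$; here the cleanest route is to note that $v\nmid p$ and to exhibit $\ff_j(K_\infty/k)_v$ directly as a unit in $\Z_p[\Gal(K_\infty/k)_v']\subset\Lambda$, where $\Gal(K_\infty/k)_v'$ is the (pro-$p$-free times finite-prime-to-$p$) local Galois group, on which $\rec_{k_v}$ is surjective, so that all the needed elements are literally in the image of $\Z_p[k_v^\times/U]$ for a suitable open $U$. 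Once the element is pinned down inside $\Lambda$, the verification of the displayed interpolation formula is a routine unwinding of $\rec_{k_v}(\varpi_v)$ acting through $\chi$: on unramified $\chi$ one gets $N(v)^j\chi_2(\sigma_{k_\infty,v})^{-1}$ directly from the first factor (and $u_v$ acts trivially since $\chi_1$ is unramified, $i_\chi=1$), while on ramified $\chi$ the first factor contributes $N(v)^j\chi_2(\sigma_{k_\infty,v})^{-1}$ and $u_v$ supplies the ratio $N(\ff(\chi_v))^j N(v)^{-j}\cdot\chi_2(\sigma_{k_\infty,v})^{1-i_\chi}$, giving the stated $N(\ff(\chi_v))^j\chi_2(\sigma_{k_\infty,v})^{-i_\chi}$.
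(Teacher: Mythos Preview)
Your idea---separate characters by their conductor exponent $i_\chi$ via inertia idempotents and multiply by the appropriate power of $N(v)^j\sigma_{k_\infty,v}^{-1}$---is exactly the paper's approach. But your execution has three genuine errors.

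First, your correction term $u_v$ cannot be ``an element of the group ring of $I_v$'': its prescribed value $N(v)^{(i_\chi-1)j}\chi_2(\sigma_{k_\infty,v})^{1-i_\chi}$ depends on $\chi_2$, not only on $\chi|_{I_v}$. What you actually need is a sum of idempotents of subgroups of $I_v$ (these do detect $i_\chi$) times powers of $\sigma_{k_\infty,v}^{-1}$ (these supply the $\chi_2$-dependence). Second, your verification paragraph is inconsistent: on ramified $\chi$ one has $\chi(e_{I_v})=0$, so your first factor $1-e_{I_v}+e_{I_v}N(v)^j\sigma_{k_\infty,v}^{-1}$ evaluates to $1$, not to $N(v)^j\chi_2(\sigma_{k_\infty,v})^{-1}$ as you claim. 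Third, and most seriously, $e_{I_v}=e_{I_v^0}$ need \emph{not} lie in $\Lambda$: the tame quotient $I_v^0/I_v^1$ has order dividing $N(v)-1$, which may well be divisible by $p$. So neither of your two factors is individually integral.

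The paper fixes all three problems simultaneously by using the full upper-numbering filtration $I_v=I_v^0\supset I_v^1\supset\cdots\supset I_v^r=\{1\}$ and writing
\[
\ff_j(K_\infty/k)_v \;=\; N(v)^j\sigma_{k_\infty,v}^{-1}\,e_{v,0} \;+\; \sum_{i=1}^{r}\bigl(N(v)^j\sigma_{k_\infty,v}^{-1}\bigr)^i\,(e_{v,i}-e_{v,i-1}),
\]
where $e_{v,i}:=e_{I_v^i}$. The interpolation property is then immediate, since $\chi$ picks out exactly the $i=i_\chi$ term (with the convention that the unramified case is the $e_{v,0}$ term). For integrality, the key observation you are missing is that for $i\geq 1$ the groups $I_v^i$ lie in the \emph{wild} inertia, hence are $l$-groups (where $v\mid l\neq p$), so $\#I_v^i\in\Z_p^\times$ and $e_{v,i}\in\Lambda$. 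The problematic $e_{v,0}$ disappears because the $i=0$ and $i=1$ terms combine to $N(v)^j\sigma_{k_\infty,v}^{-1}\,e_{v,1}$. The inverse is given by the same formula with $N(v)^{-j}\sigma_{k_\infty,v}$ in place of $N(v)^j\sigma_{k_\infty,v}^{-1}$, and the same argument shows it lies in $\Lambda$.
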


\begin{proof}
Let $I_v \subset \GG_{K_\infty}$ be the inertia group of $v$, which is finite since $v \nmid p$.
We consider the filtration 
\[
I_v^0 = I_v \supset I_v^1 \supset \cdots \supset I_v^r = \{ 1 \}
\]
such that for any character $\chi$ of $\GG_{K_\infty}$, 
$N (\ff(\chi_v)) = N (v)^{i}$ is equivalent to 
$\ker \chi \supset I_v^i $ and $\ker \chi \not\supset I_v^{i-1} $ 
(i.e, $I_v^i $ is the $i$-th ramification group in the upper numbering). 
We put $e_{v, i} := e_{I_v^i} = \frac{1}{\# I_v^i} \sum_{\sigma \in I_v^i} \sigma$. 
For any $j \in \Z$, we define $\ff_j (K_\infty / k)_v$ by 
\begin{align*}
\ff_j (K_\infty / k)_v := N (v)^{j} \cdot \sigma_{k_\infty, v}^{-1} \cdot e_{v, 0}
+ 
\sum_{i=1}^{r} \parenth{N (v)^j \cdot \sigma_{k_\infty, v}^{-1}}^i \cdot (e_{v, i} - e_{v, i-1})
\\
= N (v)^j \cdot \sigma_{k_\infty, v}^{-1} \cdot e_{v, 1} + 
\sum_{i=2}^{r} \parenth{N (v)^j \cdot \sigma_{k_\infty, v}^{-1}}^i \cdot (e_{v, i} - e_{v, i-1}).
\end{align*}
Then the interpolation property immediately follows.
It remains to show that $\ff_j (K_\infty / k)_v$ is in $\Lambda^{\times}$. 
Since $v$ is not a $p$-adic prime, we have $e_{v, i} \in \Lambda$ for any $1 \leq i \leq r$. 
Therefore $\ff_j (K_\infty / k)_v \in \Lambda$.
Similarly, the element 
\[
(\ff_j (K_\infty / k)_v)^{-1} = 
N (v)^{-j} \cdot \sigma_{k_\infty, v} \cdot e_{v, 0}
+ 
\sum_{i=1}^{r} \parenth{N (v)^{-j} \cdot \sigma_{k_\infty, v}}^{i} \cdot (e_{v, i} - e_{v, i-1}) 
\]
is also contained in $\Lambda$. 
Therefore, we have $\ff_j (K_\infty / k)_v \in \Lambda^\times$. 
\end{proof}

\subsection{The construction of the basis}\label{ss:constr_basis}

In this subsection, we construct the desired $\Lambda$-basis $Z_{K_{\infty}/k, S}^{\loc, j}$ of $\Xi^{\loc}_{K_{\infty}/k, S}(j)$.
Its properties will be shown in the next subsection.

\subsubsection{Archimedean places}

Firstly, let us take a basis of $X_{K_{\infty}}(j)$.

As in \S \ref{ss:alpha} and \S \ref{ss:choice}, we fix a labeling $\Sigma_k = \{\iota_{k, 1} , \dots , \iota_{k, r_k} \}$
and then, for each $1 \leq i \leq r_k $, 
we choose $
\iota_{K_{\infty}, i}: K_{\infty} \hookrightarrow \C
$
such that $\iota_{K_{\infty}, i}|_k = \iota_{k, i}$ and
$
\iota_{K_{\infty}, i} (\zeta_{p^n}) = \exp \left(\frac{2 \pi \sqrt{-1}}{p^n} \right)
$
for all $n \geq 0$. 
Then for each $n \geq 0$, we set $\iota_{K_n, i} = \iota_{K_{\infty, i}}|_{K_n}$.

Let $j \in \Z_{\geq 1}$. Recall that we take a basis $\{e_{\iota_{K_n , i}}^j\}_{1\leq i \leq r_k}$ of $X_{K_n} (j)$ in \S \ref{period map}. We define a $\Lambda$-basis 
$\{e_{K_{\infty}, i}^j \mid 1 \leq i \leq r_k\}$ of $X_{K_\infty} (j)$ as $e_{K_{\infty}, i}^j := (e_{\iota_{K_n , i}}^j)_{n \geq 0} \in X_{K_\infty} (j)$. Therefore, we have a basis
\[
(\wedge_{i=1}^{r_k} e_{K_\infty, i}^j )^\ast
\in
\Det_{\Lambda}^{-1} (X_{K_\infty} (j)),  
\]
where $(-)^\ast$ means the dual basis.  

Note that the choice of $\iota_{K, i}$ also determines a $\Q[G]$-basis $\{ b_{\iota_{K, i}}^j \mid 1 \leq i \leq r_k \}$
of the Betti cohomology space $H_K (j)$.

\subsubsection{Non-$p$-adic primes}

For non-$p$-adic finite primes $v$, by 
Theorem \ref{thm:det non p prime}, we have a basis
\[
\cH_{K_{\infty}/k, v}^j \in \Det_{\Lambda}^{-1} \parenth{\Delta_{K_{\infty}, v} (j) }.
\]
In addition, for non-$p$-adic finite prime $v$ which is unramified in $K/k$, 
by Corollary \ref{cor:unr_basis}, we also have a basis
\[
\cE_{K_{\infty}/k, v}^j \in \Det_{\Lambda}^{-1} \parenth{\Delta_{K_{\infty}, v} (j) }.
\] 

\subsubsection{$p$-adic primes}\label{sss:p}

We set
$
\OO_{K_p} := \Z_p \otimes_{\Z} \OO_{K}
\simeq \prod_{v \in S_p} \OO_{K_v}
$
and
\[
R_{K_p} := \left\{ f(T) \in \OO_{K_p} [[T]]  \, \middle| \, \sum_{\zeta \in \mu_p} f (\zeta (1+T)-1) =0 \right\} \subset \OO_{K_p}[[T]].
\]
Then we have $R_{K_p} = \bigoplus_{v \in S_p} R_{K_v}$.

For each $v \in S_p$, we consider the isomorphism 
\[
\Phi_{K_{\infty}, v}^j : 
\Det_{\Lambda}^{-1} \parenth{
\Delta_{K_{\infty}, v} (j) }
\xrightarrow{\sim} 
\Det_{\Lambda} (R_{K_v})
\]
in Theorem \ref{thm:Phi}. 
Note that this depends on the choice of a basis of $\Z_p (1)$, and we use the one at the beginning of this section. 
Put $\Delta_{K_{\infty}, p} (j) := \bigoplus_{v \in S_p} \Delta_{K_{\infty}, v} (j) $. 
Then by taking the tensor product for $v \in S_p$, 
the maps $\Phi_{K_{\infty}, v}^j $ induce an isomorphism  
\[
\Phi_{K_{\infty}, p}^j : 
\Det_{\Lambda}^{-1} \parenth{
\Delta_{K_\infty, p} (j) }
\xrightarrow{\sim} 
\Det_{\Lambda} (R_{K_p}).
\]

We take an ordered $\Z_p[\GG_K]$-basis $x_{K, 1}, \dots , x_{K, r_k}$ of $ \OO_{K_p}$, which is denoted by $\xx$,
as in Proposition \ref{prop: det B and Gauss sum}
for the embeddings $\{ \iota_{K, i} \mid 1 \leq i \leq r_k\} \subset \Sigma_{K}$. 
Then, by \cite[Lemme 1.5]{Per90}, 
we know that $\{ x_{K, i} (1+T) \mid 1 \leq i \leq r_k \}$ is a basis of $R_{K_p}$ as a $\Lambda$-module.
We take  
\[
\wedge_{i=1}^{r_k} x_{K,i} (1+T) \in \Det_{\Lambda} (R_{K_p})
\]
as a $\Lambda$-basis of $\Det_{\Lambda} (R_{K_p})$, and then by using the isomorphism $\Phi_{K_{\infty}, p}^j$, we obtain a $\Lambda$-basis
\[
Z^{p, j}_{K_{\infty}/k, \xx} := \left(\Phi_{K_{\infty}, p}^j\right)^{-1} \parenth{ \wedge_{i=1}^{r_k} x_{K,i} (1+T)}
\]
of $\Det_{\Lambda}^{-1} \parenth{ \Delta_{K_{\infty}, p} (j) }$.

\subsubsection{Putting all together}

Now we are ready to construct the desired basis of $\Xi^{\loc,j}_{K_{\infty}/k, S}$. 
Recall that $S$ is a finite set of places of $k$ such that 
$S \supset S_p \cup S_{\ram} (K/k) \cup S_\infty$. 
We set $S_f := S \setminus S_\infty$ and 
$S_{\ram} (K/k)_f := S_{\ram} (K/k) \cap S_f$
(note that $S_p \cap S_{\ram} (K/k)_f = \emptyset$ since $p$ is unramified in $K/ \Q$).

\begin{defn}\label{def:Z}
For any 
$j \in \Z_{\geq 1}$,
we define a $\Lambda$-basis $Z_{K_\infty /k, S}^{\loc, j}$ of
\[
\Xi_{K_\infty / k, S}^{\loc} (j) 
= \Det_{\Lambda}^{-1} \parenth{\Delta_{K_\infty, p} (j)} 
\otimes
\parenth{\bigotimes_{v \in S_f \setminus S_p} \Det_{\Lambda}^{-1} (\Delta_{K_{\infty}, v} (j)) }
\otimes
\Det_{\Lambda}^{-1} (X_{K_{\infty}} (j)) 
\]
by
\begin{align*}
Z_{K_\infty /k, S}^{\loc, j}
&:= D_{j-1}(K_\infty / k) \prod_{v \in S_{\ram} (K/k)_f} \ff_{j-1} (K_\infty / k)_v  
\\
&\times
  Z_{K_\infty /k, \xx}^{p, j} \otimes \left(\bigotimes_{v \in S_{\ram}(K/k)_f} \cH_{K_{\infty}/k, v}^{ j}
  \otimes \bigotimes_{S_f \setminus (S_{\ram}(K/k)_f \cup S_p)} \cE_{K_{\infty}/k , v}^{ j}\right)
\otimes (\wedge_{i=1}^{r_k} e_{K_\infty, i}^j )^\ast. 
\end{align*}
\end{defn}

\subsection{Proof of Theorem \ref{thm:lETNC}}

We shall check that this basis satisfies the properties in Theorem \ref{thm:lETNC}. 
Thanks to Proposition \ref{prop:localETNC_func}, we only have to consider the intermediate fields $K_n$.
Namely, we only have to compute the image of $Z_{K_\infty /k, S}^{\loc, j}$ under the composite map 
\[
\Xi_{K_\infty / k, S}^{\loc} (j) \twoheadrightarrow \Xi_{K_n/k, S}^{\loc} (j) 
\hookrightarrow \C_p \otimes_{\Z_p} \Xi_{K_n/k, S}^{\loc} (j)
\overset{\vartheta_{K_n / k, S}^{\loc,j}}{\xrightarrow{\sim}} \C_p[\GG_{K_n}] \xrightarrow{\chi} \C_p
\]
for each $j \in \Z_{\geq 1}$, $n \in \Z_{\geq 0}$ and $\chi \in \widehat{\GG_{K_n}}$.

We decompose $\chi$ as $\chi =  \chi_1 \chi_2$ with characters $\chi_1 \in \widehat{\GG_{K}}$ and 
$\chi_2 \in \widehat{\GG_{k_n}} = \widehat{\Gal(k_n / k)}$ as in Setting \ref{setting 1}. 
For any non $p$-adic finite prime $v$ of $k$ lying above a rational prime $l$, 
let $I_{K, v} \subset \GG_K ( \subset \GG_{K_n} )$ be the inertia group of $v$ and 
recall that $\epsilon_v^0 (K/k) \in \Q[\GG_K]$ is defined in \S \ref{local eTNC}. 
Also, $\ff (\chi_v)$ is the $v$-component of the conductor of $\chi$,  
$\cD_{k_v/ \Q_l}$ is the different ideal of $k_v / \Q_l$, and 
$\rec_{k_v} : k_v^\times \twoheadrightarrow \Gal (k_v (\mu_{p^n}) / k_v) \subset \GG_{k_n}$ is the local reciprocity map. 

We first compute the contribution of the $p$-adic and archimedean components. 
Set $K_{n, p} := \Q_p \otimes_{\Q} K_n \simeq \prod_{w_n \mid p} K_{n ,w_n}$. 
We consider the composite map 
\begin{equation}\label{p part descent map}
\Det_{\Lambda}^{-1} \parenth{
\Delta_{K_{\infty}, p} (j) }
\twoheadrightarrow
\Det_{\Z_p [\GG_{K_n}]}^{-1} (\bigoplus_{v \in S_p} \Delta_{K_n, v} (j)) 
\overset{\otimes_v \phi_{K_v}^j}{\hookrightarrow}
\bigwedge_{\Q_p[\GG_{K_n}]}^{r_k} K_{n, p}, 
\end{equation}
where the second map is induced by $\phi_{K_v}^j$, which we introduced in \S \ref{ss:setup}.

\begin{prop}
The composite map 
\begin{align*}
\Det_{\Lambda}^{-1} \parenth{\Delta_{K_{\infty}, p} (j)} 
\otimes
\Det_{\Lambda}^{-1} (X_{K_{\infty}} (j)) 
& \xrightarrow{\eqref{p part descent map}\text{\upshape{ and }} \eqref{Y Betti}}
\bigwedge_{\Q_p[\GG_{K_n}]}^{r_k} K_{n, p}
\otimes
\Det_{\Q_p[\GG_{K_n}]}^{-1} (\Q_p \otimes_\Q H_{K_n} (j)) \\
& \overset{\wedge \alpha_{K_n}^j}{\hookrightarrow} \C_p[\GG_{K_n}]
\overset{\chi}{\to} \C_p
\end{align*}
sends $Z_{K_\infty /k, \xx}^{p, j} \otimes ( \wedge_{i=1}^{r_k} e_{K_\infty, i}^j )^\ast$ to
\begin{align*}
& \delta_{\chi={\trivial} , j=1} \cdot (-1)^{r_k(j-1)} \cdot |D_k|^{1-j} \cdot N (\ff_\chi')^{1-j}  
 \prod_{v \in S_{\ram} (K/\Q)_f} \chi_2 (\rec_{k_v} (\ff(\chi_v) \cD_{k_v/ \Q_l})) \\
&\times
\prod_{v \in S_{\ram} (K/k)_f} 
\parenth{\epsilon_v^{0} (K/k)^\# - e_{I_{K, v}}}^{\chi_1} 
\times
\begin{cases}\displaystyle
\cfrac{L_{S_p}^\ast (\chi^{-1}, 1-j)}{L_{S_p}^\ast (\chi,j)}
& \text{if } j\neq 1,  \\
\displaystyle
\prod_{v \in S_p} 
(\delta_v(K_n/k)^\#)^{\chi} 
\cfrac{L^\ast (\chi^{-1}, 1-j)}{L_{S_p}^\ast (\chi,j)}
 & \text{if }  j=1,     
\end{cases}
\end{align*}
where $\ff_\chi'$ denotes the non-$p$-adic component of the conductor ideal $\ff_\chi$ of $\chi$.  
\end{prop}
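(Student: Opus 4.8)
The plan is to compute the image of $Z_{K_\infty /k, \xx}^{p, j} \otimes ( \wedge_{i=1}^{r_k} e_{K_\infty, i}^j )^\ast$ by unwinding the composite map factor by factor, substituting the explicit descriptions obtained in \S\ref{sec:Coleman} and \S\ref{sec:arch}, and then collapsing the resulting expression into the asserted shape via the functional equation (Theorem \ref{thm:func eq}).

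I would begin with the $p$-adic factor. Since $\Phi_{K_\infty, p}^j = \bigotimes_{v \in S_p} \Phi_{K_\infty, v}^j$ is compatible with $\Det_\Lambda(R_{K_p}) = \bigotimes_{v\in S_p}\Det_\Lambda(R_{K_v})$ and with the maps $\varpi_n$, the defining diagram \eqref{eq:Phi_com} shows that \eqref{p part descent map} carries $Z_{K_\infty /k, \xx}^{p, j}=(\Phi_{K_\infty, p}^j)^{-1}(\wedge_i x_{K,i}(1+T))$ to $\varpi_n(\wedge_{i=1}^{r_k} x_{K,i}(1+T))$. Using the multilinearity of the determinant over the blocks indexed by $S_p$, one may reduce to a basis $x_{K,1},\dots,x_{K,r_k}$ compatible with $\OO_{K_p} = \bigoplus_{v \in S_p}\OO_{K_v}$, and then Corollary \ref{cor:main thm1}, applied at each $v \in S_p$ and multiplied, gives $\varpi_n(\wedge_i x_{K,i}(1+T))^\chi$ explicitly: it equals $(-1)^{r_k(j-1)}\,((j-1)!)^{r_k}$ times an $n_\chi$-dependent factor — essentially $\prod_{v\in S_p}N(v)^{jn_\chi-1}\sigma_{K_n/k_n,v}^{-n_\chi}$ up to sign when $n_\chi>0$, and $\prod_{v\in S_p}(\epsilon_v^{1-j}(K_n/k)^\#/\epsilon_v^{j}(K_n/k))^\chi$ (or $\prod_{v\in S_p}(\delta_v(K_n/k)^\#/\epsilon_v^{1}(K_n/k))^\chi$ when $j=1$) up to sign when $n_\chi=0$ — times the basis vector $(\wedge_{i=1}^{r_k} a_n x_{K,i})^\chi$.

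Next I would push this through $\wedge\alpha_{K_n}^j$ and pair against $(\wedge_i e_{K_\infty,i}^j)^\ast$, which descends to the dual of $\wedge_i e_{\iota_{K_n,i}}^j$ and corresponds under \eqref{Y Betti} to the dual of $\wedge_i b_{\iota_{K_n,i}}^j$; by Lemma \ref{wedge alpha} this multiplies by $A(k,\chi,j)\,(2\pi\sqrt{-1})^{-jr_k}\,B_{K_n/k}(a_n x_{K,1},\dots,a_n x_{K,r_k})^\chi$. Lemma \ref{B Gauss sum} then rewrites $B_{K_n/k}(a_n x_{K,\bullet})^\chi$ through $B_{K/k}(x_{K,\bullet})^{\chi_1}$, the product $\prod_{v\in S_p}\tau_k(\chi_v)$ of local Gauss sums, and elementary factors matching those from the previous step (a sign, Frobenius terms $\prod_{v\in S_p}\chi(\sigma_{K_n,v})^{n_\chi}$, and $\prod_{v\in S_p}N(v)^{1-n_\chi}$), while Proposition \ref{prop: det B and Gauss sum} gives $B_{K/k}(x_{K,\bullet})^{\chi_1}=|D_k|^{1/2}(\sqrt{-1})^{r_\C(k)}\tau_k(\chi_1)\prod_{v\in S_{\ram}(K/k)_f}(\epsilon_v^0(K/k)^\#-e_{I_{K,v}})^{\chi_1}$. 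Everything is then explicit, and the remaining task is to see the cancellations: the $n_\chi$-dependent signs and the Frobenius terms coming from Corollary \ref{cor:main thm1} and from Lemma \ref{B Gauss sum} cancel; the norm powers combine to $\prod_{v\in S_p}N(v)^{(j-1)n_\chi}=N(\ff_\chi)^{j-1}N(\ff_\chi')^{1-j}$ (this is where $k/\Q$ unramified at $p$ is used, to identify the $p$-part of $\ff_\chi$); and, by Lemma \ref{lem:Davenport--Hasse}(i) over the non-$p$-adic primes, $\tau_k(\chi_1)\prod_{v\in S_p}\tau_k(\chi_v)=\big(\prod_{v\in S_{\ram}(K/\Q)_f}\chi_2(\rec_{k_v}(\ff(\chi_v)\cD_{k_v/\Q_l}))\big)\tau_k(\chi)$. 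The surviving transcendental-and-Gauss-sum factor $((j-1)!)^{r_k}A(k,\chi,j)(2\pi\sqrt{-1})^{-jr_k}|D_k|^{1/2}(\sqrt{-1})^{r_\C(k)}N(\ff_\chi)^{j-1}\tau_k(\chi)$ is identified, via Theorem \ref{thm:func eq}, with $\delta_{\chi={\trivial} , j=1}\,|D_k|^{1-j}\,L^\ast(\chi^{-1},1-j)/L^\ast(\chi,j)$; and the discrepancy between this and the $L_{S_p}^\ast$-quotient (resp.\ the $\delta_v$-decorated $L^\ast/L_{S_p}^\ast$-quotient) in the target is absorbed by the Euler factors at $S_p$, which for $n_\chi=0$ are precisely $\prod_{v\in S_p}\epsilon_v^{j}(K_n/k)^\chi$ and $\prod_{v\in S_p}(\epsilon_v^{1-j}(K_n/k)^\#)^\chi$ and combine with the $\epsilon_v$-/$\delta_v$-terms furnished by Corollary \ref{cor:main thm1}, whereas for $n_\chi>0$ one has $L_{S_p}^\ast=L^\ast$ and $\prod_{v\in S_p}(\delta_v(K_n/k)^\#)^\chi=1$ because $\chi$ is ramified at every $p$-adic prime.

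The hard part will be this final collapse. Carrying it out requires checking, uniformly across the four regimes ($n_\chi>0$ or $n_\chi=0$, and $j=1$ or $j\geq 2$), that the $n_\chi$-dependent signs, the Frobenius elements, and the norm powers conspire to cancel between the Coleman-map side and the Gauss-sum side; and — the subtlest point — reconciling the $S_p$-truncated $L$-functions appearing in the statement with the completed $L$-functions of the functional equation by moving the Euler factors at the $p$-adic primes into the $\epsilon_v$- and $\delta_v$-terms produced by Theorem \ref{thm:explicit computation}.
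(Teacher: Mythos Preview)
Your proposal is correct and follows essentially the same route as the paper's own proof: apply Corollary~\ref{cor:main thm1} to compute the image of $Z_{K_\infty/k,\xx}^{p,j}$, combine with Lemma~\ref{wedge alpha} to pick up the factor $A(k,\chi,j)(2\pi\sqrt{-1})^{-jr_k}B_{K_n/k}(a_n x_{K,\bullet})^\chi$, rewrite $B_{K_n/k}$ via Lemma~\ref{B Gauss sum} and Proposition~\ref{prop: det B and Gauss sum}, reorganize the Gauss sums using Lemma~\ref{lem:Davenport--Hasse}(i), and collapse the transcendental block through the functional equation (Theorem~\ref{thm:func eq}), treating the cases $n_\chi=0$ and $n_\chi>0$ separately. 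The paper also notes the same reduction you flag about the basis $\xx$ being block-compatible with $\bigoplus_{v\in S_p}\OO_{K_v}$, handling the general case by a change-of-basis argument.
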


\begin{proof}
By Corollary \ref{cor:main thm1}, 
the map $\Det_{\Lambda}^{-1} \parenth{
\Delta_{K_{\infty}, p} (j) } \xrightarrow{\eqref{p part descent map}}
\bigwedge_{\Q_p[\GG_{K_n}]}^{r_k} K_{n, p}
\overset{\chi}{\to} \parenth{ \bigwedge_{\Q_p[\GG_{K_n}]}^{r_k} K_{n, p}}^\chi
$
sends 
$Z_{K_\infty /k, \xx}^{p, j}$
 to 
\begin{align*}
&(-1)^{r_k (j-1)} \cdot  ((j-1)!)^{r_k}  
\\
&\times  
\begin{cases}\displaystyle
(-1)^{n_\chi (r_k - \# S_p )} \cdot \prod_{v \in S_p } N (v)^{j n_\chi -1}
\chi(\sigma_{K_n, v})^{-n_\chi} \cdot 
(\wedge_{i=1}^{r_k} a_n x_{K,i})^\chi & \text{if } n_\chi > 0, \\
\displaystyle
(-1)^{r_k} \cdot \prod_{v \in S_p } 
\parenth{\cfrac{\epsilon_v^{1-j}(K_n/k)^{\#}}{ \epsilon_v^{j}(K_n/k) }}^\chi 
\cdot 
(\wedge_{i=1}^{r_k} a_n x_{K,i})^\chi & \text{if } n_\chi = 0 \text{ and } j\neq 1, \\
\displaystyle
(-1)^{r_k}  \cdot
\prod_{v \in S_p } 
\parenth{\cfrac{\delta_v(K_n/k)^{\#}}{\epsilon_v^{1}(K_n/k)}}^\chi
 \cdot 
(\wedge_{i=1}^{r_k} a_n x_{K,i})^\chi & \text{if }n_\chi = 0 \text{ and }  j=1.
\end{cases}
\end{align*}
Here $n_\chi$ and $\sigma_{K_n, v}$ are defined just before Setting \ref{setting 1};
$\epsilon_v^j(K_n/k)$, $\epsilon_v^{1-j}(K_n/k)$, and  $\delta_v(K_n/k)$ are defined in \S \ref{local eTNC}; and 
$a_n \in K_n (\subset K_{n, p})$ is given by
\[
a_n := 
\begin{cases}
\frac{1}{p^{n-1}} (\zeta_p + \zeta_{p^2} + \cdots + \zeta_{p^n}) 
& \text{if } n \geq 1, \\
-1 & \text{if } n=0.   
\end{cases}
\]
More precisely, Corollary \ref{cor:main thm1} directly implies this formula when the basis $\xx$ of $\OO_{K_p}$ is chosen as the combination of bases of $\OO_{K_v}$, and the general case follows from observing what happens when changing the bases.

Then by combining Lemma \ref{wedge alpha}, we see that the image to be computed is
\begin{align}\label{local period map}
& (-1)^{r_k (j-1)} \cdot   
 B_{K_n/k}(a_n x_{K,1}, \dots, a_n x_{K, r_k})^\chi
 \cdot \cfrac{((j-1)!)^{r_k}}{(2 \pi \sqrt{-1})^{jr_k}} 
 \\
& \times 
A(k, \chi, j)  \cdot 
\begin{cases}\displaystyle
(-1)^{n_\chi (r_k - \# S_p )} \cdot \prod_{v \in S_p } N (v)^{j n_\chi -1}
\chi(\sigma_{K_n, v})^{-n_\chi} & \text{if } n_\chi > 0,  \nonumber \\
\displaystyle
(-1)^{r_k} \cdot \prod_{v \in S_p } 
\cfrac{\epsilon_v^{1-j}(K_n/k)^{\chi^{-1}}}{ \epsilon_v^{j}(K_n/k)^\chi } 
& \text{if } n_\chi = 0 \text{ and } j\neq 1, \nonumber \\
\displaystyle
(-1)^{r_k }  \cdot
\prod_{v \in S_p } 
\cfrac{\delta_v(K_n/k)^{\chi^{-1}}}{ \epsilon_v^{1}(K_n/k)^\chi }
 & \text{if }n_\chi = 0 \text{ and }  j=1. \nonumber 
\end{cases}
\end{align}

We first consider the case where $\chi_2$ is trivial (i.e. $n_\chi  = 0$).  
By Proposition \ref{prop: det B and Gauss sum} and Lemma \ref{B Gauss sum}, 
we have 
\begin{align*}
&B_{K_n/k}(a_n x_{K,1}, \dots, a_n x_{K, r_k})^\chi 
=(-1)^{r_k} \cdot B_{K/k}(x_{K,1}, \dots, x_{K, r_k})^{\chi_1} \\
&\quad=(-1)^{r_k} \cdot (\tau_{K/k})^{\chi_1}
\prod_{v \in S_{\ram} (K/k)_f} 
\parenth{\epsilon_v^0 (K/k)^\# - e_{I_{K, v}}}^{\chi_1} \\
&\quad=(-1)^{r_k} \cdot 
|D_k|^{\frac{1}{2}} \cdot (\sqrt{-1})^{r_\C (k)} \cdot \tau_k (\chi_1)
\prod_{v \in S_{\ram} (K/k)_f} 
\parenth{\epsilon_v^0 (K/k)^\# - e_{I_{K, v}}}^{\chi_1} . 
\end{align*}
Since $\chi_2$ is trivial, 
we know that $\tau_k (\chi_1) = \tau_k (\chi)$ by Lemma \ref{lem:Davenport--Hasse} (i). 
Therefore, we have 
\begin{align*}
\eqref{local period map} 
& =  A(k, \chi, j) \cdot 
 \cfrac{((j-1)!)^{r_k} \cdot (-1)^{jr_k}}
{(2 \pi \sqrt{-1})^{jr_k}} 
\cdot |D_k|^{\frac{1}{2}} \cdot (\sqrt{-1})^{r_\C (k)} \cdot \tau_k (\chi) \\
& \times 
\prod_{v \in S_{\ram} (K/k)_f} \parenth{\epsilon_v^0 (K/k)^\# - e_{I_{K, v}}}^{\chi_1}
\cdot
\begin{cases}\displaystyle
(-1)^{r_k} \cdot \prod_{v \in S_p } 
\cfrac{\epsilon_v^{1-j}(K_n/k)^{\chi^{-1}}}{ \epsilon_v^{j}(K_n/k)^\chi } 
& \text{if } j\neq 1,  \\
\displaystyle
(-1)^{r_k }  \cdot
\prod_{v \in S_p } 
\cfrac{\delta_v(K_n/k)^{\chi^{-1}}}{ \epsilon_v^{1}(K_n/k)^\chi }
 & \text{if } j=1.  
\end{cases}
\end{align*}
In this case we have $\ff_\chi=\ff'_\chi$, and the claim follows from Theorem \ref{thm:func eq}.

Let us consider the case where $\chi_2$ is non-trivial (i.e. $n_\chi  > 0$). 
Then, by Proposition \ref{prop: det B and Gauss sum} and Lemma \ref{B Gauss sum}, 
we have 
\begin{align*}
&B_{K_n/k}(a_n x_{K,1}, \dots, a_n x_{K, r_k})^\chi =
(-1)^{n_\chi (r_k - \# S_p )} \\
& \times 
\parenth{ \prod_{v \in S_p } 
\chi(\sigma_{K_n, v})^{n_\chi} \cdot N (v)^{1-n_\chi} \cdot \tau_{k} (\chi_v) }
\cdot
(\tau_{K/k})^{\chi_1}
\cdot \prod_{v \in S_{\ram} (K/k)_f} \parenth{\epsilon_v^0 (K/k)^\# - e_{I_{K, v}}}^{\chi_1}. 
\end{align*}
Since 
\[
\tau_k(\chi_1)
\prod_{v \in S_p } \tau_{k} (\chi_v) 
= \tau_k (\chi) \prod_{v \in S_{\ram} (K/\Q)_f} \chi_2 (\rec_{k_v} (\ff(\chi_v)  \cD_{k_v/ \Q_l})) 
\]
by Lemma \ref{lem:Davenport--Hasse} (i), we have 
\begin{align*}
&B_{K_n/k}(a_n x_{K,1}, \dots, a_n x_{K, r_k})^\chi =
(-1)^{n_\chi (r_k - \# S_p )} \cdot |D_k|^{^\frac{1}{2}} \cdot (\sqrt{-1})^{r_\C (k)} \cdot \tau_k (\chi)
\\
& \times 
 \prod_{v \in S_p } 
\chi(\sigma_{K_n, v})^{n_\chi} \cdot N (v)^{1-n_\chi}
\prod_{v \in S_{\ram} (K/\Q)_f} \chi_2 (\rec_{k_v} (\ff(\chi_v)  \cD_{k_v/ \Q_l})) 
 \prod_{v \in S_{\ram} (K/k)_f} \parenth{\epsilon_v^0 (K/k)^\# - e_{I_{K, v}}}^{\chi_1}. 
\end{align*}
We then obtain
\begin{align*}
\eqref{local period map} 
& =  (-1)^{r_k (j-1)} \cdot  A(k, \chi, j) \cdot 
 \cfrac{((j-1)!)^{r_k} }
{(2 \pi \sqrt{-1})^{jr_k}} 
\cdot |D_k|^{\frac{1}{2}} \cdot (\sqrt{-1})^{r_\C (k)} \cdot \tau_k (\chi) \\
& \times 
 \prod_{v \in S_p } N (v)^{(j-1) n_\chi} 
\cdot
\prod_{v \in S_{\ram} (K/\Q)_f} \chi_2 (\rec_{k_v} (\ff(\chi_v)  \cD_{k_v/ \Q_l})) 
\cdot \prod_{v \in S_{\ram} (K/k)_f} \parenth{\epsilon_v^0 (K/k)^\# - e_{I_{K, v}}}^{\chi_1}. 
\end{align*}
Also, we have $\prod_{v \in S_p }N (v)^{ (j-1)n_\chi} =  N (\ff_\chi')^{1-j} \cdot  N (\ff_\chi)^{j-1}$. 
Now the claim follows from Theorem \ref{thm:func eq}.
\end{proof}

Now we consider the contributions of the non-$p$-adic primes and the auxiliary factors.
By Lemma \ref{lem:cond unit} and the definition of $D_{j-1}(K_\infty / k)$, 
we have 
\begin{align*}
&\parenth{ D_{j-1}(K_\infty / k) \cdot \prod_{v \in S_{\ram} (K/k)_f} \ff_{j-1} (K_\infty / k)_v}^\chi 
\\
&=|D_k|^{j-1} \cdot N (\ff_\chi ')^{j-1}
\prod_{v \in S_{\ram} (K/\Q)_f} \chi_2 (\rec_{k_v} (\ff(\chi_v) \cD_{k_v / \Q_l}))^{-1} \\
&\times \prod_{v \in S_{\ram} (K/k)_f} (1-e_{I_{K, v}} +e_{I_{K, v}} N(v)^{j-1} \sigma_{k_\infty, v}^{-1} )^{\chi}.
\end{align*}
The contribution of the non-$p$-adic primes is determined by Theorem \ref{thm:det non p prime} 
and Corollary \ref{cor:unr_basis}.
For each $v \in S_{\ram} (K/k)_f$, we have
\[
\parenth{\epsilon_v^0 (K/k)^\# - e_{I_{K, v}}}^{\chi_1}
\parenth{1-e_{I_{K, v}} +e_{I_{K, v}} N(v)^{j-1} \sigma_{k_\infty, v}^{-1} }^\chi 
=
\parenth{\epsilon_v^{1-j} (K_n/k)^\# - e_{I_{K, v}}}^{\chi}.
\]
Moreover, we have for $j \geq 2$
\[
\Big{(}\epsilon_v^{1-j} (K_n/k)^\# - e_{I_{K, v}}\Big{)}
 \epsilon^{j-1}_v (K_n / k) = \epsilon^{1-j}_v (K_n / k)^\#
\]
and for $j = 1$
\[
\Big{(}\epsilon_v^{0} (K_n/k)^\# - e_{I_{K, v}} \Big{)} 
\cdot 
\Big{(}\epsilon^{0}_v (K_n / k) - e_{\GG_{K_n, v}} \cfrac{1}{\# (\GG_{K_n, v} / I_{K_n, v})}\Big{)}
= \delta_v(K_n/k)^\#. 
\]
By using these formulas, we obtain the formula in Theorem \ref{thm:lETNC}. 
\qed

\begin{rem}
We have the compatibility when $K$ and $S$ vary as follows.

Let $K'$ be an intermediate field of $K/k$. 
Then the image of the basis $Z_{K_\infty / k, S}^{\loc, j}$ by the natural map 
\[
\Xi_{K_\infty / k, S}^{\loc} (j) \twoheadrightarrow \Z_p[[\Gal (K_\infty' / k)]] \otimes_{\Lambda} \Xi_{K_\infty / k, S}^{\loc} (j) \xrightarrow{\sim} \Xi_{K'_\infty / k, S}^{\loc} (j)
\]
coincides with $Z_{K'_\infty / k, S}^{\loc, j}$. 
This immediately follows from Proposition \ref{prop:localETNC_func}. 

Let $S'$ be a finite set of places of $k$ which contains $S$. 
Then we have 
\[
Z_{K_\infty / k, S'}^{\loc, j} = Z_{K_\infty / k, S}^{\loc, j} \otimes \bigotimes_{v \in S' \setminus S } \cE_{K_\infty, v}^j
\in \Xi_{K_\infty / k, S'}^{\loc} (j) = \Xi_{K_\infty / k, S}^{\loc} (j) \otimes_{\Lambda} 
\bigotimes_{v \in S' \setminus S } \Det_{\Lambda}^{-1} (\Delta_{K_\infty, v} (j)).
\] 
 This immediately follows from a direct computation using Corollary \ref{cor:unr_basis}. 
\end{rem}

\section{Twists}\label{App:twist}
 
\subsection{Definition of the twists in non-archimedean cases}

We review the twist maps between the Galois cohomology complexes. 
Although this kind of material is known to experts, we have not found comprehensive references, and we summarize here for the convenience of readers. 

We fix an odd prime number $p$. 
Let $K/k$ be a finite abelian extension of number fields. We consider the Iwasawa algebra $\Lambda = \Z_p[[\Gal(K_{\infty}/k)]]$ as in \S \ref{ss:Iw_limit}.
For any integer $m$, we write 
\[
\twist_m : \Lambda \xrightarrow{\sim} \Lambda ; 
\quad
\sigma \mapsto \chi_{\cyc }^m (\sigma)\sigma
\]
 for the ring automorphism induced by the 
$m$-th power of the cyclotomic character. 

\begin{defn}
Let $\xi = (\zeta_{p^n})_n$ be a basis of $\Z_p(1) = \varprojlim_n \mu_{p^n}(K_{\infty})$. 
For any $j, j' \in \Z$ and any finite prime $v$ of $k$, we define the $(j'-j)$-th twist map
\[
\Twist_{\Delta_{K_{\infty}, v}, j, j'}^{\xi}: \Delta_{K_\infty , v} (j) \xrightarrow{\sim} \Delta_{K_\infty, v} (j')
\]
as follows.
For each $n \geq 0$ and each prime $w_n$ of $K_n$ lying above $v$, 
we have an isomorphism
\[
\RG (K_{n, w_n} , \Z / p^n \Z (j))
\overset{\cup \zeta_{p^n}^{\otimes j'-j}}{\xrightarrow{\sim}}
\RG (K_{n, w_n} , \Z / p^n \Z (j'))
\]
defined by the cup product with $\zeta_{p^n}^{\otimes j'-j} \in \mu_{p^n} (K_{n, w_n})^{\otimes j'-j}$, 
where we regard $\zeta_{p^n} \in K_{n, w_n}$ by the natural inclusion $K_n \hookrightarrow K_{n, w_n}$. 
Since we have an isomorphism
\[
\Delta_{K_\infty , v} (j) \simeq \varprojlim_n \left(\bigoplus_{w_n \mid v} \RG (K_{n, w_n} , \Z / p^n \Z (j)) \right),
\]
we can define the map $\Twist_{\Delta_{K_{\infty}, v}, j, j'}^{\xi}$ by taking the projective limit of $\cup \zeta_{p^n}^{\otimes j'-j}$ with respect to $n$.
We can check that this is a $\twist_{j-j'}$-semilinear isomorphism over $\Lambda$.
\end{defn}

To induce twist maps between the determinant modules, let us establish an elementary lemma.

 \begin{lem}\label{lem:det_twist}
Let $P_1, P_2$ be finitely generated projective $\Lambda$-modules and
$f : P_1 \xrightarrow{\sim} P_2 $ a $\twist_m$-semilinear isomorphism for some $m \in \Z$. 
Then $f$ induces $\twist_m$-semilinear isomorphisms
\[
\Det_{\Lambda} (P_1) \xrightarrow{\sim} \Det_{\Lambda} (P_2),
\quad
\Det_{\Lambda}^{-1} (P_1) \xrightarrow{\sim} \Det_{\Lambda}^{-1} (P_2).
\] 
Therefore, a $\twist_m$-semilinear quasi-isomorphism between perfect complexes over $\Lambda$ induces a $\twist_m$-semilinear isomorphism between the determinant modules.
\end{lem}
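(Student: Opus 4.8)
The plan is to reduce this to the ordinary base-change compatibility of the Knudsen--Mumford determinant functor (\cite{KM76}; see \S\ref{App:det} for the conventions in use). Write $\phi := \twist_m : \Lambda \xrightarrow{\sim} \Lambda$ for the ring automorphism in question, and for a $\Lambda$-module $N$ let $N_\phi$ denote $N$ equipped with the $\Lambda$-action $\lambda \cdot_\phi n := \phi(\lambda)\,n$, i.e.\ restriction of scalars along $\phi$. Since $\phi$ is a ring isomorphism, $M \mapsto M_\phi$ is an exact auto-equivalence of the category of $\Lambda$-modules, it preserves finitely generated projectives and perfect complexes, and it agrees up to canonical isomorphism with the base-change functor $\Lambda \otimes_{\Lambda,\phi^{-1}}(-)$. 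The elementary dictionary underlying everything is: a map $f : P_1 \to P_2$ satisfies $f(\lambda x) = \phi(\lambda)f(x)$ (i.e.\ is $\phi$-semilinear) if and only if the same underlying map of abelian groups is $\Lambda$-linear as a map $P_1 \to (P_2)_\phi$; in particular a $\phi$-semilinear isomorphism $f$ is exactly a $\Lambda$-linear isomorphism $\bar f : P_1 \xrightarrow{\sim} (P_2)_\phi$.

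With this in hand, the first step is the compatibility $\Det_\Lambda\bigl((P_2)_\phi\bigr) = \bigl(\Det_\Lambda(P_2)\bigr)_\phi$, and likewise for $\Det_\Lambda^{-1}$. For free modules this is immediate, since the top exterior power of a $\phi$-semilinear isomorphism is again $\phi$-semilinear; for general finitely generated projectives it follows from the base-change compatibility of the graded determinant applied to the ring homomorphism $\phi^{-1} : \Lambda \to \Lambda$. Combining this compatibility with the functoriality of $\Det_\Lambda$ applied to the $\Lambda$-linear isomorphism $\bar f$ yields a $\Lambda$-linear isomorphism $\Det_\Lambda(P_1) \xrightarrow{\sim} \bigl(\Det_\Lambda(P_2)\bigr)_\phi$, which by the same dictionary is precisely a $\phi$-semilinear isomorphism $\Det_\Lambda(P_1) \xrightarrow{\sim} \Det_\Lambda(P_2)$; applying this to $\Det_\Lambda^{-1}$ (the inverse graded invertible module) gives the second isomorphism. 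The only point that needs a moment's care, and the closest thing to an obstacle here, is the bookkeeping of the grading: the locally constant rank function of $(P_2)_\phi$ is that of $P_2$ precomposed with $\Spec(\phi)$, which may permute the components of $\Spec\Lambda$ nontrivially — but this is exactly what the base-change isomorphism for $\Det_\Lambda$ records, so no separate argument is needed.

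For the final assertion I would run the same argument on the derived category. Restriction of scalars along $\phi$ is exact, hence carries a perfect complex $C_2^\bullet$ to a perfect complex $(C_2^\bullet)_\phi$, and a $\twist_m$-semilinear quasi-isomorphism $g : C_1^\bullet \to C_2^\bullet$ becomes an ordinary $\Lambda$-linear quasi-isomorphism $C_1^\bullet \to (C_2^\bullet)_\phi$. Since $\Det_\Lambda$ is defined on the derived category of perfect complexes, sends quasi-isomorphisms to isomorphisms, and commutes with base change — so that $\Det_\Lambda\bigl((C_2^\bullet)_\phi\bigr) = \bigl(\Det_\Lambda(C_2^\bullet)\bigr)_\phi$ — we obtain $\Det_\Lambda(C_1^\bullet) \xrightarrow{\sim} \bigl(\Det_\Lambda(C_2^\bullet)\bigr)_\phi$, i.e.\ a $\twist_m$-semilinear isomorphism between the determinant modules, as claimed. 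Alternatively, one may represent $C_i^\bullet$ by bounded complexes of finitely generated projectives and deduce this directly from the module case, using that the determinant of a bounded complex of projectives is the alternating tensor product of the $\Det_\Lambda^{\pm1}$ of its terms together with the first part of the lemma applied in each degree. In all, the proof is purely formal, the only mildly delicate ingredient being the grading compatibility already noted.
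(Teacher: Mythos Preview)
Your proof is correct. The paper's argument is more hands-on and less abstract: for $\Det_\Lambda$ it simply takes the top exterior power of $f$ (which is visibly $\twist_m$-semilinear), and for $\Det_\Lambda^{-1}$ it writes down the explicit $\twist_{-m}$-semilinear map
\[
\Hom_\Lambda(P_2,\Lambda)\to\Hom_\Lambda(P_1,\Lambda),\qquad g\mapsto \twist_{-m}\circ g\circ f,
\]
and then takes the determinant of its inverse. Your restriction-of-scalars / base-change framework is a clean conceptual repackaging of exactly this content, and it has the advantage of making the perfect-complex statement an immediate consequence of the base-change compatibility of the derived determinant functor. The paper's explicit formula for the dual map, on the other hand, is convenient later (e.g.\ in \S\ref{ss:twist}) when one needs to track how specific bases transform under these twist isomorphisms.
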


\begin{proof}
The first one is simply the determinant of $f$. 
For the second one, we first consider a $\twist_{-m}$-semilinear isomorphism 
\[
\Hom_{\Lambda} (P_2, \Lambda) \xrightarrow{\sim} 
\Hom_{\Lambda} (P_1, \Lambda), \;\; g \mapsto \twist_{-m} \circ  g \circ f. 
\]
The inverse of this map is a $\twist_{m}$-semilinear isomorphism, the determinant of which induces the desired map.
\end{proof}

By applying Lemma \ref{lem:det_twist} to $\Twist_{\Delta_{K_{\infty}, v}, j, j'}^{\xi}$,
we obtain a $\twist_{j-j'}$-semilinear isomorphism
\begin{equation}\label{Det_Twist_map}
\Twist_{v, j, j'}^{\xi} : \Det_{\Lambda} (\Delta_{K_\infty, v} (j)) \xrightarrow{\sim} 
\Det_{\Lambda} (\Delta_{K_\infty, v} (j')).
\end{equation}
This map depends on the choice of the basis $\xi$ of $\Z_p(1)$, and the dependency is described by the next lemma.
We write $\ch (\Delta_{K_\infty, v})$ for the Euler characteristic of $\Delta_{K_\infty, v} (j)$ as a perfect complex over $\Lambda$, which is indeed independent of the integer $j$. 

\begin{lem}\label{twist_depend_xi}
Let $\xi' = (\zeta'_{p^n})_n$ be another basis of $\Z_p (1)$ and consider the map 
$\Twist_{v, j, j'}^{\xi'}$. We take  $a \in \Z_p^\times$ such that $\xi' = \xi^a$.  
Then we have
\[
\Twist_{v, j, j'}^{\xi'} = a^{\ch (\Delta_{K_\infty, v}) \cdot (j' - j)} \cdot \Twist_{v, j, j'}^{\xi}.
\]
In particular, if $\ch (\Delta_{K_\infty, v}) = 0$, 
then the map $\Twist_{v, j, j'}^{\xi}$ does not depend on the choice of $\xi$. 
\end{lem}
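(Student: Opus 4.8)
The plan is to establish the identity first at the level of the complexes $\Delta_{K_\infty,v}(-)$ and then apply the determinant functor. Recall that $\Twist_{\Delta_{K_\infty,v},j,j'}^{\xi}$ is constructed as the projective limit over $n$ of the cup-product maps $\cup\,\zeta_{p^n}^{\otimes(j'-j)}$ on $\RG(K_{n,w_n},\Z/p^n\Z(-))$, and similarly $\Twist_{\Delta_{K_\infty,v},j,j'}^{\xi'}$ uses $(\zeta_{p^n}')^{\otimes(j'-j)}$. Writing $\xi'=\xi^{a}$ with $a\in\Z_p^\times$, so $\zeta_{p^n}'=\zeta_{p^n}^{a}$ in $\mu_{p^n}$, bilinearity of the tensor product gives $(\zeta_{p^n}')^{\otimes m}=a^{m}\cdot\zeta_{p^n}^{\otimes m}$ in the cyclic group $\mu_{p^n}^{\otimes m}$ for every $m\in\Z$ (with the standard convention for $m<0$), and hence $\xi'^{\otimes m}=a^{m}\,\xi^{\otimes m}$ in $\Z_p(m)$, where $m:=j'-j$. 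Since cup product is $\Z$-bilinear in its arguments, these equalities of classes yield, already at the chain level and in the limit,
\[
\Twist_{\Delta_{K_\infty,v},j,j'}^{\xi'}=a^{\,j'-j}\cdot\Twist_{\Delta_{K_\infty,v},j,j'}^{\xi}
\]
as morphisms $\Delta_{K_\infty,v}(j)\to\Delta_{K_\infty,v}(j')$, where $a^{j'-j}$ means multiplication by this scalar; it is irrelevant whether one lets it act on the source or the target, since $a^{j'-j}\in\Z_p^\times$ is central and fixed by every twisting automorphism $\twist_\bullet$.

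Next I would apply $\Det_\Lambda$. By construction (Lemma~\ref{lem:det_twist}), $\Twist_{v,j,j'}^{\xi}$ in \eqref{Det_Twist_map} is the image of $\Twist_{\Delta_{K_\infty,v},j,j'}^{\xi}$ under $\Det_\Lambda$, and likewise for $\xi'$; functoriality of $\Det_\Lambda$ together with the displayed identity gives $\Twist_{v,j,j'}^{\xi'}=\Det_\Lambda(m_{a^{j'-j}})\circ\Twist_{v,j,j'}^{\xi}$, where $m_c$ denotes multiplication by $c$ on $\Delta_{K_\infty,v}(j')$. It then remains to recall the standard fact from the Knudsen--Mumford formalism that, for a perfect complex $C$ over $\Lambda$ and a central scalar $c$, the endomorphism of $\Det_\Lambda(C)$ induced by multiplication by $c$ is multiplication by $c^{\ch(C)}$: choosing a bounded complex $[\,\cdots\to P^{i}\to P^{i+1}\to\cdots\,]$ of finitely generated projectives representing $C$, this is just $\prod_i(c^{\rk P^{i}})^{(-1)^{i}}=c^{\sum_i(-1)^i\rk P^i}$. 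Applying this with $C=\Delta_{K_\infty,v}(j')$ and using that $\ch(\Delta_{K_\infty,v}(j'))=\ch(\Delta_{K_\infty,v})$ is independent of the twist, we obtain
\[
\Twist_{v,j,j'}^{\xi'}=a^{\ch(\Delta_{K_\infty,v})\cdot(j'-j)}\cdot\Twist_{v,j,j'}^{\xi},
\]
and the final assertion is the special case $\ch(\Delta_{K_\infty,v})=0$.

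I do not expect a genuine obstacle here: the whole content is the bilinearity observation $\xi'^{\otimes m}=a^m\xi^{\otimes m}$ together with the Euler-characteristic behaviour of the determinant under scaling. The only points requiring a little care are bookkeeping, namely the convention for $\mu_{p^n}^{\otimes m}$ and $\Z_p(m)$ when $m<0$ (where $a^m$ is read off from $a^{-1}\in\Z_p^\times$) and its compatibility with the inverse-determinant construction in Lemma~\ref{lem:det_twist}, and stating the ``determinant of a scalar'' formula for complexes rather than for a single module. Both are routine.
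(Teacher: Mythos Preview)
Your proof is correct and follows essentially the same approach as the paper: establish $\Twist_{\Delta_{K_\infty,v},j,j'}^{\xi'}=a^{j'-j}\cdot\Twist_{\Delta_{K_\infty,v},j,j'}^{\xi}$ at the level of complexes, then take determinants. The paper's proof is just the terse two-line version of yours, leaving the Euler-characteristic behaviour of the determinant under scalar multiplication implicit.
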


\begin{proof}
By the choice of $a$ and the construction show
\[
\Twist_{\Delta_{K_{\infty}, v}, j, j'}^{\xi'} = a^{j'-j} \cdot \Twist_{\Delta_{K_{\infty}, v}, j, j'}^{\xi}
\]
as maps between complexes.
Then we obtain the lemma, taking the determinants.
\end{proof}

\subsection{Definition of the twists in archimedean cases}

\begin{defn}
Let $\xi = (\zeta_{p^n})_n$ be a basis of $\Z_p(1) = \varprojlim_n \mu_{p^n}(K_{\infty})$.
For any $j, j' \in \Z$, we define the twist map
\[
\Twist_{X_{K_{\infty}} , j , j'}^{\xi} : X_{K_\infty} (j)
\xrightarrow{\sim} 
X_{K_\infty} (j')
\]
as follows.
For each $n \geq 0$, we have an isomorphism
\[
\bigoplus_{\iota : K_n \hookrightarrow \C} \Z / p^n \Z (j)
\xrightarrow{\sim} 
\bigoplus_{\iota : K_n \hookrightarrow \C} \Z / p^n \Z (j')
\]
defined by
\[
((a_{n, \iota})_{\iota : K_n \hookrightarrow \C} )_n \mapsto
 ((a_{n, \iota} \otimes \iota (\zeta_{p^n})^{\otimes j'-j})_{\iota : K_n \hookrightarrow \C} )_n.
 \]
 We can define the map $\Twist_{X_{K_{\infty}} , j , j'}^{\xi}$ by taking the projective limit of this isomorphism for $n$, since we have an isomorphism
\[
X_{K_{\infty}}(j) \simeq \varprojlim_n \left( \bigoplus_{\iota : K_n \hookrightarrow \C} \Z / p^n \Z (j) \right).
\]
 \end{defn}
 
Let us check that this is $\twist_{j-j'}$-semilinear:
For any $\sigma \in \Gal (K_\infty / k)$, we have  
 \begin{align*}
&\Twist_{X_{K_{\infty}} , j , j'}^{\xi} \parenth{ \sigma ((a_{n, \iota})_{\iota : K_n \hookrightarrow \C} )_n}
 = 
\Twist_{X_{K_{\infty}} , j , j'}^{\xi} \parenth{((a_{n, \iota})_{\iota \circ \sigma^{-1} \mid_{K_n} : K_n \hookrightarrow \C} )_n} \\
&\quad= 
\parenth{(a_{n, \iota} \otimes \iota (\sigma^{-1} (\zeta_{p^n}))^{\otimes j'-j} )_{\iota \circ \sigma^{-1} \mid_{K_n} : K_n \hookrightarrow \C} }_n 
=\chi_{\cyc}^{j-j'}(\sigma) \sigma  \parenth{ (( a_{n, \iota} \otimes \iota (\zeta_{p^n})^{\otimes j'-j} )_{\iota : K_n \hookrightarrow \C} )_n}.  
 \end{align*}
Therefore, by Lemma \ref{lem:det_twist}, this induces a $\twist_{j-j'}$-semilinear isomorphism over $\Lambda$ between the determinant modules 
\begin{equation}\label{shift X}
\Det_{\Lambda}^{-1} (X_{K_\infty} (j))
\xrightarrow{\sim}
\Det_{\Lambda}^{-1} (X_{K_\infty} (j')). 
\end{equation}

We can also describe the dependency of this isomorphism on the choice of $\xi$ in a similar way as in Lemma \ref{twist_depend_xi}. 
In fact, let $\xi'$ be another basis of $\Z_p(1)$ and take $a\in \Z_p^{\times}$ such that $\xi'=\xi^a$.
 Then, since $X_{K_\infty} (j)$ is a free $\Lambda$-module of rank $r_k := [k : \Q]$, the above isomorphism between the determinant modules for $\xi'$ changes by $a^{-r_k}$ from that for $\xi$.

\subsection{Twists of the bases}\label{ss:twist_basis}

Let $S$ be a finite set of places of $k$ which contains $S_\infty \cup S_p$.  
Then, for any $j \text{ and } j' \in \Z_{\geq 1}$,
we define a $\twist_{j-j'}$-semilinear isomorphism 
\[
\Twist_{j, j'}^{\loc} :
\Xi_{K_\infty / k, S}^{\loc} (j) \xrightarrow{\sim}
\Xi_{K_\infty / k, S}^{\loc} (j')
\]
as the tensor product of \eqref{shift X} and 
$\Twist_{v, j, j'}^{\xi}$
for each finite prime $v$ of $k$. 
This $\Twist_{j, j'}^{\loc}$ is independent of the choice of $\xi$ since the Euler characteristics of $\oplus_{v \in S_f} \Delta_{K_\infty , v} (j)$ and $X_{K_{\infty}}(j)[0]$ are both equal to $-r_k$, hence the dependencies cancel.

\begin{prop}\label{prop:extra_p}
Assume that $K /\Q$ is unramified at $p$. 
Let $v$ be a finite prime of $k$ lying above $p$. 
Then for any $j, j' \in \Z$, we have a commutative diagram
\[
\xymatrix{
\Det_{\Lambda}^{-1} (\Delta_{K_\infty, v} (j))  
\ar[r]^-{\Phi_{K_\infty, v}^j}_-{\simeq} \ar[d]^-{\Twist_{v, j, j'}^{\xi}}_-{\simeq}
&\Det_{\Lambda} (R_{K_v})   \ar[d]^-{\wedge D^{j-j'}}_-{\simeq} \\
\Det_{\Lambda}^{-1} (\Delta_{K_\infty, v} (j'))  
\ar[r]_-{\Phi_{K_\infty, v}^{j'}}^-{\simeq} 
&\Det_{\Lambda} (R_{K_v}).
}
\]
Here, $\xi$ is the basis of $\Z_p(1)$ which we fixed at the beginning of \S \ref{State_2} to construct the map $\Phi_{K_{\infty}, v}^j$. 
Note that both vertical maps are $\twist_{j-j'}$-semilinear. 
\end{prop}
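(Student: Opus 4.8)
The plan is to trace through the definition of $\Phi_{K_\infty, v}^j$ given in \S\ref{const_iso} and check that each step of the construction, when composed with the appropriate twist maps, matches the corresponding step for $\Phi_{K_\infty, v}^{j'}$. Recall that $\widetilde{\Phi_{K_\infty, v}^j}$ was built as a chain of isomorphisms over the total ring of fractions $Q$: first the identification $Q \otimes_\Lambda \Det_\Lambda^{-1}(\Delta_{K_\infty, v}(j)) \simeq \Det_Q^{-1}(Q \otimesL_\Lambda \Delta_{K_\infty, v}(j))$, then the passage through $H^1(\Delta_{K_\infty, v}(j))$ via \eqref{Iwasawa cohomology}, then through $U_{K_\infty, v}(j-1)$ via \eqref{Kummer seq of infty}, then through $R_{K_v}$ via the twisted Coleman exact sequence \eqref{eq:Col_ex}. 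The claim is essentially that all of these are twist-compatible in the obvious way, so that the only surviving discrepancy between the $j$ and $j'$ versions is the operator $D^{j-j'}$ built into the definition of the twisted Coleman map $\Colman_{K_\infty, v}^{j'}$ versus $\Colman_{K_\infty, v}^j$.

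Concretely, I would proceed as follows. First, observe that the twist map $\Twist_{\Delta_{K_\infty,v}, j, j'}^\xi$ is by construction cup product with $\zeta_{p^n}^{\otimes j'-j}$, and on cohomology in degree one it induces, under the identification \eqref{Iwasawa cohomology}, exactly the multiplication-by-$\xi^{\otimes j'-j}$ map $H^1(\Delta_{K_\infty,v}(j)) \xrightarrow{\sim} H^1(\Delta_{K_\infty,v}(j'))$; this is compatible with the Kummer-theoretic description of $U_{K_\infty, v}(j-1)$ and $U_{K_\infty, v}(j'-1)$ because the twist on $U_{K_\infty, v}(j-1) = U_{K_\infty, v} \otimes \Z_p(j-1)$ is just tensoring the $\Z_p(j-1)$-factor with $\Z_p(j'-j)$. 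Second, I would chase the definition of $\Colman_{K_\infty, v}^j$: by definition it is the composite $U_{K_\infty, v}(j-1) \to R_{K_v}(j-1) \xrightarrow{D^{1-j}} R_{K_v}$, where the first map is the $(j-1)$-th twist of $\Colman_{K_\infty, v}$ and the second sends $f(T) \otimes \xi^{\otimes j-1}$ to $D^{1-j}f(T)$. Comparing the square for $j$ with the square for $j'$, the twist by $\xi^{\otimes j'-j}$ on $U_{K_\infty, v}(j-1)$ and the subsequent application of $\Colman_{K_\infty, v}^{j'}$ differs from first applying $\Colman_{K_\infty, v}^j$ and then $D^{j-j'}$ precisely because the defining isomorphism $R_{K_v}(j-1) \xrightarrow{\sim} R_{K_v}$ uses $D^{1-j}$ while the one for $j'$ uses $D^{1-j'}$, and $D^{1-j'} = D^{j-j'} \circ D^{1-j}$. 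Third, I would note that both $\Det_\Lambda(R_{K_v})$-valued maps agree after restricting from $Q$ to $\Lambda$ because, as in \cite[Proposition 5.2]{BuFla06}, $\Phi_{K_\infty,v}^j$ and $\Phi_{K_\infty,v}^{j'}$ are the (unique) restrictions of $\widetilde{\Phi_{K_\infty,v}^j}$ and $\widetilde{\Phi_{K_\infty,v}^{j'}}$ to the integral determinant modules, and $\wedge D^{j-j'}$ restricts to an isomorphism $\Det_\Lambda(R_{K_v}) \xrightarrow{\sim} \Det_\Lambda(R_{K_v})$ since $D: R_{K_v} \xrightarrow{\sim} R_{K_v}(1)$ and $D^m: R_{K_v} \xrightarrow{\sim} R_{K_v}(m)$ are $\Lambda$-isomorphisms (as recorded in \S\ref{State_2}).

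The bookkeeping obstacle I expect is the semilinearity accounting: $\Twist_{v,j,j'}^\xi$ is $\twist_{j-j'}$-semilinear and $\wedge D^{j-j'}$ is also $\twist_{j-j'}$-semilinear (because $D = (1+T)\tfrac{d}{dT}$ interacts with the $\Gal(K_\infty/k)$-action via the cyclotomic character, exactly the twist that makes $D^m$ an isomorphism $R_{K_v} \to R_{K_v}(m)$), so the composite around either side of the square is genuinely $\twist_{j-j'}$-semilinear and the two sides can be compared; I would make sure the semilinear structures on all four corners are the intended ones and that the equality $D^{1-j'} = D^{j-j'} D^{1-j}$ is applied at the level of the $R_{K_v}(j-1)$ vs.\ $R_{K_v}(j'-1)$ identifications, not just as operators on $R_{K_v}$. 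Once those identifications are pinned down, the commutativity of the outer square follows by concatenating the commutativity of the four inner squares, each of which is either tautological (the determinant-of-acyclic-complex step), a direct consequence of the definition of the twist maps on cohomology (the $H^1$ and $U_{K_\infty,v}$ steps), or the computation above (the Coleman-map step). I do not anticipate any genuine difficulty beyond this careful unwinding, since the statement is really a compatibility built into the definitions.
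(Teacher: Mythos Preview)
Your proposal is correct and is precisely an expansion of the paper's one-line proof, which reads in its entirety ``Clear from the construction.'' You have carefully unpacked the chain of isomorphisms defining $\Phi_{K_\infty,v}^j$ and verified each step is twist-compatible, with the key computation being $D^{1-j'} = D^{j-j'} \circ D^{1-j}$; this is exactly what the paper intends by ``clear from the construction.''
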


\begin{proof}
Clear from the construction.
\end{proof}

\begin{prop}\label{prop:extra_l}
Let $v$ be a finite prime of $k$ not lying above $p$. 
Then for any $j, j' \in \Z_{\geq 1}$, we have
\[
\Twist_{v, j, j'} (\cH_{K_\infty /k, v}^j) = \cH_{K_\infty /k, v}^{j'}
\]
and, if $K/k$ is unramified at $v$, 
\[
\Twist_{v, j, j'} (\cE_{K_\infty /k, v}^j) = \cE_{K_\infty /k, v}^{j'}.
\] 
Here, we note that the map $\Twist_{v, j, j'} = \Twist_{v, j, j'}^\xi$ is independent of the choice of a basis $\xi$ of $\Z_p(1)$ as the Euler characteristic $\ch (\Delta_{K_\infty, v})$ is zero.
\end{prop}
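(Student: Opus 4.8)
The plan is to verify both twist-compatibility statements by reducing to the non-equivariant characterization of $\cH_{K_\infty/k,v}^j$ and $\cE_{K_\infty/k,v}^j$ established in Theorem \ref{thm:det non p prime} and Corollary \ref{cor:unr_basis}. Recall that $\cH_{K_\infty/k,v}^j$ was \emph{not} originally defined via twisting; rather, it is the element corresponding to $h_{K_\infty/k,v}^j \in Q$ under the canonical isomorphism \eqref{canonical det}, where $h_{K_\infty/k,v}^j = \tfrac{1-e_{I_{K_\infty,v}}N(v)^{1-j}\sigma_{K_\infty/k,v}}{1-e_{I_{K_\infty,v}}N(v)^{-j}\sigma_{K_\infty/k,v}}$ (Proposition \ref{prop:h}). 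Since $v\nmid p$, the complex $\Delta_{K_\infty,v}(j)$ has Euler characteristic zero, so $\Twist_{v,j,j'}$ is independent of $\xi$ by Lemma \ref{twist_depend_xi}, which justifies dropping the superscript.

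First I would record what $\Twist_{v,j,j'}$ does to the canonical trivialization. Since $Q\otimesL_\Lambda \Delta_{K_\infty,v}(j)$ is acyclic for every $j$ (the cohomology groups are torsion: they are copies of $\bigoplus_{w_\infty\mid v}\Z_p(j)$, killed by $Q$), the map $\Twist_{v,j,j'}$ fits into a commutative square with the canonical isomorphisms \eqref{canonical det} for $j$ and $j'$ and the automorphism $\twist_{j-j'}\colon Q\xrightarrow{\sim} Q$ along the right edge. This is essentially the functoriality of the canonical trivialization of the determinant of an acyclic complex under a $\twist_{j-j'}$-semilinear quasi-isomorphism; one checks it on the level of the cohomology description \eqref{Iwasawa cohomology} twisted appropriately. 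Concretely, $\Twist_{v,j,j'}(\cH_{K_\infty/k,v}^j)$ corresponds, under \eqref{canonical det} for $j'$, to $\twist_{j-j'}(h_{K_\infty/k,v}^j)$.

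Next I would compute $\twist_{j-j'}(h_{K_\infty/k,v}^j)$ and check it equals $h_{K_\infty/k,v}^{j'}$. The map $\twist_{j-j'}$ sends $\sigma\mapsto \chi_{\cyc}^{j-j'}(\sigma)\sigma$; applied to the finite-order part $e_{I_{K_\infty,v}}$ it acts trivially (elements of $I_{K_\infty,v}$ are in the kernel of $\chi_{\cyc}$ modulo the relevant level, but more to the point $I_{K_\infty,v}$ is finite of order prime to the wild part and $\chi_{\cyc}$ is trivial on it since $v\nmid p$), and it sends $\sigma_{K_\infty/k,v}\mapsto \chi_{\cyc}^{j-j'}(\sigma_{K_\infty/k,v})\sigma_{K_\infty/k,v} = N(v)^{j-j'}\sigma_{K_\infty/k,v}$, because the cyclotomic character sends the arithmetic Frobenius at $v$ to $N(v)$. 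Therefore $\twist_{j-j'}$ sends $N(v)^{1-j}\sigma_{K_\infty/k,v}\mapsto N(v)^{1-j}N(v)^{j-j'}\sigma_{K_\infty/k,v}=N(v)^{1-j'}\sigma_{K_\infty/k,v}$ and likewise $N(v)^{-j}\sigma_{K_\infty/k,v}\mapsto N(v)^{-j'}\sigma_{K_\infty/k,v}$, so indeed $\twist_{j-j'}(h_{K_\infty/k,v}^j)=h_{K_\infty/k,v}^{j'}$. This gives the first identity. The second identity, for $\cE_{K_\infty/k,v}^j = -N(v)^{j-1}\sigma_{K_\infty/k,v}^{-1}\cdot\cH_{K_\infty/k,v}^j$, follows by the same bookkeeping: $\twist_{j-j'}$ turns the scalar $-N(v)^{j-1}\sigma_{K_\infty/k,v}^{-1}$ into $-N(v)^{j-1}N(v)^{-(j-j')}\sigma_{K_\infty/k,v}^{-1} = -N(v)^{j'-1}\sigma_{K_\infty/k,v}^{-1}$, and semilinearity of $\Twist_{v,j,j'}$ lets this scalar be pulled out, so $\Twist_{v,j,j'}(\cE_{K_\infty/k,v}^j)=\cE_{K_\infty/k,v}^{j'}$.

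The main obstacle is the first reduction step, i.e.\ proving cleanly that $\Twist_{v,j,j'}$ intertwines the two canonical trivializations \eqref{canonical det} via $\twist_{j-j'}$ on $Q$. This requires tracking the twist map $\Twist_{\Delta_{K_\infty,v},j,j'}^\xi$ (defined by cup product with $\zeta_{p^n}^{\otimes j'-j}$ at each finite level) through the identification $\Delta_{K_\infty,v}(j)\simeq\varprojlim_n\bigoplus_{w_n\mid v}\RG(K_{n,w_n},\Z/p^n\Z(j))$ and through the cohomology isomorphisms \eqref{Iwasawa cohomology}, and confirming compatibility with the canonical trivialization of an acyclic complex; but since everything here is the image of a quasi-isomorphism and the canonical trivialization is functorial for quasi-isomorphisms (a general property recorded in the determinant-module appendix, Proposition \ref{prop:det_ses3} and Lemma \ref{lem:det_twist}), this is a formal verification rather than a genuine difficulty. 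Once that is in place, the rest is the elementary Frobenius/cyclotomic-character computation above.
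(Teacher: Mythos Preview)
Your proposal is correct and follows essentially the same approach as the paper: the paper's proof is simply the two-line observation that $\twist_{j-j'}(h_{K_\infty/k,v}^j) = h_{K_\infty/k,v}^{j'}$ and $\twist_{j-j'}(-N(v)^{j-1}\sigma_{K_\infty/k,v}^{-1}) = -N(v)^{j'-1}\sigma_{K_\infty/k,v}^{-1}$, which is exactly your explicit Frobenius/cyclotomic-character computation. Your additional discussion of why $\Twist_{v,j,j'}$ intertwines the canonical trivializations \eqref{canonical det} via $\twist_{j-j'}$ makes explicit a step the paper leaves implicit, but the argument is the same.
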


\begin{proof}
The first equality follows by the property $\twist_{j-j'} (h_{K_{\infty} /k, v}^j) = h_{K_{\infty}/k, v}^{j'}$. 
Then the second also follows since 
$\twist_{j-j'} (- N(v)^{j-1} \sigma_{K_\infty / k , v}^{-1}) = - N(v)^{j'-1} \sigma_{K_\infty / k, v}^{-1}$.
\end{proof}

Now we are ready to show the final formula.

\begin{prop}\label{prop:extra_Z1}
For any $j , j' \in \Z_{\geq 1}$, 
 we have 
\[
\Twist_{j, j'}^{\loc}  (Z_{K_\infty / k, S}^{\loc,j}) =  Z_{K_\infty / k, S}^{\loc , j'}.
\] 
\end{prop}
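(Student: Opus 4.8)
The plan is to unravel Definition \ref{def:Z} of $Z_{K_\infty/k,S}^{\loc,j}$ into a scalar factor $\lambda_j:=D_{j-1}(K_\infty/k)\prod_{v\in S_{\ram}(K/k)_f}\ff_{j-1}(K_\infty/k)_v\in\Lambda^\times$ and a ``bare'' tensor of bases
\[
z_j:=Z_{K_\infty/k,\xx}^{p,j}\otimes\Bigl(\bigotimes_{v\in S_{\ram}(K/k)_f}\cH_{K_\infty/k,v}^{j}\otimes\bigotimes_{v\in S_f\setminus(S_{\ram}(K/k)_f\cup S_p)}\cE_{K_\infty/k,v}^{j}\Bigr)\otimes(\wedge_{i=1}^{r_k}e_{K_\infty,i}^j)^\ast ,
\]
so that $Z_{K_\infty/k,S}^{\loc,j}=\lambda_j\, z_j$. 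Since $\Twist_{j,j'}^{\loc}$ is $\twist_{j-j'}$-semilinear, we have $\Twist_{j,j'}^{\loc}(\lambda_j z_j)=\twist_{j-j'}(\lambda_j)\cdot\Twist_{j,j'}^{\loc}(z_j)$, so it suffices to prove (a) $\twist_{j-j'}(\lambda_j)=\lambda_{j'}$ and (b) $\Twist_{j,j'}^{\loc}(z_j)=z_{j'}$.

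For (b), recall that $\Twist_{j,j'}^{\loc}$ is by definition the tensor product of the maps $\Twist_{v,j,j'}^{\xi}$ over the finite primes $v\in S_f$ together with the archimedean twist \eqref{shift X}, so I treat the tensor factors one at a time. For $v\mid p$: by Proposition \ref{prop:extra_p} the square relating $\Phi_{K_\infty,v}^j$, $\Phi_{K_\infty,v}^{j'}$, $\Twist_{v,j,j'}^{\xi}$ and $\wedge D^{j-j'}$ commutes; since $D=(1+T)\tfrac{d}{dT}$ satisfies $D(x(1+T))=x(1+T)$ for every constant $x\in\OO_{K_v}$, the map $\wedge D^{j-j'}$ fixes the $\Lambda$-basis $\wedge_{i=1}^{r_k}x_{K,i}(1+T)$ of $\Det_\Lambda(R_{K_p})$, and therefore $\bigotimes_{v\in S_p}\Twist_{v,j,j'}^{\xi}$ carries $Z_{K_\infty/k,\xx}^{p,j}$ to $Z_{K_\infty/k,\xx}^{p,j'}$. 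For the non-$p$-adic finite primes, Proposition \ref{prop:extra_l} gives directly $\Twist_{v,j,j'}(\cH_{K_\infty/k,v}^j)=\cH_{K_\infty/k,v}^{j'}$ and $\Twist_{v,j,j'}(\cE_{K_\infty/k,v}^j)=\cE_{K_\infty/k,v}^{j'}$. For the archimedean factor, the embeddings $\iota_{K_\infty,i}$ were chosen with $\iota_{K_\infty,i}(\zeta_{p^n})=\exp(2\pi\sqrt{-1}/p^n)$, so the defining formula of $\Twist_{X_{K_\infty},j,j'}^{\xi}$ sends $e_{K_\infty,i}^j$ to $e_{K_\infty,i}^{j'}$ for each $i$; being a $\twist_{j-j'}$-semilinear isomorphism of free $\Lambda$-modules carrying one basis to another, it induces by Lemma \ref{lem:det_twist} a $\twist_{j-j'}$-semilinear isomorphism $\Det_\Lambda^{-1}(X_{K_\infty}(j))\to\Det_\Lambda^{-1}(X_{K_\infty}(j'))$ sending $(\wedge_i e_{K_\infty,i}^j)^\ast$ to $(\wedge_i e_{K_\infty,i}^{j'})^\ast$. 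Tensoring these together yields (b).

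For (a) I compute $\twist_{j-j'}$ on the explicit expressions. Writing $m:=j-j'$, for $D_{j-1}(K_\infty/k)=|D_k|^{j-1}\prod_{v\in S_{\ram}(k/\Q)_f}\rec_{k_v}(\cD_{k_v/\Q_l})^{-1}$ I use that every $v\in S_{\ram}(k/\Q)_f$ is prime to $p$ (as $p$ is unramified in $k/\Q$), so $k_v(\mu_{p^\infty})/k_v$ is unramified and $\chi_{\cyc}(\rec_{k_v}(\pi_v))=N(v)$ for a uniformizer $\pi_v$; hence $\chi_{\cyc}(\rec_{k_v}(\cD_{k_v/\Q_l}))=N(\cD_{k_v/\Q_l})$, and together with the identity $\prod_v N(\cD_{k_v/\Q_l})=|D_k|$ this gives $\twist_m(\rec_{k_v}(\cD_{k_v/\Q_l})^{-1})=N(\cD_{k_v/\Q_l})^{-m}\rec_{k_v}(\cD_{k_v/\Q_l})^{-1}$ and therefore $\twist_m(D_{j-1}(K_\infty/k))=|D_k|^{j-1-m}\prod_v\rec_{k_v}(\cD_{k_v/\Q_l})^{-1}=D_{j'-1}(K_\infty/k)$. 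For $\ff_{j-1}(K_\infty/k)_v$ I apply $\twist_m$ to the explicit formula in the proof of Lemma \ref{lem:cond unit}: the inertia group $I_v$ at $v\nmid p$ acts trivially on $\mu_{p^\infty}$, so $\twist_m$ fixes each idempotent $e_{I_v^i}$, while $\twist_m(\sigma_{k_\infty,v}^{-1})=N(v)^{-m}\sigma_{k_\infty,v}^{-1}$ because $\sigma_{k_\infty,v}$ is the Frobenius at $v$; a termwise check then gives $\twist_m(\ff_{j-1}(K_\infty/k)_v)=\ff_{j'-1}(K_\infty/k)_v$. Thus $\twist_{j-j'}(\lambda_j)=\lambda_{j'}$, and combining with (b) proves the proposition. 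I expect the only point requiring genuine care is the semilinearity bookkeeping --- keeping track of all the $\twist_{j-j'}$-twists uniformly across the tensor factors and the scalar --- together with the discriminant-different identity $\prod_v N(\cD_{k_v/\Q_l})=|D_k|$; everything else is immediate from the cited Propositions \ref{prop:extra_p} and \ref{prop:extra_l}.
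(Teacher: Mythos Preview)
Your proof is correct and follows essentially the same approach as the paper: decompose $Z_{K_\infty/k,S}^{\loc,j}$ into the scalar $\lambda_j$ and the bare tensor $z_j$, handle each local tensor factor via Propositions \ref{prop:extra_p} and \ref{prop:extra_l} together with $D(x(1+T))=x(1+T)$ and the archimedean identity $\Twist_{X_{K_\infty},j,j'}^{\xi}(e_{K_\infty,i}^j)=e_{K_\infty,i}^{j'}$, and check the scalar compatibility by direct computation. The only difference is that you spell out the verification of $\twist_{j-j'}(\lambda_j)=\lambda_{j'}$ explicitly (via $\chi_{\cyc}(\rec_{k_v}(\pi_v))=N(v)$ and the discriminant--different identity), whereas the paper simply appeals to Definition \ref{defi:D} and Lemma \ref{lem:cond unit}.
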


\begin{proof}
We fix a basis $\xi = (\zeta_{p^n})_n \in \Z_p (1) = \varprojlim_n \mu_{p^n} (K_\infty)$ and 
\[
\iota_{K_\infty, i} : K_\infty \hookrightarrow \C
\] 
for each $1 \leq i \leq r_k$ as in \S \ref{sec:pf_lETNC}. 
We check the compatibility of each local component of $Z^{\loc, j}_{K_{\infty}/k, S}$ in Definition \ref{def:Z} 
for the maps $\Twist_{v, j, j'}^\xi$ and \eqref{shift X}.
The compatibility for non-archimedean primes follows from Propositions \ref{prop:extra_p} and \ref{prop:extra_l}, combined with
\[
\wedge D^{m} \parenth{\wedge_{i=1}^{r_k} x_{K, i} (1+T) }=\wedge_{i=1}^{r_k} x_{K, i} (1+T)
\] 
for $m \in \Z$. 
Also by the choice of $\iota_{K_\infty, i}$ and the definition of $e_{\iota_{K_\infty}, i}^j \in X_{K_\infty} (j)$, 
we have 
$\Twist_{X_{K_\infty}, j, j'}^\xi (e_{\iota_{K_\infty}, i}^j) = e_{\iota_{K_\infty}, i}^{j'}$ for any $1 \leq i \leq r_k$. 
This implies the compatibility of archimedean components.
The compatibility for the elements $D_j(K_\infty / k)$ and $\ff_j (K_\infty / k)_v$, i.e.,
\[
\twist_{j - j'}(D_{j-1} (K_\infty / k)) = D_{j'-1} (K_\infty / k),
\qquad
\twist_{j - j'}(\ff_{j-1} (K_\infty / k)_v) = \ff_{j'-1} (K_\infty / k)_v
\]
follows from their construction (Definition \ref{defi:D} and Lemma \ref{lem:cond unit}).
\end{proof}

\section{The case $j \leq 0$}
\label{section:j <1}

In this section, we prove a variant of Theorem \ref{thm:lETNC} for $j \leq 0$ by
using the local duality. Namely, we construct a $\Lambda$-basis $Z_{K_\infty / k, S}^{\loc, j}$ of $\Xi^{\loc}_{K_{\infty}/k, S}(j)$ for $j\leq 0$ satisfying similar properties as in the case $j \geq 1$. We also prove some extra properties of the constructed basis $Z_{K_\infty / k, S}^{\loc, j}$. 

\subsection{Definition of the map $\vartheta_{K/k, S}^{\loc, j}$ when $j \leq0$}\label{vartheta<0}

The aim of this subsection is to define a ``duality'' isomorphism
\[
\LT^{\Xi (j)}_{K/k, S}: \Xi_{K/k, S}^{\loc} (j)  \xrightarrow{\sim} \Xi_{K/k, S}^{\loc} (1-j)^{-1, \#}
\]
and, using this, to define an isomorphism
\[
\vartheta_{K/k, S}^{\loc, j} : \C_p \otimes_{\Z_p} \Xi_{K/k, S}^{\loc} (j) \xrightarrow{\sim} \C_p [G] 
\]
for $j\leq 0$, as a variant of Definition \ref{vartheta_j}.

We first introduce some notations. 
For a commutative ring $R$, a finite abelian group $G$, and an $R[G]$-module $M$, 
we have a natural isomorphism as $R[G]$-modules
\[
\Hom_R (M, R) 
\stackrel{\sim}{\rightarrow} 
\Hom_{R[G]} (M, R[G])^\# \; ;
f \mapsto 
\sum_{\sigma \in G} f(\sigma(\cdot))\sigma^{-1}, 
\]
where $G$ acts on $\Hom_R (M, R)$ by 
\[
(\sigma \cdot f) (a) := f (\sigma^{-1} a) ,\;\;\;\; (\sigma \in G, f \in \Hom_R (M, R), a \in M). 
\]
Using this, for a finitely generated projective $R[G]$-module $P$ (resp.~a perfect complex $C$ of $R[G]$-modules),  
we have a canonical isomorphism between determinant modules 
\begin{equation}\label{Det_Hom}
\Det_{R[G]} (P) \simeq \Det_{R[G]} (\Hom_{R} (P, R))^{\#, \vee} 
\parenth{\text{resp. }  \Det_{R[G]} (C) \simeq \Det_{R[G]} (\RHom_{R} (C, R))^{\#, \vee}}, 
\end{equation}
where 
$
(-)^\vee : \mathcal{P}_{R[G]} \xrightarrow{\sim} \mathcal{P}_{R[G]} ; (L, r) \mapsto (\Hom_{R[G]} (L, R[G]) , r)
$ 
is the anti equivalence between the category of the graded invertible $R[G]$-modules $\mathcal{P}_{R[G]}$ (see \S \ref{App:det}). 

Now let $K/k$ be a finite abelian extension of number fields and put $G := \Gal (K/k)$. 
We keep the same notation in \S \ref{sec:form_lETNC}. 
For any $j \in \Z_{\leq 0}$, we consider a $\Z_p[G]$-isomorphism 
\[
X_K (j) \xrightarrow{\sim} \Hom_{\Z_p} (X_K (1-j) , \Z_p) ; e_{\iota}^{j} \mapsto  e_{\iota}^{1-j, \ast}, 
\]
where $e_{\iota}^{1-j, \ast}$ is the dual basis of $e_{\iota}^{1-j}$. 
By using \eqref{Det_Hom}, this induces
\begin{equation}\label{Xduality}
\Det_{\Z_p[G]}^{-1} (X_K (j)) \simeq 
\Det_{\Z_p[G]}^{-1} (\Hom_{\Z_p} (X_K (1-j) , \Z_p))
\simeq 
\Det_{\Z_p[G]}^{-1} (X_K (1-j))^{\#, \vee} .
\end{equation}

Let $v$ be a finite prime of $k$.
Recall the semi-local Galois cohomology complex
\[
\Delta_{K_v}(j):=\RG(K_v, \Z_p(j)):=\bigoplus_{w\mid v}\RG(K_w, \Z_p(j))
\]
introduced in \S \ref{ss:setup}.
We use the local duality of the Galois cohomology,
following Nekov\'{a}\v{r} \cite{Nek06}; we reformulate \cite[Proposition 5.2.4]{Nek06} in our semi-local setting.
For each complex $C = [ \cdots \to C^i \xrightarrow{d^i} C^{i+1} \to \cdots ]$ and an integer $a \in \Z$, 
we write $\tau_{\geq a}C$ for the truncated complex defined by 
\[
\tau_{\geq a}C := [ \cdots \to 0 \to 0 \to \Coker d^{a-1} \xrightarrow{d^a} C^{a+1} 
\xrightarrow{d^{a+1}} C^{a+2} \to \cdots ]. 
\]
Now consider a morphism of complexes
\[
\RG (K_v , \Z_p (1)) \to \tau_{\geq 2} \RG (K_v , \Z_p (1)) \xrightarrow{\inv} \bigoplus_{w \mid v} \Z_p [-2]
\xrightarrow{\sum} \Z_p [-2], 
\]
where the second arrow is the one induced by the invariant maps. 
Combining this with the cup product, 
for any $j \in \Z$, we obtain a morphism of complexes 
\[
\RG (K_v , \Z_p (j)) \otimes_{\Z_p} \RG (K_v , \Z_p (1-j)) \xrightarrow{\cup}  
\RG (K_v, \Z_p (1)) \to \Z_p [-2], 
\]
where $\otimes_{\Z_p}$ denotes the tensor product of complexes.
This gives us a quasi-isomorphism
\begin{align}\label{LTD_alpha}
\alpha_{K_v, \Z_p (j)} : 
\RG (K_v , \Z_p (j)) \xrightarrow{\sim} 
 \RHom_{\Z_p} (\RG (K_v , \Z_p (1-j)) , \Z_p)[-2] ,
\end{align}
which is the local duality.
Here, we note that some authors use different conventions concerning the order of cup products.

Now, by using \eqref{Det_Hom} again, the local duality $\alpha_{K_v, \Z_p (j)}$ induces
\begin{equation}\label{local_duality_det}
\Det_{\Z_p[G]}^{-1} (\Delta_{K_v}(j))
\simeq 
\Det_{\Z_p[G]}^{-1} (\Delta_{K_v}(1-j))^{\# , \vee}
\end{equation}
for any $j \in \Z$. 

Putting \eqref{Xduality} and \eqref{local_duality_det} together, we define an isomorphism
\[
\LT^{\Xi (j)}_{K/k, S}: \Xi_{K/k, S}^{\loc} (j)  \xrightarrow{\sim} \Xi_{K/k, S}^{\loc} (1-j)^{-1, \#}
\]
by
\begin{align*}
\Xi_{K/k, S}^{\loc} (j) 
&=
\bigotimes_{v \in S_f} \Det_{\Z_p[G]}^{-1} (\Delta_{K_v}(j))
\otimes_{\Z_p[G]}
\Det_{\Z_p[G]}^{-1} (X_K (j))
\\
&\overset{\eqref{local_duality_det} \text{ and } \eqref{Xduality}}{\simeq}
\Big{(}
\bigotimes_{v \in S_f} \Det_{\Z_p[G]}^{-1} (\Delta_{K_v}(1-j))
\Big{)}^{\# , \vee}
\otimes_{\Z_p[G]}
\Det_{\Z_p[G]}^{-1} (X_K (1-j))^{\#, \vee}
\\
&\overset{\psi}{\simeq} 
\Det_{\Z_p[G]}^{-1} (X_K (1-j))^{\#, \vee}
\otimes_{\Z_p[G]}
\Big{(}\bigotimes_{v \in S_f} \Det_{\Z_p[G]}^{-1} (\Delta_{K_v}(1-j))\Big{)}^{\# , \vee}
\\
&\simeq
\parenth{
\bigotimes_{v \in S_f} \Det_{\Z_p[G]}^{-1} (\Delta_{K_v}(1-j))
\otimes_{\Z_p[G]}
\Det_{\Z_p[G]}^{-1} (X_K (1-j))
}^{\# , \vee}
\\
&=
\Xi_{K/k, S}^{\loc} (1-j)^{\#, \vee} 
=
\Xi_{K/k, S}^{\loc} (1-j)^{-1, \#}
\end{align*}
for any $j \in \Z_{\leq 0}$. 
Here the third isomorphism follows from Lemma \ref{lem:prodinv} and the final equality holds since the grade of $\Xi_{K/k, S}^{\loc} (1-j)$ is zero by Remark \ref{rem:grade_zero}.

\begin{defn}\label{map<1}
We define an isomorphism 
\[
\vartheta_{K/k, S}^{\loc, j} : \C_p \otimes_{\Z_p} \Xi_{K/k, S}^{\loc} (j) \xrightarrow{\sim} \C_p [G] 
\]
for any $j \in \Z_{\leq 0}$ by the composite map 
\[
\C_p \otimes_{\Z_p} \Xi_{K/k, S}^{\loc} (j) \overset{\LT^{\Xi (j)}_{K/k, S}}{\simeq} 
\C_p \otimes_{\Z_p} \Xi_{K/k, S}^{\loc} (1-j)^{-1, \#} 
{\xrightarrow{\sim}} \C_p[G], 
\]
where the second isomorphism sends $\vartheta_{K/k, S}^{\loc, 1-j}$ to $1$. 
\end{defn}

This definition of $\vartheta_{K/k, S}^{\loc, j}$ for $j \in \Z_{\leq 0}$ depends on the local duality.
An explicit description will be given in \S \ref{ss:exp_theta}.

\subsection{Local ETNC for non-positive Tate twists}\label{ss:j<1}

Now we extend Theorem \ref{thm:lETNC} to the case $j \leq 0$. 
We keep the same notation in \S \ref{sec:pf_lETNC}. 

Taking the projective limit of the isomorphism $\LT^{\Xi (j)}_{K/k, S}$ defined in the previous subsection, 
we have a canonical isomorphism 
\[
\LT_{K_\infty / k, S}^{\Xi (j)} : \Xi_{K_\infty / k, S}^{\loc} (j) \xrightarrow{\sim} 
\Xi_{K_\infty / k, S}^{\loc} (1-j)^{\#, \vee} 
=
\Xi_{K_\infty / k, S}^{\loc} (1-j)^{-1, \#} 
\]
for any $j \in \Z_{\leq 0}$.  
For each archimedean place $v$ of $k$, 
we write $c_{K_\infty /k}^v \in \GG_{K_\infty}$ for the generator of the inertia group of $v$ (whose order is either $1$ or $2$) and put 
\[
c^\infty_{K_\infty / k} : = \prod_{v \in S_\infty }c_{K_\infty /k}^v \in \GG_{K_\infty}. 
\]
\begin{defn}\label{Z<1}
For $j \in \Z_{\leq 0}$, 
we define $Z_{K_\infty / k, S}^{\loc , j} \in \Xi_{K_\infty / k, S}^{\loc} (j)$ by 
\[
Z_{K_\infty / k, S}^{\loc , j} := (-1)^{j r_\R (k) + r_\C (k)} \cdot c^\infty_{K_\infty / k} \cdot
 (\LT_{K_\infty / k, S}^{\Xi (j)})^{-1} \Big{(} (Z_{K_\infty / k, S}^{\loc , 1-j})^{\ast} \Big{)}, 
\]
where $(-)^\ast$ means the dual basis.  
This $Z_{K_\infty / k, S}^{\loc , j}$ is a $\Lambda = \Z_p[[\GG_{K_\infty}]]$-basis of $\Xi_{K_\infty / k, S}^{\loc} (j)$. 
\end{defn}

The reason for the extra factors in the definition will become clear in Proposition \ref{prop:extra_Z} below. 

\begin{thm}\label{thm:j<1}
For $j \in \Z_{\leq 0}$ and an intermediate number field $M$ of $K_\infty/k$,
the composite map 
\[
\Xi_{K_\infty / k, S}^{\loc} (j) \twoheadrightarrow 
\Xi_{M / k, S}^{\loc} (j) \hookrightarrow \C_p \otimes_{\Z_p} \Xi_{M / k, S}^{\loc} (j) 
\overset{\vartheta_{M/k, S}^{\loc, j}}{\xrightarrow{\sim}} \C_p [\Gal(M/k)]
\]
sends $Z_{K_\infty / k, S}^{\loc , j}$ to 
\[
(-1)^{  r_\C (k)} \cdot c^\infty_{M / k} \times 
\begin{cases}\displaystyle
\cfrac{\Theta_{M/k, S}^\ast (1-j)^\#}{\Theta_{M/k, S}^\ast (j)}
& \text{if } j\neq 0,  \\
\displaystyle
\delta_{M/k, \trivial} \cdot 
\left(\prod_{v \in S_f} 
\delta_v(M /k) \right)^{-1}
\cfrac{\Theta_{M/k, S}^\ast (1)^\#}{\Theta_{M/k}^\ast (0)}
 & \text{if }  j=0, 
\end{cases}
\]
where $c_{M / k}^\infty$ is the image of $c_{K_\infty / k}^\infty$ by the 
natural restriction map $\Lambda \to \Z_p [\Gal(M/k)]$. 
\end{thm}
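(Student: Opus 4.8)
The plan is to deduce Theorem \ref{thm:j<1} from Theorem \ref{thm:lETNC}, applied with $j' := 1-j \ge 1$ in place of $j$, by unwinding the definition of $Z_{K_\infty/k,S}^{\loc,j}$ (Definition \ref{Z<1}) and of the trivialization $\vartheta_{M/k,S}^{\loc,j}$ (Definition \ref{map<1}). The key structural point is that \emph{both} of these are built from the same local duality isomorphism $\LT^{\Xi(j)}$: so when $\vartheta_{M/k,S}^{\loc,j}$ is evaluated on the image of $Z_{K_\infty/k,S}^{\loc,j}$, the two occurrences of $\LT$ essentially cancel, leaving only an elementary identification of dual bases together with the formula for $j' \ge 1$, and the auxiliary factors $(-1)^{j r_\R(k)+r_\C(k)}$ and $c^{\infty}_{K_\infty/k}$ appearing in Definition \ref{Z<1} are precisely what is needed to turn the latter formula into the asserted one for $j \le 0$.

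First I would pass to the finite level $M$. Since $\LT^{\Xi(j)}_{K_\infty/k,S}$ is constructed as the projective limit of the isomorphisms $\LT^{\Xi(j)}_{M/k,S}$ (see \S\ref{ss:j<1} and \S\ref{vartheta<0}), it is compatible with the natural surjections $\Xi_{K_\infty/k,S}^{\loc}(m) \twoheadrightarrow \Xi_{M/k,S}^{\loc}(m)$ for $m = j$ and $m = 1-j$, and these surjections carry dual bases to dual bases because the modules involved are invertible, hence projective. Combining this with the facts that the image of $Z_{K_\infty/k,S}^{\loc,1-j}$ in $\Xi_{M/k,S}^{\loc}(1-j)$ equals the basis $z_{M/k,S}^{\loc,1-j}$ of Theorem \ref{thm:lETNC} and that $c^{\infty}_{K_\infty/k}$ restricts to $c^{\infty}_{M/k}$, Definition \ref{Z<1} shows that the image of $Z_{K_\infty/k,S}^{\loc,j}$ in $\Xi_{M/k,S}^{\loc}(j)$ equals $(-1)^{j r_\R(k)+r_\C(k)}\, c^{\infty}_{M/k}\, \big(\LT^{\Xi(j)}_{M/k,S}\big)^{-1}\big((z_{M/k,S}^{\loc,1-j})^{\ast}\big)$.

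Next I would unwind $\vartheta_{M/k,S}^{\loc,j}$. By Definition \ref{map<1} it is $\LT^{\Xi(j)}_{M/k,S}$ followed by the isomorphism $\C_p \otimes \Xi_{M/k,S}^{\loc}(1-j)^{-1,\#} \xrightarrow{\sim} \C_p[\Gal(M/k)]$ that sends $\vartheta_{M/k,S}^{\loc,1-j}$ to $1$. Feeding in the element of the previous paragraph and cancelling the two copies of $\LT$, the computation reduces to the following elementary fact: writing $\theta := \vartheta_{M/k,S}^{\loc,1-j}$, $z := z_{M/k,S}^{\loc,1-j}$ and $w := \theta(z) \in \C_p[\Gal(M/k)]^{\times}$, so that $z^{\ast} = w^{-1}\theta$ in $\C_p \otimes \Xi_{M/k,S}^{\loc}(1-j)^{-1}$, the dual basis $z^{\ast}$ is sent to $(w^{\#})^{-1}$ (the involution $\#$ arising from the $\#$-twist on the target of $\LT$; one must check the sign conventions of \S\ref{App:det} for the operations $(-)^{\vee}$, $(-)^{\#}$ and the evaluation maps here). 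Thus $\vartheta_{M/k,S}^{\loc,j}$ sends the image of $Z_{K_\infty/k,S}^{\loc,j}$ to $(-1)^{j r_\R(k)+r_\C(k)}\, c^{\infty}_{M/k}\, (w^{\#})^{-1}$. Inserting the value of $w$ from Theorem \ref{thm:lETNC}: for $j \le -1$ one has $w = (-1)^{r_k(-j)}\,\Theta_{M/k,S}^{\ast}(j)^{\#}/\Theta_{M/k,S}^{\ast}(1-j)$, so $(w^{\#})^{-1} = (-1)^{r_k(-j)}\,\Theta_{M/k,S}^{\ast}(1-j)^{\#}/\Theta_{M/k,S}^{\ast}(j)$, and using $r_k = r_\R(k) + 2 r_\C(k)$ the total sign $(-1)^{j r_\R(k)+r_\C(k)+r_k(-j)}$ reduces to $(-1)^{r_\C(k)}$, giving the claimed formula; for $j = 0$ one has $w = \delta_{M/k,\trivial}\big(\prod_{v \in S_f}\delta_v(M/k)^{\#}\big)\Theta_{M/k}^{\ast}(0)^{\#}/\Theta_{M/k,S}^{\ast}(1)$, and inverting, applying $\#$, and using $\delta_{M/k,\trivial}^{\#} = \delta_{M/k,\trivial} = \delta_{M/k,\trivial}^{-1}$ yields exactly $(-1)^{r_\C(k)}\, c^{\infty}_{M/k}\, \delta_{M/k,\trivial}\big(\prod_{v \in S_f}\delta_v(M/k)\big)^{-1}\Theta_{M/k,S}^{\ast}(1)^{\#}/\Theta_{M/k}^{\ast}(0)$.

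The main obstacle is the purely formal bookkeeping in the category of graded invertible $\Z_p[G]$-modules: one must verify that $\LT^{\Xi(j)}_{K_\infty/k,S}$ is genuinely compatible with descent to finite level (so that the projective limit and $\LT^{\Xi(j)}_{M/k,S}$ agree on the nose, and so that in particular the group element $c^{\infty}$ is transported correctly through the semilinearity of $\LT$), and that the elementary identity ``$z^{\ast} \mapsto (w^{\#})^{-1}$'' holds with the precise conventions of \S\ref{App:det}, since a stray sign or a misplaced involution there would propagate through the whole argument. Everything else is the sign arithmetic displayed above, which is exactly absorbed by the factor $(-1)^{j r_\R(k)+r_\C(k)} \cdot c^{\infty}_{K_\infty/k}$ built into Definition \ref{Z<1}.
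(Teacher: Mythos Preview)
Your proposal is correct and follows essentially the same approach as the paper: the paper's proof is a one-liner observing that by Definitions \ref{map<1} and \ref{Z<1} the image equals $(-1)^{j r_\R(k)+r_\C(k)} c^\infty_{M/k} \big(\vartheta_{M/k,S}^{\loc,1-j}(z_{M/k,S}^{\loc,1-j})\big)^{\#,-1}$, then invokes Theorem \ref{thm:lETNC}. You have simply unpacked this identity more explicitly (the cancellation of the two copies of $\LT$, the dual-basis identity $z^\ast \mapsto (w^\#)^{-1}$, and the sign arithmetic via $r_k = r_\R(k)+2r_\C(k)$) and carried out the substitution in detail.
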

\begin{proof}
Let $z_{M /k, S}^{\loc, 1-j} \in \Xi_{M / k, S}^{\loc} (1-j)$ denote the image of $Z_{K_\infty / k, S}^{\loc , 1-j}$ 
by the natural map $\Xi_{K_\infty / k, S}^{\loc} (1-j)\twoheadrightarrow \Xi_{M / k, S}^{\loc} (1-j)$. 
By Definitions \ref{map<1} and \ref{Z<1}, 
 the image of $Z_{K_\infty / k, S}^{\loc , j}$ to be computed is equal to
\[(-1)^{j r_\R (k) + r_\C (k)} \cdot c^\infty_{M / k} 
\parenth{\vartheta_{M/k, S}^{\loc, 1-j} (z_{M /k, S}^{\loc, 1-j})}^{\#, -1}.
\]
Therefore, Theorem \ref{thm:lETNC} shows the theorem.  
\end{proof}

\subsection{Explicit description of $\vartheta_{K/k, S}^{\loc, j}$ for $j \in \Z_{\leq 0}$}\label{ss:exp_theta}

In this subsection, we give an explicit description of $\vartheta_{K/k, S}^{\loc, j}$ for $j \in \Z_{\leq 0}$. 

For a $\Q_p[G]$-module $M$, we set $M^\ast := \Hom_{\Q_p} (M, \Q_p) \simeq \Hom_{\Q_p[G]} (M, \Q_p[G])^\#$. 
For a $\Q_p[G]$-homomorphism $f : M \to N$, we write $f^\ast : N^\ast \to M^\ast$ for the $\Q_p[G]$-homomorphism induced by $f$. 

For any $p$-adic prime $v$ of $k$ and $j \leq 0$,
we consider the Bloch-Kato dual exponential map 
\[
\exp_{\Q_p (j)}^\ast : H^1 (K_v, \Q_p (j)) \overset{\alpha_{K_v,  \Z_p(j)}}{\xrightarrow{\sim}}
H^1 (K_v, \Q_p (1-j))^\ast 
\xrightarrow{(\exp_{\Q_p (1-j)})^\ast} 
K_v^\ast,
\]
where the first isomorphism is induced by the local duality $\alpha_{K_v,  \Z_p(j)}$
in \S \ref{vartheta<0} and 
the second arrow is the dual of the exponential map $\exp_{\Q_p (1-j)}$. 

As in \S \ref{ss:setup}, let us define an isomorphism
\[
\begin{cases}
\vartheta_{K_v}^{j} : 
\Q_p \otimes_{\Z_p} \Det_{\Z_p[G]}^{-1} (\Delta_{K_v}(j))
\xrightarrow{\sim} 
\Q_p [G]
&\text{if } v \nmid p, \\
\phi_{K_v}^j: \Q_p \otimes_{\Z_p} \Det^{-1}_{\Z_p[G]} (\Delta_{K_v}(j))
\xrightarrow{\sim} 
\Det_{\Q_p[G]}(K_v^\ast)
&\text{if }v \mid p.
\end{cases}
\]
for a finite prime $v$ of $k$ and $j \in \Z_{\leq 0}$.
The construction again depends on whether $j = 0$ or $j < 0$.

Suppose $j<0$ and $v \nmid p$.
Then the complex 
$\Q_p \otimesL_{\Z_p} \Delta_{K_v}(j)$ is acyclic and the isomorphism $\vartheta_{K_v}^{j}$ is defined by Proposition \ref{prop:det_ses3}. 

Suppose $j<0$ and $v \mid p$.
Then the complex $\Q_p \otimesL_{\Z_p} \Delta_{K_v}(j)$ is acyclic outside degree one and the dual exponential map $\exp_{\Q_p (j)}^\ast$ is an isomorphism. Then the canonical map in Proposition \ref{prop:det_ses4} and $\exp_{\Q_p (j)}^\ast$ induce the isomorphism $\phi_{K_v}^j$.

Suppose $j = 0$.
We consider the composite map 
\begin{equation}\label{rec_v}
 H^0 (K_v, \Q_p)
 =
\bigoplus_{w \mid v} \Q_p  
\xrightarrow{(\ord_{K_v})^\ast}
(\Q_p \otimes_{\Z_p} \bigoplus_{w \mid v}  \widehat{K_w^\times})^\ast 
\simeq 
\Hom ( \bigoplus_{w \mid v} G_{K_w}^{\ab} , \Q_p)
=
H^1 (K_v, \Q_p),  
\end{equation}
where $G_{K_w}^{\ab}$ is the Galois group of the maximal abelian extension over $K_w$ and the isomorphism is induced by the reciprocity map of local class field theory.  

Now suppose $j =0$ and $v \nmid p$.
Then the complex 
$\Q_p \otimesL_{\Z_p} \Delta_{K_v}(0)$ is acyclic outside degree zero and one
and the map \eqref{rec_v} is an isomorphism. 
Using Proposition \ref{prop:det_ses4}, we define $\vartheta_{K_v}^{0}$ as
\begin{align*}
\Q_p\otimes_{\Z_p}\Det_{\Z_p[G]}^{-1}(\Delta_{K_v}(0))
& \simeq \Det_{\Q_p[G]}^{-1}(\RG(K_v, \Q_p))\\
& \simeq  \Det^{-1}_{\Q_p[G]}(H^0(K_v, \Q_p)) \otimes_{\Q_p[G]} \Det_{\Q_p[G]}(H^1(K_v, \Q_p))\\
&\overset{(\ref{rec_v})}{\simeq} \Det^{-1}_{\Q_p[G]}(H^1(K_v, \Q_p)) \otimes_{\Q_p[G]} \Det_{\Q_p[G]}(H^1(K_v, \Q_p))\\
&\overset{\ev}{\simeq} \Q_p[G],
\end{align*}
where the isomorphism $\ev$ is the one introduced in \S \ref{App:det_defn}.

Finally, we suppose $j = 0$ and $v \mid p$.
Then the complex $\Q_p \otimesL_{\Z_p} \Delta_{K_v}(0)$ is acyclic outside degree zero and one again. 
In this case, we consider the following commutative diagram; 
\[
\xymatrix{
0 \ar[r]
&
\bigoplus_{w \mid v} \Q_p 
\ar[r]^-{(\ord_{K_v})^\ast} \ar@{=}[d]
&
(\Q_p \otimes_{\Z_p} \bigoplus_{w \mid v} \widehat{K_w^\times} )^\ast
\ar[r]
\ar[d]^-{\sim}
&
(\Q_p \otimes_{\Z_p} \bigoplus_{w \mid v} U_{K_w})^\ast
\ar[r] \ar[d]^-{\sim}_-{(\oplus_{w \mid v}\exp_{p})^\ast}
&0
\\
0 \ar[r]
&
H^0 (K_v, \Q_p)
\ar[r]^-{\eqref{rec_v}}
&
H^1 (K_v, \Q_p)
\ar[r]^-{\exp_{\Q_p}^\ast}
&
K_v^\ast 
\ar[r]
&
0, 
} 
\]
where the middle vertical arrow is induced by the reciprocity map and 
$\exp_p : \Q_p \otimes U_{K_w} \xrightarrow{\sim} K_w$ is the $p$-adic exponential map. 
Since the upper sequence is clearly exact, so is the bottom sequence.
We then obtain a canonical isomorphism
\[
\Det_{\Q_p[G]}(H^1 (K_v, \Q_p)) \simeq \Det_{\Q_p[G]}(H^0 (K_v, \Q_p)) \otimes_{\Q_p[G]} \Det_{\Q_p[G]}(K_v^\ast )
\]
by Proposition \ref{prop:det_ses2}.
Using this isomorphism and Proposition \ref{prop:det_ses4}, we define $\phi_{K_v}^0$ as 
\begin{align*}
& \Q_p\otimes_{\Z_p}\Det_{\Z_p[G]}^{-1}(\Delta_{K_v}(0))\\
& \quad \simeq \Det_{\Q_p[G]}^{-1}(\RG(K_v, \Q_p))\\
& \quad  \simeq \Det^{-1}_{\Q_p[G]}(H^0(K_v, \Q_p)) \otimes_{\Q_p[G]} \Det_{\Q_p[G]}(H^1(K_v, \Q_p))\\
& \quad \simeq \Det^{-1}_{\Q_p[G]}(H^0(K_v, \Q_p)) \otimes_{\Q_p[G]} \Det_{\Q_p[G]}(H^0 (K_v, \Q_p)) \otimes_{\Q_p[G]} \Det_{\Q_p[G]}(K_v^\ast )\\
& \quad \overset{\ev}{\simeq} \Q_p[G] \otimes_{\Q_p[G]} \Det_{\Q_p[G]}(K_v^\ast )\simeq \Det_{\Q_p[G]}(K_v^\ast ).
\end{align*}

This completes the construction of $\vartheta_{K_v}^j$ and $\phi_{K_v}^j$ for $j \leq 0$.
The relation with the ones for $j \geq 1$ in \S \ref{ss:setup} are described as follows.

\begin{lem}\label{lem:commutativity_phi_vartheta}
Let $v$ be a finite prime of $k$.
For any $j \in \Z_{\leq 0}$, the following are commutative:
\[
\xymatrix@C=40pt{
\Q_p \otimes_{\Z_p} \Det_{\Z_p[G]}^{-1} (\Delta_{K_v}(j))
\ar[r]^-{\alpha_{K_v, \Z_p(j)}}_-{\sim}
\ar[d]^-{\vartheta_{K_v}^j}_-{\sim}
&
\Q_p \otimes_{\Z_p} \Det_{\Z_p[G]}^{-1} (\Delta_{K_v}(1-j))^{\#, \vee}
\ar[d]^-{(\vartheta_{K_v}^{1-j})^{\vee, -1}}_-{\sim}
\\
\Q_p[G]
\ar[r]_-{\sim}^-{\# : \sigma \mapsto \sigma^{-1}}
& 
\Q_p [G]^{\#}
}
\]
when $v \nmid p$ and 
\[
\xymatrix@C=40pt{
\Q_p \otimes_{\Z_p} \Det_{\Z_p[G]}^{-1} (\Delta_{K_v}(j))
\ar[r]^-{\alpha_{K_v, \Z_p(j)}}_-{\sim}
\ar[d]^-{\phi_{K_v}^j}_-{\sim}
&
\Q_p \otimes_{\Z_p} \Det_{\Z_p[G]}^{-1} (\Delta_{K_v}(1-j))^{\#, \vee}
\ar[d]^-{(\phi_{K_v}^{1-j})^{\vee, -1}}_-{\sim}
\\
\Det_{\Q_p[G]} (K_v^\ast)
\ar[r]_-{\sim}^-{\eqref{Det_Hom}}
& 
\Det_{\Q_p[G]} (K_v)^{\#, \vee}
}
\]
when $v \mid p$.
\end{lem}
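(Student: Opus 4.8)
The plan is to unwind both vertical composites into their elementary constituents and to verify compatibility step by step. On either side of each square the maps $\vartheta_{K_v}^{\bullet}$ and $\phi_{K_v}^{\bullet}$ are assembled, in finitely many steps, from three kinds of ingredients: the ``d\'ecalage'' isomorphisms expressing $\Det^{-1}$ of a perfect complex as the alternating tensor product of the determinants of its cohomology (Propositions \ref{prop:det_ses2}, \ref{prop:det_ses3} and \ref{prop:det_ses4}); the evaluation isomorphism $\ev$; and a short list of cohomological isomorphisms (the invariant map, the reciprocity map \eqref{rec_v}, the Kummer map, the Bloch--Kato logarithm and exponential, and the dual exponential). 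The determinant-module form of the local duality $\alpha_{K_v,\Z_p(j)}$ is built from the \emph{same} d\'ecalage isomorphisms together with the cohomological duality isomorphisms $H^i(K_v,\Q_p(j))\xrightarrow{\sim}H^{2-i}(K_v,\Q_p(1-j))^{\ast}$ induced by \eqref{LTD_alpha}. Since the d\'ecalage isomorphisms and the evaluation map are functorial for quasi-isomorphisms and are compatible in the evident way with $(-)^{\#}$, $(-)^{\vee}$ and $(-)^{\#,\vee}$ --- this being precisely the naturality of \eqref{Det_Hom} and of Propositions \ref{prop:det_ses2}--\ref{prop:det_ses4} --- each square reduces to a finite collection of compatibilities among the cohomological isomorphisms alone. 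These I would check in the four cases determined by $v\nmid p$ versus $v\mid p$ and by $j<0$ (so $1-j\geq 2$) versus $j=0$ (so $1-j=1$).

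\emph{The cases $j<0$.} For $v\nmid p$ both $\Q_p\otimesL_{\Z_p}\Delta_{K_v}(j)$ and $\Q_p\otimesL_{\Z_p}\Delta_{K_v}(1-j)$ are acyclic, so $\vartheta_{K_v}^{j}$ and $\vartheta_{K_v}^{1-j}$ are the canonical trivialisations of Proposition \ref{prop:det_ses3}; the square then says that a quasi-isomorphism of acyclic complexes is carried by $\Det^{-1}$ to the identity, the involution $\#$ being accounted for by the compatibility of that trivialisation with \eqref{Det_Hom}, and this is formal. For $v\mid p$ both complexes are acyclic outside degree one; $\phi_{K_v}^{1-j}$ is the d\'ecalage isomorphism of Proposition \ref{prop:det_ses4} followed by $\log_{\Q_p(1-j)}$, while $\phi_{K_v}^{j}$ is the same d\'ecalage isomorphism followed by $\exp_{\Q_p(j)}^{\ast}$. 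Here one invokes that $\exp_{\Q_p(j)}^{\ast}$ is \emph{defined} as $(\exp_{\Q_p(1-j)})^{\ast}\circ\alpha_{K_v,\Z_p(j)}$ and that $\exp_{\Q_p(1-j)}=(\log_{\Q_p(1-j)})^{-1}$ since $1-j\geq 2$; the square then collapses to the functoriality of Proposition \ref{prop:det_ses4} along $\alpha_{K_v,\Z_p(j)}$ and the naturality of \eqref{Det_Hom}.

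\emph{The cases $j=0$.} The decisive new input is that, under local Tate duality, the reciprocity-type map \eqref{rec_v} is the transpose of $\inv_{K_v}^{-1}\circ\ord_{K_v}$. Indeed, the local cup-product pairing $H^1(K_v,\Q_p)\times H^1(K_v,\Q_p(1))\to\Q_p$ is, after the Kummer identification, given by $(\phi,u)\mapsto\sum_{w\mid v}\phi(\rec_{K_w}(u_w))$, so the induced isomorphism $H^1(K_v,\Q_p)\xrightarrow{\sim}H^1(K_v,\Q_p(1))^{\ast}$ is exactly the $\Q_p$-dual of $\bigoplus_{w}\rec_{K_w}$; combined with the natural component-wise identification $H^0(K_v,\Q_p)\xrightarrow{\sim}H^2(K_v,\Q_p(1))^{\ast}$ this exhibits \eqref{rec_v} as $(\ord_{K_v})^{\ast}=(\inv_{K_v}^{-1}\circ\ord_{K_v})^{\ast}$. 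For $v\nmid p$ this suffices, since $\vartheta_{K_v}^{0}$ and $\vartheta_{K_v}^{1}$ are each obtained from the d\'ecalage isomorphism of Proposition \ref{prop:det_ses4}, the maps \eqref{rec_v} respectively $\inv_{K_v}^{-1}\circ\ord_{K_v}$, and the evaluation map. For $v\mid p$ one observes in addition that the bottom three-term exact sequence in the diagram defining $\phi_{K_v}^{0}$ in \S\ref{ss:exp_theta} is, by the construction of that diagram through the local reciprocity maps and the $p$-adic exponential isomorphisms $\exp_p$, the $\Q_p$-linear dual of $0\to\bigoplus_{w\mid v}U^1_{K_w}\otimes\Q_p\to\bigoplus_{w\mid v}\widehat{K_w^{\times}}\otimes\Q_p\xrightarrow{\ord_{K_v}}\bigoplus_{w\mid v}\Q_p\to 0$, that is, of \eqref{eq:H1_Kum} tensored with $\Q_p$, hence --- via the Kummer isomorphism and the identity $\log_{\Q_p(1)}=\log_p=(\exp_p)^{-1}$ --- of the exact sequence \eqref{Kummer seq} used to define $\phi_{K_v}^{1}$. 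Feeding a pair of mutually dual exact sequences into Proposition \ref{prop:det_ses2} and using once more the naturality of the evaluation map and of \eqref{Det_Hom} then gives the square.

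\emph{Anticipated main obstacle.} The genuinely delicate case is $v\mid p$, $j=0$: one must reconcile the several-step determinant computations defining $\phi_{K_v}^{0}$ and $\phi_{K_v}^{1}$ through the duality while carefully tracking the difference between $(-)^{\#}$ and $(-)^{\#,\vee}$ and the grading shifts, and one must check that the two three-term sequences are dual \emph{with the correct sign}. This last point is sensitive to the convention for the order of the cup product used in \eqref{LTD_alpha} (already flagged in \S\ref{vartheta<0}): the opposite convention would introduce a sign $-1$ that does not occur in the stated diagrams. I expect the cleanest organisation is to prove all the required pairing statements first at finite level with $\Z/p^n\Z$-coefficients, where local Tate duality and the reciprocity pairing are literally given by the cup product, and then to pass to the projective limit and to $\Q_p$-coefficients.
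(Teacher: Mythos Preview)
Your proposal is correct and follows essentially the same route as the paper: a case-by-case reduction to the compatibility of the cohomological maps under local Tate duality, with the acyclic cases ($j<0$, $v\nmid p$) formal, the $j<0$, $v\mid p$ case immediate from the definition of $\exp_{\Q_p(j)}^{\ast}$, and the $j=0$ cases resting on the fact that \eqref{rec_v} is the transpose of $\inv_{K_v}^{-1}\circ\ord_{K_v}$ (for which the paper simply cites \cite[Corollary 7.2.13]{NSW08}) together with the observation that the exact sequence defining $\phi_{K_v}^0$ is dual to \eqref{Kummer seq}. Your anticipated sign worry does not materialise in the paper's argument, which organises the $v\mid p$, $j=0$ case via a single commutative diagram of exact sequences rather than tracking signs through the determinant formalism.
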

\begin{proof}
When $j<0$ and $v \nmid p$, the commutativity holds
since the isomorphisms $\vartheta_{K_v}^j$ and $\vartheta_{K_v}^{1-j}$ are defined as the canonical ones defined for acyclic complexes.

When $j<0$ and $v \mid p$, the commutativity is clear by 
the constcution.

When $j=0$ and $v \nmid p$, the commutativity follows from the commutativity of
\[
\xymatrix@C=50pt{
H^0 (K_v, \Q_p) 
\ar[r]^-{\alpha_{K_v, \Z_p }}_-{\sim}
\ar[d]_-{\sim}^-{\eqref{rec_v}}
&
H^2 (K_v, \Q_p (1))^\ast
\ar[d]_-{\sim}^-{(\inv_{K_v}^{-1} \circ \ord_{K_v})^{\ast}}
\\
H^1 (K_v, \Q_p) 
\ar[r]^-{\alpha_{K_v, \Z_p }}_-{\sim}
&
H^1 (K_v, \Q_p (1))^\ast 
}
\]
(see \cite[Corollary 7.2.13]{NSW08}).

Finally suppose $j=0$ and $v \mid p$.
We similarly have a commutative diagram 
\[
\xymatrix@C=55pt{
0 \ar[r]
&
H^0 (K_v, \Q_p)
\ar[r]^-{\eqref{rec_v}}
\ar[d]^-{\sim}_-{\alpha_{K_v, \Z_p}}
&
H^1 (K_v, \Q_p)
\ar[r]^-{\exp_{\Q_p}^\ast}
\ar[d]^-{\sim}_-{\alpha_{K_v, \Z_p}}
&
\displaystyle
K_v^\ast 
\ar[r]
\ar@{=}[d]
&
0
\\
0 \ar[r]
&
H^2 (K_v, \Q_p (1))^\ast 
\ar[r]^-{(\inv_{K_v}^{-1} \circ \ord_{K_v})^\ast} 
&
H^1 (K_v, \Q_p (1))^\ast
\ar[r]^-{(\exp_{\Q_p (1)})^\ast}
&
\displaystyle
K_v^\ast 
\ar[r] 
&0. 
\\
} 
\]
From this, we deduce that the isomorphism $\phi_{K_v}^0$ coincides with the composite map
\begin{align*}
& \Q_p\otimes_{\Z_p}\Det_{\Z_p[G]}^{-1}(\Delta_{K_v}(0))
 \overset{\alpha_{K_v,\Z_p}}{\simeq} \Det_{\Q_p[G]}^{-1}(\RG(K_v, \Q_p(1)))^{\#, \vee}\\
& \quad \simeq \parenth{ \Det_{\Q_p[G]}(H^1(K_v, \Q_p(1))) \otimes_{\Q_p[G]} \Det_{\Q_p[G]}^{-1}(H^2(K_v, \Q_p(1))) }^{\#, \vee}\\
& \quad \overset{\eqref{Kummer seq}}{\simeq} \Det_{\Q_p[G]}(H^1_f(K_v, \Q_p(1)))^{\#, \vee} \\
& \quad \overset{\log_{\Q_p(1)}}{\simeq} \Det_{\Q_p[G]}(K_v)^{\#, \vee} \overset{\eqref{Det_Hom}}{\simeq} \Det_{\Q_p[G]}(K_v^\ast ), 
\end{align*}
where the second isomorphism comes from Proposition \ref{prop:det_ses4}.
By the definition of $\phi_{K_v}^1$, the composite map 
from the second isomorphism to the fourth is equal to $(\phi_{K_v}^1)^{\vee, -1}$. 
This implies the claim.
\end{proof}

For each $j \leq 0$, as in Definition \ref{vartheta_j}, combining $\vartheta_{K_v}^j$ and $\phi_{K_v}^j$ for each $v \in S_f$ yields an isomorphism 
\begin{equation}\label{dual_version}
\Q_p \otimes_{\Z_p} \bigotimes_{v \in S_f} \Det_{\Z_p[G]}^{-1} (\Delta_{K_v}(j)) \simeq 
\Det_{\Q_p[G]} ((\Q_p \otimes_{\Q} K )^\ast).
\end{equation}
Using this, for $j \leq 0$, we construct a composite map 
\begin{align}\label{second_map}
&  \C_p \otimes_{\Z_p} \Xi_{K/k, S}^{\loc} (j) 
= \C_p \otimes_{\Z_p} \left(\bigotimes_{v \in S_f} \Det_{\Z_p[G]}^{-1} (\Delta_{K_v}(j))
\otimes \Det_{\Z_p[G]}^{-1} (X_K (j))\right)
 \\& \quad
\overset{\eqref{dual_version} \text{ and } \eqref{Xduality} }{\xrightarrow{\sim}} 
\C_p \otimes_{\Q_p} \parenth {\Det_{\Q_p[G]} ((\Q_p \otimes K )^\ast) \otimes_{\Q_p[G]} \Det_{\Q_p[G]}^{-1} ((\Q_p \otimes X_K (1-j))^\ast)} 
\\ 
&\quad
\overset{\wedge (\alpha_K^{1-j})^{\ast} \text{ and } \eqref{Y Betti}^{\ast}}{\xrightarrow{\sim}} 
\C_p \otimes_{\Q_p} 
\parenth{
\Det_{\Q_p[G]}( (\Q_p \otimes H_K (1-j) )^\ast) \otimes_{\Q_p[G]} \Det_{\Q_p[G]}^{-1}
( ( \Q_p \otimes H_K (1-j)  )^\ast) 
}
\\
&\quad
\overset{\ev}{\simeq}
\C_p[G]
\end{align}
($\alpha_K^{1-j}$ is defined in \S \ref{period map}).

\begin{prop}
For any $j \in \Z_{\leq 0}$, 
this \eqref{second_map} coincides with $(-1)^{r_k}$ times the homomorphism $\vartheta_{K / k, S}^{\loc, j}$ in Definition \ref{map<1}. 
\end{prop}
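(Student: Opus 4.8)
The plan is to compare the two isomorphisms $\C_p\otimes_{\Z_p}\Xi_{K/k,S}^{\loc}(j)\xrightarrow{\sim}\C_p[G]$ appearing in the statement: the map \eqref{second_map}, which is built directly out of the dual-exponential-type isomorphisms $\vartheta_{K_v}^j$, $\phi_{K_v}^j$ for $j\le 0$ and the dualized period map $(\alpha_K^{1-j})^\ast$, and the map $\vartheta_{K/k,S}^{\loc,j}$ of Definition \ref{map<1}, which is defined by first applying the global duality isomorphism $\LT^{\Xi(j)}_{K/k,S}$ and then transporting $\vartheta_{K/k,S}^{\loc,1-j}$ to $1$. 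The strategy is to unwind $\vartheta_{K/k,S}^{\loc,j}$ from Definition \ref{map<1} using the local compatibilities already recorded in Lemma \ref{lem:commutativity_phi_vartheta}, so that the only thing left to reconcile is an archimedean/period factor, which will produce exactly the sign $(-1)^{r_k}$.

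First I would recall that $\LT^{\Xi(j)}_{K/k,S}$ was constructed in \S\ref{vartheta<0} as the tensor product of the local duality isomorphisms \eqref{local_duality_det} over $v\in S_f$, the duality \eqref{Xduality} on $X_K$, and the reshuffling isomorphism $\psi$ (Lemma \ref{lem:prodinv}); and that Definition \ref{vartheta_j} builds $\vartheta_{K/k,S}^{\loc,1-j}$ out of $\phi_{K_v}^{1-j}$, $\vartheta_{K_v}^{1-j}$ and the period map $\alpha_K^{1-j}$ via \eqref{use_later} and $\wedge\alpha_K^{1-j}$. Substituting these into $\vartheta_{K/k,S}^{\loc,j}=\bigl(\text{transport of }\vartheta_{K/k,S}^{\loc,1-j}\text{ to }1\bigr)\circ\LT^{\Xi(j)}_{K/k,S}$ and applying Lemma \ref{lem:commutativity_phi_vartheta} at every finite prime $v\in S_f$, the local duality isomorphisms $\alpha_{K_v,\Z_p(j)}$ get absorbed: each square in that lemma says precisely that the composite $\bigl(\vartheta_{K_v}^{1-j}\bigr)^{\vee,-1}\circ\alpha_{K_v,\Z_p(j)}$ (resp.\ with $\phi$) equals $\vartheta_{K_v}^j$ (resp.\ $\phi_{K_v}^j$) up to the involution $\#$ and the canonical identification \eqref{Det_Hom}. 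Hence after this substitution $\vartheta_{K/k,S}^{\loc,j}$ becomes the composite of the local maps $\vartheta_{K_v}^j$, $\phi_{K_v}^j$ (giving \eqref{dual_version}), the duality \eqref{Xduality} on $X_K$, and then the dual $(\wedge\alpha_K^{1-j})^\ast$ together with the dual of \eqref{Y Betti} and the evaluation map — which is exactly the recipe defining \eqref{second_map}, except that one must keep track of the $\#$'s, the grading-zero identification $M^{\#,\vee}=M^{-1,\#}$, and the order of the tensor factors permuted by $\psi$.

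The remaining discrepancy is a scalar, and the main obstacle is pinning down that this scalar is $(-1)^{r_k}$ and nothing else. This comes from two sources, which I would treat separately. One is the evaluation pairing: \eqref{second_map} ends with $\ev$ applied to $\Det_{\Q_p[G]}(V^\ast)\otimes\Det_{\Q_p[G]}^{-1}(V^\ast)$ with $V=\Q_p\otimes H_K(1-j)$ of rank $r_k$, whereas the transported version of $\vartheta_{K/k,S}^{\loc,1-j}$ used the $\ev$ on $\Det(V)\otimes\Det^{-1}(V)$; comparing $\ev$ on a module with $\ev$ on its $\Q_p$-linear dual introduces a sign $(-1)^{\rank}= (-1)^{r_k}$ (this is the standard sign in $\Det(P)^\vee$ versus $\Det(\Hom(P,R))$, i.e.\ the sign hidden in \eqref{Det_Hom}). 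The second is to check that the permutation $\psi$ of \S\ref{vartheta<0} and the reordering of the finite-prime factors against $X_K$ contribute no extra sign once everything is in the Knudsen--Mumford sign convention of \S\ref{App:det}, because all the relevant graded pieces here have even grade (the complexes $\Delta_{K_v}(j)$ and $X_K(j)[0]$ contribute grades summing to $0$, as in Remark \ref{rem:grade_zero}). I expect the cleanest writeup is to phrase the whole comparison as a single large commutative diagram whose rows are \eqref{second_map} and $\vartheta_{K/k,S}^{\loc,j}$, whose columns are the local identifications of Lemma \ref{lem:commutativity_phi_vartheta} together with \eqref{Det_Hom} applied to $X_K$ and to $H_K(1-j)$, and then to observe that the only non-commuting square is the evaluation square, which commutes up to $(-1)^{r_k}$. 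No genuinely new computation is needed beyond bookkeeping of Knudsen--Mumford signs.
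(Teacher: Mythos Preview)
Your overall strategy matches the paper's proof exactly: both arguments assemble a single large commutative diagram whose left column is \eqref{second_map}, whose right column is the map characterized by $\vartheta_{K/k,S}^{\loc,1-j}\mapsto 1$, whose top horizontal arrow is $\LT_{K/k,S}^{\Xi(j)}$, and whose upper square commutes by Lemma~\ref{lem:commutativity_phi_vartheta}. So the architecture of your proposal is correct.

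The gap is in your sign analysis. You assert that the permutation $\psi$ contributes no sign ``because all the relevant graded pieces here have even grade'', citing Remark~\ref{rem:grade_zero}. But Remark~\ref{rem:grade_zero} only says that the \emph{total} grade of $\Xi_{K/k,S}^{\loc}(j)$ is zero; the two tensor factors being swapped have grades $r_k$ and $-r_k$, which need not be even. Swapping them via $\psi$ produces $(-1)^{r_k\cdot(-r_k)}=(-1)^{r_k}$. In the paper's diagram this is exactly the content of the bottom square: the horizontal arrow there is ``\eqref{Det_Hom} and $\psi$'' on $[H_K(1-j)^\ast]\otimes[H_K(1-j)^\ast]^{-1}$ (grades $r_k$ and $-r_k$), and the two evaluation maps differ by $(-1)^{r_k}$ by the convention recorded in \S\ref{App.ev} (the left inverse via $\psi$ differs from $\ev_{(L,r)^{-1}}$ by $(-1)^r$). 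Conversely, your attribution of the sign to ``the sign hidden in \eqref{Det_Hom}'' is not right: \eqref{Det_Hom} is the straightforward identification $f_1\wedge\cdots\wedge f_r\mapsto\bigl(x_1\wedge\cdots\wedge x_r\mapsto\det(f_i(x_j))\bigr)$ and carries no extra sign. So your two claims about the sign source are essentially reversed. Once you correct this --- i.e.\ drop the ``even grade'' claim and instead track the $(-1)^{r_k}$ through the $\psi$/$\ev$ square as in \S\ref{App.ev} --- your argument becomes the paper's proof.
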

\begin{proof}
To ease the notation, we write $[-] := \C_p \otimes_{\Q_p} \Det_{\Q_p[G]} (-)$ and so $[-]^{-1} = \C_p \otimes_{\Q_p} \Det_{\Q_p[G]}^{-1} (-)$.
The following is commutative.
{\footnotesize
\[
\xymatrix@C=30pt{
\C_p \otimes_{\Z_p} \Xi_{K/k , S}^{\loc} (j)
\ar[r]^-{\LT_{K/k, S}^{\Xi(j)}}_-{\sim}
\ar[d]^-{\eqref{dual_version}\text{ and }\eqref{Xduality}}_-{\sim}
&
\C_p \otimes_{\Z_p} \Xi_{K/k , S}^{\loc} (1-j)^{-1, \#}
= \C_p \otimes_{\Z_p} \Xi_{K/k , S}^{\loc} (1-j)^{\#, \vee}
\ar[d]_-{\sim}^-{\eqref{use_later}^{\vee, -1}}
\\
 [(\Q_p \otimes K )^\ast]\otimes[(\Q_p \otimes X_K (1-j)   )^\ast]^{-1} 
\ar[r]^-{\eqref{Det_Hom} \text{ and }\psi}_-{\sim}
\ar[d]^-{\wedge (\alpha_K^{1-j})^{\ast, -1}\text{ and }\eqref{Y Betti}}_-{\sim}
&
[\Q_p \otimes X_K (1-j)]^{-1, \#, \vee}
 \otimes
[\Q_p \otimes K ]^{\#, \vee} 
\ar[d]^-{\eqref{Y Betti} \text{ and }  (\wedge \alpha_K^{1-j})^{\vee, -1}}_-{\sim}
\\
 [(\Q_p \otimes H_K (1-j))^\ast]\otimes [(\Q_p \otimes H_K (1-j) )^\ast]^{-1}
 \ar[r]^-{\eqref{Det_Hom}\text{ and }\psi}_-{\sim}
\ar[d]^-{\ev}_-{\sim}
&
 [\Q_p \otimes H_K (1-j)]^{-1, \#, \vee} \otimes
 [\Q_p \otimes H_K (1-j)]^{\#, \vee}
\ar[d]^-{\ev}_-{\sim}
\\
\C_p[G]
\ar[r]_-{\sim}^-{\times (-1)^{r_k}}
&
\C_p[G]. 
}
\]
}
Indeed, the commutativity of the upper square follows from Lemma \ref{lem:commutativity_phi_vartheta} 
and the definition of $\LT_{K_\infty / k, S}^{\Xi (j)}$. 
The commutativity of the middle square and the bottom square is obvious (see the notation about the evaluation map 
in \S \ref{App.ev}). 

The composite map of the left vertical arrows is exactly the isomorphism \eqref{second_map}. 
By Definition \ref{vartheta_j}, the composite map of the right vertical arrows is the isomorphism 
$\C_p \otimes_{\Z_p} \Xi_{K/k, S}^{\loc} (1-j)^{-1, \#} \xrightarrow{\sim} \C_p [G] ;  \vartheta_{K/k, S}^{\loc, 1-j} 
\mapsto 1$. 
Now the proposition follows from this commutative diagram and Definition \ref{map<1}. 
\end{proof}

\subsection{Twists}\label{ss:twist}

The aim of this subsection is to prove the following.

\begin{prop}\label{prop:extra_Z}
Proposition \ref{prop:extra_Z1} is valid for $j, j' \in \Z$ instead of $\Z_{\geq 1}$.
\end{prop}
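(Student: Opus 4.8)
The plan is to extend Proposition \ref{prop:extra_Z1} from the range $j,j' \in \Z_{\geq 1}$ to all $j,j' \in \Z$. Since the twist maps $\Twist_{j,j'}^{\loc}$ compose (i.e.\ $\Twist_{j',j''}^{\loc} \circ \Twist_{j,j'}^{\loc} = \Twist_{j,j''}^{\loc}$, which follows from the analogous composition property of the complex-level maps $\Twist_{\Delta_{K_\infty, v}, j, j'}^{\xi}$ and $\Twist_{X_{K_\infty}, j, j'}^{\xi}$ together with Lemma \ref{lem:det_twist}), it suffices to treat two cases: first $j, j' \in \Z_{\geq 1}$ (already done in Proposition \ref{prop:extra_Z1}), and second the single ``crossing'' relation between $j = 1$ and $j = 0$, from which the general case for $j, j' \in \Z_{\leq 0}$ and the mixed case both follow by composing with the known positive-range relation and the duality. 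So the heart of the matter is to prove
\[
\Twist_{1, 0}^{\loc}(Z_{K_\infty / k, S}^{\loc, 1}) = Z_{K_\infty / k, S}^{\loc, 0}.
\]

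First I would reduce to a compatibility between the twist map $\Twist_{1,0}^{\loc}$ and the local duality isomorphism $\LT_{K_\infty/k, S}^{\Xi(0)} : \Xi_{K_\infty/k, S}^{\loc}(0) \xrightarrow{\sim} \Xi_{K_\infty/k, S}^{\loc}(1)^{-1,\#}$ used in Definition \ref{Z<1}. Concretely, $Z_{K_\infty/k, S}^{\loc, 0}$ is defined as $(-1)^{r_\C(k)} \cdot c_{K_\infty/k}^\infty \cdot (\LT_{K_\infty/k, S}^{\Xi(0)})^{-1}((Z_{K_\infty/k, S}^{\loc, 1})^\ast)$, so I need to verify that $\Twist_{1,0}^{\loc}$ applied to $Z^{\loc,1}$ equals this expression. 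The natural strategy is to compute, at the level of the constituent local determinant modules, how $\Twist_{v,1,0}^{\xi}$ interacts with the duality pairing $\alpha_{K_v, \Z_p(j)}$ from \S\ref{vartheta<0}. There should be a clean diagram: the cup product with $\zeta_{p^n}^{\otimes -1}$ defining the twist, composed with the pairing into $\RG(K_v,\Z_p(1))[-2]$, is essentially the pairing into $\RG(K_v, \Z_p(0))$ shifted, up to a sign coming from graded commutativity of cup products. I would isolate these signs prime-by-prime and place: the $p$-adic primes (using Proposition \ref{prop:extra_p} and the identification of $\phi_{K_v}^0$ via the dual exponential as in \S\ref{ss:exp_theta}, plus Lemma \ref{lem:commutativity_phi_vartheta} with $j=0$), the non-$p$-adic primes (using Proposition \ref{prop:extra_l} and Lemma \ref{lem:commutativity_phi_vartheta} with $j=0$), and the archimedean places (where $\Twist_{X_{K_\infty}, 1, 0}^\xi$ acts and one uses $\eqref{Xduality}$). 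The sign $(-1)^{r_\C(k)}$ and the inertia element $c_{K_\infty/k}^\infty$ in Definition \ref{Z<1} are precisely engineered to absorb the discrepancy between the naive twist of the period map $\alpha_K^1$ and the dual of $\alpha_K^0$; I would check this via the explicit formula \eqref{alpha} for $\alpha_K^j$, comparing $j=1$ (odd) with $j=0$ (even) and noting that the real/imaginary parts swap roles, which introduces the complex-place sign and, at ramified real places, the inertia contribution.

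The main obstacle will be bookkeeping the signs and the factor $c_{K_\infty/k}^\infty$ coherently across the duality: the twist maps are only $\twist_m$-\emph{semilinear}, so one must be careful that $(\LT^{\Xi(0)})^{-1} \circ (-)^\ast$ interacts correctly with semilinearity, and the dual-basis operation $(-)^\ast$ introduces an inversion that must be reconciled with $\twist_{1-0} = \twist_1$ versus $\twist_{0-1} = \twist_{-1}$. I expect the cleanest route is not to recompute everything but to observe that both $\Twist_{1,0}^{\loc}(Z^{\loc,1})$ and $Z^{\loc,0}$ are $\Lambda$-bases of the same rank-one module $\Xi_{K_\infty/k,S}^{\loc}(0)$, so they differ by a unit $u \in \Lambda^\times$; then it suffices to show $\chi(u) = 1$ for every finite character $\chi$ of $\GG_{K_\infty}$, and this one checks by comparing the images under $\vartheta_{K_n/k, S}^{\loc, 0}$. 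By Theorem \ref{thm:j<1} the image of $Z^{\loc, 0}$ is $(-1)^{r_\C(k)} c_{M/k}^\infty \cdot \Theta^\ast$-ratios, while the image of $\Twist_{1,0}^{\loc}(Z^{\loc,1})$ can be computed from the image of $Z^{\loc,1}$ under $\vartheta^{\loc,1}_{K_n/k,S}$ (given by Theorem \ref{thm:lETNC}) together with the explicit description of $\vartheta^{\loc,0}$ in \S\ref{ss:exp_theta} and Lemma \ref{lem:commutativity_phi_vartheta}; the functional equation (Theorem \ref{thm:func eq}) relating $\Theta^\ast(1)$ and $\Theta^\ast(0)$ then forces the two to agree, including the signs. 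I would write the argument in this ``check on characters'' form, invoking Proposition \ref{prop:localETNC_func} to reduce to the finite layers $K_n$, which keeps the computation finite-dimensional and sidesteps delicate Iwasawa-algebra manipulations.
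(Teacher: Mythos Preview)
Your overall reduction strategy (composition of twists, reduce to the single crossing $j=1 \leftrightarrow j=0$) matches the paper's, but there is a genuine gap at the crucial step, and your preferred route via ``check on characters'' is circular.

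The map $\vartheta_{K_n/k,S}^{\loc,0}$ in Definition \ref{map<1} is \emph{defined} through the duality isomorphism $\LT_{K/k,S}^{\Xi(0)}$, and the alternative description in \S\ref{ss:exp_theta} is shown equivalent to it via Lemma \ref{lem:commutativity_phi_vartheta}. Hence computing $\vartheta^{\loc,0}(\Twist_{1,0}^{\loc}(Z^{\loc,1}))$ is tantamount to computing $\LT^{\Xi(0)}(\Twist_{1,0}^{\loc}(Z^{\loc,1}))$; there is no independent information coming from the functional equation, because Theorem \ref{thm:j<1} is itself a tautological consequence of the definitions together with Theorem \ref{thm:lETNC}. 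Likewise, the interpolation property for $\Phi^j_{K_\infty,v}$ (Theorem \ref{thm:explicit computation}) is only established for $j \geq 1$, so you cannot simply read off the $p$-adic contribution of $\Twist_{1,0}^{\loc}(Z^{\loc,1})$ from a descent to finite level.

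What is actually needed---and what you do not identify---is a compatibility between the Coleman isomorphisms $\Phi_{K_\infty,p}^{j}$ and the local Tate duality $\LT_{K_\infty/k,p}^{\Delta(1-j)}$. This is the explicit reciprocity law: the paper formulates it as Proposition \ref{prop:ERL}, reducing it to a pairing $\langle \cdot,\cdot\rangle_{R_{K_p}}$ on $R_{K_p}$ and invoking Berger's theorem \cite[Theorem II.16]{Ber03} (a refinement of Perrin-Riou's reciprocity formula). This is a genuine theorem, not sign-bookkeeping. From it, Lemma \ref{lem:compute} computes $\Twist_{p,j,1-j}^{\xi}(Z^{p,j}_{K_\infty/k,\xx})$ explicitly; the answer involves $\det(\sum_\sigma \Tr_{K/\Q}(x_a^{\sigma^{-1}} x_b)\sigma)$, which is precisely the second formula in Proposition \ref{prop: det B and Gauss sum}. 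For non-$p$-adic primes one needs the analogous Lemmas \ref{lem:LT_l-adic} and \ref{lem:unr_basis_2}. Finally, the identification of the factor $(-1)^{r_\C(k)} c_{K_\infty/k}^\infty$ is not a period-map calculation as you suggest, but comes from the global product formula: one shows $(\sigma_{-1})^{r_k}\prod_{v\in S_{\ram}(K/k)_f}\rec_{k_v}(-1)=c_{K_\infty/k}^\infty$ via the reciprocity law of global class field theory (end of \S\ref{ss:extra}). Without these ingredients the argument cannot be completed.
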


To prove this, it is enough to show the following three claims:
\begin{itemize}
 \item[(i)] For any $j , j' \in \Z_{\geq 1}$, we have 
$\Twist_{j, j'}^{\loc}  (Z_{K_\infty / k, S}^{\loc,j}) =  Z_{K_\infty / k, S}^{\loc , j'}$. 
 \item[(ii)] For any $j , j' \in \Z_{\leq 0}$, we have 
$\Twist_{j, j'}^{\loc}  (Z_{K_\infty / k, S}^{\loc,j}) =  Z_{K_\infty / k, S}^{\loc , j'}$. 
 \item[(iii)]
  For any $j  \in \Z_{\geq 1}$, we have 
 $\Twist_{j, 1-j}^{\loc}  (Z_{K_\infty / k, S}^{\loc,j}) =  Z_{K_\infty / k, S}^{\loc , 1-j}. $
 \end{itemize}
Claim (i) is already established.
Claim (ii) is established as follows:
By the compatibility of the cup product, 
we have a commutative diagram 
\[
\xymatrix{
\Xi_{K_\infty / k, S}^{\loc} (j) \ar[rr]^-{\Twist_{j, j'}^{\loc}}_-{\simeq} \ar[d]^-{\LT_{K_\infty / k, S}^{\Xi (j)}}_-{\simeq}
&& 
\Xi_{K_\infty / k, S}^{\loc} (j') \ar[d]^-{\LT_{K_\infty / k, S}^{\Xi (j')}}_-{\simeq}
\\
\Xi_{K_\infty / k, S}^{\loc} (1-j)^{-1, \#} 
\ar[rr]^-{\Twist_{1-j', 1-j}^{\loc , \ast}}_-{\simeq} 
&&
\Xi_{K_\infty / k, S}^{\loc} (1-j')^{-1, \#} , 
}
\]
where the horizontal arrows are $\twist_{j-j'}$-semilinear and  
 the bottom arrow is defined by  
\[
\Twist_{j', j}^{\loc, \ast} : 
\Hom_{\Lambda} (\Xi_{K_\infty / k, S}^{\loc} (j), \Lambda)^{\#} \xrightarrow{\sim} 
\Hom_{\Lambda} (\Xi_{K_\infty / k, S}^{\loc} (j'), \Lambda )^{\#} ; 
f \mapsto  \twist_{j'-j} \circ f \circ \Twist_{j', j}^{\loc}. 
\] 
Now claim (ii) follows from claim (i), this diagram, and
\[
\twist_{j-j'} \parenth{(-1)^{j r_\R (k) + r_\C (k)} \cdot c^\infty_{K_\infty / k}}
= \parenth{(-1)^{j' r_\R (k) + r_\C (k)} \cdot c^\infty_{K_\infty / k}}. 
\] 

It remains to show claim (iii). 
For this, we need detailed study of the twists.

\subsubsection{$p$-adic primes}

We keep the notation in \S \ref{sec:pf_lETNC}. 
For an integer $j \geq 1$, 
we write 
\[
\LT_{K_\infty / k, p}^{\Delta (1-j)} :   \Det_{\Lambda}^{-1} (\Delta_{K_\infty, p} (1-j))  \xrightarrow{\sim} 
  \Det_{\Lambda}^{-1} (\Delta_{K_\infty, p} (j))^{\#, \vee} 
\]
for the $p$-adic component of the isomorphism $\LT_{K_\infty / k, S}^{\Xi (1-j)}$. 

Let us discuss the explicit reciprocity law of the local duality by Coleman maps. 
We define an $\OO_{K_p} [[\Gal (K_\infty / K)]] $-bilinear form
\[
\langle \;, \;\rangle: 
R_{K_p} \times R_{K_p} \to \OO_{K_p} [[\Gal (K_\infty / K)]]
\]
by
\[
\langle a(1+T), b(1+T) \rangle = ab ,
\;\; (a, b \in \OO_{K_p} [[\Gal (K_\infty / K)]]).  
\]
This is well-defined since $R_{K_p}$ is a free $\OO_{K_p} [[\Gal (K_\infty / K)]] $-module of rank one with basis $1+T$ 
by \cite[Lemme 1.5]{Per90}.
We also define an involution $\iota_{R_{K_p}}: R_{K_p} \to R_{K_p}$ by
\[
\iota_{R_{K_p}}(a (1+T)) = a^\# \cdot  (1+T)^{-1} = 
\sigma_{-1}a^\# \cdot  (1+T) , \;\; (a \in \OO_{K_p} [[\Gal(K_\infty / K)]]),  
\]
where $(-)^\#$ denotes the involution $\sigma \mapsto \sigma^{-1}$ for $\sigma \in \Gal (K_\infty / K))$ and
$\sigma_{-1} \in \Gal (K_\infty / K)$ is the unique element satisfying $\chi_{\cyc} (\sigma_{-1}) = -1$. 
Using these, we define a pairing 
\[
\langle \;, \;\rangle_{R_{K_p}} : 
R_{K_p} \times R_{K_p} \to \Lambda
\]
by
\[
\langle f, g \rangle_{R_{K_p}} = \sum_{\sigma \in \Gal (K_\infty / k (\mu_{p^\infty}))} 
\Tr_{K / \Q} \parenth{ \langle \sigma^{-1} f, \iota_{R_{K_p}} (g) \rangle } \sigma , 
\]
where the map $\Tr_{K / \Q} : \OO_{K_p} [[\Gal (K_\infty / K)]] \to \Z_p [[\Gal (K_\infty / K)]] \hookrightarrow \Lambda$ is induced by the trace map. 
We can check
\[
t\langle f, g\rangle_{R_{K_p}} = \langle t f, g\rangle_{R_{K_p}} = \langle f, t^\# g\rangle_{R_{K_p}}
\]
for $f, g \in R_{K_p}$ and $t \in \Lambda$. 
In other words,
$\langle \;, \;\rangle_{R_{K_p}} : R_{K_p} \times R_{K_p}^\# \to \Lambda$ 
is a $\Lambda$-bilinear form, which is moreover a perfect pairing since $K/\Q$ is unramified at $p$. 
Therefore, this and \eqref{Det_Hom} induce an isomorphism 
\begin{align}\label{RLpairing}
\Det_{\Lambda} (R_{K_p}) &\xrightarrow{\sim} \Det_{\Lambda} (R_{K_p})^{\# , \vee}
= \Hom_{\Lambda} (\bigwedge_{\Lambda}^{r_k} R_{K_p}, \Lambda)^\# ; \\
\wedge_{a=1}^{r_k} f_a &\mapsto \parenth{ \wedge_{b=1}^{r_k} g_b \mapsto \det (\langle f_a, g_b\rangle_{R_{K_p}})_{1\leq a, b \leq r_k}}. 
\end{align}

\begin{prop}\label{prop:ERL}
For $j \geq 1$, the diagram
\[\xymatrix@C=50pt{
 \Det_{\Lambda}^{-1} (\Delta_{K_\infty, p} (1-j)) \ar[r]^-{\LT_{K_\infty / k, p}^{\Delta(1-j)}}_-{\sim} \ar[d]^-{\simeq}_-{\Phi_{K_\infty, p}^{1-j}}
&
\Det_{\Lambda}^{-1} (\Delta_{K_\infty, p} (j))^{\#, \vee} 
 \\
 \Det_{\Lambda} (R_{K_p}) \ar[r]^-{\eqref{RLpairing}}_-{\sim}
 &
 \Det_{\Lambda} (R_{K_p})^{\#, \vee} \ar[u]_-{\simeq}^-{(\Phi_{K_\infty, p}^{j})^{\ast}}
}\]
is commutative up to sign $(-1)^{ r_k j}$, 
where the right vertical arrow is induced by $\Phi_{K_\infty,p}^{j}$. 
\end{prop}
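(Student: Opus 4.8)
The statement is an explicit reciprocity law identifying, via the Coleman isomorphisms $\Phi_{K_\infty,p}^{j}$ and $\Phi_{K_\infty,p}^{1-j}$, the local duality isomorphism $\LT_{K_\infty/k,p}^{\Delta(1-j)}$ with the pairing $\eqref{RLpairing}$ on $R_{K_p}$, up to the sign $(-1)^{r_k j}$. The plan is to reduce everything to Perrin-Riou's explicit reciprocity law (as reflected in Proposition \ref{key diagram}), which already tells us that the twisted Coleman maps interpolate the Bloch--Kato logarithms/dual exponentials. Since all the objects in the diagram are free modules over $\Lambda$ (or become so after inverting a non-zero-divisor), it suffices to check commutativity after base change to the total ring of fractions $Q$ of $\Lambda$, where the complexes $Q\otimesL_\Lambda \Delta_{K_\infty,p}(j)$ become acyclic outside degree one and the determinant isomorphisms are built out of honest maps of $Q$-modules. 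There one can unwind both $\LT_{K_\infty/k,p}^{\Delta(1-j)}$ and $\eqref{RLpairing}$ into pairings between the cohomology groups, and compare.

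\textbf{Key steps.} First I would recall the construction of $\Phi_{K_\infty,p}^{j}$ as the composition $\Det^{-1}(\Delta_{K_\infty,p}(j)) \simeq \Det(H^1) \simeq \Det(U_{K_\infty,p}(j-1)) \overset{\Colman^j}{\simeq} \Det(R_{K_p})$ after inverting the relevant ideals, and similarly for $1-j$; the duality $\LT_{K_\infty/k,p}^{\Delta(1-j)}$ comes from the cup-product quasi-isomorphism $\alpha_{K_v,\Z_p(1-j)}$ of \eqref{LTD_alpha}. Second, I would verify the analogue of Proposition \ref{prop:ERL} at the level of cohomology: namely that the pairing on $U_{K_\infty,p}(j-1)\times U_{K_\infty,p}(-j)$ induced by cup product and the local invariant map corresponds, under the Coleman maps $\Colman^j$ and $\Colman^{1-j}$, to a multiple of $\langle\,,\,\rangle_{R_{K_p}}$. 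This is where Perrin-Riou's work enters: her $\Omega^\xi_{\Q_p(j),j}$ and $\Omega^\xi_{\Q_p(1-j),1-j}$ are adjoint (up to an explicit constant involving $(-1)$'s and factorials) with respect to the cup-product pairing and the natural pairing on $R_{K_p}$, a statement that follows from combining her theorem in p.\,82 with the compatibility of cup products under twist. Using Proposition \ref{key diagram} for both $j$ and $1-j$, together with the fact that the Bloch--Kato dual exponential is the adjoint of the Bloch--Kato logarithm under $\alpha_{K_v,\Z_p(j)}$ (Lemma \ref{lem:commutativity_phi_vartheta} and the standard duality of exp/exp$^*$), one pins down the comparison constant. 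Third, I would track all the signs carefully: the twisting sign $(-1)^{j-1}$ from the definition of $\Colman^j$ (cf.\ the proof of Proposition \ref{key diagram}), the $(-1)^{j-1}\cdot(j-1)!$ and $(-1)^{-j}\cdot(-j)!$ factors from the two instances of Proposition \ref{key diagram}, the sign in the convention for $\alpha_{K_v,\Z_p(j)}$ versus $\alpha_{K_v,\Z_p(1-j)}$, and the $(-1)^{r_k}$'s that arise from reordering factors in graded determinant modules and from the involution $\iota_{R_{K_p}}$. After the dust settles, these should combine to $(-1)^{r_k j}$, matching the statement.

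\textbf{Main obstacle.} The serious difficulty is the sign bookkeeping in the third step, rather than any conceptual point: each of Perrin-Riou's maps, the cup-product duality, the graded-determinant formalism of Knudsen--Mumford, and the twist operators $\Twist^{\xi}$ carries its own sign convention, and reconciling them requires being scrupulous about orderings of tensor factors, the placement of the shift $[-2]$ in \eqref{LTD_alpha}, and the compatibility $\twist_{j-j'}$-semilinearity of \eqref{Det_Twist_map}. A secondary technical point is that $H^2(\Delta_{K_\infty,p}(j))$ is a nonzero torsion $\Lambda$-module (it is $\oplus_{w_\infty\mid v}\Z_p(j)$), so the exact sequence \eqref{eq:Col_ex} has nonzero outer terms; these vanish after localizing at $\fq_\chi$ or tensoring with $Q$, but one must make sure the duality respects the two copies of $\oplus_{w_\infty\mid v}\Z_p$ appearing at the two ends of \eqref{eq:Col_ex} compatibly — this is exactly the statement that $R_{K_p}\to\oplus\Z_p(j)$ and the inclusion $\oplus\Z_p(j)\hookrightarrow U_{K_\infty,p}(j-1)$ are dual to each other under the pairings, which again follows from Perrin-Riou's construction but should be stated explicitly.
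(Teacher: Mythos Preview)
Your overall strategy---pass to the total ring of fractions $Q$, unwind both sides as pairings on $H^1$, and compare via the Coleman maps---is the same as the paper's. The difference, and the genuine gap, is in where you locate the key input.

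You claim the adjointness of $\Omega^\xi_{\Q_p(j),j}$ and $\Omega^\xi_{\Q_p(1-j),1-j}$ ``follows from combining her theorem in p.\,82 with the compatibility of cup products under twist'' and from ``Proposition \ref{key diagram} for both $j$ and $1-j$.'' Neither works. Proposition \ref{key diagram} (equivalently, Perrin-Riou's theorem on p.\,82 of \cite{PR}) is the \emph{interpolation} statement: it says $\Colman^j$ specializes to $\log_{\Q_p(j)}$ at finite levels. It is stated only for $j\geq 1$, so you cannot invoke it for $1-j\leq 0$; your factor ``$(-1)^{-j}\cdot(-j)!$'' is undefined. More fundamentally, knowing that $\Colman^j$ interpolates $\log$ does not by itself tell you that $\Colman^{1-j}$ interpolates $\exp^*$---that dual statement \emph{is} the explicit reciprocity law, not a consequence of the interpolation plus formal twist compatibilities. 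This was Perrin-Riou's conjecture $\mathrm{Rec}(V)$, proved by Colmez, Benois, Kato--Kurihara--Tsuji, and Berger.

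The paper's proof simply cites Berger \cite[Theorem II.16]{Ber03} as a black box for the commutative diagram
\[
\xymatrix{
(Q\otimes H^1(\Delta_{K_\infty,p}(1-j)))\times(Q\otimes H^1(\Delta_{K_\infty,p}(j))^\#)
\ar[r]^-{\langle\,,\,\rangle_{\LT}} \ar[d]_{\Colman^{1-j}\times\Colman^{j}} & Q \ar@{=}[d]\\
(Q\otimes R_{K_p})\times(Q\otimes R_{K_p}^\#) \ar[r]^-{(-1)^{j}\langle\,,\,\rangle_{R_{K_p}}} & Q,
}
\]
and then the sign $(-1)^{r_kj}$ drops out by taking determinants of rank $r_k$. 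The paper even remarks (immediately after the proof) that Perrin-Riou's own version \cite[Th\'eor\`eme 4.2.3]{PR99} has an incorrect sign, which is why Berger's formulation is preferred. So your ``main obstacle'' of sign bookkeeping is real, but it is resolved by quoting the right theorem with the right sign, not by the chain of compatibilities you outline.
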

\begin{proof}
For each $n \geq 0$, 
we consider a perfect pairing induced by the cup product and the invariant maps
\begin{align*}
\langle\; , \; \rangle_n 
& :
\bigoplus_{w_n \mid p} H^1 (K_{n, w_n} , \Z_p (1-j)) \times 
\bigoplus_{w_n \mid p} H^1 (K_{n, w_n} , \Z_p (j)) 
\\
&\xrightarrow{\cup}
\bigoplus_{w_n \mid p}  H^2 (K_{n, w_n} , \Z_p (1)) \xrightarrow{\sum_{w_n \mid p} \inv_{K_{n, w_n}}} \Z_p. 
\end{align*}
This induces a pairing
\[
\langle \; , \; \rangle_{\LT} : H^1 (\Delta_{K_\infty, p} (1-j)) \times H^1 (\Delta_{K_\infty, p} (j))  \to \Lambda
\]
defined by
\[
\langle (x_n)_n ,(y_n)_n \rangle_{\LT} = 
 \parenth{ \sum_{\sigma \in \GG_{K_n}} \langle x_n^{\sigma^{-1}} , y_n \rangle_{n} \sigma}_n,  
\]
where we regard $H^1 (\Delta_{K_\infty, p} (j)) = \varprojlim_n 
\parenth{\bigoplus_{w_n \mid p} H^1 (K_{n, w_n} , \Z_p (j))}$ and 
$\Lambda = \varprojlim_n \Z_p [\GG_{K_n}]$. 
We can check
that $\langle \; , \; \rangle_{\LT} : H^1 (\Delta_{K_\infty, p} (1-j)) \times H^1 (\Delta_{K_\infty, p} (j))^\#  \to \Lambda$ 
is a $\Lambda$-bilinear form. 

Let $Q$ be the total fractional ring of $\Lambda$.  
By the local duality theorem, 
this induces an isomorphism 
\begin{align}\label{LTH1}
\bigwedge_{Q}^{r_k}
Q \otimes_{\Lambda} H^1 (\Delta_{K_\infty, p} (1-j)) 
&\simeq \Hom_{Q} ( \bigwedge_{Q}^{r_k}
Q \otimes_{\Lambda} H^1 (\Delta_{K_\infty, p} (j)), Q)^\#, \\
\wedge_{a=1}^{r_k} x_a &\mapsto \parenth{\wedge_{b=1}^{r_k}y_b 
\mapsto \det (\langle x_a , y_b \rangle_{\LT})_{1 \leq a, b \leq r_k}}. 
\end{align}

Now the map 
$\LT_{K_\infty / k, p}^{\Delta(1-j)}: Q\otimes_{\Lambda} \Det_{\Lambda}^{-1} (\Delta_{K_\infty, p} (1-j)) 
\simeq Q \otimes_{\Lambda} \Det_{\Lambda}^{-1} (\Delta_{K_\infty, p} (1-j))^{\#, \vee} $ is equal to the composite map 
\begin{align*}
Q \otimes_{\Lambda} \Det_{\Lambda}^{-1} (\Delta_{K_\infty, p} (1-j)) 
\overset{\eqref{Iwasawa cohomology}}{\simeq} 
\Det_{Q} (Q \otimes_{\Lambda} H^1 (\Delta_{K_\infty, p} (1-j))) \\
\overset{\eqref{LTH1}}{\simeq}
\Det_{Q} (Q \otimes_{\Lambda} H^1 (\Delta_{K_\infty, p} (j)))^{\#, \vee} 
\overset{\eqref{Iwasawa cohomology}}{\simeq}
Q \otimes_{\Lambda} \Det_{\Lambda}^{-1} (\Delta_{K_\infty, p} (j))^{\#, \vee} .
\end{align*}
On the other hand, by 
\cite[Theorem II.16]{Ber03}, 
we have a commutative diagram 
\[\xymatrix@C=36pt{
\left(Q \otimes H^1 (\Delta_{K_\infty, p} (1-j))\right) \times 
\left(Q \otimes H^1 (\Delta_{K_\infty, p} (j))^\# \right)
\ar[d]_-{\eqref{Iwasawa cohomology} \text{ and }\eqref{Kummer seq of infty}}^-{\simeq}
\ar[r]^-{\langle\; , \; \rangle_{\LT}} 
& Q \ar@{=}[dd]\\
\left(Q \otimes \bigoplus_{v \in S_p} U_{K_\infty, v} (-j)\right) \times 
\left(Q \otimes \bigoplus_{v \in S_p} U_{K_\infty, v} (j-1)^\# \right)
 \ar[d]_-{\left(\oplus_{v \in S_p}\Colman_{K_\infty, v}^{1-j}\right) \times \left(\oplus_{v \in S_p} \Colman_{K_\infty, v}^{j}\right)} ^-{\simeq}
& \\    
\left(Q \otimes_{\Lambda} R_{K_p}\right) \times 
\left(Q \otimes_{\Lambda} R_{K_p}^\#\right) \ar[r]^-{(-1)^{j} \cdot\langle\; , \; \rangle_{R_{K_p}}} 
& Q. 
}
\]
Here we note that the map $\Colman_{K_\infty, v}^j$ (resp. $ \Colman_{K_\infty, v}^{1-j}$ ) coincides with the inverse of the isomorphism $\Omega_{\Q_p (j), j}$ (resp. $\Omega_{\Q_p (1-j), 1-j}$) in \cite{Ber03} in our semi-local setting.
The proposition follows from this diagram. 
\end{proof}

\begin{rem}
Berger \cite[Remark II.17]{Ber03} pointed out that the sign in the explicit reciprocity formulas in \cite[Th\'eor\`eme 4.2.3]{PR99} seems to be incorrect.
This is why we referred \cite[Theorem II.16]{Ber03} in this proof.
\end{rem}

At the beginning of \S \ref{sec:pf_lETNC}, 
we fixed a basis $\xi = (\zeta_{p^n})_n$ of $\Z_p (1) = \varprojlim_n \mu_{p^n} (K_\infty)$. 
For this $\xi$, we consider the twist map 
\[
 \Twist_{p, j, 1-j}^\xi := \otimes_{v \in S_p}  \Twist_{v, j, 1-j}^\xi 
 : 
  \Det_{\Lambda}^{-1} (\Delta_{K_\infty, p} (j)) 
  \xrightarrow{\sim}  \Det_{\Lambda}^{-1} (\Delta_{K_\infty, p} (1-j)). 
\]
We regard $\Z_p[\GG_K] \subset \Lambda$ by using the natural isomorphism
$\GG_K \overset{\sim}{\leftarrow} \Gal (K_\infty / k (\mu_{p^\infty})) \subset \GG_{K_\infty}$.
 
Using Proposition \ref{prop:ERL}, we show the following lemma.

\begin{lem}\label{lem:compute}
For $j \geq 1$, we have 
 \begin{align*}
 \Twist_{p, j, 1-j}^\xi (Z^{p,j}_{K_\infty / k, \xx}) = 
 & (-1)^{r_kj} \cdot (\sigma_{-1})^{r_k} \cdot
 |D_k| \cdot (-1)^{r_\C (k)} 
 \prod_{v \in S_{\ram} (K/k)_f} \rec_{k_v} (-1) (\sum_{\chi \in \widehat{\GG_K}} N (\ff_\chi) e_\chi )
 \\
& \times 
( \LT_{K_\infty / k, p}^{\Delta (1-j)} )^{-1} \parenth{(Z^{p,j}_{K_\infty / k, \xx})^\ast }
\end{align*}
in $ \Det_{\Lambda}^{-1} (\Delta_{K_\infty, p} (1-j)) $, 
where
$(-)^\ast$ means the $\Lambda$-dual basis and 
$\rec_{k_v} : k_v^\times \to \GG_{K_\infty}$ is defined in Definition \ref{defi:D}. 
  \end{lem}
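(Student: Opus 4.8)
The plan is to combine three ingredients already available: the twist‑compatibility of the Coleman‑map isomorphisms at $p$ (Proposition~\ref{prop:extra_p}), the explicit reciprocity law expressing the local duality via Coleman maps (Proposition~\ref{prop:ERL}), and the Gauss‑sum evaluation of the trace form (the trace‑form identity of Proposition~\ref{prop: det B and Gauss sum}). First I would record that, for each $i$,
\[
D\bigl(x_{K,i}(1+T)\bigr)=(1+T)\tfrac{d}{dT}\bigl(x_{K,i}(1+T)\bigr)=x_{K,i}(1+T),
\]
so $\wedge D^{m}$ fixes $\wedge_{i=1}^{r_k}x_{K,i}(1+T)$ for every $m\in\Z$ (the fact used in the proof of Proposition~\ref{prop:extra_Z1}). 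Tensoring the diagram of Proposition~\ref{prop:extra_p} over $v\in S_p$ and applying it to the defining equality $\Phi_{K_\infty,p}^{j}\bigl(Z^{p,j}_{K_\infty/k,\xx}\bigr)=\wedge_{i=1}^{r_k}x_{K,i}(1+T)$ gives
\[
\Twist_{p,j,1-j}^{\xi}\bigl(Z^{p,j}_{K_\infty/k,\xx}\bigr)=\bigl(\Phi_{K_\infty,p}^{1-j}\bigr)^{-1}\bigl(\wedge_{i=1}^{r_k}x_{K,i}(1+T)\bigr),
\]
so it remains to show that the right‑hand side of the asserted formula equals this element.

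Next I would feed in Proposition~\ref{prop:ERL}, which yields $\bigl(\LT_{K_\infty/k,p}^{\Delta(1-j)}\bigr)^{-1}=(-1)^{r_k j}\,\bigl(\Phi_{K_\infty,p}^{1-j}\bigr)^{-1}\circ\eqref{RLpairing}^{-1}\circ\bigl((\Phi_{K_\infty,p}^{j})^{\ast}\bigr)^{-1}$. Since $\Phi_{K_\infty,p}^{j}$ carries the $\Lambda$‑basis $Z^{p,j}_{K_\infty/k,\xx}$ to the $\Lambda$‑basis $\wedge_{i=1}^{r_k}x_{K,i}(1+T)$ with unit coefficient $1$, its transpose carries $(Z^{p,j}_{K_\infty/k,\xx})^{\ast}$ to $\bigl(\wedge_{i=1}^{r_k}x_{K,i}(1+T)\bigr)^{\ast}$; applying $\eqref{RLpairing}^{-1}$ and then $(\Phi_{K_\infty,p}^{1-j})^{-1}$ one lands again on a $\Lambda$‑multiple of $\bigl(\Phi_{K_\infty,p}^{1-j}\bigr)^{-1}\bigl(\wedge_{i}x_{K,i}(1+T)\bigr)$. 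Thus everything reduces to identifying the scalar, namely the Gram determinant $s:=\det\bigl(\langle x_{K,a}(1+T),x_{K,b}(1+T)\rangle_{R_{K_p}}\bigr)_{1\le a,b\le r_k}$, which is exactly $\eqref{RLpairing}\bigl(\wedge_{i}x_{K,i}(1+T)\bigr)$ evaluated at $\wedge_{i}x_{K,i}(1+T)$.

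The substantive step is the evaluation of this determinant, and here is where the bookkeeping matters. Unwinding the definitions: for $\sigma\in\Gal(K_\infty/k(\mu_{p^\infty}))$ the element $\sigma^{-1}$ acts on $x_{K,a}(1+T)$ only through $\GG_K$ (it fixes $\mu_{p^\infty}$, hence $T$), while $\iota_{R_{K_p}}\bigl(x_{K,b}(1+T)\bigr)=x_{K,b}(1+T)^{-1}=\sigma_{-1}\bigl(x_{K,b}(1+T)\bigr)$ because $x_{K,b}\in\OO_{K_p}$ is $\#$‑invariant; inserting this into the pairing $\langle a(1+T),b(1+T)\rangle=ab$, applying $\Tr_{K/\Q}$ coefficientwise, and using the identification $\Gal(K_\infty/k(\mu_{p^\infty}))\simeq\GG_K$ (valid since $K\cap k(\mu_{p^\infty})=k$, as $K/\Q$ is unramified at $p$ while $k(\mu_{p^\infty})/k$ is totally ramified there) gives
\[
\langle x_{K,a}(1+T),x_{K,b}(1+T)\rangle_{R_{K_p}}=\sigma_{-1}\cdot\sum_{\sigma\in\GG_K}\Tr_{K/\Q}\bigl(x_{K,a}^{\,\sigma^{-1}}x_{K,b}\bigr)\,\sigma .
\]
Taking the $r_k\times r_k$ determinant pulls out $\sigma_{-1}^{r_k}$, and the trace‑form identity of Proposition~\ref{prop: det B and Gauss sum}, applied to the very basis $x_{K,1},\dots,x_{K,r_k}$ fixed there, evaluates the remaining factor, so that
\[
s=\sigma_{-1}^{r_k}\cdot|D_k|\,(-1)^{r_\C(k)}\prod_{v\in S_{\ram}(K/k)_f}\rec_{k_v}(-1)\sum_{\chi\in\widehat{\GG_K}}N(\ff_\chi)\,e_\chi .
\]
One then checks $s\in\Lambda^{\times}$ (the pairing $\langle\,,\,\rangle_{R_{K_p}}$ is perfect, or directly because $\{x_{K,i}\}$ is a $\Z_p[\GG_K]$‑basis of $\OO_{K_p}$) and $s^{\#}=s$ (using $\rec_{k_v}(-1)^{-1}=\rec_{k_v}(-1)$, $\sigma_{-1}^{2}=1$, $e_\chi^{\#}=e_{\chi^{-1}}$, and $N(\ff_{\chi^{-1}})=N(\ff_\chi)$), which by the definition of $\eqref{RLpairing}$ and of the $\#$‑twisted dual gives $\eqref{RLpairing}^{-1}\bigl((\wedge_{i}x_{K,i}(1+T))^{\ast}\bigr)=s^{-1}\,\wedge_{i}x_{K,i}(1+T)$. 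Substituting back, $\bigl(\LT_{K_\infty/k,p}^{\Delta(1-j)}\bigr)^{-1}\bigl((Z^{p,j}_{K_\infty/k,\xx})^{\ast}\bigr)=(-1)^{r_k j}s^{-1}\bigl(\Phi_{K_\infty,p}^{1-j}\bigr)^{-1}\bigl(\wedge_{i}x_{K,i}(1+T)\bigr)$; multiplying through by the unit $(-1)^{r_k j}s$, which is exactly the scalar appearing in the statement, and comparing with the first two displays yields the claimed identity.

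The only genuine obstacle is this last evaluation: correctly tracking the involution $\iota_{R_{K_p}}$ — the source of the factor $(\sigma_{-1})^{r_k}$ — together with the $\#$‑twists built into $\eqref{Det_Hom}$ and $\eqref{RLpairing}$ and the precise normalisation of $\Tr_{K/\Q}$ used in the definition of $\langle\,,\,\rangle_{R_{K_p}}$. Everything else in the argument is formal manipulation of determinant modules and direct appeals to Propositions~\ref{prop:extra_p}, \ref{prop:ERL}, and \ref{prop: det B and Gauss sum}.
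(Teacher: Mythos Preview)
Your proposal is correct and follows essentially the same route as the paper: both apply Proposition~\ref{prop:extra_p} (using $D(x_{K,i}(1+T))=x_{K,i}(1+T)$) to reduce to $(\Phi_{K_\infty,p}^{1-j})^{-1}(\wedge_i x_{K,i}(1+T))$, then invoke Proposition~\ref{prop:ERL} and compute the Gram determinant $\det(\langle x_{K,a}(1+T),x_{K,b}(1+T)\rangle_{R_{K_p}})=(\sigma_{-1})^{r_k}A_{\xx}$ via the trace-form identity of Proposition~\ref{prop: det B and Gauss sum}, finishing with the observation $A_{\xx}^{\#}=A_{\xx}$. The only cosmetic difference is that the paper applies $\LT_{K_\infty/k,p}^{\Delta(1-j)}$ to the twisted element and then inverts, whereas you apply $(\LT_{K_\infty/k,p}^{\Delta(1-j)})^{-1}$ to the dual basis directly; the computations are the same.
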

 
 \begin{proof}
 By Proposition \ref{prop:extra_p}, 
 we have a commutative diagram
\[\xymatrix@C=36pt{
 \Det_{\Lambda}^{-1} (\Delta_{K_\infty, p} (j)) \ar[r]^-{\Twist_{p, j, 1-j}^{\xi}}_-{\sim} \ar[d]^-{\simeq}_-{\Phi_{K_\infty, p}^j}
&
\Det_{\Lambda}^{-1} (\Delta_{K_\infty, p} (1-j))  
\ar[d]^-{\simeq}_-{\Phi_{K_\infty, p}^{1-j}} \\
 \Det_{\Lambda} (R_{K_p}) \ar[r]^-{\wedge D^{2j-1}}_-{\sim}
 &
 \Det_{\Lambda} (R_{K_p}).
 }\]
Since $D (x (1+T)) = x (1+T)$ for any $x \in \OO_{K_p}$, we have 
\begin{equation}\label{d}
\Twist_{p, j, 1-j}^\xi (Z^{p,j}_{K_\infty / k, \xx}) = (\Phi_{K_\infty, p}^{1-j})^{-1} (\wedge_{i=1}^{r_k} x_{K, i} (1+T))
\end{equation}
in $\Det_{\Lambda}^{-1} (\Delta_{K_\infty, p} (1-j))$ by the definition of $Z^{p,j}_{K_\infty / k, \xx}$. 

Let us compute the image of $\wedge_{i=1}^{r_k} x_{K, i} (1+T)$ by the map \eqref{RLpairing}.
We have
\begin{align*}
& \det \parenth{\langle x_{K, a}(1+T), x_{K, b} (1+T) \rangle_{R_{K_p}}}_{1\leq a, b \leq r_k} \\
& \quad =
\det \parenth{
\sum_{\tau \in \Gal (K_\infty / k (\mu_{p^\infty}))} 
\Tr_{K / \Q} \parenth{ \langle x_{K, a}^{\tau^{-1}} (1+T) ,  x_{K, b} \sigma_{-1}(1+T) \rangle } \tau
}_{1\leq a, b \leq r_F} \\
&\quad =
(\sigma_{-1})^{r_k} A_{\xx},
\end{align*}
where we put 
\[
A_{\xx} := \det \parenth{ \sum_{\tau \in \Gal (K_\infty / k(\mu_{p^\infty}))} \Tr_{K/ \Q} ( x_{K, a}^{\tau^{-1}} x_{K,b})\tau}_{1 \leq a, b \leq r_k} \in \Lambda^\times.
\]
Explicitly, by Proposition \ref{prop: det B and Gauss sum}, we have 
 \[
 A_{\xx} =
 |D_k| \cdot (-1)^{r_\C (k)}
 \prod_{v \in S_{\ram} (K/k)_f} \rec_{k_v} (-1) 
\sum_{\chi \in \widehat{\GG_K}} 
N(\ff_\chi) e_\chi.
\]
Therefore, the map \eqref{RLpairing} sends $\wedge_{i=1}^{r_k} x_{K, i} (1+T)$ to
\[
(\sigma_{-1})^{r_k} A_{\xx}^\# \parenth{\wedge_{i=1}^{r_k} x_{K, i} (1+T)}^\ast
\in \Det_{\Lambda} (R_{K_p})^{\#, \vee}.
\]

Now Proposition \ref{prop:ERL} implies that 
\begin{align*}
\LT_{K_\infty / k, p}^{\Delta (1-j)} \circ \Twist_{p, j, 1-j}^\xi (Z^{p,j}_{K_\infty / k, \xx}) 
&\overset{\eqref{d}}{=} 
\LT_{K_\infty / k, p}^{\Delta (1-j)} \circ (\Phi_{K_\infty, p}^{1-j})^{-1} (\wedge_{i=1}^{r_k} x_{K, i} (1+T))
\\
 &=
(-1)^{r_kj} (\sigma_{-1})^{r_k} A_{\xx}^\# \cdot (\Phi_{K_\infty, p}^{j})^\ast \parenth{ (\wedge_{i=1}^{r_k} x_{K, i} (1+T))^\ast} 
\\
 &= (-1)^{r_kj}(\sigma_{-1})^{r_k} A_{\xx}^\# \cdot \parenth{ (\Phi_{K_\infty, p}^{j})^{-1} (\wedge_{i=1}^{r_k} x_{K,i} (1+T))}^\ast
 \\
 &=  (-1)^{r_kj}(\sigma_{-1})^{r_k} A_{\xx}^\# \cdot   (Z^{p,j}_{K_\infty / k, \xx} )^\ast
\end{align*}
in $\Det_{\Lambda}^{-1} (\Delta_{K_\infty, p} (j))^{\#, \vee}$. 
Noting that $A_{\xx} = A_{\xx}^\#$, we obtain the lemma.
\end{proof}

\subsubsection{Non-$p$-adic primes}

Let $v$ be a non-$p$-adic finite prime of $k$ in $S$.
We write 
\[
\LT_{K_\infty / k, v}^{\Delta (1-j)} : 
\Det_{\Lambda}^{-1} (\Delta_{K_\infty, v} (1-j)) \simeq \Det_{\Lambda}^{-1} (\Delta_{L_\infty} (j))^{\#, \vee}
= \Det_{\Lambda} (\Delta_{L_\infty} (j))^{\#}
\]
for the $v$-component of the isomorphism $\LT_{K_\infty / k, S}^{\Xi (1-j)}$. 
We use the same notations as in \S \ref{sec:non p adic prime}. 
\begin{lem}\label{lem:LT_l-adic} 
For $j \geq 1$, we have 
\[
\Twist_{v, j, 1-j} (\cH_{K_\infty /k, v}^j) =\parenth{1-e_{I_{K_\infty}, v} + e_{I_{K_\infty}, v} \cdot N(v)} \cdot 
 (\LT_{K_\infty / k, v}^{\Delta (1-j)})^{-1} \parenth{ (\cH_{K_\infty /k, v}^j)^\ast }
\]
in $\Det_{\Lambda}^{-1} (\Delta_{K_\infty, v} (1-j))$, 
where $(-)^\ast$ means the $\Lambda$-dual basis and $I_{K_\infty, v} \subset \GG_{K_\infty}$ is the inertia group of $v$. 
\end{lem}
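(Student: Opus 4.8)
The plan is to verify the asserted equality after applying $Q\otimes_\Lambda-$, where $Q$ is the total ring of fractions of $\Lambda$; this is legitimate since $\Det_\Lambda^{-1}(\Delta_{K_\infty,v}(1-j))$ is free of rank one over $\Lambda$ and $\Lambda$ embeds in $Q$. Because $v\nmid p$, all the complexes $\Delta_{K_\infty,v}(m)$ ($m\in\Z$) become acyclic after $\otimes_\Lambda Q$, so Proposition \ref{prop:det_ses3} provides canonical trivializations
\[
t_m\colon Q\otimes_\Lambda\Det_\Lambda^{-1}(\Delta_{K_\infty,v}(m))\xrightarrow{\ \sim\ }Q ,
\]
and by Proposition \ref{prop:h} (with the defining property of $\cH^m_{K_\infty/k,v}$, which makes sense for every $m\in\Z$) we have $t_m(\cH^m_{K_\infty/k,v})=h^m_{K_\infty/k,v}$. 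Write $e:=e_{I_{K_\infty},v}$, $s:=\sigma_{K_\infty/k,v}$, $u:=N(v)$, so that $h^m_{K_\infty/k,v}=(1-eu^{1-m}s)(1-eu^{-m}s)^{-1}$.

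Next I would transport both sides of the lemma to $Q$. The map $\Twist_{v,j,1-j}$ is $\twist_{2j-1}$-semilinear, and exactly as in the proof of Proposition \ref{prop:extra_l}—by naturality of the trivialization of the determinant of an acyclic complex—it is carried by $t_j$ and $t_{1-j}$ to $\twist_{2j-1}\colon Q\to Q$. Since $v\nmid p$ forces $\chi_{\cyc}$ to be trivial on $I_{K_\infty,v}$ and to send $s$ to $u$, one has $\twist_m(e)=e$ and $\twist_m(es)=u^m es$, hence $\twist_m(h^{j'}_{K_\infty/k,v})=h^{j'-m}_{K_\infty/k,v}$ for all integers $j'$; with $m=2j-1$ this gives $t_{1-j}\bigl(\Twist_{v,j,1-j}(\cH^j_{K_\infty/k,v})\bigr)=h^{1-j}_{K_\infty/k,v}$. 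On the other side, the dual basis $(\cH^j_{K_\infty/k,v})^\ast$ corresponds to $(h^j_{K_\infty/k,v})^{-1}$ under the trivialization of $Q\otimes_\Lambda\Det_\Lambda(\Delta_{K_\infty,v}(j))$ inverse to $t_j$. The isomorphism $\LT^{\Delta(1-j)}_{K_\infty/k,v}$ is the determinant of the local duality quasi-isomorphism $\alpha_{K_{\infty,v},\Z_p(1-j)}$ (the Iwasawa limit of \eqref{LTD_alpha}) read through \eqref{Det_Hom}; over $Q$ both source and target complexes are acyclic, so by the same naturality—together with the facts that the shift $[-2]$ is even and that \eqref{Det_Hom} is sign-free—$\LT^{\Delta(1-j)}_{K_\infty/k,v}$ is carried to the involution $x\mapsto x^\#$ of $Q$ (the target being read as $Q^\#$). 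Therefore $(\LT^{\Delta(1-j)}_{K_\infty/k,v})^{-1}\bigl((\cH^j_{K_\infty/k,v})^\ast\bigr)$ corresponds to $(h^j_{K_\infty/k,v})^{-\#}$, and the right-hand side of the lemma is carried by $t_{1-j}$ to $\bigl(1-e+eu\bigr)(h^j_{K_\infty/k,v})^{-\#}$.

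Comparing, the lemma reduces to the algebraic identity
\[
h^{1-j}_{K_\infty/k,v}\cdot\bigl(h^j_{K_\infty/k,v}\bigr)^\#=1-e+eu
\]
in $Q$. Its left-hand side equals $(1-eu^{j}s^{-1})(1-eu^{1-j}s)\bigl[(1-eu^{j-1}s^{-1})(1-eu^{-j}s)\bigr]^{-1}$; on the factor ring $(1-e)Q$ it is $1$, while on $eQ$ a one-line expansion shows that the numerator equals $u$ times the denominator, hence it is $u$ there, and altogether one gets $1-e+eu$. The step demanding the most care is the identification of $\LT^{\Delta(1-j)}_{K_\infty/k,v}$ over $Q$ with the canonical involution $x\mapsto x^\#$: one must check that passing to the dual complex, the $[-2]$-shift, and the isomorphism \eqref{Det_Hom} introduce neither a sign nor a residual unit, so that over $Q$ the duality map really is the canonical one. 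Everything else is either this formal naturality or the short computation just displayed.
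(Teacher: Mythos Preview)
Your proof is correct and follows essentially the same approach as the paper's: both pass to $Q$, identify $\Twist_{v,j,1-j}$ with $\twist_{2j-1}$ and $\LT^{\Delta(1-j)}_{K_\infty/k,v}$ with the involution $\#$ on the canonical trivializations, and then reduce to the algebraic identity $(h^j_{K_\infty/k,v})^\#\cdot h^{1-j}_{K_\infty/k,v}=1-e+eu$. (Your displayed expression for $h^{1-j}(h^j)^\#$ has $s$ and $s^{-1}$ swapped throughout---you actually wrote down $(h^{1-j})^\# h^j$---but since $1-e+eu$ is $\#$-invariant this is harmless and the computation goes through unchanged.)
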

\begin{proof}
We consider the commutative diagram
\[
\xymatrix@C=50pt{
Q \otimes \Det_{\Lambda} (\Delta_{K_\infty, v} (j))^\#  
\ar[r]^-{\eqref{canonical det}}_-{\simeq} \ar[d]_-{\simeq}^-{(\LT_{K_\infty / k, v}^{\Delta (1-j)})^{-1}}
&Q^\# \ar[r]^-{\#}
&Q \ar@{=}[d]
\\
Q \otimes \Det_{\Lambda}^{-1} (\Delta_{K_\infty, v} (1-j))
\ar[rr]^-{\eqref{canonical det}}_-{\simeq} 
& 
&Q 
\\
Q \otimes \Det_{\Lambda}^{-1} (\Delta_{K_\infty, v} (j))  \ar@{-->}[rru]
\ar[rr]^-{\eqref{canonical det}}_-{\simeq} \ar[u]^-{\simeq}_-{\Twist_{v, j, 1-j}}
& 
&Q \ar[u]_-{\twist_{2j-1}}^-{\simeq}, 
}
\]
where the lower vertical arrows are both $\twist_{2j-1}$-semilinear. 
By definition, the upper horizontal composite arrow sends $(\cH_{K_\infty /k, v}^j)^\ast$ to $(h_{K_\infty / k, v}^{j, \#})^{-1}$ and the dotted arrow sends $\cH_{K_\infty /k, v}^j$ to 
$\twist_{2j-1} ((h_{K_\infty / k, v}^{j}))= h_{K_\infty / k, v}^{1-j}$.  
Therefore,
\begin{align*}
\Twist_{v, j, 1-j} (\cH_{K_\infty /k, v}^j) &=  (h_{K_\infty / k, v}^j)^{\#} \cdot h_{K_\infty / k, v}^{1-j}
\cdot (\LT_{K_\infty / k, v}^{\Delta (1-j)})^{-1} \parenth{ (\cH_{K_\infty /k, v}^j)^\ast } \\
&= 
\parenth{1-e_{I_{K_\infty}, v} + e_{I_{K_\infty}, v} \cdot N(v)} \cdot 
 (\LT_{K_\infty / k, v}^{\Delta (1-j)})^{-1} \parenth{ (\cH_{K_\infty /k, v}^j)^\ast }
\end{align*}
in $\Det^{-1}_{\Lambda} (\Delta_{K_\infty, v} (1-j))$. 
\end{proof}

\begin{lem}\label{lem:unr_basis_2}
Assume that $K/k$ is unramified at $v$. 
Then, for $j \geq 1$,
we have 
\[\Twist_{v, j, 1-j} (\cE_{K_\infty /k, v}^j) =(\LT_{K_\infty / k, v}^{\Delta (1-j)})^{-1} \parenth{ (\cE_{K_\infty /k, v}^j)^\ast }
\] 
in  
$\Det_{\Lambda}^{-1} (\Delta_{K_\infty, v} (1-j))$. 
\end{lem}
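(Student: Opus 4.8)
The plan is to deduce this from Lemma~\ref{lem:LT_l-adic} by a direct scalar computation, in exactly the same spirit as the deduction of Corollary~\ref{cor:unr_basis} from Theorem~\ref{thm:det non p prime}. First I would note that since $v \nmid p$ the cyclotomic extension $k(\mu_{p^\infty})/k$ is unramified at $v$; combined with the hypothesis that $K/k$ is unramified at $v$, this forces $K_\infty/k$ to be unramified at $v$, so the inertia group $I_{K_\infty, v}$ is trivial and $e_{I_{K_\infty}, v} = 1$. Consequently the correction factor $1 - e_{I_{K_\infty}, v} + e_{I_{K_\infty}, v}\cdot N(v)$ appearing in Lemma~\ref{lem:LT_l-adic} collapses to $N(v)$, so that lemma reads $\Twist_{v, j, 1-j}(\cH_{K_\infty/k, v}^j) = N(v)\cdot (\LT_{K_\infty/k, v}^{\Delta(1-j)})^{-1}\big((\cH_{K_\infty/k, v}^j)^\ast\big)$.

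Next I would invoke the definition $\cE_{K_\infty/k, v}^j = -N(v)^{j-1}\sigma_{K_\infty/k, v}^{-1}\cdot \cH_{K_\infty/k, v}^j$ from the proof of Corollary~\ref{cor:unr_basis}, the $\twist_{2j-1}$-semilinearity of $\Twist_{v, j, 1-j}$, and the identity $\chi_{\cyc}(\sigma_{K_\infty/k, v}) = N(v)$, valid because $\sigma_{K_\infty/k, v}$ is an arithmetic Frobenius at a prime $v \nmid p$. This gives $\twist_{2j-1}(-N(v)^{j-1}\sigma_{K_\infty/k, v}^{-1}) = -N(v)^{-j}\sigma_{K_\infty/k, v}^{-1}$, whence, absorbing the factor $N(v)$ from the simplified Lemma~\ref{lem:LT_l-adic}, $\Twist_{v, j, 1-j}(\cE_{K_\infty/k, v}^j) = -N(v)^{1-j}\sigma_{K_\infty/k, v}^{-1}\cdot (\LT_{K_\infty/k, v}^{\Delta(1-j)})^{-1}\big((\cH_{K_\infty/k, v}^j)^\ast\big)$. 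Since $\LT_{K_\infty/k, v}^{\Delta(1-j)}$ is $\Lambda$-linear (it is induced by the local duality, which is $\Lambda$-linear together with the canonical identification~\eqref{Det_Hom}), it then suffices to check the purely formal identity $(\cE_{K_\infty/k, v}^j)^\ast = -N(v)^{1-j}\sigma_{K_\infty/k, v}^{-1}\cdot (\cH_{K_\infty/k, v}^j)^\ast$ in $\Det_\Lambda^{-1}(\Delta_{K_\infty, v}(j))^{\#, \vee}$. Writing $\cE^j = u\cH^j$ with $u = -N(v)^{j-1}\sigma_{K_\infty/k, v}^{-1} \in \Lambda^\times$, the dual basis transforms by $(u\cH^j)^\ast = (u^\#)^{-1}\cdot (\cH^j)^\ast$ in the $\#$-twisted $\Lambda$-dual, and $(u^\#)^{-1} = (-N(v)^{j-1}\sigma_{K_\infty/k, v})^{-1} = -N(v)^{1-j}\sigma_{K_\infty/k, v}^{-1}$, which closes the argument.

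The one point requiring genuine care, and the main source of possible sign or exponent slips, is the interaction of the three twisting operations in play: the $\twist_{2j-1}$-semilinearity of $\Twist_{v, j, 1-j}$, the behaviour of the arithmetic Frobenius under $\chi_{\cyc}$, and the $\#$-twist built into the target $\Det_\Lambda^{-1}(\Delta_{K_\infty, v}(j))^{\#, \vee}$ of $\LT_{K_\infty/k, v}^{\Delta(1-j)}$. One must verify that the factor $N(v)^{j-1}$ inverted by $\twist_{2j-1}$, the correction $N(v)$ coming from the unramified case of Lemma~\ref{lem:LT_l-adic}, and the factor $N(v)^{1-j}$ produced by dualizing the scalar $u$ all combine to $1$; this is precisely why the modified basis $\cE$, rather than $\cH$, is the one compatible with twisting, but it is bookkeeping rather than a conceptual obstacle.
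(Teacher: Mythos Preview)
Your proposal is correct and follows essentially the same route as the paper's proof: use the definition $\cE_{K_\infty/k,v}^j = -N(v)^{j-1}\sigma_{K_\infty/k,v}^{-1}\cdot\cH_{K_\infty/k,v}^j$, apply the $\twist_{2j-1}$-semilinearity of $\Twist_{v,j,1-j}$, invoke Lemma~\ref{lem:LT_l-adic} (with the correction factor collapsing to $N(v)$ in the unramified case), and then match up the scalar on the dual side. The paper's proof is terser and leaves the simplification $1 - e_{I_{K_\infty,v}} + e_{I_{K_\infty,v}}N(v) = N(v)$ implicit, but the chain of equalities is identical to yours.
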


\begin{proof}
By definition (see the proof of Corollary \ref{cor:unr_basis}), we have 
\begin{align*}
\Twist_{v, j, 1-j} (\cE_{K_\infty /k, v}^j) 
&=
 \Twist_{v, j, 1-j} (- N(v)^{j-1} \sigma_{K_\infty / k, v}^{-1} \cdot \cH_{K_\infty /k, v}^j)   
\\
&= - N(v)^{-j} \sigma_{K_\infty / k, v}^{-1} \cdot  \Twist_{v, j, 1-j} (\cH_{K_\infty /k, v}^j)  
\\
&= - N(v)^{-j+1} \sigma_{K_\infty / k, v}^{-1} 
(\LT_{K_\infty / k, v}^{\Delta (1-j)})^{-1} \parenth{ (\cH_{K_\infty /k, v}^j)^\ast }
\\
&= (\LT_{K_\infty / k, v}^{\Delta (1-j)})^{-1} \parenth{ (\cE_{K_\infty /k, v}^j)^\ast }
\end{align*}
where the third equality follows from Lemma \ref{lem:LT_l-adic}. 
\end{proof}

\subsubsection{Putting all together}\label{ss:extra}

Now we complete the proof of Proposition \ref{prop:extra_Z}.
As already discussed, it remains to show claim (iii). 

For $j \in \Z_{\geq 1}$, we decompose the element $Z_{K_\infty/k , S}^{\loc, j} \in \Xi_{K_\infty / k, S}^{\loc} (j)$ in Definition \ref{def:Z} as $Z_{K_\infty/k , S}^{\loc, j} = A_j \cdot (Z^{\fin}_j \otimes E_j)$, where
\[
A_j :=  D_{j-1}(K_\infty / k) \prod_{v \in S_{\ram} (K/k)_f} \ff_{j-1} (K_\infty / k)_v \in \Lambda^\times,
\]
\[
Z_j^{\fin} := Z_{K_\infty /k, \xx}^{p, j} \otimes \bigotimes_{v \in S_{\ram} (K/k)_f} \cH_{K_{\infty}/k, v}^{ j}
\otimes \bigotimes_{v \in S_f \setminus (S_{\ram} (K/k)_f \cup S_p) } \cE_{K_{\infty}/k, v}^{ j} 
 \in \bigotimes_{v \in S_f} \Det_{\Lambda}^{-1} (\Delta_{K_\infty, v} (j)),
\]
and
\[
E_j := (\wedge_{i=1}^{r_k} e_{K_\infty, i}^j )^\ast \in \Det_{\Lambda}^{-1} (X_{K_\infty} (j)).
\]

By Definition \ref{Z<1}, claim (iii) can be restated as
\[
\Twist_{j, 1-j}^{\loc} (A_j \cdot (Z^{\fin}_j \otimes E_j)) 
=  (-1)^{(j-1)r_\R (k) + r_\C (k)} \cdot c_{K_\infty / k}^\infty  \cdot 
(\LT_{K_\infty / k, S}^{\Xi(1-j)})^{-1} \parenth{ (A_j \cdot (Z^{\fin}_j \otimes E_j) )^\ast}. 
\]
We can check this as follows. 
We write 
$\Twist_{j, 1-j}^{\loc} = \Twist_{\fin, j, 1-j}^{\xi} \otimes \Twist_{X_{K_\infty},j, 1-j}^{\xi}$, 
where $\Twist_{\fin, j, 1-j}^{\xi} $ denotes the finite prime components and 
$\Twist_{X_{K_\infty},j, 1-j}^{\xi}$ the $X_{K_\infty} (j)$-component. 
We also write 
\[
\LT_{K_\infty / k}^{\fin} : 
\bigotimes_{v \in S_f} \Det_{\Lambda}^{-1} (\Delta_{K_\infty, v} (1-j)) 
\xrightarrow{\sim}
\bigotimes_{v \in S_f} \Det_{\Lambda}^{-1} (\Delta_{K_\infty, v} (j))^{\#, \vee}
\]
for the finite prime components of the isomorphism $\LT_{K_\infty / k, S}^{\Xi(1-j)}$. 
Then, by Lemmas \ref{lem:compute}, \ref{lem:LT_l-adic} and \ref{lem:unr_basis_2}, 
we have
\begin{align*}\label{finite_part}
\Twist_{\fin, j, 1-j}^{\xi}  (Z_j^{\fin}) 
&=
(-1)^{jr_k} \cdot (\sigma_{-1})^{r_k} \cdot
|D_k| \cdot (-1)^{r_\C (k)}
 \prod_{v \in S_{\ram} (K/k)_f} \rec_{k_v} (-1) 
\sum_{\chi \in \widehat{\GG_K}} 
N(\ff_\chi) e_\chi
\\
&\times \prod_{ v \in S_{\ram} (K/k)_f} \parenth{1-e_{I_{K_\infty, v}} + e_{I_{K_\infty ,v}} N(v) } 
(\LT_{K_\infty / k}^{\fin})^{-1} ((Z_j^{\fin})^\ast). 
\end{align*}
Furthermore, we have $\Twist_{X_{K_\infty},j, 1-j}^{\xi} (E_j) = E_{1-j}$
 and 
\begin{align*}
\twist_{2j-1} (A_j) \cdot A_j^\# = |D_k|^{-1} \cdot \prod_{ v \in S_{\ram} (K/k)_f} \parenth{1-e_{I_{K_\infty, v}} + 
e_{I_{K_\infty,v}} N(v)^{-1} } 
\sum_{\chi \in \widehat{\GG_K}}N(\ff_\chi)^{-1} e_\chi. 
\end{align*}
Combining these, we obtain
\begin{align*}
&\Twist_{j, 1-j}^{\loc} (A_j \cdot (Z^{\fin}_j \otimes E_j))
= \twist_{2j-1} (A_j) \cdot \Twist_{j, 1-j}^{\loc} ( Z^{\fin}_j \otimes E_j) 
\\
&= \parenth{ \twist_{2j-1} (A_j) \cdot A_j^\#}  A_j^{\#, -1} \cdot 
\parenth{\Twist_{\fin, j, 1-j}^{\xi}  (Z_j^{\fin}) \otimes \Twist_{X_{K_\infty},j, 1-j}^{\xi} (E_j) }
\\
&=  (-1)^{j r_k + r_\C (k)} (\sigma_{-1})^{r_k}  \prod_{v \in S_{\ram} (K/k)_f} \rec_{k_v} (-1) \cdot 
 A_j^{\#, -1} \cdot 
\parenth{
(\LT_{K_\infty / k}^{\fin})^{-1} ((Z_j^{\fin})^\ast) \otimes E_{1-j} }. 
\end{align*}
On the other hand, we have
\begin{align*}
& (\LT_{K_\infty / k, S}^{\Xi(1-j)} )^{-1} \parenth{ (A_j \cdot (Z^{\fin}_j \otimes E_j))^\ast} 
= 
A_j^{\#, -1} \cdot  (\LT_{K_\infty / k, S}^{\Xi(1-j)} )^{-1} \parenth{(Z^{\fin}_j \otimes E_j)^\ast}  
\\
&\quad= 
A_j^{\#, -1} \cdot  (\LT_{K_\infty / k, S}^{\Xi(1-j)} )^{-1} \parenth{ E_j^\ast \otimes (Z^{\fin}_j)^\ast}  
= 
A_j^{\#, -1} \cdot \parenth{ E_{1-j} \otimes (\LT_{K_\infty / k}^{\fin})^{-1} ((Z_j^{\fin})^\ast) }
\\
&\quad= 
(-1)^{r_k} \cdot A_j^{\#, -1} \cdot \parenth{ (\LT_{K_\infty / k}^{\fin})^{-1} ((Z_j^{\fin})^\ast)  \otimes  E_{1-j}}.
\end{align*}
Here, the sign $(-1)^{r_k}$ appears 
since the grade of $\otimes_{v \in S_f} \Det_{\Lambda}^{-1} (\Delta_{K_\infty, v} (1-j))$ is $r_k$ and that of $\Det_{\Lambda}^{-1} (X_{K_\infty} (1-j))$ is $-r_k$.

It remains to check that
$(\sigma_{-1})^{r_k}  \prod_{v \in S_{\ram} (K/k)_f} \rec_{k_v} (-1) = c_{K_\infty / k}^\infty. $
We consider the commutative diagram 
\[\xymatrix{
k_v^\times \ar[r]^-{\rec_{k_v}} \ar[d]_-{N_{k_v / \Q_p}} 
&
\Gal (K_w (\mu_{p^\infty}) / k_v) \ar@{^{(}->}[r] \ar[d]^-{\res}
&
\Gal (K_\infty / k) 
\\
\Q_p^\times \ar[r]^-{\rec_{\Q_p}} 
&
\Gal (\Q_p (\mu_{p^\infty}) / \Q_p) \ar[r]_-{\sim}^-{f}
& 
\Gal (K_\infty / K) \ar@{^{(}->}[u]
}
\]
for $v \in S_p $ and a prime $w$ of $K$ lying above $v$. 
Here, $\rec_{\Q_p}$ and $\rec_{k_v}$ denote the reciprocity maps, $N_{k_v / \Q_p}$ the norm map, and $f$ the inverse of $\Gal (K_\infty / K) \simeq \Gal (K_w (\mu_{p^\infty})/ K_w) 
\overset{\res}{\xrightarrow{\sim}} \Gal (\Q_p (\mu_{p^\infty}) / \Q_p) $. 
The composite map of the bottom arrows sends $-1$ to $\sigma_{-1}$. 
Therefore, we have
\[
(\sigma_{-1})^{r_k} =  \prod_{v \in S_p } f \circ \rec_{\Q_p} (N_{k_v / \Q_p} (-1))
= \prod_{v \in S_p} \rec_{k_v} (-1). 
\]
From this and the reciprocity law of the global class field theory, we deduce that 
\[
(\sigma_{-1})^{r_k}  \prod_{v \in S_{\ram} (K/k)_f} \rec_{k_v} (-1) =
\prod_{v \in S_{\ram} (K_\infty /k)_f} \rec_{k_v} (-1) = c_{K_\infty / k}^\infty. 
\]
This completes the proof of Proposition \ref{prop:extra_Z}. 
\qed

\appendix

\section{Determinant modules}\label{App:det}

In this section, we review the theory of the determinant modules, following Knudsen--Mumford \cite{KM76}.

\subsection{Definitions}\label{App:det_defn}

\subsubsection{Category of graded invertible modules}\label{App.ev}

(See \cite[page 20]{KM76}.)
Let $R$ be a commutative ring and $\mathcal{P}_R$ be the category of graded invertible $R$-modules.
Objects in $\mathcal{P}_R$ are pairs $(L, r)$ where $L$ is an invertible $R$-module and $r : \Spec(R) \to \Z$ is a
locally constant function.
For two objects $(L,r)$ and $(M,s)$ in $\mathcal{P}_R$, the morphisms from $(L,r)$ to $(M,s)$ are 
isomorphisms $L \xrightarrow{\sim} M$ as $R$-module if $r = s$, and there are no morphisms otherwise. 

The category $\mathcal{P}_R$ is equipped with the tensor product defined by $( L , r ) \otimes_R (M,s) := (L \otimes_R M, r+ s)$.
The associativity holds naturally, and the unit object is ${\trivial}_R := (R, 0)$.
The commutativity holds by the isomorphism
\begin{equation}\label{switch}
\psi: (L,r) \otimes_{R} (M,s) \xrightarrow{\sim} (M,s) \otimes_{R} (L,r),
\quad
l \otimes m \mapsto (-1)^{rs} m \otimes l
\end{equation}
(the last formula holds Zariski locally). For any objects $( L , r )$ and $(M,s)$ in $\mathcal{P}_R$, we always identify $(L,r) \otimes_{R} (M,s)$ with $(M,s) \otimes_{R} (L,r)$ via the above isomorphism $\psi$.

For each object $(L,r)$ of $\mathcal{P}_R$, we define
 $(L,r)^{-1} :=(L^\ast,-r)$ with $L^\ast := \Hom_R(L,R)$. We can regard the object $(L,r)^{-1}$ as a right inverse of $(L,r)$ by the evaluation map
 \[
 \ev_{(L, r)} : (L,r) \otimes_R (L,r)^{-1} \overset{\sim}{\to} (R, 0) = {\trivial}_R,
 \quad
 x \otimes f \mapsto f(x). 
 \]
The object $(L,r)^{-1}$ also can be regarded as a left inverse of $(L,r)$ by the isomorphism
 \[
(L, r)^{-1} \otimes_R (L, r) 
\overset{\psi}{\to} 
(L, r) \otimes_R (L, r)^{-1}
\overset{\ev_{(L, r)}}{\to} {\trivial}_R.
 \]
 Note that this left inverse differs from 
\[
\ev_{(L, r)^{-1}} : (L,r)^{-1} \otimes_R (L,r) \overset{\sim}{\to} {\trivial}_R,
 \quad
 f \otimes x \mapsto f(x)
\]
by the sign $(-1)^r$ when we identify $((L, r)^{-1})^{-1} \simeq (L, r)$. The construction of the determinant functors below depends on this choice of the inverse (\cite[page 30]{KM76}).

Throughout this paper, we often write $\ev$ for the evaluation map $ \ev_{(L, r)}$ 
for simplicity. 
When we change the order (i.e., use the isomorphism $\psi$) before applying the evaluation map, 
we always write it explicitly.

\begin{lem}[{cf.~\cite[page 31]{KM76}}]\label{lem:prodinv}
Let $(L, r)$ and $(M, s)$ be objects of $\PP_R$.
The natural isomorphism $(L \otimes M)^* \simeq M^* \otimes L^*$ yields an isomorphism
\[
\theta: ((L, r) \otimes (M, s))^{-1} \simeq (M, s)^{-1} \otimes (L, r)^{-1},
\]
which gives us a commutative diagram
\[
\xymatrix{
(L, r) \otimes (M, s) \otimes (M, s)^{-1} \otimes (L, r)^{-1} \ar[r]^-{\id \otimes \theta}_-{\sim} \ar[d]^-{\sim}_-{\ev_{(M, s)}}
 & ((L, r) \otimes (M, s)) \otimes ((L, r) \otimes (M, s))^{-1}
 \ar[d]_-{\sim}^-{\ev_{(L, r) \otimes (M, s)}}
\\
(L, r) \otimes (L, r)^{-1} \ar[r]^-{\sim}_-{\ev_{(L, r)}}
&
{\trivial}_R. 
}
\]
\end{lem}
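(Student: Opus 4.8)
The plan is to reduce the assertion to a rank-one, Zariski-local bookkeeping of signs. For any $R$-module $N$ the natural map $N \to \prod_{\mathfrak m} N_{\mathfrak m}$ into the product over the maximal ideals is injective, so two morphisms in $\PP_R$ between fixed objects coincide as soon as they do after localisation; and an $R$-linear map between invertible modules is an isomorphism iff it is locally so. Hence it suffices to treat the case in which $L$ and $M$ are free of rank one: fix local generators $\ell \in L$ and $m \in M$, let $\ell^\ast \in L^\ast$, $m^\ast \in M^\ast$ be the dual generators, and regard the locally constant functions $r,s$ as constants. In this situation every object in sight is canonically $\cong R$, and the only real content is that the Koszul signs attached to \eqref{switch} and to the chosen inverses do not interfere.

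First I would make $\theta$ explicit. The natural isomorphism $(L \otimes_R M)^\ast \cong M^\ast \otimes_R L^\ast$ is the composite of the tensor–hom adjunction $\Hom_R(L \otimes M, R) \cong \Hom_R\bigl(M, \Hom_R(L, R)\bigr)$ with the identification $\Hom_R(M, L^\ast) \cong M^\ast \otimes_R L^\ast$ (valid since $M$ is finite projective); on the dual basis it sends $(\ell \otimes m)^\ast$ to $m^\ast \otimes \ell^\ast$. Comparing grades, $-(r+s) = (-s)+(-r)$, so this defines a morphism $\theta \colon \bigl((L,r)\otimes(M,s)\bigr)^{-1} \xrightarrow{\sim} (M,s)^{-1} \otimes (L,r)^{-1}$ in $\PP_R$; crucially no sign is introduced here, precisely because the target carries the contravariant order $M^\ast \otimes L^\ast$. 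With this in hand the top edge of the square is $\id_{(L,r)\otimes(M,s)} \otimes \theta^{-1}$ acting on the last two tensor factors.

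The proof is then a one-line chase of the generator $\ell \otimes m \otimes m^\ast \otimes \ell^\ast$. Down the left-hand column, $\ev_{(M,s)}$ pairs the second and third factors, $m \otimes m^\ast \mapsto 1$, the grade-zero factor $\trivial_R$ drops out with no sign, and $\ev_{(L,r)}$ then sends the remaining $\ell \otimes \ell^\ast$ to $1$. Along the top and down the right-hand column, $\id \otimes \theta^{-1}$ produces $(\ell \otimes m) \otimes (\ell \otimes m)^\ast$, which $\ev_{(L,r)\otimes(M,s)}$ evaluates to $1$. The two routes agree on a generator, hence after localisation at every maximal ideal, so the square commutes. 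The one point that genuinely requires care — and the only obstacle I anticipate — is confirming that all three evaluation maps occurring in the diagram are the ``unswitched'' ones $\ev_{(-)}$, not the left inverses obtained by first applying $\psi$, and that collapsing $(L,r)\otimes\trivial_R\otimes(L,r)^{-1}$ to $(L,r)\otimes(L,r)^{-1}$ costs no Koszul sign; both hold because $\trivial_R$ has grade $0$, but this is exactly the conventions bookkeeping of \cite[page 31]{KM76}, against which I would cross-check.
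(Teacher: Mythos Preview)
Your argument is correct. The paper itself does not supply a proof of this lemma; it is stated with a bare citation to \cite[page~31]{KM76} and no proof environment, so there is nothing substantive to compare your approach against. Your reduction to the Zariski-local rank-one case followed by the explicit generator chase is exactly the kind of direct verification that justifies the lemma, and you correctly identify the only genuine subtlety: that all three evaluations are the unswitched $\ev_{(-)}$ and that collapsing a grade-zero $\trivial_R$ factor costs no Koszul sign.

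One small remark on notation: the paper labels the top arrow ``$\id \otimes \theta$'', but as you correctly observe, the arrow actually goes from the side containing $(M,s)^{-1} \otimes (L,r)^{-1}$ to the side containing $((L,r)\otimes(M,s))^{-1}$, so it is $\id \otimes \theta^{-1}$ in your convention. This is harmless --- the label is just indicating which isomorphism is in play --- and your chase is unaffected.
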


\subsubsection{Determinants of modules}

(See \cite[pages 20--21]{KM76}.)
For a finitely generated projective $R$-module $P$, writing $\rank_R(P)$ for its (locally constant) rank, we define its determinant module by
\[
\Det_R(P) := \left( \bigwedge^{\rank_R(P)}_R P , \rank_R(P) \right) \in \mathcal{P}_R.
\]
We write $\Det_R^{-1}(P) := \Det_R(P)^{-1}$ for the inverse of $\Det_R(P)$ in $\mathcal{P}_R$.  

\begin{prop}\label{prop:det_ses}
Let $0 \to P_1 \xrightarrow{f} P_2 \xrightarrow{g} P_3 \to 0$ be an exact sequence of finitely generated projective $R$-modules.
Set $r_i = \rank_R(P_i)$ for $i = 1, 2, 3$.
Then we have a canonical isomorphism
\[
 \Det_R(P_1) \otimes_R \Det_R(P_3)  \simeq \Det_R(P_2) 
 \]
defined locally by
 \begin{align*}
 x_1 \wedge \cdots \wedge x_{r_1} \otimes 
 z_{1} \wedge \cdots \wedge z_{r_3}  
\mapsto 
f(x_1) \wedge \cdots \wedge f(x_{r_1}) \wedge  
 \widetilde{z_{1}} \wedge \cdots \wedge \widetilde{z_{r_3}},
\end{align*}
where $ x_1 , \dots , x_{r_1} \in P_1$, $ z_{1} , \dots , z_{ r_3} \in P_3$, and $\wtil{z_1}, \dots, \wtil{z_{r_3}} \in P_2$ are lifts of $z_1, \dots, z_{r_3}$ with respect to $g$.
\end{prop}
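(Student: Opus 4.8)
\textbf{Proof proposal for Proposition~\ref{prop:det_ses}.}

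The plan is to reduce to the local case and then verify that the prescribed formula is well-defined and functorial. First I would note that the statement is local on $\Spec(R)$: determinant modules commute with localization, and an isomorphism of invertible modules can be checked after localizing at each prime (or, since all the modules involved are finitely generated projective, after passing to a faithfully flat cover trivializing all three $P_i$ simultaneously). So I may assume $R$ is local, hence that $P_1, P_2, P_3$ are free of ranks $r_1, r_2, r_3$ with $r_2 = r_1 + r_3$, and that the sequence $0 \to P_1 \xrightarrow{f} P_2 \xrightarrow{g} P_3 \to 0$ splits. Choose a splitting $s : P_3 \to P_2$ of $g$; then $P_2 = f(P_1) \oplus s(P_3)$ as $R$-modules.

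Next I would write down the candidate map explicitly using the splitting: send $x_1 \wedge \cdots \wedge x_{r_1} \otimes z_1 \wedge \cdots \wedge z_{r_3}$ to $f(x_1) \wedge \cdots \wedge f(x_{r_1}) \wedge s(z_1) \wedge \cdots \wedge s(z_{r_3}) \in \bigwedge^{r_2}_R P_2$. Because $P_2 = f(P_1) \oplus s(P_3)$, the standard fact that $\bigwedge^{r_2}(A \oplus B) \simeq \bigwedge^{r_1} A \otimes \bigwedge^{r_3} B$ when $\rank A = r_1$, $\rank B = r_3$ shows this is an isomorphism of $R$-modules; on the graded-degree component it sends $r_1 + r_3 = r_2$ to $r_2$, so it is a morphism in $\mathcal{P}_R$. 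The key point is then that this isomorphism does \emph{not} depend on the choice of splitting $s$: any two splittings differ by a homomorphism $P_3 \to f(P_1)$, i.e. $s'(z) = s(z) + f(h(z))$ for some $h : P_3 \to P_1$, and expanding $f(x_1) \wedge \cdots \wedge f(x_{r_1}) \wedge (s(z_1) + f(h(z_1))) \wedge \cdots \wedge (s(z_{r_3}) + f(h(z_{r_3})))$ multilinearly, every cross term contains at least $r_1 + 1$ vectors lying in the rank-$r_1$ submodule $f(P_1)$ and hence vanishes. This also shows the formula agrees with the coordinate-free description in the statement, since the lift $\widetilde{z_i}$ of $z_i$ along $g$ is exactly $s(z_i)$ plus an element of $f(P_1)$.

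Finally I would check that the independence just established makes the locally defined isomorphisms glue: on overlaps two local trivializations produce the same map because both compute the image of a given wedge-tensor by choosing \emph{some} local lift and the answer is insensitive to that choice (and to base change), so the maps patch to a global isomorphism $\Det_R(P_1) \otimes_R \Det_R(P_3) \simeq \Det_R(P_2)$ in $\mathcal{P}_R$. I would also record, for later use in the paper, that the construction is functorial in short exact sequences and compatible with the evaluation and switching isomorphisms of \S\ref{App.ev} in the evident way, though these compatibilities follow from the same cross-term-vanishing mechanism. The main obstacle is purely bookkeeping rather than conceptual: one must be careful that the sign conventions in the grading (the $(-1)^{rs}$ in \eqref{switch}) are consistent with the chosen order $f(x_1) \wedge \cdots \wedge f(x_{r_1}) \wedge \widetilde{z_1} \wedge \cdots \wedge \widetilde{z_{r_3}}$ — putting the $P_1$-part first rather than the $P_3$-part — since reversing the order would introduce a $(-1)^{r_1 r_3}$ discrepancy; matching Knudsen--Mumford's convention \cite[Theorem~1, p.~21]{KM76} here is the only place where real care is needed.
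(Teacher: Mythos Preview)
Your proof is correct and complete. The paper itself gives no proof of this proposition: it is stated as a standard fact from Knudsen--Mumford \cite[pages 20--21]{KM76}, with the explicit formula recorded for later use. Your argument supplies exactly the standard verification --- localize to reduce to the free case, check independence of the choice of lifts via the cross-term-vanishing argument, and glue --- and your closing remark about the sign convention matching \cite[Theorem~1, p.~21]{KM76} is the appropriate citation.
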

 
\subsubsection{Determinants of complexes}

(See \cite[pages 31--35]{KM76}.)
Let $C$ be a perfect complex of $R$-modules.
This means that $C$ is a bounded complex
\[
C= [ \cdots \to C^i \to C^{i+1} \to \cdots]
\]
such that each $C^i$ is a finitely generated projective $R$-module.
Then the determinant module of $C$ is defined by 
\[
 \Det_R(C) := \bigotimes_{i \in \Z} \Det_R^{(-1)^i} (C^i)  \in \mathcal{P}_R.
\]

Proposition \ref{prop:det_ses} implies the following.
 
 \begin{prop}\label{prop:det_ses2}
Let $0 \to C_1 \to C_2 \to C_3 \to 0$ be an exact sequence of perfect complexes over $R$.
Then we have a canonical isomorphism
\[
 \Det_R(C_1) \otimes_R \Det_R(C_3)  \simeq \Det_R(C_2).
 \]
\end{prop}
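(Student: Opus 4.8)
The plan is to deduce this from the module-level statement, Proposition~\ref{prop:det_ses}, applied in each degree, and then to reassemble the pieces using the symmetric monoidal structure of $\PP_R$.

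First I would note that an exact sequence of complexes $0 \to C_1 \to C_2 \to C_3 \to 0$ is by definition exact in each degree, so for every $i \in \Z$ we obtain an exact sequence $0 \to C_1^i \to C_2^i \to C_3^i \to 0$ of finitely generated projective $R$-modules. Proposition~\ref{prop:det_ses} then gives a canonical isomorphism $\Det_R(C_1^i) \otimes_R \Det_R(C_3^i) \xrightarrow{\sim} \Det_R(C_2^i)$ in $\PP_R$, and this involves no choices: replacing the chosen lifts $\widetilde{z}_1, \dots, \widetilde{z}_{r_3}$ by other lifts changes each $\widetilde{z}_m$ by an element of the image of $f \colon C_1^i \to C_2^i$, and such a change is annihilated once it is wedged behind $f(x_1) \wedge \cdots \wedge f(x_{r_1})$.

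Next I would pass to the alternating tensor products defining $\Det_R(C_\bullet)$. For even $i$ the isomorphism above is used as is; for odd $i$ I would apply the inverse functor $(-)^{-1}$ on $\PP_R$, using Lemma~\ref{lem:prodinv} to commute the inversion past the tensor product, so as to get $\Det_R^{-1}(C_1^i) \otimes_R \Det_R^{-1}(C_3^i) \xrightarrow{\sim} \Det_R^{-1}(C_2^i)$. Tensoring these isomorphisms over all $i$, and then using the graded-commutativity constraint $\psi$ of $\PP_R$ from \eqref{switch} to gather the $C_1$-factors and the $C_3$-factors separately, one rewrites the product as
\[
\bigotimes_{i \in \Z} \Det_R^{(-1)^i}(C_1^i) \otimes_R \bigotimes_{i \in \Z} \Det_R^{(-1)^i}(C_3^i) \xrightarrow{\sim} \bigotimes_{i \in \Z} \Det_R^{(-1)^i}(C_2^i),
\]
which is exactly the asserted isomorphism $\Det_R(C_1) \otimes_R \Det_R(C_3) \xrightarrow{\sim} \Det_R(C_2)$; the grades on both sides agree automatically because the exact sequence forces the Euler characteristic of $C_2$ to be the sum of those of $C_1$ and $C_3$.

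The only real content beyond Proposition~\ref{prop:det_ses} is the sign bookkeeping in the last step: each transposition used to separate the $C_1$- and $C_3$-contributions contributes a sign governed by the grading function, and one must verify that the total sign is independent of the order in which the rearrangement is performed and that it is compatible with the convention for the left inverse fixed at the end of \S\ref{App.ev}. This is precisely the verification carried out in \cite[pages~31--35]{KM76}, to which our situation reduces verbatim, so I would simply cite it rather than redo it here.
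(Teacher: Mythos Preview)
Your approach is essentially the same as the paper's: the paper simply states that Proposition~\ref{prop:det_ses} implies Proposition~\ref{prop:det_ses2} without further proof, and your argument spells out exactly this degreewise application followed by the rearrangement in $\PP_R$, with the sign bookkeeping deferred to \cite{KM76}. There is nothing to add.
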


We have canonical trivializations of the determinants of acyclic complexes.

\begin{prop}[{cf.~\cite[page 32--37]{KM76}}]\label{prop:det_ses3}
Let $C$ be an acyclic complex of $R$-modules. Then we have a canonical isomorphism
\[
\Det_{R} (C) \simeq {\trivial}_R. 
\]
\end{prop}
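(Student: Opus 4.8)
The plan is to peel the acyclic perfect complex $C$ into the short exact sequences of finitely generated projective modules supplied by its boundaries, apply Proposition \ref{prop:det_ses} to each of them, and telescope the resulting identifications in $\mathcal{P}_R$; the delicate point will be canonicity.

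First I would make explicit that, since $\Det_R(-)$ is only defined on perfect complexes, $C$ is tacitly a bounded complex $C = [\cdots \to C^i \xrightarrow{d^i} C^{i+1} \to \cdots]$ of finitely generated projective $R$-modules, say with $C^i = 0$ for $i \notin [a,b]$. Set $B^i := \Imag(d^{i-1})$; acyclicity means $B^i = \Ker(d^i)$, so for every $i$ there is a short exact sequence
\[
0 \to B^i \to C^i \xrightarrow{d^i} B^{i+1} \to 0,
\]
and $B^a = B^{b+1} = 0$. By descending induction starting from $i=b$ (where $C^b \cong B^b$), each $B^i$ is a direct summand of $C^i$, hence a finitely generated projective $R$-module; in particular each displayed sequence is a sequence of finitely generated projective modules, to which Proposition \ref{prop:det_ses} applies.

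Next I would invoke Proposition \ref{prop:det_ses} to obtain canonical isomorphisms $\Det_R(C^i) \simeq \Det_R(B^i) \otimes_R \Det_R(B^{i+1})$, raise the $i$-th one to the power $(-1)^i$, and tensor over $i$. Reindexing the two resulting products, the factor $\Det_R^{(-1)^i}(B^i)$ is matched with $\Det_R^{(-1)^{i-1}}(B^i) = \Det_R^{-(-1)^i}(B^i)$ for each $i$ with $a < i \le b$; using the symmetry $\psi$ of \eqref{switch} to bring these mutually inverse objects adjacent and then the evaluation map $\ev$ to cancel them, and noting that the surviving boundary factors involve $B^a = 0$ and $B^{b+1} = 0$ and hence equal ${\trivial}_R$, one is left with an isomorphism $\Det_R(C) \simeq {\trivial}_R$.

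The hard part is canonicity, i.e.\ that this isomorphism is independent of the auxiliary choices (the order of cancellation, the placement of parentheses) and is the ``expected'' one. Here the sign conventions conspire: the $(-1)^{rs}$ in $\psi$ and the sign $(-1)^r$ distinguishing $\ev_{(L,r)}$ from $\ev_{(L,r)^{-1}}$ (cf.~\cite[page~30]{KM76}) must be tracked, and the compatibilities one needs are precisely the associativity of $\otimes$ together with Lemma \ref{lem:prodinv}. I would establish canonicity as in Knudsen--Mumford: first verify directly the model case of a two-term complex $[P \xrightarrow{\sim} P]$ placed in two adjacent degrees, then reduce the general case to it by applying Proposition \ref{prop:det_ses2} to the filtration of $C$ by its brutal subcomplexes $\sigma_{\ge i}C$, which pins down the isomorphism uniquely. (Equivalently, once $\Det_R$ is known to extend to quasi-isomorphisms of perfect complexes, the canonical isomorphism is simply $\Det_R$ applied to the quasi-isomorphism $0 \to C$, and the computation above renders it explicit.)
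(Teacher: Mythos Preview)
The paper does not supply its own proof of this proposition; it simply records the statement with a citation to \cite[pages 32--37]{KM76}. Your proposal is correct and is precisely the standard Knudsen--Mumford argument that this citation points to: splitting the acyclic perfect complex into short exact sequences via the (projective) boundary modules, applying Proposition \ref{prop:det_ses} termwise, and telescoping, with canonicity pinned down either by reduction to the two-term case via brutal truncations or, equivalently, by functoriality under quasi-isomorphisms.
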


\begin{prop}[{cf.~\cite[page 43]{KM76}}]\label{prop:det_ses4}
Let $C$ be a perfect complex over $R$.
We assume that each $H^i(C)$ is also finitely generated projective over $R$.
Then we have a canonical isomorphism
\[
\Det_{R} (C) \simeq \bigotimes_{i  \in \Z} \Det_R^{(-1)^i} (H^i (C)).
\]
\end{prop}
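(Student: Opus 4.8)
The plan is to decompose $C$, up to isomorphism of complexes, as the sum of its cohomology modules placed in the correct degrees together with a direct sum of short acyclic complexes, and then to feed this decomposition into Propositions \ref{prop:det_ses}, \ref{prop:det_ses2} and \ref{prop:det_ses3}.

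Concretely, write $C = [\cdots \to C^i \xrightarrow{d^i} C^{i+1} \to \cdots]$ with $C^i = 0$ outside a finite interval, and set $B^i := \Imag(d^{i-1}) \subseteq C^i$ and $Z^i := \Ker(d^i) \subseteq C^i$, so $H^i(C) = Z^i/B^i$. First I would show, by descending induction on $i$ starting from the top nonzero degree (where $B^{i+1}=0$, hence $Z^i=C^i$), that each $Z^i$ and each $B^i$ is a finitely generated projective $R$-module: the exact sequence $0 \to Z^i \to C^i \xrightarrow{d^i} B^{i+1} \to 0$ splits once $B^{i+1}$ is projective, making $Z^i$ a direct summand of $C^i$; and the exact sequence $0 \to B^i \to Z^i \to H^i(C) \to 0$ splits because $H^i(C)$ is assumed projective, making $B^i$ a direct summand of $Z^i$. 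Choosing such splittings yields $C^i \cong B^i \oplus H^i(C) \oplus B^{i+1}$ with $d^i$ vanishing on $B^i \oplus H^i(C)$ and mapping the summand $B^{i+1}$ isomorphically onto $B^{i+1} \subseteq C^{i+1}$. Thus $C$ is isomorphic as a complex to $\bigl(\bigoplus_i H^i(C)[-i]\bigr) \oplus \bigoplus_i \bigl[\,B^{i+1} \xrightarrow{\id} B^{i+1}\,\bigr]$, where the second collection of summands is acyclic.

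Applying $\Det_R$ and using Proposition \ref{prop:det_ses2} for the split exact sequences that realize these direct sums, Proposition \ref{prop:det_ses3} to trivialize each acyclic factor, and the definitional identity $\Det_R(H^i(C)[-i]) = \Det_R^{(-1)^i}(H^i(C))$, one obtains the claimed isomorphism $\Det_R(C) \simeq \bigotimes_i \Det_R^{(-1)^i}(H^i(C))$.

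The subtle point, which I expect to be the main obstacle, is canonicity: the construction just sketched depends on the chosen splittings. To remove this dependence I would instead assemble the isomorphism directly from the \emph{canonical} isomorphisms of Proposition \ref{prop:det_ses} attached to the (not necessarily split) sequences $0 \to Z^i \to C^i \to B^{i+1} \to 0$ and $0 \to B^i \to Z^i \to H^i(C) \to 0$, giving $\Det_R(C^i) \simeq \Det_R(B^i) \otimes_R \Det_R(H^i(C)) \otimes_R \Det_R(B^{i+1})$; then reorganize $\bigotimes_i \Det_R^{(-1)^i}(C^i)$ so that, for each $i$, the factor $\Det_R^{(-1)^i}(B^{i+1})$ is matched with the inverse factor $\Det_R^{(-1)^{i+1}}(B^{i+1})$ coming from degree $i+1$ and cancelled by the evaluation map. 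The rearrangements introduce exactly the Koszul signs carried by the symmetry isomorphism $\psi$ of $\mathcal{P}_R$, and one must check that these are the signs already built into the definition of $\Det_R$ of a complex and that the resulting isomorphism is functorial in $C$; this bookkeeping is precisely what is carried out in \cite[pp.~31--43]{KM76}, which we may simply cite.
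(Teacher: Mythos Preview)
Your proposal is correct and in fact more detailed than what the paper does: the paper gives no proof at all for this proposition, simply citing \cite[page 43]{KM76}. Your outline---showing by descending induction that each $Z^i$ and $B^i$ is projective, then assembling the canonical isomorphism from the short exact sequences $0\to Z^i\to C^i\to B^{i+1}\to 0$ and $0\to B^i\to Z^i\to H^i(C)\to 0$ via Proposition~\ref{prop:det_ses}, cancelling the $B^{i+1}$ factors with the evaluation map---is exactly the standard argument, and is essentially what Knudsen--Mumford do on the pages you cite.
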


\subsubsection{Determinants in the derived category}

(See \cite[pages 37--43]{KM76}.)
Let $D^{\perf} (R)$ be the derived category of perfect complexes of $R$-modules.
The morphisms are defined so that any quasi-isomorphisms between complexes are actually isomorphisms.

We write $D^{\perf} (R)_{\isom}$ for the category whose objects are the same as $D^{\perf}(R)$ and the morphisms are restricted to isomorphisms.
Then the determinant can be extend to a functor 
\[ 
D^{\perf} (R)_{\isom} \to \mathcal{P}_R,
\quad
C \mapsto \Det_R(C)
\]
which is also called the determinant functor (\cite[page 42, Theorem 2]{KM76}). 

\subsubsection{Base changes}
 
Let $f : R \to R'$ be a ring homomorphism.
We have the base change functor 
\[
R' \otimes_R (-) : \mathcal{P}_{R} \to \mathcal{P}_{R'}, 
\quad
(L,r) \mapsto (R' \otimes_R L, r \circ f^* ),
\]
where $f^* : \Spec(R') \to \Spec(R)$ is induced by $f$.
For a finitely generated projective $R$-module $P$ 
(resp. a perfect complex $C$ of $R$-modules), 
we have a canonical isomorphism 
\[
R' \otimes_R \Det_{R} (P) \simeq \Det_{R'} (R' \otimes_R P) 
\quad
(\text{resp. }  
R' \otimes_R \Det_{R} (C) \simeq \Det_{R'} (R' \otimesL_R C) 
).
\]

\subsubsection{Linear duals}

For $(L, r) \in \mathcal{P}_R$, we define $(L, r)^\vee := (L^\ast , r)$ (recall $(L, r)^{-1} = (L^*, -r)$).
Then, for any finitely generated projective $R$-module $P$, 
we have an isomorphism
\[
\Det_R (\Hom_R(P, R)) \xrightarrow{\sim} \Det_R (P)^\vee
\]
defined by 
\[
f_1 \wedge \cdots \wedge f_r \mapsto \parenth{
x_1 \wedge \cdots \wedge x_r \mapsto \det (f_i(x_j))_{i, j}
},
\]
where $\det$ denotes the determinants of matrices. 
Similarly, for any $C \in D^{\perf} (R)$, we also have an isomorphism
\[
\Det_R (\RHom_{R} (C, R)) \simeq \Det_R (C)^\vee .
\]

\subsection{Propositions}\label{App:det_prop}

In this subsection, we introduce some propositions on the determinant modules, which are used in \S \ref{chi = 1 and j=1}, \S \ref{ss:interp_l}. For integers $a \leq b$, we write $D^{[a, b]}(R)$ for the subcategory of $D^{\perf} (R)$ that consists of complexes that are quasi-isomorphic to $[0 \to C^a \to C^{a+1} \to \dots \to C^b \to 0]$ with $C^a, C^{a+1}, \dots, C^b$ finitely generated projective.

We begin with an easy lemma. Let $k$ be a field. Suppose we have an exact sequence of $k$-vector spaces
\[
0 \to A \overset{f}{\to} A_1 \overset{g}{\to} A_2 \overset{h}{\to} A \to 0.
\]
Then we have a canonical isomorphism
\[
\Det_k(A_1) \simeq \Det_k(A_2),
\]
which is induced by Proposition \ref{prop:det_ses3}. 
More precisely, we have short exact sequences $0 \to A \overset{f}{\to} A_1 \overset{g}{\to} B \to 0$ and $0 \to B \to A_2 \overset{h}{\to} A \to 0$ with $B:=\mathrm{Im}(g)$. These induce the above isomorphism as 
\[
\Det_k(A_1) 
\simeq \Det_k(A) \otimes \Det_k(B) 
\overset{\psi}{\simeq} \Det_k(B) \otimes \Det_k(A) 
\simeq \Det_k(A_2).
\]
This isomorphism can be described explicitly as follows. 
\begin{lem}\label{lem:Det01}
In the above situation, set $r = \dim_k A$ and $r + s = \dim_k A_1 = \dim_k A_2$.
Let us take a basis $a_1, \dots, a_r, \wtil{b_1}, \dots, \wtil{b_s}$ of $A_1$ and a basis $\wtil{a_1}, \dots, \wtil{a_r}, b_1, \dots, b_s$  of $A_2$ such that $f \circ h (\wtil{a_i}) = a_i$ for $1 \leq i \leq r$ and $g(\wtil{b_j}) = b_j$ for $1 \leq j \leq s$.
Then the above isomorphism
\[
\Det_k(A_1) \simeq \Det_k(A_2)
\]
sends the basis $a_1 \wedge \dots \wedge a_r \wedge \wtil{b_1} \wedge \dots \wedge \wtil{b_s}$ to the basis $\wtil{a_1} \wedge \dots \wedge \wtil{a_r} \wedge b_1 \wedge \dots \wedge b_s$.
\end{lem}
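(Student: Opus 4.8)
The plan is to unwind, on the given bases, the three isomorphisms whose composite is the asserted $\Det_k(A_1)\simeq\Det_k(A_2)$: the inverse of the Proposition~\ref{prop:det_ses} isomorphism attached to $0\to A\xrightarrow{f}A_1\xrightarrow{g}B\to 0$, the commutativity isomorphism $\psi$ of~\eqref{switch}, and the Proposition~\ref{prop:det_ses} isomorphism attached to $0\to B\hookrightarrow A_2\xrightarrow{h}A\to 0$. First I would fix a basis $\alpha_1,\dots,\alpha_r$ of $A$ with $a_i=f(\alpha_i)$; then $h(\wtil{a_i})=\alpha_i$, the set $\{b_1,\dots,b_s\}$ is a basis of $B=\Imag(g)$, and the stated families are genuine bases of $A_1$ and $A_2$.

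Next I would evaluate each arrow on the relevant elements. By the explicit formula in Proposition~\ref{prop:det_ses}, the isomorphism $\Det_k(A)\otimes\Det_k(B)\xrightarrow{\sim}\Det_k(A_1)$ carries $(\alpha_1\wedge\cdots\wedge\alpha_r)\otimes(b_1\wedge\cdots\wedge b_s)$ to $f(\alpha_1)\wedge\cdots\wedge f(\alpha_r)\wedge\wtil{b_1}\wedge\cdots\wedge\wtil{b_s}=a_1\wedge\cdots\wedge a_r\wedge\wtil{b_1}\wedge\cdots\wedge\wtil{b_s}$, the $\wtil{b_j}$ serving as lifts of $b_j$ along $g$; hence its inverse sends $a_1\wedge\cdots\wedge a_r\wedge\wtil{b_1}\wedge\cdots\wedge\wtil{b_s}$ to $(\alpha_1\wedge\cdots\wedge\alpha_r)\otimes(b_1\wedge\cdots\wedge b_s)$. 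Applying $\psi$ introduces a factor $(-1)^{rs}$ and swaps the tensor factors, and then the Proposition~\ref{prop:det_ses} isomorphism $\Det_k(B)\otimes\Det_k(A)\xrightarrow{\sim}\Det_k(A_2)$ sends $(b_1\wedge\cdots\wedge b_s)\otimes(\alpha_1\wedge\cdots\wedge\alpha_r)$ to $b_1\wedge\cdots\wedge b_s\wedge\wtil{a_1}\wedge\cdots\wedge\wtil{a_r}$, using that each $b_j$ maps to itself under $B\hookrightarrow A_2$ and that $\wtil{a_i}$ is a lift of $\alpha_i$ along $h$. Thus the composite sends $a_1\wedge\cdots\wedge a_r\wedge\wtil{b_1}\wedge\cdots\wedge\wtil{b_s}$ to $(-1)^{rs}\,b_1\wedge\cdots\wedge b_s\wedge\wtil{a_1}\wedge\cdots\wedge\wtil{a_r}$.

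Finally I would reorder the exterior product in $\bigwedge^{r+s}A_2$: bringing the block $\wtil{a_1}\wedge\cdots\wedge\wtil{a_r}$ in front of $b_1\wedge\cdots\wedge b_s$ costs exactly $(-1)^{rs}$, so the image equals $\wtil{a_1}\wedge\cdots\wedge\wtil{a_r}\wedge b_1\wedge\cdots\wedge b_s$, as claimed. The one point that demands genuine care — and hence the step I would double-check most carefully — is the bookkeeping of the two factors $(-1)^{rs}$, one coming from $\psi$ and one from the reordering of the wedge; they cancel, which is exactly why the statement carries no sign. All the rest is routine substitution into Proposition~\ref{prop:det_ses} and the definition of $\psi$ in~\eqref{switch}.
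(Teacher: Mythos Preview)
Your proof is correct and follows essentially the same route as the paper's: both unwind the composite $\Det_k(A_1)\simeq\Det_k(A)\otimes\Det_k(B)\xrightarrow{\psi}\Det_k(B)\otimes\Det_k(A)\simeq\Det_k(A_2)$ on the given bases, track the sign $(-1)^{rs}$ introduced by $\psi$, and cancel it against the $(-1)^{rs}$ from reordering the wedge in $\bigwedge^{r+s}A_2$. The only cosmetic difference is that you name the basis of $A$ as $\alpha_i$ with $f(\alpha_i)=a_i$, whereas the paper works directly with $h(\wtil{a_i})$; since $f\circ h(\wtil{a_i})=a_i$ these are the same elements.
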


\begin{proof}
By the assumption, we see that $h(\wtil{a_1}), \dots, h(\wtil{a_r})$ is a basis of $A$ and $b_1, \dots, b_s$ is a basis of $B=\mathrm{Im}(g)$.
With respect to these basis, the image of $a_1 \wedge \dots \wedge a_r \wedge \wtil{b_1} \wedge \dots \wedge \wtil{b_s} \in \Det_k(A)$ under the isomorphism $\Det_k(A_1) \simeq \Det_k(A_2)$ are computed as
\begin{align*}
a_1 \wedge \dots \wedge a_r \wedge \wtil{b_1} \wedge \dots \wedge \wtil{b_s}
& \mapsto
(h(\wtil{a_1}) \wedge \dots \wedge h(\wtil{a_r})) \otimes (b_1 \wedge \dots \wedge b_s)\\
& \overset{\psi}{\mapsto}
(-1)^{rs} (b_1 \wedge \dots \wedge b_s) \otimes (h(\wtil{a_1}) \wedge \dots \wedge h(\wtil{a_r}))\\
& \mapsto 
(-1)^{rs} b_1 \wedge \dots \wedge b_s \wedge \wtil{a_1} \wedge \dots \wedge \wtil{a_r}\\
& = \wtil{a_1} \wedge \dots \wedge \wtil{a_r} \wedge b_1 \wedge \dots \wedge b_s.
\end{align*}
\end{proof}

To state Propositions \ref{prop:DetA} and \ref{prop:Det02} below, we now focus on a specific setting.
Let $R$ be a DVR with a uniformizer $\omega$ and let $k = R/(\omega)$ be its residue field.
For a finitely generated free $R$-module $M$, we write $\ol{M} = k \otimes_R M$.
Similarly, for a complex $C \in D^{\perf}(R)$, we write $\ol{C} = k \otimesL_R C \in D^{\perf}(k)$.

\begin{defn}\label{defn:Bock}
Let $C \in D^{[1, 2]}(R)$ be a complex with $H^{2}(C)$ annihilated by $\omega$, that is, 
$H^{2}(C)$ is a $k$-vector space.
Let us define the Bockstein map 
\[
\beta_{\omega}: \Det^{-1}_k (\ol{C}) \simeq \Det_k (\ol{H^1(C)})
\]
as follows.
By the exact triangle $C \overset{\omega \times}{\to} C \to \ol{C}$, we have an exact sequence
\begin{equation}\label{eq:Bock_seq}
0 \to \ol{H^1(C)} \to H^1(\ol{C}) \to H^{2}(C)[\omega] \to 0.
\end{equation}
Since we have $H^{2}(C)[\omega] = H^{2}(C) = H^{2}(\ol{C})$ by the assumption, this induces the map $\beta_{\omega}$ as follows:
\begin{align*}
\Det^{-1}_k (\ol{C}) 
& \simeq \Det_k(H^1(\ol{C})) \otimes \Det_k^{-1}(H^2(\ol{C}))\\
& \overset{\eqref{eq:Bock_seq}}{\simeq} \Det_k(\ol{H^1(C)}) \otimes \Det_k(H^2(C)[\omega]) \otimes \Det_k^{-1}(H^2(C)[\omega])\\
& \overset{\text{ev}}{\simeq} \Det_k (\ol{H^1(C)}).
\end{align*}
\end{defn}

\begin{prop}\label{prop:DetA}
Let $C$ be as in Definition \ref{defn:Bock}. Putting $s = \dim_k H^{2}(C)$, we have a commutative diagram
\[
\xymatrix{
	\Det_R^{-1} (C) \ar@{->>}[d] \ar[r]_-{\omega^{- s} \times}^-{\simeq}
	& \Det_R(H^1(C)) \ar@{->>}[d]\\
	\Det_k^{-1} (\ol{C}) \ar[r]^-{\simeq}_-{\beta_{\omega}}
	& \Det_k (\ol{H^1(C)}).
}
\]
Both the vertical arrows are the natural base-change maps and the upper isomorphism is obtained by identifying $\Det_R^{-1} (C) = \omega^{s} \cdot \Det_R (H^1(C))$, 
which results from identification of $\Det_R^{-1}(H^2(C))$ with the Fitting ideal $\Fitt_R(H^2(C)) = \omega^s R$ of $H^2(C)$.
\end{prop}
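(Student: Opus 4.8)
The plan is to reduce the statement to an explicit Smith‑normal‑form computation and then check the square on a single generator. First I would replace $C$ by a quasi‑isomorphic model concentrated in degrees $1,2$: since $C\in D^{[1,2]}(R)$ and $R$ is a DVR (so finitely generated projective $=$ free), write $C\simeq[C^1\xrightarrow{d}C^2]$ with $C^1,C^2$ free. Because $H^2(C)=\Coker d$ is killed by $\omega$, it is torsion, so its free part vanishes and its nonzero elementary divisors all equal $\omega$; choosing bases realizing the Smith normal form of $d$ gives a splitting $C^1=F_0\oplus F_1$ and bases $b_1,\dots,b_{m-s}$ of $F_0$, $e_1,\dots,e_s$ of $F_1$, $f_1,\dots,f_s$ of $C^2$ with $d(b_j)=0$, $d(e_i)=\omega f_i$, where $s=\dim_k H^2(C)=\rank C^2$, $F_0=\ker d=H^1(C)$, and, crucially, $\bar d=0$ since $d(C^1)\subseteq\omega C^2$. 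All four maps in the diagram are module homomorphisms between invertible modules (the vertical ones being surjections onto rank‑one modules), so it will suffice to chase the generator $\lambda:=(b_1\wedge\cdots\wedge b_{m-s}\wedge e_1\wedge\cdots\wedge e_s)\otimes(f_1\wedge\cdots\wedge f_s)^\ast$ of $\Det_R^{-1}(C)=\Det_R(C^1)\otimes\Det_R^{-1}(C^2)$.

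Next I would compute the three ``easy'' arrows. Over $Q=\Frac(R)$ the subcomplex $[F_1\xrightarrow{d}C^2]$ is acyclic with $\det(d|_{F_1})=\omega^s$, so the canonical isomorphism $\Det_Q^{-1}(C\otimes Q)\simeq\Det_Q(H^1(C)\otimes Q)$ (Proposition \ref{prop:det_ses4} together with Proposition \ref{prop:det_ses3} for the acyclic part) carries $\lambda$ to $\omega^s\,b_1\wedge\cdots\wedge b_{m-s}$; this is precisely the Fitting‑ideal identification $\Det_R^{-1}(C)=\omega^s\Det_R(H^1(C))$ via $\Det_R^{-1}(H^2(C))\simeq\Fitt_R(H^2(C))=\omega^sR$, so after the normalization $\omega^{-s}\times$ the top arrow sends $\lambda\mapsto b_1\wedge\cdots\wedge b_{m-s}$. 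The left base‑change arrow sends $\lambda$ to its reduction $(\bar b_1\wedge\cdots\wedge\bar b_{m-s}\wedge\bar e_1\wedge\cdots\wedge\bar e_s)\otimes(\bar f_1\wedge\cdots\wedge\bar f_s)^\ast$, and the right base‑change arrow sends $b_1\wedge\cdots\wedge b_{m-s}\mapsto\bar b_1\wedge\cdots\wedge\bar b_{m-s}$. Hence ``top then right'' carries $\lambda$ to $\bar b_1\wedge\cdots\wedge\bar b_{m-s}$.

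Then I would compute the Bockstein arrow. Since $\bar d=0$ we have $H^1(\ol C)=\ol{C^1}=\ol{F_0}\oplus\ol{F_1}$ and $H^2(\ol C)=\ol{C^2}$, and the connecting map $\partial\colon H^1(\ol C)\to H^2(C)$ of the triangle $C\xrightarrow{\omega}C\to\ol C$ is evaluated by lifting a cocycle to $C^1$, applying $d$, and dividing by $\omega$: this gives $\partial(\bar e_i)=\bar f_i$ and $\partial(\bar b_j)=0$, so the sequence \eqref{eq:Bock_seq} becomes $0\to\ol{F_0}\to\ol{F_0}\oplus\ol{F_1}\xrightarrow{\partial}\ol{C^2}\to0$ with each $\bar e_i$ a lift of $\bar f_i$. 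Unwinding Definition \ref{defn:Bock} — apply Proposition \ref{prop:det_ses} to this sequence and then the evaluation map — the map $\beta_\omega$ carries $(\bar b_\bullet\wedge\bar e_\bullet)\otimes\bar f_\bullet^\ast$ to $(\bar b_\bullet\otimes\bar f_\bullet)\otimes\bar f_\bullet^\ast$ and thence to $\bar b_1\wedge\cdots\wedge\bar b_{m-s}$. So ``left then bottom'' also carries $\lambda$ to $\bar b_1\wedge\cdots\wedge\bar b_{m-s}$, which proves commutativity. Finally I would remark that the conclusion is independent of the chosen model since every map in the square is defined canonically.

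The hard part will be purely the sign and ordering bookkeeping: tracking the signs introduced by the graded‑commutativity isomorphism $\psi$ and by the orderings of wedge factors in Proposition \ref{prop:det_ses}, so that the identifications $\Det_R^{-1}(C)=\Det_R(C^1)\otimes\Det_R^{-1}(C^2)$, $\Det_R(C^1)=\Det_R(F_0)\otimes\Det_R(F_1)$, the two exact sequences underlying $\beta_\omega$, and the sign of the snake connecting map $\partial$ are all applied with consistent conventions; and pinning down the $\omega^{\pm s}$ normalization through $\Det_R^{-1}(H^2(C))\simeq\Fitt_R(H^2(C))$ with the correct orientation. Conceptually everything here is just the compatibility of the determinant functor with the long exact cohomology sequence of $C\xrightarrow{\omega}C\to\ol C$, but because the rest of the paper is sign‑sensitive this bookkeeping needs to be done carefully rather than waved away.
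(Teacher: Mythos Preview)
Your approach is essentially identical to the paper's: choose a two-term free model, put $d$ in Smith normal form, and chase an explicit generator around the square. One small point to fix: your assertion $\rank_R C^2 = s$ does not follow from Smith normal form for an arbitrary model --- $d$ may also have unit elementary divisors --- so you should first split off the acyclic direct summand $[R^{m-s}\xrightarrow{\sim}R^{m-s}]$ (or simply declare you take a minimal model with $\bar d=0$); the paper instead keeps a general model and carries the extra basis vectors $\wtil{y_{s+1}},\dots,\wtil{y_m}$ and $y_{s+1},\dots,y_m$ through the computation, but after that reduction your $b_i,e_i,f_i$ are exactly the paper's $x_i,\wtil{\omega y_i},y_i$ (in the case $m=s$) and the two arguments coincide verbatim.
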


\begin{proof}
Since $C \in D^{[1, 2]}(R)$, we can take a quasi-isomorphism $C \simeq [C^1 \overset{g}{\to} C^{2}]$ with $C^1$, $C^2$ finitely generated free over $R$. 
Then we also have $\ol{C} \simeq [\ol{C^1} \overset{\ol{g}}{\to} \ol{C^2}]$ and we obtain exact sequences
\[
0 \to H^1(C) \to C^1 \overset{g}{\to} C^{2} \to H^{2}(C) \to 0
\]
and
\[
0 \to H^1(\ol{C}) \to \ol{C^1} \overset{\ol{g}}{\to} \ol{C^{2}} \to H^{2}(\ol{C}) \to 0.
\]

Putting $r = \rank_R H^1(C)$ and $m = \rank_R C^{2}$, we have $\rank_R C^1= r + m$.
Let $X$ be the image of $g: C^1 \to C^2$. We take
\begin{itemize}
\item
a basis $\{x_1, \dots, x_r\}$ of $H^1(C)$ over $R$,
\item
a basis $y_1, \dots, y_m$ of $C^{2}$ over $R$ such that $\omega y_1, \dots, \omega y_s, y_{s+1}, \dots, y_m$ is a basis of $X$, and
\item
$\wtil{\omega y_1}, \dots, \wtil{\omega y_s}, \wtil{y_{s+1}}, \dots, \wtil{y_m} \in C^1$ as lifts of the elements without tildes.
\end{itemize}
Then 
\[
x_1, \dots, x_r, \wtil{\omega y_1}, \dots, \wtil{\omega y_s}, \wtil{y_{s+1}}, \dots, \wtil{y_m}
\]
forms a basis of $C^1$ over $R$.
By construction, $H^1(\ol{C})$ has a $R$-basis
\[
\ol{x_1}, \dots, \ol{x_r}, \ol{\wtil{\omega y_1}}, \dots, \ol{\wtil{\omega y_s}} \in \ol{C^1}.
\]
Also, the classes of $\ol{y_1}, \dots, \ol{y_s} \in \ol{C_2}$ form a basis of $H^2(\ol{C})$.

The module $\Det^{-1}_R (C) = \Det_R(C^1) \otimes \Det_R^{-1}(C^{2})$ has a basis
\[
\xi := (x_1 \wedge \dots \wedge x_r \wedge \wtil{\omega y_1} \wedge \dots \wedge \wtil{\omega y_s} \wedge \wtil{y_{s+1}} \wedge \dots \wedge \wtil{y_m})
\otimes (y_1 \wedge \dots \wedge y_m)^{\ast}.
\]
Since $\xi$ is identified with
\[
\omega^{s } \cdot (x_1 \wedge \dots \wedge x_r)
\]
in $\omega^{s} \Det_R (H^1(C))$, the upper horizontal arrow sends $\xi$ to
\[
x_1 \wedge \dots \wedge x_r
\]
in $\Det_R H^1(C)$.

On the other hand, by the map $H^1(\ol{C}) \to H^{2}(C)[\omega] = H^{2}(\ol{C})$ induced by $C \overset{\omega}{\to} C \to \ol{C}$, the element $\ol{\wtil{\omega y_j}}$ is sent to the class of $\ol{y_j}$ in $H^{2}(\ol{C})$.
Therefore, by the definition of $\beta_{\omega}$, the element
\[
\ol{\xi} = (\ol{x_1} \wedge \dots \wedge \ol{x_r} \wedge \ol{\wtil{\omega y_1}} \wedge \dots \wedge \ol{\wtil{\omega y_s}} \wedge \ol{\wtil{y_{s+1}}} \wedge \dots \wedge \ol{\wtil{y_m}})
\otimes (\ol{y_1} \wedge \dots \wedge \ol{y_m})^\ast
\]
is sent by $\beta_{\omega}$ to 
$ \ol{x_1} \wedge \cdots \wedge \ol{x_r}$.
This shows the proposition.
\end{proof}

Based on the situation in \S \ref{chi = 1 and j=1}, we consider the following setting.

\begin{setting}\label{setting:A1}
Let $C$ be as in Definition \ref{defn:Bock}.
Suppose that we are given a surjective homomorphism $\ol{\varphi}: H^1(\ol{C}) \to H^2(\ol{C})$ such that the composite map
\[
\varphi: H^1(C) \to H^1(\ol{C}) \overset{\ol{\varphi}}{\to} H^2(\ol{C}) \simeq H^2(C)
\]
is still surjective.

Let $H^1_f(C)$ and $H^1_f(\ol{C})$ be the kernel of $\varphi$ and $\ol{\varphi}$ respectively, so we have a commutative diagram
\begin{equation}\label{eq:Det02_1}
\xymatrix{
	0 \ar[r]
	& H^1_f(C) \ar[r] \ar[d]
	& H^1(C) \ar[r]^{\varphi} \ar[d]
	& H^2(C) \ar[r] \ar[d]^{\simeq}
	& 0\\
	0 \ar[r]
	& H^1_f(\ol{C}) \ar[r]
	& H^1(\ol{C}) \ar[r]_{\ol{\varphi}}
	& H^2(\ol{C}) \ar[r]
	& 0.
}
\end{equation}
By the bottom exact sequence, together with Propositions \ref{prop:det_ses2} and \ref{prop:det_ses4}, we have 
\begin{align*}
\Det_R^{-1}(C) 
& \simeq  \Det_R(H^1(C))\otimes_{R}\Det_R^{-1}(H^2(C))\\
& \simeq \Det_R(H^1_f(C))\otimes_{R}\Det_R(H^2(C))\otimes_{R}\Det_R^{-1}(H^2(C))\\
 & \overset{\text{ev}}{\simeq}  \Det_R (H^1_f(C))
\end{align*}
and $\Det_k^{-1}(\ol{C}) \simeq \Det_k (H^1_f(\ol{C}))$ in a similar manner.

By applying the snake lemma to the multiplication-by-$\omega$ map to the upper sequence of \eqref{eq:Det02_1}, we obtain a diagram with exact rows
\begin{equation}\label{eq:Det02_2}
\xymatrix{
	0 \ar[r]
	& H^2(C) \ar[r]
	& \ol{H^1_f(C)} \ar[r] \ar[d]
	& \ol{H^1(C)} \ar[r]^{\ol{\varphi}} \ar[d]
	& H^2(C) \ar[r] \ar[d]^{\simeq}
	& 0\\
	& 0 \ar[r]
	& H^1_f(\ol{C}) \ar[r]
	& H^1(\ol{C}) \ar[r]_{\ol{\varphi}}
	& H^2(\ol{C}) \ar[r]
	& 0.
}
\end{equation}
Taking \eqref{eq:Bock_seq} into account, applying the snake lemma to the above diagram again, we see that the cokernel of the left vertical arrow is isomorphic to $H^2(C)$. We then obtain an exact sequence
\begin{equation}\label{eq:Det02_3}
0 \to H^2(C) \to \ol{H^1_f(C)} \to H^1_f(\ol{C}) \to H^2(C) \to 0.
\end{equation}
\end{setting}

\begin{prop}\label{prop:Det02}
In Setting \ref{setting:A1}, put $s = \dim_k H^2(C)$.
Then the diagram
\[
\xymatrix{
\Det_R^{-1}(C) \ar[r]^-{\simeq} \ar@{->>}[dd]&\Det_R (H^1_f(C))\ar@{->>}[d]\\
 &  \Det_k (\ol{H^1_f(C)}) \ar[d]_{\simeq}^{\alpha}\\
\Det_k^{-1}(\ol{C})\ar[r]_-{\simeq}&\Det_k (H^1_f(\ol{C}))
}
\]
is commutative up to sign $(-1)^s$.
Here, the horizontal isomorphisms are the ones constructed following \eqref{eq:Det02_1}, the isomorphism $\alpha$ is induced by \eqref{eq:Det02_3} and Lemma \ref{lem:Det01}, and the other vertical arrows are the natural ones.
\end{prop}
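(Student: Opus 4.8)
The plan is to reduce to an explicit computation with a concrete representative and adapted bases, running parallel to the proof of Proposition~\ref{prop:DetA}. Since $C \in D^{[1,2]}(R)$, fix a quasi-isomorphism $C \simeq [C^1 \xrightarrow{g} C^2]$ with $C^1, C^2$ finitely generated free over $R$, and let $X$ be the image of $g$. By the structure theory over the DVR $R$, choose an $R$-basis $y_1, \dots, y_m$ of $C^2$ such that $\omega y_1, \dots, \omega y_s, y_{s+1}, \dots, y_m$ is an $R$-basis of $X$, together with lifts $\wtil{\omega y_j}, \wtil{y_j} \in C^1$ of the corresponding elements; write $\bar y_j$ for the class of $y_j$ in $H^2(C) = C^2/X$, so $\bar y_1, \dots, \bar y_s$ is a $k$-basis of $H^2(C)$. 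Using that $\varphi \colon H^1(C) \to H^2(C)$ is surjective, choose an $R$-basis $x_1, \dots, x_r$ of $H^1(C) = \ker g$ with $\varphi(x_i) = \bar y_i$ for $i \le s$ and $\varphi(x_i) = 0$ for $i > s$; then $\omega x_1, \dots, \omega x_s, x_{s+1}, \dots, x_r$ is an $R$-basis of $H^1_f(C)$ and $x_1, \dots, x_r, \wtil{\omega y_1}, \dots, \wtil{\omega y_s}, \wtil{y_{s+1}}, \dots, \wtil{y_m}$ is an $R$-basis of $C^1$. Modifying the lifts $\wtil{\omega y_j}$ by suitable elements of $H^1(C)$ — which changes neither this basis nor the generator $\xi$ below — and using surjectivity of $\varphi$ on $\ol{H^1(C)}$, one may also arrange $\ol{\varphi}(\ol{\wtil{\omega y_j}}) = 0$, so that $\ol{\wtil{\omega y_1}}, \dots, \ol{\wtil{\omega y_s}}, \bar x_{s+1}, \dots, \bar x_r$ is a $k$-basis of $H^1_f(\ol C)$ while $\bar x_1, \dots, \bar x_r, \ol{\wtil{\omega y_1}}, \dots, \ol{\wtil{\omega y_s}}$ is a $k$-basis of $H^1(\ol C)$.

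I would then evaluate both composites in the diagram on the generator
\[
\xi := (x_1 \wedge \dots \wedge x_r \wedge \wtil{\omega y_1} \wedge \dots \wedge \wtil{\omega y_s} \wedge \wtil{y_{s+1}} \wedge \dots \wedge \wtil{y_m}) \otimes (y_1 \wedge \dots \wedge y_m)^{\ast}
\]
of $\Det_R^{-1}(C)$ used in the proof of Proposition~\ref{prop:DetA}. Along the top--right path, the isomorphism $\Det_R^{-1}(C) \simeq \Det_R(H^1_f(C))$ built from the upper row of \eqref{eq:Det02_1} sends $\xi$ to $\omega x_1 \wedge \dots \wedge \omega x_s \wedge x_{s+1} \wedge \dots \wedge x_r$ — this is Proposition~\ref{prop:DetA} together with the identification $\Det_R(H^1_f(C)) = \omega^{s}\Det_R(H^1(C))$, all intermediate steps being the tautological isomorphisms of invertible modules after $\otimes_R \Frac(R)$ (note $H^2(C)$ contributes a grade-zero object, so the reorderings cost no sign, and the relevant evaluation is $\ev_{(L,r)}$); reduction mod $\omega$ gives $\ol{\omega x_1} \wedge \dots \wedge \ol{\omega x_s} \wedge \bar x_{s+1} \wedge \dots \wedge \bar x_r$; and Lemma~\ref{lem:Det01}, applied to \eqref{eq:Det02_3} with the explicit description of its maps ($\bar y_j \mapsto \ol{\omega x_j}$ on the left, $\ol{\wtil{\omega y_j}} \mapsto \bar y_j$ on the right), turns this into $\ol{\wtil{\omega y_1}} \wedge \dots \wedge \ol{\wtil{\omega y_s}} \wedge \bar x_{s+1} \wedge \dots \wedge \bar x_r$, with no sign. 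Along the left--bottom path, reduction mod $\omega$ sends $\xi$ to the analogous generator $\bar\xi$ of $\Det_k^{-1}(\ol C)$, and the trivialization $\Det_k^{-1}(\ol C) \simeq \Det_k(H^1_f(\ol C))$ coming from the bottom row of \eqref{eq:Det02_1} is computed on the adapted $k$-bases by successively splitting off the image of $\ol g$, then $H^1(\ol C)$, then $H^1_f(\ol C)$ via Propositions~\ref{prop:det_ses2}, \ref{prop:det_ses3} and~\ref{prop:det_ses4}. Here every sign coming from a reordering $\psi$ of \eqref{switch} cancels against a partner, except for one factor produced when the chosen basis $\bar x_1, \dots, \bar x_r, \ol{\wtil{\omega y_1}}, \dots, \ol{\wtil{\omega y_s}}$ of $H^1(\ol C)$ is rewritten in terms of the basis of $H^1_f(\ol C)$ followed by the $\ol\varphi$-section $\bar x_1, \dots, \bar x_s$: this permutation has sign $(-1)^{s^2} = (-1)^s$. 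Thus the left--bottom path sends $\xi$ to $(-1)^s \cdot \ol{\wtil{\omega y_1}} \wedge \dots \wedge \ol{\wtil{\omega y_s}} \wedge \bar x_{s+1} \wedge \dots \wedge \bar x_r$, and comparing the two outputs gives the proposition.

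The only genuine difficulty is the sign bookkeeping along the second path: one must track every occurrence of $\psi$ and of the two conventions $\ev_{(L,r)}$ versus $\ev_{(L,r)^{-1}}$ (which differ by $(-1)^r$, cf.~\S\ref{App.ev}) through the repeated splitting-off of the image of $\ol g$ and of $H^2(\ol C)$; once this is carried out carefully the net contribution is the single factor $(-1)^s$ isolated above, whose conceptual meaning is that the two paths cancel the grade-$s$ line attached to $H^2(C) = H^2(\ol C)$ in opposite orders. Everything else — existence of the adapted bases, the explicit descriptions of the connecting maps in \eqref{eq:Det02_3}, and the action of each determinant isomorphism on the displayed basis vectors — is a routine unwinding of the definitions recalled in \S\ref{App:det}.
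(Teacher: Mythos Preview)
Your proposal is correct and follows essentially the same approach as the paper: choose adapted bases of $C^1$, $C^2$, $H^1(C)$, $H^1_f(C)$ with $\varphi(x_i)=[y_i]$ for $i\le s$, modify the lifts $\wtil{\omega y_j}$ so that $\ol{\wtil{\omega y_j}}\in H^1_f(\ol C)$, and then evaluate both composites on the generator $\xi$. The paper's sign computation along the lower path is slightly more explicit---it isolates a factor $(-1)^{sr}$ from moving $\ol{x_1},\dots,\ol{x_s}$ past the remaining $r$ elements, then $(-1)^{(r-s)s}$ from a second reordering, giving $(-1)^{sr+(r-s)s}=(-1)^{s^2}=(-1)^s$---which matches your $(-1)^{s^2}$; otherwise the arguments are the same.
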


\begin{proof}
As in the proof of Proposition \ref{prop:DetA}, we take $C \simeq [C^1 \overset{g}{\to} C^2]$, write $X$ for the image of $g$, and set $r = \rank_R H^1(C)$ and $m = \rank_R C^2$. We take
\begin{itemize}
\item
$x_1, \dots, x_r$ as an $R$-basis of $H^1(C)$ such that $\omega x_1, \dots, \omega x_s, x_{s+1}, \dots, x_r$ is an $R$-basis of $H^1_f(C)$ and 
\item
$y_1, \dots, y_m$ as an $R$-basis of $C^2$ such that $\omega y_1, \dots, \omega y_s, y_{s+1}, \dots, y_m$ is an $R$-basis of $X$.
\end{itemize}
Then $\varphi(x_1), \dots, \varphi(x_s)$ is a $k$-basis of $H^2(C)$.
If we write $[y_i] \in H^2(C)$ for the class of $y_i$, then $[y_1], \dots, [y_s]$ is also a $k$-basis of $H^2(C)$. By an appropriate linear transformation, we may assume that $\varphi(x_i) = [y_i]$ in $H^2(C)$ for $1 \leq i \leq s$.

Let $\wtil{\omega y_1}, \dots, \wtil{\omega y_s}, \wtil{y_{s+1}}, \dots, \wtil{y_m} \in C^1$ be lifts of the elements without tildes. We note that $\ol{\wtil{\omega y_i}} \in H^1 (\ol{C})$ for $1 \leq i \leq s$. We may take the lifts $\wtil{\omega y_i}$ so that $\ol{\wtil{\omega y_i}} \in H^1_f (\ol{C})$. In fact, since $\varphi(x_1), \dots, \varphi(x_s)$ is a basis of $H^2(C)$,  
there are elements $c_{ij} \in R$ (unique up to the maximal ideal) such that
\[
\ol{\varphi}(\ol{\wtil{\omega y_i}}) = \sum_{j=1}^s \ol{c_{ij}} \varphi(x_j) 
\]
in $H^2 (\ol{C}) \simeq H^2 (C)$. The desired lift of $\omega y_i$ is $\wtil{\omega y_i} - \sum_{j=1}^s c_{ij} x_j$. 

The elements $x_1, \dots, x_r, \wtil{\omega y_1}, \dots, \wtil{\omega y_s}, \wtil{y_{s+1}}, \dots, \wtil{y_m}$ form an $R$-basis of $C^1$.
Moreover, 
\[
\ol{x_1}, \dots, \ol{x_r}, \ol{\wtil{\omega y_1}}, \dots, \ol{\wtil{\omega y_s}}
\]
is a basis of $H^1(\ol{C})$ and $\ol{\wtil{\omega y_i}}$ is in $H^1_f(\ol{C})$ for $1 \leq i \leq s$.
We can take an $R$-basis of $\Det_R^{-1}(C) = \Det_R(C^1) \otimes \Det_R^{-1}(C^2)$ as
\[
\xi := (x_1 \wedge \dots \wedge x_r \wedge \wtil{\omega y_1} \wedge \dots \wedge \wtil{\omega y_s} \wedge \wtil{y_{s+1}} \wedge \dots \wedge \wtil{y_m})
\otimes (y_1 \wedge \dots \wedge y_m)^*.
\]
Under the isomorphism $\Det_R^{-1}(C) \simeq \Det_R(H^1_f(C))$, $\xi$ goes to
\[
\omega x_1 \wedge \dots \wedge \omega x_s \wedge x_{s + 1} \wedge \dots \wedge x_r
\]
in $\Det_R H^1_f(C)$, and further goes to
\[
\ol{\omega x_1} \wedge \dots \wedge \ol{\omega x_s} \wedge \ol{x_{s + 1}} \wedge \dots \wedge \ol{x_r}
\]
in $\Det_k \ol{H^1_f(C)}$ under the natural projection.

We compute the image of 
\[
\ol{\xi} = 
(\ol{x_1} \wedge \dots \wedge \ol{x_r} \wedge \ol{\wtil{\omega y_1}} \wedge \dots \wedge \ol{\wtil{\omega y_s}} \wedge \ol{\wtil{y_{s+1}}} \wedge \dots \wedge \ol{\wtil{y_m}})
\otimes (\ol{y_1} \wedge \dots \wedge \ol{y_m})^*
\]
under the lower horizontal arrow. 
The canonical isomorphism $\Det_k^{-1}(\ol{C}) \simeq \Det_k(H^1(\ol{C})) \otimes \Det_k^{-1}(H^2(\ol{C}))$ sends 
$\ol{\xi}$ to
\begin{align}
& (\ol{x_1} \wedge \dots \wedge \ol{x_r} \wedge \ol{\wtil{\omega y_1}} \wedge \dots \wedge \ol{\wtil{\omega y_s}})
\otimes ([y_1] \wedge \dots \wedge [y_s])^*\\
& \quad =
(-1)^{s r} (\ol{x_{s+1}} \cdot \wedge \dots \wedge \ol{x_r} \wedge \ol{\wtil{\omega y_1}} \wedge \dots \wedge \ol{\wtil{\omega y_s}} \wedge \ol{x_1} \wedge \dots \wedge \ol{x_s})
\otimes ([y_1] \wedge \dots \wedge [y_s])^*.
\end{align}
Since $\varphi(x_i) = [y_i]$ ($1 \leq i \leq s$), under the isomorphism $\Det_k(H^1(\ol{C})) \otimes \Det_k^{-1}(H^2(\ol{C}))
\simeq \Det_k H^1_f(\ol{C})$, this element goes to 
\begin{align}
& (-1)^{s r} \cdot \ol{x_{s+1}} \wedge \dots \wedge \ol{x_r} \wedge \ol{\wtil{\omega y_1}} \wedge \dots \wedge \ol{\wtil{\omega y_s}}\\
& \quad =
(-1)^{s r} \cdot (-1)^{(r - s) s} \cdot \ol{\wtil{\omega y_1}} \wedge \dots \wedge \ol{\wtil{\omega y_s}} \wedge \ol{x_{s+1}} \wedge \dots \wedge \ol{x_r}\\
& \quad =
(-1)^s \cdot \ol{\wtil{\omega y_1}} \wedge \dots \wedge \ol{\wtil{\omega y_s}} \wedge \ol{x_{s+1}} \wedge \dots \wedge \ol{x_r}.
\end{align}

To conclude the proof, we only have to show that 
\[
\alpha(\ol{\omega x_1} \wedge \dots \wedge \ol{\omega x_s} \wedge \ol{x_{s + 1}} \wedge \dots \wedge \ol{x_r})=\ol{\wtil{\omega y_1}} \wedge \dots \wedge \ol{\wtil{\omega y_s}} \wedge \ol{x_{s+1}} \wedge \dots \wedge \ol{x_r}.
\]
Recall that $\alpha$ is induced by \eqref{eq:Det02_3}.
By observing the construction, the map $H^1_f(\ol{C}) \to H^2(C)$ sends $\ol{\wtil{\omega y_i}}$ to $[y_i]$ and the map $H^2(C) \to \ol{H^1_f(C)}$ sends $[y_i]$ to $\ol{\omega x_i}$, since $\varphi(x_i) = [y_i]$.
Now Lemma \ref{lem:Det01} shows the claim.
\end{proof}

\section{A proof of the generalized Davenport--Hasse relation}\label{sec:DH}

In this section we prove Lemma \ref{lem:Davenport--Hasse}(iii).
This proof was provided to the authors by Daichi Takeuchi.

We first briefly recall the classical Davenport--Hasse relation for Gauss sums over finite fields. We fix a prime number $p$. Let $\F$ be a finite field whose characteristic is $p$. 
We fix a primitive $p$-th root of unity $\zeta_p\in\C$ and define an additive character
\[
\psi' : \F \to \C^{\times} ; a \mapsto \zeta_p^{\Tr_{\F/\F_p}(a)}.
\]
Let $\chi' : \F^{\times} \to \C^{\times}$ be a (multiplicative) character.
Then the classical Gauss sum $g(\chi')$ over $\F$ is defined by
\[
g(\chi'):= \sum_{a\in\F^{\times}} \chi'(a)\psi'(a).
\]

For a positive integer $s$, let $\F_s$ be the unique extension of $\F$ such that $[\F_s : \F]=s$. 
Associated to $\chi'$, we have a character $\chi : \F_s^{\times} \to \C^{\times}$ defined by
\[
\chi:=\chi' \circ \NN_{\F_s/\F} : \F^{\times}_s \to \C^{\times}.
\]
Similarly, we have an additive character $\psi$ over $\F_s$ which is defined as $\psi:=\psi'\circ\Tr_{\F_s/\F}$ and use it to define the Gauss sum $g(\chi)$. 
The classical Davenport--Hasse relation describes a relationship between these two Gauss sums $g(\chi)$ and $g(\chi')$.

\begin{thm}[Davenport--Hasse relation, {\cite[Chapter 11, Section 3, Theorem 1]{IrRo82}}]\label{classical HD}
We have $g(\chi) = (-1)^{s-1} g(\chi')^s$.
\end{thm}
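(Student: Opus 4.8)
The plan is to prove the relation by the classical generating-function argument (this is essentially the proof in \cite[Chapter 11, Section 3]{IrRo82}), which I now outline. Extend $\chi'$ to all of $\F$ by $\chi'(0):=0$, and for each monic polynomial $f=X^d+c_1X^{d-1}+\cdots+c_d\in\F[X]$ of degree $d\geq 1$ put
\[
\lambda(f):=\chi'\bigl((-1)^d c_d\bigr)\,\psi'(-c_1),\qquad \lambda(1):=1.
\]
The first step is to observe that $\lambda$ is completely multiplicative on monic polynomials: for monic $f,g$ the constant term of $fg$ is the product of the constant terms and the subleading coefficient of $fg$ is the sum of the subleading coefficients, so $\lambda(fg)=\lambda(f)\lambda(g)$. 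Consequently the formal Euler-product identity
\[
\sum_{f\ \mathrm{monic}}\lambda(f)\,T^{\deg f}=\prod_{P\ \mathrm{monic\ irreducible}}\bigl(1-\lambda(P)T^{\deg P}\bigr)^{-1}
\]
holds in $\C[[T]]$.

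The second step is to evaluate the left-hand side degree by degree. For $d=0$ the coefficient is $1$; for $d=1$ it is $\sum_{a\in\F^\times}\chi'(a)\psi'(a)=g(\chi')$; and for $d\geq 2$ it vanishes, because then $\lambda(f)$ depends only on the two \emph{distinct} coefficients $c_1$ and $c_d$, and summing over $c_1\in\F$ gives $\sum_{c_1\in\F}\psi'(-c_1)=0$ since $\psi'$ is a nontrivial additive character. Hence the left-hand side equals the polynomial $1+g(\chi')\,T$, and the identity becomes
\[
1+g(\chi')\,T=\prod_{P\ \mathrm{monic\ irreducible}}\bigl(1-\lambda(P)T^{\deg P}\bigr)^{-1}.
\]

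The third step is to take $\log$ of both sides and compare the coefficients of $T^s$. The left-hand side contributes $(-1)^{s-1}g(\chi')^s/s$, while the right-hand side contributes $\frac{1}{s}\sum_{m\mid s}m\sum_{\deg P=m}\lambda(P)^{s/m}$, and the key point is to recognise the double sum as $g(\chi)$. For this I would use that if $\beta\in\F_s^\times$ has minimal polynomial $P$ over $\F$ of degree $m\mid s$, then $\NN_{\F_s/\F}(\beta)=\bigl((-1)^m P(0)\bigr)^{s/m}$ and $\Tr_{\F_s/\F}(\beta)=(s/m)\cdot\bigl(-c_1(P)\bigr)$ by transitivity of norm and trace, whence $\chi'(\NN_{\F_s/\F}(\beta))\,\psi'(\Tr_{\F_s/\F}(\beta))=\lambda(P)^{s/m}$; grouping the elements of $\F_s^\times$ by their minimal polynomial, each irreducible $P$ of degree $m\mid s$ contributes its $m$ roots, so $g(\chi)=\sum_{m\mid s}m\sum_{\deg P=m}\lambda(P)^{s/m}$ (the polynomial $P=X$ contributes $0$ since $\chi'(0)=0$). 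Equating the two coefficients of $T^s$ then gives $g(\chi)=(-1)^{s-1}g(\chi')^s$, which is the assertion.

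The hard part will be the bookkeeping in the third step: extracting $\NN_{\F_s/\F}(\beta)$ and $\Tr_{\F_s/\F}(\beta)$ from the minimal polynomial of $\beta$ with the exponents and multiples $s/m$ placed correctly, and keeping every sign straight throughout; the analytic input, by contrast, is only the elementary vanishing of the additive-character sum used in step two. I would also note that the whole argument is uniform in whether or not $\chi'$ is trivial, since that vanishing comes from $\psi'$ rather than from $\chi'$.
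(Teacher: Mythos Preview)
Your proposal is correct and is precisely the classical generating-function argument of Ireland--Rosen that the paper cites; the paper itself does not supply a proof of this theorem but merely invokes \cite[Chapter 11, Section 3, Theorem 1]{IrRo82} as a black box in the proof of Theorem \ref{app:HD}. Your bookkeeping in the third step is accurate, including the treatment of $P=X$ via $\chi'(0)=0$.
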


Now we recall the notation in \S \ref{ss:choice}.
Let $k$ be a finite extension of $\Q$ such that $p$ is unramified in $k/\Q$ and $k_n:=k(\mu_{p^n})$. We consider a character $\chi_2 : \Gal(k_n/k) \to \C^{\times}$, and define $n_{\chi}$ as the minimum integer such that $\chi_2$ factors through $\Gal(k_{n_{\chi}}/k)$.

For a $p$-adic place $v$ of $k$, we write $\chi_{2, v}$ for the restriction of $\chi$ on the decomposition group of $v$. By combining $\chi_{2, v}$ with the local reciprocity map, we have a character of $k^{\times}_v$ denoted by $\theta_{\chi_{2, v}} : k^{\times}_v \to \C^{\times}$. 

Through the isomorphism $\Gal(k_n/k)\simeq \Gal(\Q(\mu_{p^n})/\Q)$, we can also regard $\chi_2$ as a character $\chi'_2: \Gal(\Q(\mu_{p^n})/\Q) \to \C^{\times}$.
We then similarly write $\chi'_{2, p}$ for the restriction of $\chi'_2$ on the decomposition group of $p$
and, combining with the local reciprocity map, have a character $\theta_{\chi'_{2, p}} : \Q_p^{\times} \to \C^{\times}$.
By construction, we have $\theta_{\chi_{2, v}} = \theta_{\chi'_{2, p}} \circ \NN_{k_v/\Q_p}$.

For these characters $\theta_{\chi_{2, v}}, \theta_{\chi'_{2, p}}$, we have introduced the local Gauss sums $\tau_k(\chi_{2, v}), \tau_{\Q}(\chi'_{2, p})$ in Definition \ref{def:Gsum}. 
These local Gauss sums are related to each other as follows.

\begin{thm}[Lemma \ref{lem:Davenport--Hasse}(iii)]\label{app:HD}
We have 
\[
\tau_k(\chi_{2, v}) = (-1)^{n_{\chi}(r_{k_v}-1)}\cdot \tau_{\Q}(\chi'_{2, p})^{r_{k_v}}
\]
with $r_{k_v}:=[k_v : \Q_p]$.
\end{thm}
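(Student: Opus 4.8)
The plan is to reduce the statement to the classical Davenport--Hasse relation (Theorem \ref{classical HD}) applied to the residue field extension at $v$. First I would unwind the definition of the local Gauss sums $\tau_k(\chi_{2,v})$ and $\tau_{\Q}(\chi'_{2,p})$ from Definition \ref{def:Gsum}. Since $p$ is unramified in $k/\Q$, the different $\cD_{k_v/\Q_p}$ is trivial, the conductor of $\chi_{2,v}$ is $(p^{n_\chi})$, and similarly for $\chi'_{2,p}$; hence in both cases we may take $c = p^{n_\chi}$ (or $c=1$ if $n_\chi = 0$, in which case both sides are easily seen to equal $\theta_{\chi_{2,v}}(1)=1=\theta_{\chi'_{2,p}}(1)$, so we assume $n_\chi \geq 1$). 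Thus $\tau_k(\chi_{2,v}) = \sum_{u \in \OO_{k_v}^\times/(1+p^{n_\chi}\OO_{k_v})} \theta_{\chi_{2,v}}(up^{-n_\chi})\psi_{k_v}(up^{-n_\chi})$, and likewise over $\Q_p$.

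Next I would handle the case $n_\chi = 1$ directly, which is the heart of the matter. When $n_\chi=1$, the quotient $\OO_{k_v}^\times/(1+p\OO_{k_v})$ is exactly the multiplicative group of the residue field $\F := \OO_{k_v}/p\OO_{k_v}$, which is the degree-$r_{k_v}$ extension of $\F_p$. The character $\theta_{\chi_{2,v}}$ restricted to $\OO_{k_v}^\times$ factors through this residue field quotient and, via the relation $\theta_{\chi_{2,v}} = \theta_{\chi'_{2,p}} \circ \NN_{k_v/\Q_p}$ together with the compatibility of norm maps on units with the residue-field norm $\NN_{\F/\F_p}$, corresponds to $\chi' \circ \NN_{\F/\F_p}$ where $\chi'$ is the residue-field character attached to $\theta_{\chi'_{2,p}}$. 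Similarly $\psi_{k_v}(up^{-1}) = \exp(2\pi\sqrt{-1}\,\Tr_{k_v/\Q_p}(u)/p)$ depends only on $u \bmod p$ and equals $\psi'(\Tr_{\F/\F_p}(\bar u))$-type data, i.e. the additive character over $\F$ pulled back from that over $\F_p$ via $\Tr_{\F/\F_p}$. After matching conventions (the primitive $p$-th root of unity used here is $\zeta_p = \exp(2\pi\sqrt{-1}/p)$, consistent with Definition \ref{def:Gsum}), I get $\tau_k(\chi_{2,v}) = g(\chi)$ and $\tau_{\Q}(\chi'_{2,p}) = g(\chi')$ in the notation of Theorem \ref{classical HD} with $s = r_{k_v}$, and the classical relation gives $g(\chi) = (-1)^{r_{k_v}-1}g(\chi')^{r_{k_v}}$, which is exactly the claim for $n_\chi=1$.

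For general $n_\chi \geq 1$, I would reduce to the same finite-field computation by a standard stratification argument: the sum over $\OO_{k_v}^\times/(1+p^{n_\chi}\OO_{k_v})$ can be analyzed by writing $u = u_1(1 + p^{n_\chi - 1}t)$; the contribution of the higher-level part is governed by $\psi_{k_v}$ restricted to $p^{-1}\OO_{k_v}/\OO_{k_v}$, and non-triviality of the conductor forces the terms to collapse onto a single residue-field-level sum, multiplied by a power of $p$ coming from the size of the level filtration, exactly as in the standard proof that local Gauss sums of higher conductor reduce (up to an explicit root-of-unity and $p$-power factor) to level-one sums over the residue field. Carrying this out over $k_v$ and over $\Q_p$ in parallel and taking the ratio, the $p$-power factors are compatible with norming and the residue-field piece again contributes the Davenport--Hasse factor $(-1)^{r_{k_v}-1}$ at each of the $n_\chi$ levels, producing the total sign $(-1)^{n_\chi(r_{k_v}-1)}$. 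The main obstacle I anticipate is precisely this bookkeeping for $n_\chi > 1$: keeping the normalizations of the additive characters, the choice of $c$, and the level-filtration $p$-powers consistent between $k_v$ and $\Q_p$ so that everything except the residue-field Gauss sums cancels in the ratio. Alternatively, if a clean reference or a functional-equation/global argument (e.g. via the known behavior of $\varepsilon$-factors under Weil restriction or induction) is available, one could bypass the explicit higher-level computation entirely; I would look for such a shortcut, but the finite-field route above is self-contained and should work.
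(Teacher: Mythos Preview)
Your treatment of the cases $n_\chi = 0$ and $n_\chi = 1$ is correct and matches the paper. The gap is in the case $n_\chi > 1$: your proposed mechanism --- peel off one layer $u = u_1(1 + p^{n_\chi-1}t)$ at a time and pick up a Davenport--Hasse factor $(-1)^{r_{k_v}-1}$ at each of the $n_\chi$ levels --- does not work, and the explanation of the sign is wrong. After one such step the inner sum over $t$ does collapse the variable $u_1$ to a single congruence class modulo $p$, but what remains is a sum over a \emph{coset} of $(1+p\OO_{k_v})/(1+p^{n_\chi-1}\OO_{k_v})$, not a Gauss sum for a character of smaller conductor; there is no obvious way to iterate, and no residue-field Gauss sum appears at each step.

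The paper's argument is structurally different. One splits at the \emph{middle} level $m$ (where $n_\chi = 2m$ or $2m+1$) and linearizes the character on $1+p^m\OO_{k_v}$ via a truncated exponential $E_1(z)=1+z+z^2/2$ (odd case) or $E_0(z)=1+z$ (even case): there is a unique $e \in (\OO_{k_v}/p^{m+1})^\times$ with $\theta_{\chi_{2,v}}(E(z)) = \psi_{k_v}(ez p^{-n_\chi})$, and one checks this $e$ is the \emph{same} element for $k_v$ and for $\Q_p$. The outer sum then collapses to the single class $x \equiv -e$. In the even case this already gives $\tau_k(\chi_{2,v}) = \tau_\Q(\chi'_{2,p})^{r_{k_v}}$ with no residue-field Gauss sum at all (consistent with the sign being $+1$). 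In the odd case a single quadratic Gauss sum over the residue field survives, and the classical Davenport--Hasse relation applied to \emph{that} one sum produces the factor $(-1)^{r_{k_v}-1}$. So the sign $(-1)^{n_\chi(r_{k_v}-1)}$ arises not from $n_\chi$ iterations but from the parity of $n_\chi$ governing whether a single quadratic Gauss sum appears. Your alternative suggestion via $\varepsilon$-factors under induction is a legitimate route, but would need to be developed; the explicit argument above is the one the paper gives.
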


\begin{rem}
This theorem generalizes Theorem \ref{classical HD} to wildly ramified characters.
In fact, let us show that the statement of Theorem \ref{app:HD} for $n_{\chi} = 1$ is equivalent to Theorem \ref{classical HD}.
When $n_{\chi}=1$ (i.e., $\chi_{2, v}$ is tamely ramified), we can use a uniformizer $\pi$ of $k_v$ as the element $c \in k_v^{\times}$ in Definition \ref{def:Gsum} since $\ff(\chi_v)=(\pi)$.
We also have $\theta_{\chi_{2, v}}(\pi)=1$ since $k_v/\Q_p$ is unramified.
Then the map
\[
\OO_{k_v}^{\times} \to \C^{\times}\; ;\; u\mapsto \theta_{\chi_{2,v}}\left(u\pi^{-1}\right)
\]
defines a character by the condition $\theta_{\chi_{2, v}}(\pi)=1$, and factors through $\F_v^{\times}$, where $\F_v$ is the residue field of $k_v$. In addition, since the additive character $\psi_{k_v}$ in the definition of local Gauss sums factors through $\OO_{k_v}$, the additive character 
\[
\OO_{k_v} \to \C^{\times}\; ;\; u \mapsto \psi_{k_v}\left(u\pi^{-1}\right)
\]
factors through $\F_v$. Therefore, when $n_{\chi}=1$, the local Gauss sums $\tau_k(\chi_{2, v})$ and $\tau_{\Q}(\chi'_{2, p})$ are the same as the classical Gauss sums in Theorem \ref{classical HD} defined for characters over finite fields. 
\end{rem}

\begin{proof}[Proof of Theorem \ref{app:HD}]
When $n_{\chi}=0$ (i.e., $\chi_2$ is trivial), both sides of the equality in the statement are $1$ by definition.

When $n_{\chi}=1$, the statement follows from Theorem \ref{classical HD} as already discussed.

In the rest of the proof, we consider the case where $n_{\chi}>1$. 
We often identify elements of subquotients of $\OO_{k_v}$ and $\Z_p$ with their arbitrary lifts in $\OO_{k_v}$ and $\Z_p$ respectively, and assign the same symbols for them if no confusion occurs. 

We first consider the case where $n_{\chi}$ is odd. Put $n_{\chi}=2m+1$ ($m\in\Z_{\geq 1}$). With the additive character $\psi_{k_v}$, we can define a perfect pairing 
\begin{equation}\label{trpairing}
\OO_{k_v}/p^{n_{\chi}}\OO_{k_v} \times \OO_{k_v}/p^{n_{\chi}}\OO_{k_v} \rightarrow \C^{\times}\ ;\ (x, y)\mapsto \psi_{k_v}(xyp^{-n_{\chi}}),
\end{equation}
where the value $\psi_{k_v}(xyp^{-n_{\chi}})$ is well-defined since $\psi_{k_v}(\OO_{k_v})=0$. 
Since the exact annihilator of $p^m\OO_{k_v}/p^{n_{\chi}} \OO_{k_v}$, this pairing induces
\begin{equation}\label{pfpair}
p^m\OO_{k_v}/p^{n_{\chi}} \OO_{k_v} \times \OO_{k_v}/p^{m+1}\OO_{k_v} \rightarrow \C^{\times}, 
\end{equation}
which is also perfect.
We define a group homomorphism (a ``truncated'' exponential map)
\[
E_1 : p^m\OO_{k_v}/p^{n_{\chi}} \OO_{k_v} \rightarrow 1+p^m\OO_{k_v}/ 1+ p^{n_{\chi}}\OO_{k_v}\ ;\ \ z \mapsto 1+z+\frac{z^2}{2}.
\]
We can see that this is an isomorphism. 
Then because of the perfect pairing \eqref{pfpair}, we can associate to the homomorphism $\theta_{\chi_{2, v}}\circ E_1 : p^m\OO_{k_v}/p^{n_{\chi}} \OO_{k_v} \to \C^{\times}$ an element $e_{k_v} \in \OO_{k_v}/p^{m+1}\OO_{k_v}$ such that 
\begin{align}\label{thetavspsi}
\theta_{\chi_{2, v}}(E_1 (z)) = \psi_{k_v}(e_{k_v}zp^{-n_{\chi}})
\end{align}
holds for all $z \in p^m\OO_{k_v}/p^{n_{\chi}} \OO_{k_v}$.
Since $\theta_{\chi_{2, v}}\circ E_1$ is injective, we have $e_{k_v} \in (\OO_{k_v}/p^{m+1}\OO_{k_v})^{\times}$.
In the same way, we can take an element $e_{\Q_p} \in (\Z_p/p^{m+1}\Z_p)^{\times}$ such that 
\[
\theta_{\chi_{2, p}^{\prime}}(E_1 (z)) = \psi_{\Q_p}(e_{\Q_p}zp^{-n_{\chi}})
\]
holds for all  $z \in p^m\Z_p/p^{n_{\chi}} \Z_p$.
Here, we restrict $E_1$ on $p^m\Z_p/p^{n_{\chi}} \Z_p  \subset p^m\OO_{k_v}/p^{n_{\chi}}\OO_{k_v}$. 

Let us show $e_{k_v}=e_{\Q_p}$ in $\Z_p/p^{m+1}\Z_p \subset \OO_{k_v}/p^{m+1}\OO_{k_v}$. 
Since $\theta_{\chi_{2, v}} = \theta_{\chi_{2, p}^{\prime}} \circ \NN_{k_v/\Q_p}$ and 
$\NN_{k_v/\Q_p}\circ E_1 = E_1 \circ \Tr_{k_v / \Q_p}$, 
we have 
\begin{align*}
\theta_{\chi_{2,v}} (E_1(z)) = \theta_{\chi^{\prime}_{2,p}}(\NN_{k_v/\Q_p}(E_1(z))) = \theta_{\chi^{\prime}_{2,p}}(E_1(\Tr_{k_v/\Q_p}(z)))
 = \psi_{\Q_p}(\Tr_{k_v/\Q_p}(e_{\Q_p}zp^{-n_{\chi}}))
\end{align*}
for $z \in p^m\OO_{k_v}/p^{n_{\chi}} \OO_{k_v}$. 
On the other hand, since $\psi_{k_v} = \psi_{\Q_p}\circ\Tr_{k_v/\Q_p}$, 
we have
\[
\theta_{\chi_{2,v}} (E_1(z)) = \psi_{k_v}(e_{k_v}zp^{-n_{\chi}}) = \psi_{\Q_p}(\Tr_{k_v/\Q_p}(e_{k_v}zp^{-n_{\chi}}))
\]
for $z \in p^m\OO_{k_v}/p^{n_{\chi}} \OO_{k_v}$. Since $\Ker \psi_{\Q_p} = \Z_p$, we have $\Tr_{k_v/\Q_p}((e_{k_v}-e_{\Q_p})zp^{-n_{\chi}}) \in \Z_p$ for arbitrary lifts of $e_{k_v}, e_{\Q_p}$ and all $z \in p^m\OO_{k_v}$. This implies
\[
e_{k_v}-e_{\Q_p} \in p^{n_{\chi}-m}\cD_{k_v / \Q_p} ^{-1} = p^{m+1}\OO_{k_v},
\]
where we use the fact that $k_v / \Q_p$ is unramified. Therefore, we obtain $e_{k_v}=e_{\Q_p}$ in $\OO_{k_v}/p^{m+1}\OO_{k_v}$, as claimed.

We put $e:=e_{k_v}=e_{\Q_p} \in (\Z_p/p^{m+1}\Z_p)^{\times}$. In the following computation of the Gauss sum $\tau_k(\chi_{2, v})$, we decompose $u\in \OO_{k_v}^{\times}/1+p^{n_{\chi}} \OO_{k_v}$ as a product $u = xy$ with $x \in \OO_{k_v}^{\times}/1+p^m \OO_{k_v}$ (lifted to $\OO_{k_v}^{\times}/1+p^{n_{\chi}} \OO_{k_v}$) and $y \in 1+p^m\OO_{k_v}/1+p^{n_{\chi}} \OO_{k_v}$. 
We have  
\begin{align*}
\tau_k(\chi_{2, v}) & = \sum_{u \in \OO_{k_v}^{\times}/1+p^{n_{\chi}} \OO_{k_v}} \theta_{\chi_{2, v}}(up^{-n_{\chi}})\psi_{k_v}(up^{-n_{\chi}})\\
&=\sum_{x \in \OO_{k_v}^{\times}/1+p^m \OO_{k_v}}\sum_{y \in 1+p^m\OO_{k_v}/1+p^{n_{\chi}}\OO_{k_v}} \theta_{\chi_{2, v}}\left(xyp^{-n_{\chi}}\right)\psi_{k_v}\left(xyp^{-n_{\chi}}\right)\\
&=\sum_{x \in \OO_{k_v}^{\times}/1+p^m \OO_{k_v}}\theta_{\chi_{2, v}}(x p^{-n_{\chi}})
\sum_{z\in p^m\OO_{k_v}/p^{n_{\chi}}\OO_{k_v}} 
\theta_{\chi_{2, v}}(E_1 (z))\psi_{k_v}\left(x E_1 (z) p^{-n_{\chi}}\right)\\
&=\sum_{x \in \OO_{k_v}^{\times}/1+p^m \OO_{k_v}}\theta_{\chi_{2, v}}(x p^{-n_{\chi}})
\sum_{z\in p^m\OO_{k_v}/p^{n_{\chi}}\OO_{k_v}} 
\psi_{k_v}(e z p^{-n_{\chi}})\psi_{k_v}\left(x\left(1+z+\frac{z^2}{2}\right)p^{-n_{\chi}}\right)\\
&=\sum_{x \in \OO_{k_v}^{\times}/1+p^m \OO_{k_v}}\theta_{\chi_{2, v}}(xp^{-n_{\chi}})\psi_{k_v}(xp^{-n_{\chi}})\sum_{z\in p^m\OO_{k_v}/p^{n_{\chi}}\OO_{k_v}} \psi_{k_v}\left(\left(ez+xz+x\frac{z^2}{2}\right)p^{-n_{\chi}}\right).
\end{align*}
For the sum with respect to $z$, we have 
\begin{align*}
&\sum_{z\in p^m\OO_{k_v}/p^{n_{\chi}}\OO_{k_v}} \psi_{k_v}\left(\left(ez+xz+x\frac{z^2}{2}\right)p^{-n_{\chi}}\right)\\
= &\sum_{\substack{z_1\in p^m\OO_{k_v}/p^{m+1}\OO_{k_v} \\ z_2\in p^{m+1}\OO_{k_v}/p^{n_{\chi}}\OO_{k_v}}} \psi_{k_v}\left(\left(e(z_1+z_2)+x(z_1+z_2)+x\frac{(z_1+z_2)^2}{2}\right)p^{-n_{\chi}}\right)\\
= & \sum_{z_1 \in p^m\OO_{k_v}/p^{m+1}\OO_{k_v}} \psi_{k_v}\left(\left(ez_1+xz_1+x\frac{z_1^2}{2}\right)p^{-n_{\chi}}\right)\sum_{z_2\in p^{m+1}\OO_{k_v}/p^{n_{\chi}}\OO_{k_v}} \psi_{k_v}\left((e+x)z_2 p^{-n_{\chi}}\right).
\end{align*}
Here, we decompose $z\in p^m\OO_{k_v}/p^{n_{\chi}}\OO_{k_v}$ as a sum $z = z_1 + z_2$ with $z_1 \in p^m\OO_{k_v}/p^{m+1}\OO_{k_v}$ (lifted to $p^m\OO_{k_v}/p^{n_{\chi}}\OO_{k_v}$) and $z_2\in p^{m+1}\OO_{k_v}/p^{n_{\chi}}\OO_{k_v}$.
We also note that we use $(z_1+z_2)^2 \equiv z_1^2 \pmod{ p^{n_{\chi}}\OO_{k_v}}$ in the second equality above. The sum with respect to $z_2$ is equal to 
\[
\sum_{z_2\in p^{m+1}\OO_{k_v}/p^{n_{\chi}}\OO_{k_v}}  \psi_{k_v}\left((e+x)z_2 p^{-n_\chi}\right) = 
\begin{cases}
 p^{mr_{k_v}}    &\text{if } e+x \equiv 0\pmod{ p^{m}\OO_{k_v}}, \\
 0    &\text{otherwise}. 
\end{cases}
\]
Therefore, we have
\begin{align*}
\tau_k(\chi_{2, v})&=p^{mr_{k_v}}\theta_{\chi_{2, v}}(-ep^{-n_{\chi}})\psi_{k_v}(-ep^{-n_{\chi}})\sum_{z_1\in p^m\OO_{k_v}/p^{m+1}\OO_{k_v}} \psi_{k_v}\left(-e\frac{z_1^2}{2}p^{-n_{\chi}}\right)\\
& = p^{mr_{k_v}}\theta_{\chi^{\prime}_{2,p}}(-ep^{-n_{\chi}})^{r_{k_v}}\psi_{\Q_p}(-ep^{-n_{\chi}})^{r_{k_v}}\sum_{z_1\in p^m\OO_{k_v}/p^{m+1}\OO_{k_v}} \psi_{k_v}\left(-e\frac{z_1^2}{2}p^{-n_{\chi}}\right).
\end{align*}
The sum with respect to $z_1$ can be seen as a classical Gauss sum over the residue field $\mathbb{F}_v$ of $k_v$. Indeed, let $\displaystyle \left(\frac{\ast}{p}\right)$ be the Legendre symbol over $(\Z_p/p\Z_p)^{\times}$. By the identification
\[
p^m\OO_{k_v}/p^{m+1}\OO_{k_v} \simeq \F_v\; ; z_1=p^mz_0 \mapsto z_0 \mod p,
\]
we have
\begin{align*}
& \sum_{z_1\in p^m\OO_{k_v}/p^{m+1}\OO_{k_v}} \psi_{k_v}\left(-e\frac{z_1^2}{2}p^{-n_{\chi}}\right)
=\sum_{z_0\in \mathbb{F}_v} \psi_{k_v}\left(-e\frac{z_0^2}{2}p^{-1}\right)\\
& \quad =1+2\sum_{z_0\in (\mathbb{F}_v^{\times})^2} \psi_{k_v}\left(-e\frac{z_0}{2}p^{-1}\right)\\
& \quad = 1+\sum_{z_0\in \mathbb{F}_v^{\times}} \left(\frac{\NN_{k_v/\Q_p}(z_0)}{p}\right)\psi_{k_v}\left(-e\frac{z_0}{2}p^{-1}\right)+\sum_{z_0\in \mathbb{F}_v^{\times}} \psi_{k_v}\left(-e\frac{z_0}{2}p^{-1}\right)\\
& \quad = \sum_{z_0\in \mathbb{F}_v^{\times}} \left(\frac{\NN_{k_v/\Q_p}(z_0)}{p}\right)\psi_{k_v}\left(-e\frac{z_0}{2}p^{-1}\right), 
\end{align*}
and this is a classical Gauss sum over $\F_v$ attached to the Legendre symbol. Here, 
the last equality follows by 
$\sum_{z_0\in \mathbb{F}_v^{\times}} \psi_{k_v}\left(-e\dfrac{z_0}{2}p^{-1}\right)=-1.$ By Theorem \ref{classical HD}, we have
\[
\sum_{z_0\in \mathbb{F}_v^{\times}} \left(\frac{\NN_{k_v/\Q_p}(z_0)}{p}\right)\psi_{k_v}\left(-e\frac{z_0}{2}p^{-1}\right)= (-1)^{r_{k_v}-1}\left(\sum_{z_0\in \mathbb{F}_p^{\times}} \left(\frac{z_0}{p}\right)\psi_{\Q_p}\left(-e\frac{z_0}{2}p^{-1}\right)\right)^{r_{k_v}}. 
\]
Therefore, we obtain 
\begin{align*}
\tau_k(\chi_{2, v}) =  (-1)^{r_{k_v}-1}\left(p^m \theta_{\chi^{\prime}_{2,p}}(-ep^{-n_{\chi}})\psi_{\Q_p}(-ep^{-n_{\chi}})\sum_{z_0\in \mathbb{F}_p^{\times}} \left(\frac{z_0}{p}\right)\psi_{\Q_p}\left(-e\frac{z_0}{2}p^{-1}\right)\right)^{r_{k_v}}.
\end{align*}
On the other hand, by the similar computation for $\tau_{\Q}(\chi_{2, p}^{\prime})$ as above, 
we also obtain
\[
\tau_{\Q}(\chi_{2, p}^{\prime}) = p^m \theta_{\chi^{\prime}_{2,p}}(-ep^{-n_{\chi}})\psi_{\Q_p}(-ep^{-n_{\chi}})\sum_{z_0\in \mathbb{F}_p^{\times}} \left(\frac{z_0}{p}\right)\psi_{\Q_p}\left(-e\frac{z_0}{2}p^{-1}\right). 
\]
These two formulas imply Theorem \ref{app:HD} in the case where $n_{\chi}>1$ is odd. 

Finally we consider the case where $n_{\chi}>1$ is even. Put $n_{\chi}=2m$ ($m\in\Z_{>1}$). 
This case can be shown in a similar but easier way.
We consider a group isomorphism
\[
E_0 : p^m\OO_{k_v}/p^{n_{\chi}} \OO_{k_v} \rightarrow 1+p^m\OO_{k_v}/ 1+ p^{n_{\chi}}\OO_{k_v}\ ;\ \ z \mapsto 1+z.
\]
In the same way as in the even case, we can show that there exists $e\in (\Z_p/p^m\Z_p)^{\times}$ which satisfies
\[
\theta_{\chi_{2, v}}(E_0(z)) = \psi_{k_v}(ezp^{-n_{\chi}})
\]
for all $z \in p^m\OO_{k_v}/p^{n_{\chi}}\OO_{k_v}$, and 
\[
\theta_{\chi_{2, p}^{\prime}}(E_0(z)) = \psi_{\Q_p}(ezp^{-n_{\chi}})
\]
for all $z \in p^m\Z_p/p^{n_{\chi}}\Z_p$, where we restrict $E_0$ to $p^m\Z_p/p^{n_{\chi}}\Z_p \subset p^m\OO_{k_v}/p^{n_{\chi}}\OO_{k_v}$ in the second equality. 
By a similar computation as in the even case, we have 
\begin{align*}
\tau_k(\chi_{2, v}) &=\sum_{u \in \OO_{k_v}^{\times}/1+p^{n_{\chi}} \OO_{k_v}} \theta_{\chi_{2, v}}(up^{-n_{\chi}})\psi_{k_v}(up^{-n_{\chi}})\\
& = \sum_{x \in \OO_{k_v}^{\times}/1+p^m\OO_{k_v}}\sum_{y \in 1+p^m\OO_{k_v}/1+p^{n_{\chi}}\OO_{k_v}} \theta_{\chi_{2, v}}(xyp^{-n_{\chi}})\psi_{k_v}(xyp^{-n_{\chi}})\\
&=\sum_{x \in \OO_{k_v}^{\times}/1+p^m \OO_{k_v}}\sum_{z\in p^m\OO_{k_v}/p^{n_{\chi}}\OO_{k_v}} \theta_{\chi_{2, v}}\left(x E_0(z)p^{-n_{\chi}}\right)\psi_{k_v}\left(xE_0(z)p^{-n_{\chi}}\right)\\
&=\sum_{x \in \OO_{k_v}^{\times}/1+p^m \OO_{k_v}}\theta_{\chi_{2, v}}(xp^{-n_{\chi}})\psi_{k_v}(xp^{-n_{\chi}})\sum_{z\in p^m\OO_{k_v}/p^{n_{\chi}}\OO_{k_v}} \psi_{k_v}\left((e+x)zp^{-n_{\chi}}\right)\\
&=\theta_{\chi_{2, v}}(-ep^{-n_{\chi}})\psi_{k_v}(-ep^{-n_{\chi}})p^{r_{k_v}(n_{\chi}-m)}=\theta_{\chi_{2, p}^{\prime}}(-ep^{-n_{\chi}})^{r_{k_v}}\psi_{\Q_p}(-ep^{-n_{\chi}})^{r_{k_v}} p^{r_{k_v}(n_{\chi}-m)}.
\end{align*}
In the fifth equality, we use the fact that the sum with respect to $z$ is $0$ unless $x \equiv -e \pmod{ p^m\OO_{k_v}}$. 
By a similar computation for $\tau_{\Q}(\chi_{2, p}^{\prime})$, we also obtain
\[
\tau_{\Q}(\chi_{2, p}^{\prime}) = \theta_{\chi_{2, p}^{\prime}}(-ep^{-n_{\chi}})\psi_{\Q_p}(-ep^{-n_{\chi}}) p^{n_{\chi}-m}.
\]
These two formulas imply Theorem \ref{app:HD} in the case where $n_{\chi}>1$ is even.
\end{proof}

{
\bibliographystyle{abbrv}
\bibliography{biblio}
}

\end{document}